\documentclass[a4paper,12pt]{amsart}

% Packages -------------------------------------------------------------%
\usepackage{amsmath,amsxtra,amssymb,latexsym, amscd,amsthm,dsfont,subcaption,cases}
\usepackage[mathscr]{eucal}
\usepackage{mathrsfs,bbm,enumerate,calc,yhmath,mathtools,faktor}
\usepackage{stackengine}
\usepackage{pict2e}
\usepackage{tikz}
\usepackage{graphicx}
\usepackage{color}
\usepackage[foot]{amsaddr}

\usepackage[a4paper]{geometry}
\geometry{left=2.5cm,right=2.5cm,top=2.5cm,bottom=2.5cm}
\usepackage{hyperref}
\hypersetup{colorlinks=true, breaklinks=true, urlcolor= blue, linkcolor= black, citecolor=teal,
  pdftitle={Friedman-Ramanujan functions}, 
  pdfauthor={Nalini Anantharaman and Laura Monk}, 
  pdfsubject={}}

\usepackage[capitalize]{cleveref}
\crefname{equation}{equation}{equations}
\Crefname{equation}{Equation}{Equations}
\crefname{figure}{Figure}{Figures}
\Crefname{figure}{Figure}{Figures}

\setcounter{tocdepth}{1}
\numberwithin{equation}{section}

%- Title and informations ---------------------------------------------%
\title[Friedman--Ramanujan functions in random hyperbolic
geometry I]{Friedman--Ramanujan functions \protect\\ in random hyperbolic geometry
  \protect\\ and application to spectral gaps I}

\author{Nalini Anantharaman\textsuperscript{1} and Laura Monk\textsuperscript{2}}

\address[1]{Coll\`ege de France, 11 place Marcelin Berthelot, 75005 Paris / IRMA, 7 rue Ren\'e Descartes, 67084 Strasbourg Cedex, France} 

\address[2]{School of Mathematics, University of Bristol, Bristol BS8 1UG, U.K.}

\email{nalini.anantharaman@college-de-france.fr}
\email{laura.monk@bristol.ac.uk}

% \address

\makeatletter
\@namedef{subjclassname@2020}{\textup{2020} Mathematics Subject Classification}
\makeatother

\subjclass[2020]{Primary 58J50, 32G15; Secondary 05C80, 11F72}

\keywords{Random hyperbolic surfaces, Weil--Petersson form, moduli space,
  spectral gap, closed geodesic, Selberg trace formula.}

\date{\today}

% - Macros and notation ----------------------------------------------%

%- Environments -------------------------------------------------------%
\theoremstyle{plain}
\newtheorem{thm}{Theorem}[section]
\newtheorem{prp}[thm]{Proposition}
\newtheorem{cor}[thm]{Corollary}
\newtheorem{lem}[thm]{Lemma}

\newtheorem*{namedthm}{\namedthmname}
\newcounter{namedthm}

\makeatletter
\newenvironment{named}[1]
  {\def\namedthmname{#1}%
   \refstepcounter{namedthm}%
   \namedthm\def\@currentlabel{#1}}
  {\endnamedthm}
\makeatother

\theoremstyle{definition}
\newtheorem{defa}[thm]{Definition}

\newtheorem{rem}[thm]{Remark}

\newtheorem{exa}[thm]{Example}
\newtheorem{nota}[thm]{Notation}

%----------------------------------------------------------------------%
%

% ----------------------------------------------------------------------%
\renewcommand{\d}{\, \mathrm{d}}

%----------------------------------------------------------------------%

%----------------------------------------------------------------------%

%----------------------------------------------------------------------%
\makeatletter
\newcommand*{\ov}[1]{%
  $\m@th\overline{\mbox{#1}}$%
}
\newcommand*{\ovA}[1]{%
  $\m@th\overline{\mbox{#1}\raisebox{3mm}{}}$%
}
\newcommand*{\ovB}[1]{%
  $\m@th\overline{\mbox{#1\rule{0pt}{3mm}}}$%
}
\newcommand*{\ovC}[1]{%
  $\m@th\overline{\mbox{#1\strut}}$%
}
\newcommand*{\ovD}[1]{%
  $\m@th\overline{\mbox{#1\vphantom{\"A}}}$%
}
\newcommand*{\ovE}[1]{%
  $\m@th\overline{\raisebox{0pt}[1.2\height]{#1}}$%
}
\newcommand*{\ovF}[1]{%
  $\m@th\overline{\raisebox{0pt}[\dimexpr\height+0.3mm\relax]{#1}}$%
  % Package `calc' can be used as alternative for `\dimexpr'.
}
\newcommand*{\ovG}[1]{%
  $\m@th\overline{\raisebox{0pt}[\dimexpr\height+1mm\relax]{#1\vphantom{A}}}$%
}
\makeatother

\newcommand\runderset[2][\sim]{\mathrel{\ensurestackMath{%
  \stackengine{-.2pt}{\scriptscriptstyle#2}{#1}{O}{c}{F}{F}{S}}}}

% ----------------------------------------------------------------------%

\newcommand{\Z}{\mathbb{Z}}

\newcommand{\R}{\mathbb{R}}
\newcommand{\C}{\mathbb{C}}

\newcommand{\D}{\mathcal{D}}

%\newcommand{\ID}{\mathbb{D}}
%----------------------------------------------------------------------%
\DeclareMathOperator{\dist}{dist}

\DeclareMathOperator{\tr}{tr}

\DeclareMathOperator{\id}{id}

\DeclareMathOperator\argch{argch}
\DeclareMathOperator\argcosh{argch}

%----------------------------------------------------------------------%
\DeclareSymbolFont{extraup}{U}{zavm}{m}{n}
\DeclareMathSymbol{\varheart}{\mathalpha}{extraup}{86}
\DeclareMathSymbol{\vardiamond}{\mathalpha}{extraup}{87}
%----------------------------------------------------------------------%

\newcommand{\nwc}{\newcommand}
% \nwc{\nwt}{\newtheorem}
% \nwt{coro}{Corollary}
% \nwt{ex}{Example}
% \nwt{prop}{Proposition}
% \nwt{defin}{Definition}
% \nwt{conj}{Conjecture}
%\nwt{lem}{Lemma}

%font change

\nwc{\mf}{\mathbf} %Latex (as in \bf not tilted math letters)
\nwc{\blds}{\boldsymbol} %Latex 
\nwc{\ml}{\mathcal} %Latex

%greek letters

\nwc{\lam}{\lambda}
\nwc{\del}{\delta}
\nwc{\Del}{\Delta}
\nwc{\Lam}{\Lambda}
\nwc{\elll}{\ell}
%blackboard bold math

\nwc{\IA}{\mathbb{A}} %algebraic
\nwc{\IB}{\mathbb{B}} %ball
\nwc{\IC}{\mathbb{C}} %complex
\nwc{\ID}{\mathbb{D}} %Dedekind
\nwc{\IE}{\mathbb{E}} %Euklides
\nwc{\IF}{\mathbb{F}} %finite field
\nwc{\IG}{\mathbb{G}} %Gauss
\nwc{\IH}{\mathbb{H}} %Hilbert\N-subgroup
\nwc{\IN}{\mathbb{N}} %natural
\nwc{\IP}{\mathbb{P}} %prime
\nwc{\IQ}{\mathbb{Q}} %rational
\nwc{\IR}{\mathbb{R}} %real
\nwc{\IS}{\mathbb{S}} %sphere
\nwc{\IT}{\mathbb{T}} %torus
\nwc{\IZ}{\mathbb{Z}} %integers

\def\bbleft{{\mathchoice {[\mskip-3mu {[}} {[\mskip-3mu {[}}{[\mskip-4mu {[}}{[\mskip-5mu {[}}}}
\def\bbright{{\mathchoice {]\mskip-3mu {]}} {]\mskip-3mu {]}}{]\mskip-4mu {]}}{]\mskip-5mu {]}}}}
\nwc{\setK}{\bbleft 1,K \bbright}
\nwc{\setN}{\bbleft 1,\cN \bbright}
 
%Straight (vector) bold letters

%lowercase

\nwc{\va}{{\bf a}}
\nwc{\vb}{{\bf b}}
\nwc{\vc}{{\bf c}}
\nwc{\vd}{{\bf d}}
\nwc{\ve}{{\bf e}}
\nwc{\vf}{{\bf f}}
\nwc{\vg}{{\bf g}}
\nwc{\vh}{{\bf h}}
\nwc{\vi}{{\bf i}}
\nwc{\vI}{{\bf I}}
\nwc{\vj}{{\bf j}}
\nwc{\vk}{{\bf k}}
\nwc{\vl}{{\bf l}}
\nwc{\vm}{{\bf m}}
\nwc{\vM}{{\bf M}}
\nwc{\vn}{{\bf n}}
\nwc{\vo}{{\it o}}
\nwc{\vp}{{\bf p}}
\nwc{\vq}{{\bf q}}
\nwc{\vr}{{\bf r}}
\nwc{\vs}{{\bf s}}
\nwc{\vt}{{\bf t}}
\nwc{\vu}{{\bf u}}
\nwc{\vv}{{\bf v}}
\nwc{\vw}{{\bf w}}
\nwc{\vx}{{\bf x}}
\nwc{\vy}{{\bf y}}
\nwc{\vz}{{\bf z}}
\nwc{\bal}{\blds{\alpha}}
\nwc{\bep}{\blds{\epsilon}}
\nwc{\barbep}{\overline{\blds{\epsilon}}}
\nwc{\bnu}{\blds{\nu}}
\nwc{\bmu}{\blds{\mu}}
\nwc{\bet}{\blds{\eta}}

%bold letters
%\b* letters are tilted in math mode and scale in equations. 
%but cannot be used in plain text format.

%I. lowercase

\nwc{\bk}{\blds{k}}
\nwc{\bm}{\blds{m}}
\nwc{\bM}{\blds{M}}
\nwc{\bp}{\blds{p}}
\nwc{\bq}{\blds{q}}
\nwc{\bn}{\blds{n}}
\nwc{\bv}{\blds{v}}
\nwc{\bw}{\blds{w}}
\nwc{\bx}{\blds{x}}
\nwc{\bxi}{\blds{\xi}}
\nwc{\by}{\blds{y}}
\nwc{\bz}{\blds{z}}

%caligraphic

\nwc{\cA}{\ml{A}}
\nwc{\cB}{\ml{B}}
\nwc{\cC}{\ml{C}}
\nwc{\cD}{\ml{D}}
\nwc{\cE}{\ml{E}}
\nwc{\cF}{\ml{F}}
\nwc{\cG}{\ml{G}}
\nwc{\cH}{\ml{H}}
\nwc{\cI}{\ml{I}}
\nwc{\cJ}{\ml{J}}
\nwc{\cK}{\ml{K}}
\nwc{\cL}{\ml{L}}
\nwc{\cM}{\ml{M}}
\nwc{\cN}{\ml{N}}
\nwc{\cO}{\ml{O}}
\nwc{\cP}{\ml{P}}
\nwc{\cQ}{\ml{Q}}
\nwc{\cR}{\ml{R}}
\nwc{\cS}{\ml{S}}
\nwc{\cT}{\ml{T}}
\nwc{\cU}{\ml{U}}
\nwc{\cV}{\ml{V}}
\nwc{\cW}{\ml{W}}
\nwc{\cX}{\ml{X}}
\nwc{\cY}{\ml{Y}}
\nwc{\cZ}{\ml{Z}}

\nwc{\fA}{\mathfrak{a}}
\nwc{\fB}{\mathfrak{b}}
\nwc{\fC}{\mathfrak{c}}
\nwc{\fD}{\mathfrak{d}}
\nwc{\fE}{\mathfrak{e}}
\nwc{\fF}{\mathfrak{f}}
\nwc{\fG}{\mathfrak{g}}
\nwc{\fH}{\mathfrak{h}}
\nwc{\fI}{\mathfrak{i}}
\nwc{\fJ}{\mathfrak{j}}
\nwc{\fK}{\mathfrak{k}}
\nwc{\fL}{\mathfrak{l}}
\nwc{\fM}{\mathfrak{m}}
\nwc{\fN}{\mathfrak{n}}
\nwc{\fO}{\mathfrak{o}}
\nwc{\fP}{\mathfrak{p}}
\nwc{\fQ}{\mathfrak{q}}
\nwc{\fR}{\mathfrak{r}}
\nwc{\fS}{\mathfrak{s}}
\nwc{\fT}{\mathfrak{t}}
\nwc{\fU}{\mathfrak{u}}
\nwc{\fV}{\mathfrak{v}}
\nwc{\fW}{\mathfrak{w}}
\nwc{\fX}{\mathfrak{x}}
\nwc{\fY}{\mathfrak{y}}
\nwc{\fZ}{\mathfrak{z}}

%% (wide)tilde letters

\nwc{\tA}{\widetilde{A}}
\nwc{\tB}{\widetilde{B}}
\nwc{\tE}{E^{\vareps}}
%\nwc{\tcO}{\widetilde{\mathcal{O}}}
\nwc{\tk}{\tilde k}
\nwc{\tN}{\tilde N}
\nwc{\tP}{\widetilde{P}}
\nwc{\tQ}{\widetilde{Q}}
\nwc{\tR}{\widetilde{R}}
\nwc{\tV}{\widetilde{V}}
\nwc{\tW}{\widetilde{W}}
\nwc{\ty}{\tilde y}
\nwc{\teta}{\tilde \eta}
\nwc{\tdelta}{\tilde \delta}
\nwc{\tlambda}{\tilde \lambda}
%\nwc{\tchi}{\tilde \chi}
\nwc{\ttheta}{\tilde \theta}
\nwc{\tvartheta}{\tilde \vartheta}
\nwc{\tPhi}{\widetilde \Phi}
\nwc{\tpsi}{\tilde \psi}
\nwc{\tmu}{\tilde \mu}

%miscellany
\nwc{\To}{\longrightarrow} %limits

\nwc{\ad}{\rm ad}
\nwc{\eps}{\epsilon}
\nwc{\ep}{\epsilon}
\nwc{\vareps}{\varepsilon}

\def\ep{\epsilon}
\def\tr{{\rm tr}}

\def\sq2{\sqrt{2}}

\def\t2{{\mathbb T}^2}
%\def\tt2{{\mathbb T}^2}
%\nwc{\t1}{{\mathbb T}^1}
\def\s2{{\mathbb S}^2}

\def\R{\mathbb{R}}

\def\Z{\mathbb{Z}}
\def\C{\mathbb{C}}
\def\O{\mathcal{O}}

\nwc{\lap}{\bigtriangleup}
\nwc{\rest}{\restriction}
\nwc{\Diff}{\operatorname{Diff}}
\nwc{\diam}{\operatorname{diam}}
\nwc{\Res}{\operatorname{Res}}
\nwc{\Spec}{\operatorname{Spec}}
\nwc{\Vol}{\operatorname{Vol}}
\nwc{\Op}{\operatorname{Op}}
\nwc{\supp}{\operatorname{supp}}
\nwc{\Span}{\operatorname{span}}

\nwc{\dia}{\varepsilon}
\nwc{\cut}{f}
\nwc{\qm}{u_\hbar}

\def\hto0{\xrightarrow{\hbar\to 0}}

\def\rto0{\xrightarrow{r\to 0}}

\providecommand{\abs}[1]{\lvert#1\rvert}
\providecommand{\norm}[1]{\lVert#1\rVert}

\nwc{\la}{\langle}
\nwc{\ra}{\rangle}
\nwc{\lp}{\left(}
\nwc{\rp}{\right)}

%\nwc{\bal}{\begin{align}}
\nwc{\bequ}{\begin{equation}}
\nwc{\be}{\begin{equation}}
\nwc{\ben}{\begin{equation*}}
\nwc{\bea}{\begin{eqnarray}}
\nwc{\bean}{\begin{eqnarray*}}
\nwc{\bit}{\begin{itemize}}
\nwc{\bver}{\begin{verbatim}}

%\nwc{\eal}{\end{align}}
\nwc{\eequ}{\end{equation}}
\nwc{\ee}{\end{equation}}
\nwc{\een}{\end{equation*}}
\nwc{\eea}{\end{eqnarray}}
\nwc{\eean}{\end{eqnarray*}}
\nwc{\eit}{\end{itemize}}
\nwc{\ever}{\end{verbatim}}

\makeatletter
\newlength{\temp@wc@width}
\newlength{\temp@wc@height}
\newcommand{\widecheck}[1]{%
  \setlength{\temp@wc@width}{\widthof{$#1$}}%
  \setlength{\temp@wc@height}{\heightof{$#1$}}%
  #1\hspace{-\temp@wc@width}%
  \raisebox{\temp@wc@height+2pt}[\heightof{$\widehat{#1}$}]%
     {\rotatebox[origin=c]{180}{\vbox to 0pt{\hbox{$\widehat{\hphantom{#1}}$}}}}%
}
\makeatother

\DeclareMathOperator{\grad}{grad}

% Teichmüller theory

\newcommand{\mcg}{\mathrm{MCG}}

% Geometry

% Random surfaces

\newcommand{\Volwp}[1][g]{\mathrm{Vol}_{#1}^{\mathrm{\scriptsize{WP}}}}
\newcommand{\Pwpo}{\mathbb{P}_g^{\mathrm{\scriptsize{WP}}}}
\newcommand{\Ewpo}[1][g]{\mathbb{E}_{#1}^{\mathrm{\scriptsize{WP}}}}
\newcommand{\Pwp}[1]{\Pwpo \left( #1 \right)}
\newcommand{\Ewp}[2][g]{\mathbb{E}_{#1}^{\mathrm{\scriptsize{WP}}} \Bigg[ #2 \Bigg]}

% Delimiters

\DeclarePairedDelimiter{\paren}{(}{)}

\DeclarePairedDelimiter{\abso}{|}{|}
\DeclarePairedDelimiter{\brac}{[}{]}
\DeclarePairedDelimiter{\floor}{\lfloor}{\rfloor}

\DeclarePairedDelimiter{\norminf}{\|}{\|_{\infty}}
\let\div\relax
\newcommand{\div}[1]{\paren*{\frac{#1}{2}}}

% TODO

%\newcommand{\todo}[1]{}

% Asymptotics 

\renewcommand{\O}[2][ ]{\mathcal{O}_{#1} \left( #2 \right)}
\newcommand{\Ow}[2][ ]{\mathcal{O}_{#1}^w \left( #2 \right)}

% cuts 

% matrices

\newcommand{\1}[1]{\mathds{1}_{#1}}

\DeclareMathOperator{\area}{Area}
\newcommand{\ntor}{N_{a}}
\DeclareMathOperator{\arcsinh}{arcsinh}

\newcommand{\av}[2][\mathrm{all}]{\langle #2 \rangle_g^{{#1}}}

\newcommand{\avb}[2][\mathrm{all}]{\left\langle #2 \right\rangle_g^{{#1}}}

\newcommand{\FR}{\mathcal{F}}
\newcommand{\FRrem}{\mathcal{R}}
\newcommand{\FRw}{\mathcal{F}_{w}}

\newcommand{\eqc}[1]{[ #1 ]_{\mathrm{loc}}}
\newcommand{\eq}{\, \raisebox{-1mm}{$\runderset{\mathrm{loc}}$} \,}

\newcommand{\tei}{\, \raisebox{-1mm}{$\runderset{\mathrm{Teich}}$} \,}
\newcommand{\eqmcg}{\, \raisebox{-1mm}{$\runderset{\mathrm{MCG}}$} \,}

\newcommand{\orb}{\mathrm{Orb}}
\newcommand{\Stab}{\mathrm{Stab}}

\newcommand{\x}{\mathbf{x}}

\DeclareMathOperator{\li}{li}

\makeatletter
\newcommand{\smallbullet}{} % for safety
\DeclareRobustCommand\smallbullet{%
  \mathord{\mathpalette\smallbullet@{0.5}}%
}
\newcommand{\smallbullet@}[2]{%
  \, \vcenter{\hbox{\scalebox{#2}{$\m@th#1\bullet$}}} \,%
}
\makeatother

\hyphenation{Ray-leigh}

\newcommand{\Sf}{\mathbf{S}}
\newcommand{\type}{\mathbf{T}}
\newcommand{\simple}{\mathbf{s}}
\newcommand{\pop}{\mathbf{P}}
\newcommand{\ord}{N}
\newcommand{\rN}{\mathrm{N}}
\newcommand{\rK}{\mathrm{K}}
\newcommand{\curve}{\mathbf{c}}
\newcommand{\geod}{\mathcal{G}}
\newcommand{\cc}{\mathfrak{n}}

\newcommand{\chic}{{\chi_+}}

%%% Local Variables: 
%%% mode: latex
%%% TeX-master: "main"
%%% End: 

\begin{document}

\maketitle

\begin{abstract}
  In this series of articles, we analyze the level-sets of length functions on the moduli space of
  compact hyperbolic surfaces of fixed genus.  This work
  ultimately culminates in a proof that typical hyperbolic surfaces have an optimal spectral gap.

  In this first article, we introduce new \emph{volume functions} $V_g^{\mathbf{T}}(\ell)$, counting the
  expected number of closed geodesics of type $\mathbf{T}$ and length $\ell$ on a random hyperbolic
  surface of genus~$g$.  So far, this function has only been considered in the case where the type is
  \emph{simple}, in which case it can be expressed as a combination of Weil--Petersson volumes
  polynomials, as proven by Mirzakhani. We provide an integral expression for
  $V_g^{\mathbf{T}}(\ell)$ for any prescribed type $\mathbf{T}$, which we use to prove that
  $V_g^{\mathbf{T}}(\ell)$ admits a full asymptotic expansion in powers of $1/g$.

  We then claim that the coefficients in this expansion, as a function of the length variable
  $\ell$, belong to a newly-introduced class of functions called ``Friedman--Ramanujan
  functions''. We relate this claim to the study of the spectral gap of the Laplace--Beltrami
  operator, and prove it when $\mathbf{T}$ fills a surface of Euler characteristic $0$ or $-1$,
  providing a method to explicitly compute all coefficients in the second-order expansion.  We
  conclude by displaying how the presence of tangles (which is an event of vanishing probability)
  prevents the sum over all types to satisfy the Friedman--Ramanujan property at the second order.
\end{abstract}

\tableofcontents

\section{Introduction}
\label{sec:introduction}

The aim of this article is to develop new geometric tools for the study of \emph{random hyperbolic
  surfaces}, and in particular \emph{non-simple closed geodesics} on them.  The objects we
introduce, compute and study are vast generalisations of objects studied by Mirzakhani in the case
of simple closed geodesics \cite{mirzakhani2007,mirzakhani2013}, which have lead to several
breakthroughs in the field of random hyperbolic geometry. However, counting non-simple closed
geodesics presents several completely new challenges, from the understanding of complicated
level-set integrals to the proliferation of tangles. The tools developed in this article can be used
in a broad class of geometric and analysis problems, opening the door to counting problems which
would have otherwise been out of reach.

A \emph{compact hyperbolic surface} $X$ is a connected, oriented, compact surface, without boundary,
equipped with a Riemannian metric of constant curvature $-1$. Its topology is therefore entirely
determined by its \emph{genus} $g \geq 2$. We will be particularly interested in surfaces of large
genus. This large-genus limit can be viewed as a large-scale limit, because the area of a compact
hyperbolic surface of genus $g$ is $4 \pi(g-1)$ by the Gauss--Bonnet formula.

\subsection{Random hyperbolic surfaces}
\label{sec:rand-hyperb-surf-1}

Several different models of random hyperbolic surfaces exist
\cite{brooks2004,guth2011,mirzakhani2013,magee2022}. In this article, we will focus solely on the
\emph{Weil--Petersson model}, that consists in equipping the \emph{moduli space}
\begin{equation*}
  \mathcal{M}_g := \faktor{\{ \text{compact hyperbolic surfaces of genus } g\}}{\text{isometry}}
\end{equation*}
with the probability measure $\Pwpo$ obtained by renormalization of the measure induced by the
Weil--Petersson symplectic form on $\mathcal{M}_g$. This is a very natural probabilistic setting, in
which one can hope to accurately describe \emph{typical} surfaces.

In her breakthrough articles \cite{mirzakhani2007,mirzakhani2013}, Mirzakhani developed a toolbox
allowing to study the geometry of random hyperbolic surfaces sampled according to the probability
$\Pwpo$, especially in the large-genus limit. These tools have since then been applied in an
ever-growing number of articles, analyzing the geometric properties of random surfaces
\cite{monk2021a,parlier2021,nie2023,hide2022}, their spectral gap
\cite{wu2022,lipnowski2021,hide2022a,hide2022} and eigenfunctions \cite{gilmore2021}, as well as the
statistics of their length spectrum \cite{mirzakhani2019,wu2022a} and Laplacian spectrum
\cite{monk2022,rudnick2022}.

\subsection{The spectral gap of a compact hyperbolic surface}
\label{sec:spectral-gap-14}

While many of the results presented in this article are purely geometric, they are all deeply
motivated by an important question in \emph{spectral theory}, which we shall now present.

The spectral gap of a compact hyperbolic surface is the smallest non-zero eigenvalue $\lambda_1>0$
of the (positive) Laplace--Beltrami operator on the surface.  Surfaces with a large spectral gap are
known to be well-connected \cite{cheeger1970,buser1982}, fast-mixing for the geodesic flow and
random walks \cite{ratner1987,golubev2019}, and of small diameter \cite{magee2020b}.  Finding (rich)
families of such surfaces has been an objective shared by many, whether in the context of
arithmetics and number theory \cite{selberg1965,kim2003}, spectral geometry \cite{buser1988}, and
more recently random hyperbolic geometry
\cite{magee2022,wu2022,lipnowski2021,hide2021,hide2022,louder2022}.

In the large-genus regime, Huber \cite{huber1974} proved that the spectral gap is bounded above by a
quantity going to $1/4$ as $g \rightarrow + \infty$ ($1/4$ being the bottom of the spectrum of the
hyperbolic plane). The existence of surfaces of large genus with a near-optimal spectral gap was
conjectured by Burger--Buser--Dodziuk \cite{buser1988} in the 80's, and only solved very recently by
breakthrough work of Hide--Magee \cite{hide2022} using random covers.

{ In this series of papers, we prove that hyperbolic surfaces with a near-optimal spectral gap not
  only exist, but are \emph{typical}. }

\begin{thm}
  \label{con:gap}
  For any $\epsilon >0$,
  \begin{equation*}
    \lim_{g \rightarrow + \infty} \Pwp{\lambda_1 \geq \frac 14 - \epsilon} = 1.
  \end{equation*}
\end{thm}

The literature so far contains two probabilistic spectral gap results in the Weil--Petersson
setting. First, Mirzakhani proved in 2013 that random hyperbolic surfaces satisfy
$\lambda_1 > 0.002$ with probability going to $1$ as $g \rightarrow + \infty$
\cite{mirzakhani2013}. This bound has been vastly improved by two independent teams in 2021, Wu--Xue
\cite{wu2022} and Lipnowski--Wright~\cite{lipnowski2021}, who proved that for all $\epsilon > 0$,
$\lambda_1 \geq 3/16-\epsilon$ with probability going to $1$ as $g \rightarrow + \infty$.

We reach our final objective, and prove Theorem \ref{con:gap}, in the second article of this series.
In \cref{sec:canc-selb-trace}, we set up several key elements of the proof of Theorem~\ref{con:gap},
which motivate the objects and properties studied in this article.  Our proof is based on the trace
method (a standard method to study spectral gaps, as used in~\cite{wu2022,lipnowski2021}), with an
original cancellation argument.

In a previous version of this article, we used its main results to prove that typical hyperbolic
surfaces have a spectral gap greater than $\frac 29 - \epsilon$. The details of this intermediate result
can be found in \cite{anantharaman_29}.

\Cref{con:gap} is a result of the same nature as Alon's famous conjecture~\cite{alon1986} stating
that random $d$-regular graphs with $n \gg 1$ vertices typically have a near-optimal spectral
gap. It was solved by Friedman in~\cite{friedman2003}, after 20 years of active research. Compact
hyperbolic surfaces and regular graphs share a variety of geometric and spectral properties, and the
results presented in this article have counterparts in Friedman's proof of Alon's conjecture. 

\subsection{Averages of geodesic counting functions}
\label{sec:aver-geom-count}

A natural approach to access the geometry and spectrum of random
hyperbolic surfaces consists in reducing problems to the study of
averages of the form
\begin{equation}
  \label{e:def_av_all}
  \av[\text{all}]{F} := \Ewp{\sum_{\gamma \in \mathcal{G}(X)} F(\ell_X(\gamma))}
\end{equation}
where
\begin{itemize}
\item $\mathcal{G}(X)$ is the set of primitive oriented closed geodesics
  $\gamma$ on the surface $X$;
\item for any closed geodesic $\gamma$ on $X$, $\ell_X(\gamma)$ is the length of $\gamma$;
\item $F : \R_{\geq 0} \rightarrow \R$ is a test function, i.e. a bounded, compactly supported
  measurable function.
\end{itemize}
Such averages have been used to obtain geometric results in
\cite{mirzakhani2013,mirzakhani2019,monk2021a,nie2023}. Importantly, they appear
in trace methods when taking the expectation of the \emph{Selberg trace
  formula}, a formula relating the eigenvalues of the Laplacian to the lengths
of all closed geodesics on the surface (see \cref{sec:canc-selb-trace}).

Unfortunately, the methods developed by Mirzakhani
in~\cite{mirzakhani2007,mirzakhani2013} only allow to study such
sums if they are restricted to \emph{simple} geodesics,
i.e. geodesics with no self-intersection:
\begin{equation}
  \label{e:def_av_simple}
  \av[\simple]{F} := \Ewp{\sum_{\gamma \text{ simple}}
    F(\ell_X(\gamma))}.
\end{equation}
This has proven to be very restrictive, and dealing with non-simple closed geodesics is often a
challenging aspect of the study of random hyperbolic surfaces
\cite{mirzakhani2019,wu2022,lipnowski2021,rudnick2022}.

In this paper, we provide new information on the contribution of non-simple
geodesics to the average $\av{F}$. We hope the tools we develop can be used in
various settings.

\begin{rem}
  Wu--Xue proved in \cite{wu2022} that, for any $\eta > 0$,
  \begin{equation}
    \label{eq:only_simple_intro}
    \av{F} = \av[\simple]{F}
    + \O[\eta]{\frac{1}{g} \, \norminf{F(\ell) \, e^{(1+\eta)\ell}}}.
  \end{equation}
  As a consequence, at the leading order as $g \rightarrow + \infty$, non-simple
  geodesics do not contribute to the average $\av{F}$.
  However, we will see in this article that some non-simple geodesics yield
  contributions decaying like $1/g$ in the average $\av{F}$, which means that
  \cref{eq:only_simple_intro} cannot be extended past the precision $1/g$.
  
  We explain in \cref{sec:canc-selb-trace} why, in order to reach the optimal spectral gap
  $1/4 - \epsilon$, all computations need to be performed with arbitrary high precision, i.e. with
  errors decaying in $1/g^\ord$ for arbitrary large $\ord$.  The spectral gap $3/16 - \epsilon$ previously obtained in \cite{wu2022, lipnowski2021} then
  appears to be the threshold at which a description of the contribution of non-simple geodesics to
  the average $\av{F}$ becomes essential.  In \cite{anantharaman_29}, we show how the
    intermediate spectral gap $2/9 - \epsilon$ can be obtained by entirely analyzing the
    contribution of size $1/g$ of the average $\av{F}$, building on results from the present
    article.
\end{rem}

\subsection{Local topological types of geodesics}
\label{sec:non-simple-closed}

In order to study the average $\av{F}$ where the sum runs over all closed
geodesics, we regroup its terms according to what we call the \emph{local
  (topological) type} $\type$ of $\gamma$, a new notion coarser than the notion of mapping-class group orbit. This is done in
\cref{sec:topol-types-curv}, and we refer the reader to \cref{sec:preliminaries}
for the definitions of topological notions appearing below.

The data of a local topological type is given by a pair $\eqc{\Sf,\curve}$, where
$\Sf$ is a \emph{topological surface with boundary}, and $\curve$ is a \emph{filling
  loop} on $\Sf$. Several examples are represented in \cref{fig:local_types}. For instance,
all simple geodesics are grouped in a local type, given by a simple loop in a cylinder. Another
type, which we will describe in detail in this article, is the \emph{figure-eight}, i.e. geodesics
with exactly one self-intersection, represented at the top of \cref{fig:local_pop}. For this type,
$\Sf$ is a \emph{pair of pants} (a surface of signature $(0,3)$).

\begin{figure}[h!]
  \centering
  \begin{subfigure}[t]{0.3\textwidth}
    \centering
    \includegraphics[scale=0.4]{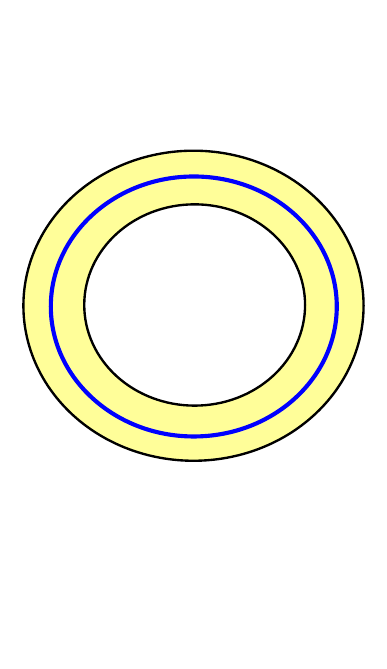} 
    \caption{The type ``simple''.}
    \label{fig:local_simple}
  \end{subfigure}%
  \begin{subfigure}[t]{0.35\textwidth}
    \centering
    \includegraphics[scale=0.4]{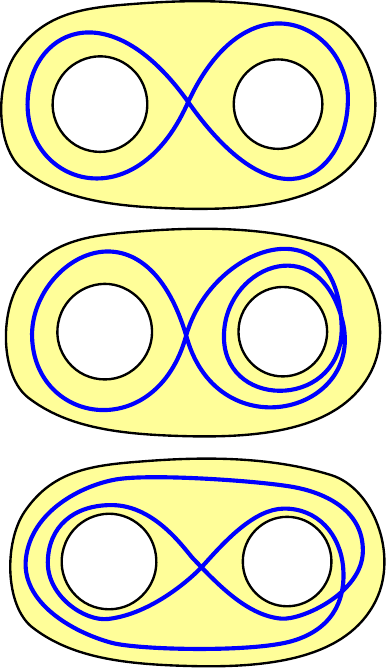}
    \caption{Types filling a pair of pants.}
    \label{fig:local_pop}
  \end{subfigure}%
  \begin{subfigure}[t]{0.3\textwidth}
    \centering
    \includegraphics[scale=0.4]{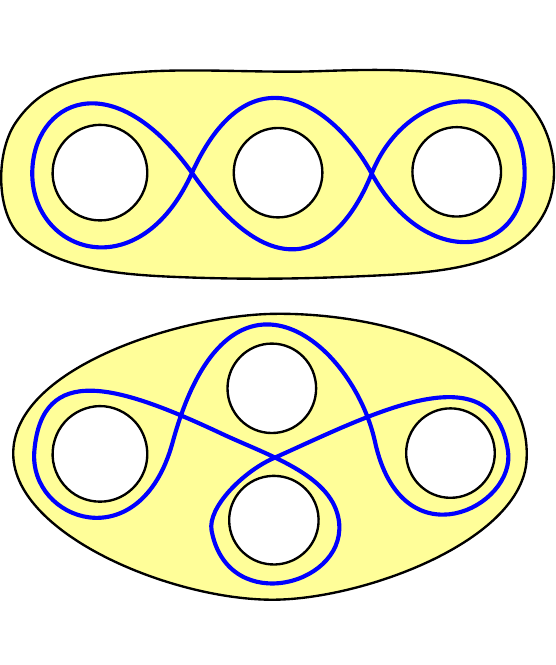}
    \caption{Other types.}
    \label{fig:local_other}
  \end{subfigure}%
  \caption{A few examples of local types.}
  \label{fig:local_types}
\end{figure}

Now, we say a closed geodesic $\gamma$ on a compact hyperbolic surface $X$ is \emph{of local type
  $\type = \eqc{\Sf,\curve}$} if there exists an embedding $\Sf \rightarrow X$ sending $\curve$ on
$\gamma$. For a local type $\type$ and a test function $F$, we define
\begin{equation*}
  \av[\type]{F}
  := \Ewp{\sum_{\gamma \text{ of type } \type} F(\ell_X(\gamma))}
\end{equation*}
which allows us to rewrite
\begin{equation}
  \label{eq:decomp_types_intro}
  \av{F}
  = \sum_{\type \text{ local type} }\av[\type]{F}.
\end{equation}

\begin{rem}
  The word \emph{local} is an emphasis on the fact that the notion of local type
  only depends on the topology of the geodesic itself, and not the way it is
  embedded in the surface of genus $g$. Notably, all simple closed geodesics
  form one local type. This notion should not be confused with the notion of
  \emph{topological type}, which is often used to refer to mapping-class-group
  orbits (in Mirzakhani's work for instance). We prove in \cref{lem:local_vs_mcg}
  that the notion of mapping-class-group equivalence is finer than the notion of
  local equivalence. Hence, every local type can be decomposed as a disjoint
  union of topological types, which we refer to as its
  \emph{realizations}. Realizations correspond to the different ways the pair
  $(\Sf,\curve)$ can be embedded in a surface of genus $g$.
\end{rem}

\subsection{Statement of the main results on the averages}

In \cref{thm:express_average}, we provide an expression for the averages
$\av[\type]{F}$ in terms of Weil--Petersson volumes of moduli spaces of bordered
hyperbolic surfaces. We use this expression to prove the following.

\begin{thm}[{Theorems \ref{prp:existence_density} and
    \ref{thm:exist_asympt_type}}]
  \label{thm:exist_asympt_type_intro}
  Let $\type$ be a local topological type.
  \begin{itemize}
  \item For any $g \geq 2$, there exists a unique locally integrable function
    $V_g^\type : \R_{> 0} \rightarrow \R_{\geq 0}$, called \emph{volume function}, such that, for
    any test function $F$,
    \begin{equation*}
      \av[\type]{F} = \frac{1}{V_g} \int_{0}^{+\infty} F(\ell) V_g^{\type}(\ell) \d \ell.
    \end{equation*}
  \item There exists a unique family of locally integrable functions
    $(f_k^\type)_{k \geq \chi(\type)}$ such that, for any $\ord \geq 0$, any large
    enough $g$, any $\epsilon >0$,
  \begin{equation}
    \label{eq:exist_asympt_type_intro}
    \frac{V_g^\type(\ell)}{V_g}
    = \sum_{k=\chi(\type)}^\ord \frac{f_k^\type(\ell)}{g^k}
    + \Ow[\ord,\chi(\type),\epsilon]{\frac{\exp \paren*{(1+\epsilon)\ell}}{g^{\ord+1}}}
  \end{equation}
  where $\chi(\type)$ denotes the absolute Euler characteristic of the local type $\type$.
  \end{itemize}
\end{thm}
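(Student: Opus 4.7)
The plan is to derive both statements from Theorem \ref{thm:express_average}. That earlier theorem, itself obtained by applying Mirzakhani's integration formula to the filling curve $\gamma$ inside the subsurface $S$, rewrites $\av[T]{F}$ as a finite sum, indexed by the realisations of $T$, of integrals of the form
\begin{equation*}
  \frac{1}{V_g} \int_0^{\infty} F(\ell) \cdot V_{S,\gamma}(\ell) \cdot V_{\mathrm{comp}}(\ell) \, \mathrm{d}\ell,
\end{equation*}
where $V_{S,\gamma}(\ell)$ is a local Weil--Petersson factor depending only on the pair $(S,\gamma)$ (an integral over the hyperbolic structures on $S$ realising $\gamma$ as a geodesic of length $\ell$), and $V_{\mathrm{comp}}(\ell)$ is a product of Mirzakhani volumes $V_{g',n}(L_1,\ldots,L_n)$ of the moduli spaces of the connected components of the complement, with boundary lengths determined by $\ell$ through the boundary identifications prescribed by $(S,\gamma)$. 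The first half of the theorem is then essentially a definition: we set $V_g^T(\ell)$ equal to the $\ell$-integrand of the sum above (with the appropriate combinatorial weights). Mirzakhani's volumes $V_{g',n}(L)$ are polynomials in $L_i^2$, and $V_{S,\gamma}$ is continuous by construction, so $V_g^T$ is a continuous, non-negative function of $\ell>0$. Uniqueness is the standard measure-theoretic fact that two continuous functions producing the same integral against every compactly supported measurable $F$ must coincide.

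For the second half, the engine is a large-genus asymptotic expansion of ratios of Weil--Petersson volumes, of the schematic form
\begin{equation*}
  \frac{V_{g',n}(L_1,\ldots,L_n)}{V_g} = \sum_{k=0}^{K} \frac{Q_k^{g',n}(L)}{g^{\alpha(g',n)+k}} + \Ow[K,g',n,\epsilon]{\frac{\exp\paren*{(1+\epsilon)(L_1+\cdots+L_n)/2}}{g^{\alpha(g',n)+K+1}}},
\end{equation*}
with explicit polynomial coefficients $Q_k^{g',n}$ and an exponent $\alpha(g',n)$ depending on $g-g'$ and $n$. Such an expansion is the technical backbone of the preceding sections and combines Mirzakhani's recursive formulas with Zograf-style asymptotics. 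Substituting this expansion into each factor $V_{g',n}(L^{(j)})$ appearing inside $V_{\mathrm{comp}}(\ell)$ and collecting terms by powers of $1/g$ produces an expansion of $V_g^T(\ell)/V_g$ in which the exponents contributed by the various connected components add, at leading order, exactly to $|\chi_S|$ (this is the content of the identity $\chi_S + \sum_j \chi_{\text{comp},j} = 2-2g$). The coefficients $f_k^T(\ell)$ are defined as the coefficient of $g^{-k}$ in this combined series; they are continuous because they are obtained by multiplying polynomials in $\ell^2$ against $V_{S,\gamma}(\ell)$, and uniqueness is immediate from the uniqueness of asymptotic expansions in $1/g$ to arbitrary order.

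The main obstacle will be obtaining the error term in \eqref{eq:exist_asympt_type_intro} with the sharp exponential rate $\exp((1+\epsilon)\ell)$, uniformly in $\ell>0$. The boundary lengths $L_i$ of the complementary pieces are fixed integer combinations of $\ell$ with coefficients that depend only on the combinatorics of $(S,\gamma)$, so the cumulative factor $\exp((1+\epsilon)(L_1+\cdots+L_n)/2)$ reduces to $\exp((1+\epsilon)\ell)$ up to a multiplicative constant depending only on $S$. The delicate point is therefore that the remainder in the volume-ratio expansion above must already grow no faster than $\exp((1+\epsilon)(L_1+\cdots+L_n)/2)$, a bound at the critical exponential rate $1$ (only an arbitrarily small $\epsilon$-loss is allowed). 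Establishing this sharp estimate, and then tracking how the finitely many error terms arising from the multiple substitutions combine without exceeding the target bound, is the central technical task.
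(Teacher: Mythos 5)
Your proposal contains a structural misconception that obscures the genuine content of both halves of the theorem.

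You write that the average is a sum of one-dimensional integrals $\int F(\ell) V_{S,\gamma}(\ell) V_{\mathrm{comp}}(\ell)\d\ell$, with "the boundary lengths $L_i$ of the complementary pieces [\ldots] fixed integer combinations of $\ell$." This is false except in two degenerate cases (simple loops, and loops filling a pair of pants). For a general local type $T=\eqc{S,\gamma}$, Theorem~\ref{thm:express_average} expresses $\av[T]{F}$ as an integral over $\x\in\R_{>0}^{n_S}$ of the conditional expectation $\mathbb{E}_S[F^T\,|\,\ell(\partial S)=\x]$ against $\phi_g^S(\x)$. The boundary lengths $\x$ of $S$ (hence of the complementary pieces) are genuine integration variables, and the quantity $\ell_Y(\gamma^T)$ is a nonconstant real-analytic function on the whole Teichm\"uller space $\mathcal{T}_{g_S,n_S}^\star$, constrained only by the inequality $\sum_i x_i\leq 2\ell_Y(\gamma^T)$ of Lemma~\ref{lem:length_fill}. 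Because of this, the first bullet is not "essentially a definition": one must show that the pushforward of a high-dimensional Weil--Petersson measure under a real-analytic length function admits a continuous density. This is exactly the content of Lemma~\ref{lem:abstract_density_writing}, which relies on the fact that the critical set of a nonconstant real-analytic function has measure zero; your proposal omits this argument entirely.

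The same misconception causes you to miss the main technical ingredient of the second bullet. Since you believe $\x$ is determined by $\ell$, you never confront the problem of bounding the conditional expectation $\mathbb{E}_S[F^T\,|\,\ell(\partial S)=\x]$ uniformly in $\x$, which for a general $T$ involves a sum over a mapping-class-group orbit of filling geodesics on $\mathcal{M}_{g_S,n_S}(\x)$. The paper handles this via Lemma~\ref{lem:bound_exp_wu_xue}, which invokes Wu--Xue's improved geodesic counting (Theorem~\ref{thm:wu_xue_counting}) to obtain the crucial exponential decay $\exp\bigl(-\tfrac{1+\epsilon/4}{2}\sum_i x_i\bigr)$ in the boundary lengths. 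It is precisely this decay, paired with the growth $\exp\bigl(\tfrac12\sum_i x_i\bigr)$ of the remainder in the volume-polynomial expansion (from Theorem~\ref{thm:expansion_volume} via Proposition~\ref{lem:claim_exp_phi_T}), that makes the error integrable in $\x$ at the claimed rate $\exp((1+\epsilon)\ell)/g^{K+1}$. You flag the sharp exponential rate as the "delicate point," which is correct, but your proposal gives no mechanism to achieve it: without the Wu--Xue counting, the expansion cannot be closed in the weak sense $\mathcal{O}^w$, and the asserted reduction to a multiplicative constant depending only on $S$ does not hold.

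To repair the argument you would need to (i) invoke the level-set/coarea argument of Lemma~\ref{lem:abstract_density_writing} for the density existence, and (ii) replace the deterministic length substitution with the uniform bound from Lemma~\ref{lem:bound_exp_wu_xue}, then combine it with the expansion of $\phi_g^S$ from Proposition~\ref{lem:claim_exp_phi_T}.
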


\begin{rem}
  The notation $\Ow{\cdot}$ is a weak version of the usual Landau notation
  $\O{\cdot}$, and introduced in \cref{sec:notations}.
\end{rem}

\Cref{thm:exist_asympt_type_intro} is new in all cases except for the local
type ``simple'', where it comes as a consequence of
\cite{mirzakhani2007,mirzakhani2015,wu2022}. In this case, we know the value of
$f_0^{\simple}(\ell) = \frac{4}{\ell} \sinh^2 \div{\ell}$, and this
expression is an essential component of many recent results
\cite{mirzakhani2019,wu2022,lipnowski2021,rudnick2022}.

We highlight the fact that the leading-order term of $\av[\type]{F}$ for the local type $\type $
decays like $1/g^{\chi(\type)}$, where $\chi(\type) = \chi(\Sf) \geq 0$ is the absolute value of the Euler
characteristic (or for short, ``absolute Euler characteristic'') of the filled surface $\Sf$, when $\type = \eqc{\Sf,\curve}$. This is the reason why
only simple geodesics contribute to the leading term of $\av{F}$: for all other local types,
$\chi(\type) \geq 1$.

\begin{rem}
  \label{rem:difficult_non_simple}
  Let us describe the obstacle to the study of non-simple geodesics in
  Mirzakhani's work. Mirzakhani's integration formula \cite{mirzakhani2007}
  allows to write an explicit formula for the average $\av[\simple]{F}$. This
  is done by considering a random hyperbolic surface~$X$ of genus $g$ containing
  a simple closed geodesic $\gamma$ of length $\ell$, and analyzing the topology
  of the surface $X \setminus \gamma$ obtained by \emph{cutting} $X$ along the
  geodesic $\gamma$. Because the geodesic is simple, the result is a (possibly
  disconnected) hyperbolic surfaces with two \emph{geodesic boundary components}
  of length $\ell$.
  
  This unfortunately ceases to be true for non-simple geodesics.  In order to
  remedy this, we rather cut $X$ along the boundary components of the surface
  $S(\gamma)$ filled by the non-simple geodesic~$\gamma$. This approach was used
  to a certain extent in \cite{mirzakhani2019,wu2022,lipnowski2021}, but we push
  it further, which allows us to write a formula for $\av[\type]{F}$
  (\cref{thm:express_average}).  The formula is more involved, because it
  contains an average on all possible hyperbolic metrics on the filled surface
  $S(\gamma)$ for which $\gamma$ has length $\ell$.
\end{rem}

\begin{exa}
  In \cref{exa:int_pop}, we compute $\av[\type]{F}$ for the type
  $\type_8 := \eqc{\pop,\curve_8}$, where $\pop$ is a pair of pants and $\curve_8$ a
  figure-eight. A metric on the pair of pants $\pop$ is entirely described by the
  lengths $\ell_1, \ell_2, \ell_3$ of its three boundary components. As a
  consequence, the formula for $\av[\type]{F}$ in this setting takes the form of an
  integration on the two-dimensional level-set
  \begin{equation}
    \label{eq:level_set_P}
    \left\{ (\ell_1, \ell_2, \ell_3) \in \R_{>0}^3 \, : \,
    \cosh \div{\ell} = 2 \cosh \div{\ell_1} \cosh \div{\ell_2} + \cosh
    \div{\ell_3}
    \right\}
  \end{equation}
  which corresponds to the metrics on $\pop$ for which the length of the figure-eight is
  exactly~$\ell$.
\end{exa}

In \cref{sec:existence-asym-ext}, we explain how to extend
\cref{thm:exist_asympt_type_intro} to the overall average $\av{F}$, obtained by
summing over all closed geodesics.

\begin{thm}[Theorems \ref{prp:existence_density_all} and
  \ref{thm:existence_asym_all}] \quad
  \label{thm:existence_asym_all_intro}
  \begin{itemize}
  \item There exists a unique locally integrable function
  $V_g^{\mathrm{all}} : \R_{> 0} \rightarrow \R_{\geq 0}$ such that, for any test function $F$,
  \begin{equation*}
    \av{F} = \frac{1}{V_g} \int_{0}^{+\infty} F(\ell) V_g^{\mathrm{all}}(\ell) \d \ell.
  \end{equation*}
\item   There exists a unique family of locally integrable functions
  $(f_k^{\mathrm{all}})_{k \geq 0}$ such that, for any $A \geq 1$, 
  $\ord \geq 0$, $\epsilon > 0$, any large enough $g$, if $L := A \log(g)$,
  \begin{equation}
    \label{eq:existence_asym_all_intro}
    \frac{V_g^{\mathrm{all}}(\ell)}{V_g} \1{[0, L]}(\ell)
    = \sum_{k=0}^{\ord} \frac{f_k^{\mathrm{all}}(\ell)}{g^k} \1{[0, L]}(\ell)
    + \Ow[\epsilon,\ord,A]{\frac{\exp((1+\epsilon) \ell)}{g^{\ord+1}}}.
  \end{equation}
  \end{itemize}
\end{thm}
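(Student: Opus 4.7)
My strategy is to derive both parts of the theorem from the type-by-type statement in Theorem~\ref{thm:exist_asympt_type_intro}, applied through the decomposition \eqref{eq:decomp_types_intro} over local topological types.

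\textbf{Existence.} For any closed geodesic $\gamma$ on a hyperbolic surface of genus $g$, the filled subsurface $S_\gamma$ embeds in $X$ and therefore satisfies $|\chi_{S_\gamma}| \leq 2g-2$. Consequently, only finitely many local types contribute to \eqref{eq:decomp_types_intro} at fixed $g$, so that
\begin{equation*}
V_g^{\mathrm{all}}(\ell) := \sum_{T \text{ local type}} V_g^T(\ell)
\end{equation*}
is a finite sum of continuous, non-negative functions and hence continuous. The required integral identity follows by summing those of Theorem~\ref{thm:exist_asympt_type_intro}, and uniqueness of a continuous density among functions producing the same measure is standard.

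\textbf{Asymptotic expansion.} I would define
\begin{equation*}
f_k^{\mathrm{all}}(\ell) := \sum_{T \,:\, |\chi_S| \leq k} f_k^T(\ell),
\end{equation*}
which is a finite sum because only finitely many local types have bounded Euler characteristic. Given $K$, $A$, $\epsilon$, the plan is to pick a threshold $K' = K'(K,A,\epsilon)$ and split
\begin{equation*}
\frac{V_g^{\mathrm{all}}(\ell)}{V_g} = \sum_{T \,:\, |\chi_S| \leq K'} \frac{V_g^T(\ell)}{V_g} + \sum_{T \,:\, K' < |\chi_S| \leq 2g-2} \frac{V_g^T(\ell)}{V_g}.
\end{equation*}
The first piece is a $g$-independent finite sum: applying Theorem~\ref{thm:exist_asympt_type_intro} term by term and collecting coefficients of $g^{-k}$ for $k \leq K$ reconstructs $\sum_{k=0}^{K} f_k^{\mathrm{all}}(\ell)/g^k$, with the individual errors adding up to $\Ow[K,A,\epsilon]{e^{(1+\epsilon)\ell}/g^{K+1}}$.

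\textbf{Main obstacle.} The delicate step is controlling the tail
\begin{equation*}
\sum_{T \,:\, K' < |\chi_S| \leq 2g-2} \frac{V_g^T(\ell)}{V_g} \1{[0, A\log g]}(\ell).
\end{equation*}
Heuristically, each type with $|\chi_S|=n$ contributes at order $g^{-n}$, so the naive bound is $g^{-K'-1}$ times the number of local types with $|\chi_S| > K'$ realisable in genus $g$. Two ingredients seem necessary: a combinatorial estimate on the number of local types $\eqc{S,\gamma}$ with prescribed $|\chi_S|$ carrying a closed filling loop of length at most $A\log g$, and a uniform-in-$T$ upper bound on $V_g^T(\ell)/V_g$ obtained from the integration formula of Theorem~\ref{thm:express_average} combined with Mirzakhani-style estimates on Weil--Petersson volumes of moduli spaces of bordered surfaces. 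The truncation $\ell \leq A\log g$ is essential: it converts $e^{(1+\epsilon)\ell}$ into a polynomial factor $g^{A(1+\epsilon)}$, which can then be absorbed by choosing $K'$ sufficiently large compared to $K+A$.
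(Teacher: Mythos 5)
There are two genuine gaps, one in each half of the argument.

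\textbf{Existence.} Your claim that ``only finitely many local types contribute to the decomposition at fixed $g$'' is false. While there are only finitely many \emph{filling types} $S$ with $\abso{\chi_S} \leq 2g-2$, each filling type $S$ with $\chi_S < 0$ supports infinitely many distinct local types $[S,\gamma]$ — already the pair of pants is filled by the figure-eight, the one-sided iterated eights $\alpha_j$ for all $j \geq 2$, infinitely many double-filling loops, etc. So $V_g^{\mathrm{all}} = \sum_T V_g^T$ is an \emph{infinite} sum even at fixed $g$, and your continuity argument collapses. The paper works instead at the level of filling types $S$ (finitely many for fixed $g$) through Theorem~\ref{thm:express_average_all}, and for each $S$ obtains the density by applying Lemma~\ref{lem:abstract_density_writing} to the push-forward under the length functional of the sum over \emph{all} filling loops, with absolute finiteness guaranteed by the crude count of Lemma~\ref{lem:bound_number_closed_geod}.

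\textbf{Tail estimate.} You correctly identify the tail as the main obstacle, but your proposed ingredients (a combinatorial count of local types of bounded length, plus a uniform-in-$T$ bound on $V_g^T/V_g$) are not developed, and the paper itself remarks that the direct summation over filling types fails because the constants in Theorem~\ref{thm:exist_asympt_type} all depend on $S$. The paper's actual mechanism is probabilistic: it uses the generalised tangle-free hypothesis, Lemma~\ref{lem:TF_gene_coro}, to produce a set $\mathcal{A}_g^{K+2}$ of probability $\geq 1 - g^{-(K+2)}$ on which every closed geodesic of length $\leq A\log g$ fills a surface of Euler characteristic bounded (in absolute value) by an $M$ depending only on $K$ and $A$. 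This reduces the first sum to finitely many filling types independently of $g$, and on the exceptional set of probability $\leq g^{-(K+2)}$, the raw geodesic count of Lemma~\ref{lem:bound_number_closed_geod} bounds the total contribution by $\O{g\,\norminf{(\ell+1)^2 F(\ell) e^\ell}}$, giving the required $g^{-(K+1)}$ error. This probabilistic sidestep — trading uniform deterministic control for a small exceptional event — is the idea your proposal is missing.
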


\begin{rem}
  The indicator function in \cref{eq:existence_asym_all_intro} is used to reduce
  the number of local topological types that need to be summed when computing
  $\av{F}$. Indeed, if we do not assume that we only look at geodesics of 
  length $\leq A \log(g)$, we a priori need to take into account the
  geodesics filling the whole surface of genus $g$, for instance.
\end{rem}

Now that we know that the averages $\av[\type]{F}$ and $\av{F}$ admit asymptotic
expansions in powers of $1/g$, we shall be concerned with the form of the
coefficients $(f_k^\type)_{k \geq 0}$ and $(f_k^{\mathrm{all}})_{k \geq 0}$
appearing in these expansions.

\subsection{Friedman--Ramanujan functions}
\label{sec:friedm-raman-funct}

An essential step in Friedman's proof of Alon's conjecture is the introduction
of a notion of \emph{Ramanujan functions} \cite[Section 7]{friedman2003}.  We
introduce a similar notion in the context of random hyperbolic geometry. We then show
that this class of functions arises naturally when studying the lengths of
geodesics on random hyperbolic surfaces, and its relevance to the spectral gap
problem.

\begin{defa}[\cref{def:FR}]
  A locally integrable function $f : \R_{\geq 0} \rightarrow \C$ is said to be a
  \emph{Friedman--Ramanujan function} if there exists a polynomial function $p$
  and constants $c >0$, $\rN \geq 1$ such that
  \begin{equation*}
    \forall \ell \geq 0, \quad 
    |f(\ell) - p(\ell) \, e^\ell| \leq c (\ell + 1)^{\rN-1} e^{\frac \ell 2}.
  \end{equation*}
  We denote as $\FR$ the class of Friedman--Ramanujan functions, and as $\FRrem$
  the subset of Friedman--Ramanujan function for which $p=0$.
  We similarly define a notion of Friedman--Ramanujan function \emph{in the
    weak sense} using the weaker $\Ow{\cdot}$.
\end{defa}

This is a natural adaptation of Friedman's definition of \emph{Ramanujan
  functions} for $d$-regular graphs, namely functions $f : \Z_{>0} \rightarrow \C$
such that
\begin{equation*}
  |f(\ell) - p(\ell) \,(d-1)^\ell| \leq c \ell^{\rN-1} (d-1)^{\frac \ell 2}
\end{equation*}
for a polynomial function $p$ and constants $c>0$, $\rN \geq 1$. The quantities
$e^\ell$ and $(d-1)^\ell$ are the growth-rate of balls in the
hyperbolic plane and the $d$-regular tree respectively.

\begin{rem}
  The name ``Ramanujan'' was chosen by Friedman in relationship to the
  breakthrough work by Lubotzky--Phillips--Sarnak \cite{lubotzky1988}, in which
  the authors prove the existence of large $d$-regular graphs with an optimal
  spectral gap (such graphs are called \emph{Ramanujan graphs} due to the use
  of the Ramanujan conjecture in \cite{lubotzky1988}). We have chosen the name
  ``Friedman–Ramanujan'' with the wish to both maintain the link with the
  original article that inspired this work and emphasise Friedman’s impressive
  contribution to the study of random $d$-regular graphs.
\end{rem}

\begin{rem}
  An alternative way to understand the definition of
  Friedman--Ramanujan function and its relation to the spectral gap
  problem is to look at the prime number theorem with error terms,
  proven by Huber \cite{huber1974} (see also \cite[Theorem
  9.6.1]{buser1992}). This theorem states that, for a fixed
  hyperbolic surface $X$ and a large $\ell$,
  \begin{equation*}
    N_X(\ell) := \#\{\gamma :  \ell_X(\gamma) \leq \ell \}
    = \li(e^\ell)
    + \sum_{j: 0 < \lambda_j < \frac{3}{16}}
    \li \big( e^{s_j \ell} \big)
    + \O{\frac{e^{\frac 3 4 \ell }}{\ell}}
  \end{equation*}
  where $\li(x) := \int_2^x \frac{\d \tau}{\log \tau} \sim x/\log(x)$ and
  $s_j := \frac 12 + \sqrt{\frac 14 - \lambda_j} $.  The leading term
  $\li(e^\ell) \sim e^\ell/\ell$ comes from the eigenvalue $\lambda_0 = 0$. We observe that small
  eigenvalues (or at least the ones smaller than $3/16$) correspond to subdominant contributions to
  $N_X(\ell)$. The exponent gap in the definition of Friedman--Ramanujan functions, between the
  exponent $e^\ell$ in the main term and the exponent $e^{\ell/2}$ in the remainder, corresponds
  exactly to the gap between the trivial eigenvalue $0$ and the optimal spectral gap $1/4$.
\end{rem}

\subsection{Link between Friedman--Ramanujan functions and spectral gaps}

The motivation behind the introduction of Friedman--Ramanujan functions is that
one can exhibit \emph{cancellations} in the Selberg trace formula thanks to
their structure.  We discuss this relationship in \cref{sec:canc-selb-trace}. It
motivates the following objective.

\begin{named}{Objective (FR)}
  \label{chal:FR_type}
  Let $\type$ be a local type other than simple. Prove that, for any $k \geq 0$, the function
  $f_k^\type$ is a Friedman--Ramanujan function in the weak sense.
\end{named}

The local type ``simple'' is singled out because the functions $\ell \mapsto f_k^{\simple}(\ell)$
have a singularity of order one at $0$. We have already mentioned that
$f_0^{\simple}(\ell) = \frac{4}{\ell} \sinh^2 \div{\ell}$. Clearly, the function
$\ell \mapsto 4 \sinh^2 \div{\ell} = \ell f_0^\simple(\ell)$ is a Friedman--Ramanujan function. For
higher-order terms in the average $\av[\simple]{F}$, \cref{prop:simple} relies on our previous
work \cite{anantharaman2022} and states that $\ell \mapsto \ell f_k^\simple(\ell)$ is a
Friedman--Ramanujan function for any $k \geq 0$.

In this article, we prove \ref{chal:FR_type} for the following local types.

\begin{thm}
  \label{thm:FR_type_proven_intro}
  For any local type $\type$ filling a surface of absolute Euler characteristic~$1$, all functions
  $(f_k^\type)_{k \geq 0}$ are Friedman--Ramanujan in the weak sense.
\end{thm}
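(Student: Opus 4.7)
The plan is to start from the integration formula of \cref{thm:express_average}, which expresses $\av[T]{F}$ for a local type $T = \eqc{S,\gamma}$ as an integral over hyperbolic metrics on the filled surface $S$ subject to the constraint that $\gamma$ has length $\ell$, weighted by Weil--Petersson volumes of the complementary bordered surface of genus $g - g(S)$. Since $\chi_S = -1$, the surface $S$ is either a pair of pants $P_{0,3}$ (rigid, parametrised by three boundary lengths $\ell_1,\ell_2,\ell_3$) or a one-holed torus $P_{1,1}$ (parametrised by Fenchel--Nielsen coordinates $(\ell_1, \tau_1, L)$ with $L$ the boundary length). In both cases the Teichmüller space is low-dimensional and the constraint $\ell_S(\gamma) = \ell$ reduces to an explicit $\mathrm{SL}_2(\mathbb{R})$ trace identity of the shape $\cosh(\ell/2) = \Phi(\ell_1,\ldots)$, as in the figure-eight relation \cref{eq:level_set_P}.

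First I would enumerate the finitely many local types filling each of $P_{0,3}$ and $P_{1,1}$, and for each write down the corresponding length equation explicitly. Then I would insert into the formula for $V_g^T(\ell)$ the large-genus asymptotic expansion of the Weil--Petersson volume polynomials $V_{g',n}(L_1,\ldots,L_n)/V_g$ coming from the Mirzakhani--Zograf type expansion (the same input used to derive \cref{thm:exist_asympt_type_intro}). This yields, for each $k$, an explicit expression for $f_k^T(\ell)$ as an integral over the level set $\{\Phi(\ell_1,\ldots) = \cosh(\ell/2)\}$ in Teichmüller space of $S$, modulo the stabiliser of $\gamma$ in the mapping class group, with integrand built from polynomials in the $\ell_i$ and products of $\sinh^2(\ell_i/2)$-type factors coming from the volume-polynomial expansions.

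The heart of the proof is then an asymptotic analysis of these integrals as $\ell \to +\infty$. The expected mechanism is that the constraint $\cosh(\ell/2) = \Phi$ forces at least one of the $\cosh(\ell_i/2)$ to be of size $e^{\ell/2}$; parametrising the level set by the "small" boundary components and using the change of variables $u_i = 2 \cosh(\ell_i/2)$ reduces the computation to a Laplace-type integral whose leading behaviour is of the form $p(\ell) e^\ell$ with $p$ a polynomial, while the complementary regions of the level set, where no $\ell_i$ is close to $\ell$, are suppressed by a factor $e^{-\ell/2}$ and yield a remainder $\Ow{(\ell+1)^c e^{\ell/2}}$. This is precisely the Friedman--Ramanujan structure. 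In the one-holed torus case the twist coordinate $\tau_1$ is integrated out over a bounded interval after quotienting by the Dehn twist in $\Stab(\gamma)$, so it only contributes polynomial-in-$\ell$ factors.

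The main obstacle will be to carry out this asymptotic analysis uniformly in $k$ and across all local types, and in particular to verify that the polynomial-in-$\ell_i$ corrections generated by the $1/g$-expansion of the Weil--Petersson volumes do not produce spurious intermediate terms of size between $e^\ell$ and $e^{\ell/2}$. For the one-holed torus types, additional care is needed to choose a fundamental domain for the mapping class group $\mcg(P_{1,1})$ acting on the Teichmüller space, and to control the sum over orbit representatives of $\gamma$; this is where the weak version of the Landau notation $\Ow{\cdot}$ becomes essential, since uniform pointwise bounds might fail in a measure-zero subset while weak bounds still hold.
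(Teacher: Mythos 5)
The proposal correctly starts from the integration formula of \cref{thm:express_average} and the idea of inserting the Weil--Petersson volume expansion, and the diagnosis that the level-set integral over $\{\ell_S(\gamma) = \ell\}$ is the object to analyse is right. But there are two gaps, one tactical and one structural.

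\textbf{The tactical gap: the asymptotic mechanism.} You propose to treat the level-set integral as a Laplace-type integral, parametrising by the ``small'' boundary lengths and arguing that the constraint $\cosh(\ell/2)=\Phi(\ell_1,\ell_2,\ell_3)$ forces some $\cosh(\ell_i/2)$ to be of size $e^{\ell/2}$. That is not how the principal term $p(\ell)e^\ell$ arises. For the figure-eight, the length relation $\cosh(\ell/2)=2\cosh(\ell_1/2)\cosh(\ell_2/2)+\cosh(\ell_3/2)$ is dominated (in measure) by the regime where $\ell_1$ and $\ell_2$ \emph{both} range over almost the full interval $(0,\ell)$ with $\ell_1+\ell_2\approx\ell$; there is no localisation at an isolated maximum. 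The paper's actual move is a change of variables to $(L_1,L_2,u)$, where $L_1,L_2$ are the lengths of the two arcs of the figure-eight (so $L_1+L_2=\ell$ exactly on the level set) and $u=\cos^2(\theta/2)$ encodes the self-intersection angle (Lemma \ref{lem:l123_calc} and Lemma \ref{lem:ch_var}). In these coordinates the level-set integral becomes a convolution over $L_1+L_2=\ell$, and the proof is a calibrated generalisation of the convolution-stability argument (\cref{p:conv}): the polynomial $p$ arises from integrating a polynomial factor $Q_u(L_1,\ell-L_1)$ over $L_1\in(L_-(\ell,u),\ell/2)$, and $e^\ell$ comes from $\sinh^2(\ell/2)$ in the Jacobian. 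The ``no spurious intermediate terms'' worry you flag is handled precisely by the $\FRrem$-absorption lemmas \ref{lem:easy_cases_remainder}, \ref{lem:expansion_ell}, \ref{cor:expansion_ell}, not by Laplace asymptotics.

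\textbf{The structural gap: the classification.} You write of ``enumerat[ing] the finitely many local types filling each of $P_{0,3}$ and $P_{1,1}$'', but there are infinitely many such types (e.g. the one-sided iterated eights $\alpha_j$ for all $j\geq 2$, and infinitely many double-filling types). A type-by-type analysis therefore needs a uniformity argument across an infinite family, which your outline does not supply. The paper's structure is: (i) classify loops with $\chi_S=-1$ into double-filling loops, figure-eights, and one-sided iterated eights (\cref{prp:non-double-fill}); (ii) for double-filling loops use the sharpened length inequality $\ell_Y(\partial Y)\leq\ell_Y(\gamma)$ (\cref{lem:double_fill_ineq}), which immediately gives $f_k^T\in\FRrem_w$ with bounds independent of $T$ (\cref{prp:con_double_filling}); (iii) carry out the convolution analysis only for the two explicit families, with estimates uniform in $j$ (\cref{thm:eight}, \cref{thm:it_eight}). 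In particular, \emph{every} loop filling a once-holed torus turns out to be double-filling, so the once-holed torus case is handled entirely by step (ii) with zero polynomial term --- your proposed Fenchel--Nielsen / Dehn-twist fundamental-domain argument for $P_{1,1}$ is not needed, and more importantly, without the double-filling reduction it is not clear how you would show that the polynomial part of $f_k^T$ vanishes in the torus case, which is essential for the bounds to be compatible with later summation over types.
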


Surfaces of absolute Euler characteristic $1$ are the pair of pants and the once-holed torus. The
proof of Theorem \ref{thm:FR_type_proven_intro} goes by explicitly writing down the form of the
coefficients $f_k^\type$ and directly exhibiting the Friedman--Ramanujan property. The
arguments are presented in Sections \ref{sec:generalised_convolutions} (for the figure-eight filling
a pair of pants) and \ref{sec:other-geodesics} (for all other loops filling a pair of pants or
once-holed torus). The technical difficulties of such an explicit approach are hinted at in \cref{rem:difficult_non_simple}.

Because the expansion in \cref{thm:exist_asympt_type_intro} starts with the term $k=\chi(\type)$,
\cref{thm:FR_type_proven_intro} has the following immediate consequence.

\begin{cor}
  \label{cor:FR_type_first_order_intro}
  For \emph{any} local type $\type$, the function $\ell \mapsto \ell f_k^\type(\ell)$ is
  Friedman--Ramanujan in the weak sense for $k=0$ and $1$.
\end{cor}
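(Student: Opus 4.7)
The plan is to split the corollary into cases according to the value of $|\chi_S|$, where $S$ is the filled surface of the local type $T = \eqc{S,\gamma}$, and to invoke in each case one of three ingredients already available in the paper: the vanishing statement built into \cref{thm:exist_asympt_type_intro}, the known behaviour of the ``simple'' type coming from \cite{anantharaman2022}, and \cref{thm:FR_type_proven_intro}.

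First I would recall that \cref{thm:exist_asympt_type_intro} guarantees the asymptotic expansion of $V_g^T(\ell)/V_g$ only starts at the index $k = |\chi_S|$. By the uniqueness part of that same theorem, this forces $f_k^T \equiv 0$ whenever $k < |\chi_S|$. This vanishing observation immediately takes care of most cases: any zero function is trivially a Friedman--Ramanujan function (even in the stronger sense), so the corollary holds automatically for every local type $T$ whose filled surface satisfies $|\chi_S| > k$. In particular, for $k=0$ the only type with potentially non-vanishing $f_0^T$ is the ``simple'' type (whose filled surface is a cylinder, $\chi_S = 0$), and for $k=1$ the only types with potentially non-vanishing $f_1^T$ are ``simple'' together with the types filling a surface of Euler characteristic $-1$ (pair of pants or once-holed torus).

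Next I would treat the ``simple'' type. Here $s_T = 1$, and I need $\ell \mapsto \ell \, f_0^{\mathrm{s}}(\ell)$ and $\ell \mapsto \ell \, f_1^{\mathrm{s}}(\ell)$ to be Friedman--Ramanujan in the weak sense. For $k=0$ this is immediate: as already noted in the introduction, $f_0^{\mathrm{s}}(\ell) = \frac{4}{\ell} \sinh^2 \div{\ell}$, so $\ell f_0^{\mathrm{s}}(\ell) = 4 \sinh^2\div{\ell} = e^\ell - 2 + e^{-\ell}$, which is plainly of the form $p(\ell) e^\ell + \O{e^{\ell/2}}$ with $p \equiv 1$. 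For $k=1$, I would invoke \cref{prop:simple}, which the authors indicate translates our previous work \cite{anantharaman2022} into the statement that $\ell f_k^{\mathrm{s}}(\ell)$ is a Friedman--Ramanujan function in the weak sense for every $k \geq 0$; in particular for $k=1$.

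Finally I would treat the local types with $|\chi_S| = 1$, which only arise in the $k=1$ case. For such a type $s_T = 0$, so I need $f_1^T$ itself to be Friedman--Ramanujan in the weak sense. But these are exactly the types considered in \cref{thm:FR_type_proven_intro} (the non-trivial analytical content of the paper, handled in \cref{sec:generalised_convolutions,sec:other-geodesics}), which yields the stronger statement that \emph{every} $f_k^T$ is Friedman--Ramanujan in the weak sense, and in particular for $k=1$. Collecting these three cases exhausts all local types $T$, which establishes \cref{cor:FR_type_first_order_intro}. The only substantive step is the appeal to \cref{thm:FR_type_proven_intro}; the rest reduces to combinatorial bookkeeping on $|\chi_S|$ and to the already-known form of $f_0^{\mathrm{s}}$.
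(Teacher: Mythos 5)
Your proposal is correct and follows essentially the same route as the paper: the paper presents the corollary as an immediate consequence of the fact that the expansion in \cref{thm:exist_asympt_type_intro} begins at $k = |\chi_S|$, combined with \cref{prop:simple} (for the type ``simple'') and \cref{thm:FR_type_proven_intro} (for types with $\chi_S = -1$), which is exactly the three-way case split you spell out. You merely make explicit the bookkeeping on $|\chi_S|$ that the paper leaves as a one-line remark, and you correctly verify the $k=0$ simple case directly from the known formula $\ell f_0^{\mathrm{s}}(\ell) = 4\sinh^2\div{\ell}$.
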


The analysis presented in Section \ref{sec:def_FR} explains why \ref{chal:FR_type} is
the key ingredient to our proof of the optimal spectral gap result, \cref{con:gap}.  Already Corollary \ref{cor:FR_type_first_order_intro}, in a quantitative version and with some extensions, can be used to prove that
typical surfaces have a spectral gap $2/9 - \epsilon$, as we have done in an expanded version of this paper \cite{anantharaman_29}.

We fulfil \ref{chal:FR_type} for any local type and any $k$ in the second paper of this series.  The
proof of \cref{con:gap} requires Sections \ref{sec:preliminaries} to \ref{sec:existence-asym-ext} of
this paper, where several new concepts are defined and important results
established.

Sections~\ref{sec:generalised_convolutions} and \ref{sec:other-geodesics} are dedicated to the proof
of \cref{thm:FR_type_proven_intro} and not technically required for the proof of
\cref{con:gap}. However, they do illustrate how the coefficients of volume functions can be 
computed, and highlight several difficulties that arise when dealing with the level-set integrals
appearing in their expression, in an explicit, elementary case. The approach here is
more direct, and not only proves the Friedman--Ramanujan property but also calculates the polynomial
term; this is not the case in the second article. 
When pushed to its maximum precision, this explicit approach contains the reward of the full
information about the polynomial part of the Friedman--Ramanujan functions, which is of independent interest and contains information about fine spectral statistics.

\subsection{The challenge of tangles}
\label{sec:challenge-tangles}

Another striking demonstration of the intimate relationship between
Friedman--Ramanujan functions and the spectral gap problem can be found in our
proof of the following statement.

\begin{thm}[\cref{prp:non_FR}]
  \label{thm:non_FR_intro}
  The function $\ell \mapsto \ell f_1^{\mathrm{all}}(\ell)$ is not a
  Friedman--Ramanujan function in the weak sense.
\end{thm}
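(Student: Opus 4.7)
The plan is to decompose $f_1^{\mathrm{all}}$ along local topological types and to pinpoint an irreducible non-Friedman--Ramanujan contribution inside the resulting infinite sum.

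First, by the vanishing of $f_k^T$ for $k<|\chi_S|$ in \cref{thm:exist_asympt_type_intro} together with the decomposition \cref{eq:decomp_types_intro}, only local types $T=\eqc{S,\gamma}$ with $|\chi_S|\leq 1$ contribute to $f_1^{\mathrm{all}}$. These are the simple type ($\chi_S=0$) and the local types filling a pair of pants or a once-holed torus ($\chi_S=-1$), and so
\begin{equation*}
    \ell f_1^{\mathrm{all}}(\ell) \;=\; \ell f_1^{\mathrm{simple}}(\ell) \;+\; \ell \sum_{T\,:\,|\chi_S|=1} f_1^T(\ell).
\end{equation*}
By \cref{cor:FR_type_first_order_intro}, the first summand lies in $\FRw$, so the theorem reduces to showing that the remaining infinite sum is not in $\FRw$.

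Next, I would use \cref{thm:express_average} to put this sum into integral form. Once the homeomorphism class of the filling surface $S$ is fixed, summing $f_1^T$ over all free-homotopy classes of primitive filling loops $\gamma$ on $S$ is equivalent to summing over all primitive closed geodesics on the hyperbolic realisation of $S$. Combined with the leading $1/g$ asymptotics of the Mirzakhani volume polynomials $V_{g-1,3}(\ell_1,\ell_2,\ell_3)$ and $V_{g-1,1}(\ell_1)$, this yields an explicit representation of $\sum_{T:|\chi_S|=1} f_1^T(\ell)$ as an integral over the moduli space of $S$ of the counting measure of primitive closed geodesics on $S$, weighted by a smooth density coming from the Weil--Petersson form.

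Third, I would analyse the large-$\ell$ behaviour of this integral. For each $S$ in the thick part of its moduli space, Huber's prime geodesic theorem gives an asymptotic of the form $e^\ell/\ell$ for the count of primitive closed geodesics on $S$ of length at most $\ell$, uniformly; integrating against the Weil--Petersson density yields a contribution of the form $p(\ell) e^\ell$ with $p$ polynomial, which is FR. The non-FR feature should therefore originate from the thin part of moduli space, i.e.\ where one of the boundary lengths $\ell_i$ of $S$ approaches $0$. In that regime, primitive geodesics that wind many times around the collapsing simple curve accumulate at specific length scales, and integrating against the Weil--Petersson density should produce a correction of the form $e^\ell \cdot h(\ell)$ in which $h$ grows strictly faster than any polynomial in $\ell$ --- the concrete manifestation of the ``tangles'' highlighted in \cref{sec:challenge-tangles}.

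The main technical obstacle is to carry out this thin-part analysis sharply enough to (i) exclude any cancellation with the thick-part contribution and with $\ell f_1^{\mathrm{simple}}$, and (ii) lower-bound the correction strictly above $(\ell+1)^c e^{\ell/2}$ for every constant $c>0$. A natural route is to isolate a one-parameter sub-family of local types --- for instance, the powers of a fixed primitive filling loop on a pair of pants --- and to compute the corresponding partial sum in closed form, using a length identity of the kind illustrated in \cref{eq:level_set_P}. The resulting explicit formula should display the non-polynomial enhancement of $e^\ell$ directly, and thereby establish that $\ell f_1^{\mathrm{all}} \notin \FRw$.
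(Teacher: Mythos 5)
Your proposed route is genuinely different from the paper's. The paper does not analyse the infinite sum $\sum_T f_1^T$ directly. Instead it proves the statement by contradiction, combining two spectral facts: first, \cref{lem:FR_contradiction} shows that \emph{if} $\ell\mapsto\ell f_1^{\mathrm{all}}(\ell)$ were Friedman--Ramanujan in the weak sense, then the trace method of \cref{lem:selberg_reformulated} applied at second order (scale $L=6\log g$, $\alpha=5/12$) would give $\Pwp{\delta\leq\lambda_1\leq 5/72}=\O[\delta]{g^{-5/4}}$; second, \cref{lem:prob_small_eig} establishes independently, via small embedded once-holed tori and the min-max principle, that $\Pwp{\lambda_1\leq a}\geq c_1 a^2/g$. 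These two estimates are incompatible. So the paper never needs to exhibit a non-FR tail in the length spectrum; non-FR-ness is deduced from its spectral consequences.

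Your plan, by contrast, tries to locate the non-FR behaviour directly in the thin part of the moduli space of the filling surface $S$, and to extract a sub-polynomial-times-$e^\ell$ correction that survives cancellation. The intuition is exactly the one the paper credits to tangles, and the reduction to $|\chi_S|\leq 1$ via the vanishing $f_k^T=0$ for $k<|\chi_S|$ is correct. But two substantial gaps remain, and you essentially flag them yourself. (i) The hard computation is not done: the paper's own \cref{thm:all_chi_-1} shows that $f_1^T$ is FR in the weak sense \emph{with parameters uniform in $T$} for every $T$ with $|\chi_S|=1$, so any partial sum over a cutoff set of types (such as the iterated eights of a fixed number of iterations) is still FR with controlled constants. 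The putative non-FR growth can only come from letting the number of contributing types grow with $\ell$ and tracking how their individually-small FR remainders accumulate; that is a delicate uniform-in-$T$, uniform-in-$\ell$ estimate that your proposal sketches but does not carry out, and no mechanism is offered for ruling out cancellation with the thick-part term or with $\ell f_1^{\mathrm{s}}$. (ii) Some details are off: for a loop filling a pair of pants the connected realisation contributes $V_{g-2,3}$, not $V_{g-1,3}$; and ``powers of a fixed primitive filling loop'' are not primitive, so the one-parameter family would need to be the iterated eights (or similar), for which the paper already establishes individual FR-ness. As written, the proposal is a research plan whose key step --- a sharp lower bound on the thin-part contribution that beats every $(\ell+1)^c e^{\ell/2}$ in the weak-FR integral sense --- is left open, so the proof is incomplete even though the heuristic is consistent with the paper's explanation of the phenomenon.
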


This might seem surprising, because
\cref{cor:FR_type_first_order_intro} implies that
$\ell \mapsto \ell f_1^{\mathrm{all}}(\ell)$ is a countable sum of
Friedman--Ramanujan functions in the weak sense, and this property
is stable by linear combination. The proof of this result consists
in proving that, if the counting functions are Friedman--Ramanujan,
then we can obtain quantitative information on the spectral gap.

\begin{lem}[{\cref{lem:FR_contradiction}}]
  \label{thm:FR_contradiction_intro}
  If $\ell \mapsto \ell f_1^{\mathrm{all}}(\ell)$ is a Friedman--Ramanujan
  function in the weak sense, then for small $\delta>0$ and large enough $g$,
    \begin{equation}
      \label{eq:FR_contradiction_intro}
      \Pwp{\delta < \lambda_1 < \frac{5}{72}} = \O[\delta]{\frac{1}{g^{5/4}}}.
  \end{equation}
\end{lem}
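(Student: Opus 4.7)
The plan is to apply the Selberg trace formula to each random surface, take the Weil--Petersson expectation, and expand the geometric side via \cref{thm:existence_asym_all_intro} at order $K=1$. Under the Friedman--Ramanujan hypothesis on $\ell f_1^{\mathrm{all}}$, the expected spectral side is controlled, modulo deterministic subtractions, by a power of $\log g$ divided by $g$. Meanwhile, on the event $\{\delta < \lambda_1 < 5/72\}$, the small eigenvalue $\lambda_1$ produces a spectral contribution of size $g^{A\sqrt{13/72}}$ for a well-designed test function of width $A \log g$. Markov's inequality with optimized $A$ then yields the probability bound $1/g^{5/4}$.

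\textbf{Selberg test pair and spectral lower bound.} Set $L = A \log g$ for a parameter $A > 0$ to be optimized. Choose a non-negative, smooth, even function $\phi$ supported near $L$, and let $h$ be its Selberg transform. A standard design ensures $h \geq 0$ on $\mathbb{R} \cup \{ir : 0 < r < 1/2\}$, with $h(r) \gtrsim e^{L \Im(r)}$ for $r = i\sqrt{1/4 - \lambda}$ and $\lambda \in (\delta, 1/4)$. Since $1/4 - 5/72 = 13/72$, on the event $\{\delta < \lambda_1 < 5/72\}$ the spectral side satisfies
\begin{equation*}
  \sum_{j \geq 0} h(r_j) \;\geq\; h(r_1) \;\gtrsim\; g^{A\sqrt{13/72}}.
\end{equation*}

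\textbf{Geometric upper bound.} The Weil--Petersson expectation of the geometric side of the trace formula reads, after separating iterates as in \cite{wu2022, lipnowski2021},
\begin{equation*}
  \frac{1}{V_g} \int_0^{+\infty} \frac{\phi(\ell)}{2 \sinh(\ell/2)} V_g^{\mathrm{all}}(\ell) \d\ell \;+\; (\text{negligible iterate contribution}).
\end{equation*}
Expanding the integrand via \cref{thm:existence_asym_all_intro} at $K=1$ produces three pieces: an $f_0^{\mathrm{all}}$-term, matching (as in \cite{mirzakhani2013, wu2022, lipnowski2021}) the joint contribution of $\lambda_0 = 0$ together with the Weyl-law average of large eigenvalues; an $f_1^{\mathrm{all}}/g$-correction; and an $\Ow{e^{(1+\epsilon)\ell}/g^2}$ error. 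The FR hypothesis writes $\ell f_1^{\mathrm{all}}(\ell) = p(\ell) e^\ell + \Ow{(\ell+1)^c e^{\ell/2}}$. After dividing by $\sinh(\ell/2) \sim e^{\ell/2}/2$ and integrating against $\phi$, the $p(\ell) e^\ell$ piece reproduces an explicit deterministic spectral correction (a $1/g$-Weyl-type correction, identified with the $\lambda_0$- and bulk-eigenvalue sub-leading contributions), while the FR remainder integrates to $\Ow{(\log g)^{c-1}/g}$. The intrinsic error of the expansion is subdominant provided $A(1+\epsilon) < 2$.

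\textbf{Markov conclusion and main obstacle.} Subtracting the deterministic spectral contributions identified above, Markov's inequality applied to the non-negative residual spectral quantity yields
\begin{equation*}
  \Pwpo(\delta < \lambda_1 < 5/72) \;\lesssim\; \frac{(\log g)^{c-1}/g}{g^{A\sqrt{13/72}}}.
\end{equation*}
Choosing $A = 1/(4\sqrt{13/72})$ balances this to $\O[\delta]{(\log g)^{c-1}/g^{5/4}}$, and the logarithmic factor is absorbed by infinitesimally reducing the exponent, giving the claimed $\O[\delta]{1/g^{5/4}}$. The main obstacle lies in the cancellation step: identifying the $p(\ell) e^\ell$ part of $\ell f_1^{\mathrm{all}}(\ell)$ with a deterministic spectral contribution that can legitimately be subtracted (rather than a spurious residue that would ruin the bound), and uniformly controlling both the iterate sum and the trace-formula kernel $1/\sinh(\ell/2)$ with FR remainders that propagate cleanly to the final Markov estimate.
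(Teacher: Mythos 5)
Your outline (Selberg trace formula, second-order expansion, Markov) is the right skeleton, but two of your steps would fail, and both are resolved in the paper by a device you do not use: the differential operator $\D^m = (\tfrac14 - \partial^2)^m$ applied to the test function.

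The most serious gap is the cancellation of the principal term $p(\ell)\,e^\ell$ of $\ell f_1^{\mathrm{all}}(\ell)$. You propose to ``identify'' this piece with a deterministic spectral correction and subtract it. No such identification exists. The leading-order match $\ell f_0^{\mathrm{all}}(\ell) \leftrightarrow \hat h_L(i/2)$, which is what \cite{wu2022,lipnowski2021} exploit, has no analogue at order $1/g$ --- and indeed the entire point of \cref{prp:non_FR}, of which this lemma is the engine, is that $\ell f_1^{\mathrm{all}}$ cannot satisfy the Friedman--Ramanujan relation precisely because any such matching would produce an impossibly small probability for small $\lambda_1$. The paper instead takes as test function $\D^m h_L$ with $m > \deg p$: the Fourier multiplier $(\tfrac14+r^2)^m$ vanishes to order $m$ at $r_0=i/2$, which kills the trivial-eigenvalue contribution, and the dual integration by parts (\cref{prp:int_FR}) shows $\int_0^L p(\ell)\,e^{\ell/2}\,\D^m h_L(\ell)\,\d\ell = 0$ identically for \emph{any} polynomial $p$ of degree $<m$. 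Without this device your ``subtraction'' step has no content.

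The second gap is the choice of the length scale. You balance the FR remainder $\O{(\log g)^{c}/g}$ against the spectral lower bound $g^{A\sqrt{13/72}}$ and set $A\sqrt{13/72}=1/4$, giving $A\approx 0.59$. But the topological (integral) term of the trace formula has size $\Theta(gL^2)$ and, after Markov, contributes $\sim g^{1-A\sqrt{13/72}}L^2 = g^{3/4}L^2 \to \infty$: you cannot take $A<1/\alpha$. The paper fixes $A=6$, $\alpha=5/12$, so $\alpha A = 5/2$ and the topological term decays like $g^{-3/2}(\log g)^2$. With this (forced) larger $A$, the actual bottleneck in $\av{\ell e^{-\ell/2}\D^m h_L}$ is no longer the FR remainder (which after the IBP is merely polynomial in $L$), but the asymptotic-expansion error $\Ow{e^{(1+\epsilon)\ell}/g^2}$, which over $[0,6\log g]$ contributes $\O{g^{5/4}}$. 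The final $g^{-5/4}$ is $g^{-5/2}\cdot g^{5/4}$, i.e.\ Markov factor times expansion-error contribution --- not a balance against the $1/g$-scaling of $f_1^{\mathrm{all}}$ as your optimization assumed.
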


The contradiction then arises from the following estimate on the probability for
a surface to have a small eigenvalue.

\begin{thm}[\cref{lem:prob_small_eig}]
  \label{lem:prob_small_eig_intro}
  There exists $c_1, c_2>0$ such that, for small enough $a>0$ and large
  enough $g$,
  \begin{equation*}
    c_1 \frac{a^2}{g} \leq \Pwp{\lambda_1 \leq a} \leq c_2 \frac{a}{g}\cdot
  \end{equation*}
\end{thm}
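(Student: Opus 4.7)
The plan is to handle the two bounds separately: the lower bound comes from an explicit construction on moduli space, while the upper bound uses the Selberg trace formula together with the volume-function asymptotics from \cref{thm:existence_asym_all_intro}.

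For the lower bound $c_1 a^2/g \leq \Pwp{\lambda_1 \leq a}$, I would focus on the local topological type $T_{\mathrm{tor}} = \eqc{S,\gamma}$ where $S$ is a once-holed torus and $\gamma$ is a simple closed curve freely homotopic to its boundary, so $\chi_S = -1$. By \cref{thm:exist_asympt_type_intro}, the density of such geodesics satisfies $V_g^{T_{\mathrm{tor}}}(\ell)/V_g \sim f_1^{T_{\mathrm{tor}}}(\ell)/g$, and an explicit computation based on \cref{thm:express_average} and Mirzakhani's formula on $\mathcal{M}_{1,1}$ shows that $f_1^{T_{\mathrm{tor}}}(\ell) \sim c\ell$ as $\ell \to 0^+$. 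Integrating yields an expected count of such geodesics of length $\leq L$ of order $L^2/g$ for small $L$. A second-moment argument, applied to the two-geodesic analogue of \cref{thm:express_average}, then produces via Paley--Zygmund
\[
\Pwp{\exists \gamma \text{ of type } T_{\mathrm{tor}},\, \ell_X(\gamma) \leq L} \geq c' L^2/g
\]
for $L$ small enough and $g$ large enough. When $X$ contains such a geodesic $\gamma$ of length $\ell$, the once-holed torus $T_\gamma$ bounded by $\gamma$ has area exactly $2\pi$ (by Gauss--Bonnet). Taking a test function $f$ equal to opposite constants on $T_\gamma$ and its complement, with mean zero, and smoothed across the standard collar of $\gamma$ of width $\log(1/\ell)$, the Rayleigh quotient gives $\lambda_1(X) \leq C\ell/\log(1/\ell) \leq C\ell$. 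The choice $L = a/C$ then produces $\Pwp{\lambda_1 \leq a} \geq c_1 a^2/g$.

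For the upper bound $\Pwp{\lambda_1 \leq a} \leq c_2 a/g$, I would invoke the Selberg trace formula with a smooth non-negative test function $h_a$ concentrated on the imaginary axis near $r=i/2$, normalised so that $h_a(r_j) \geq \mathbf{1}_{\lambda_j \leq a}$. Its Fourier transform $\widehat{h}_a$ has exponential growth of order $e^{\ell/2}$. The trace formula gives
\[
\Pwp{\lambda_1 \leq a} \leq \Ewp{\sum_{j \geq 1} h_a(r_j)} = \mathrm{Weyl}(h_a) - h_a(i/2) + \Ewp{\sum_\gamma \frac{\ell_0(\gamma)\, \widehat{h}_a(\ell_\gamma)}{2\sinh(\ell_\gamma/2)}}.
\]
Expanding the expected geodesic sum via \cref{thm:existence_asym_all_intro}, the leading term driven by $f_0^{\mathrm{all}}$ should, via the Selberg-type cancellation discussed in \cref{sec:canc-selb-trace}, offset $\mathrm{Weyl}(h_a) - h_a(i/2)$ up to lower-order terms. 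The residual, controlled by the $O(1/g)$ error in \cref{eq:existence_asym_all_intro}, yields a bound of size $a/g$ after integration against $\widehat{h}_a$.

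The main obstacle is the precise control of these trace-formula cancellations, since the exponential growth $e^{\ell/2}$ in $\widehat{h}_a$ at infinity is exactly at the borderline of integrability. This is precisely where the Friedman--Ramanujan structure plays its role: the exponent $1/2$ matches the threshold in the definition of $\FR$, and \cref{cor:FR_type_first_order_intro} on the Friedman--Ramanujan nature of $f_0^{\mathrm{all}}$ and $f_1^{\mathrm{all}}$ provides exactly the cancellation needed to make the estimate work. A further technical issue is that \cref{thm:existence_asym_all_intro} only controls $V_g^{\mathrm{all}}$ on $[0, A \log g]$, which forces a preliminary truncation of the geodesic sum at that scale; the tail contribution is negligible by standard deterministic estimates on long geodesics.
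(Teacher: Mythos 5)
Your lower bound follows the same route as the paper: exhibit a once-holed torus with short boundary, convert it to a small eigenvalue via the min--max principle, and use a first-moment plus second-moment argument to lower-bound the probability of such a torus existing. Two small issues. First, the widest collar is the wrong choice: if you smooth your test function across a collar of width $W$, the Rayleigh quotient is of order $\ell \sinh(W)/W^2$, and taking $W = \log(1/\ell)$ yields a bound of size $1/\log^2(1/\ell)$, which for small $\ell$ is much \emph{larger} than $\ell$ and therefore useless. The paper takes a collar of \emph{fixed} width $1$, giving $\lambda_1 \leq \ell_X(\partial\tor)$ directly. Second, a simple curve bounding a once-holed torus is not a local topological type $\eqc{S,\gamma}$ in the sense of \cref{def:local_type} (a boundary-parallel curve does not \emph{fill} the once-holed torus); what you have in mind is a \emph{realisation} of the type ``simple''. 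Your calculation can be recast correctly by just applying Mirzakhani's integration formula to the separating multi-curve directly, which is what the paper does.

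The upper bound is where the proposal genuinely diverges, and where the gaps are substantial. Your plan is to bound $\Pwp{\lambda_1\leq a}$ by applying the Selberg trace formula to a fixed test function $h_a$, but this runs into three serious obstacles. First, the topological term $\mathrm{Weyl}(h_a)$ is of order $g$, while you need a bound of order $a/g$ on the remaining spectral sum; this would require a near-exact cancellation between the topological term, $h_a(i/2)$, and the expected geometric sum, which is far beyond what the error control in \cref{thm:existence_asym_all_intro} provides. Second, the Friedman--Ramanujan cancellation mechanism (\cref{prp:int_FR}) is engineered for the family $\D^m h_L$ with the genus-dependent length scale $L = A\log g$: it gives polynomial-in-$L$ control on the geometric average, not a bound of size $a/g$ for a $g$-independent test function. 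Third, and most decisively, the paper's own reformulation of the trace method (\cref{lem:selberg_reformulated}) must explicitly exclude the regime $\lambda_1 < \delta$ precisely because the $\lambda_1^m$ factor kills the signal there; the trace-cancellation argument is structurally unable to produce a bound on $\Pwp{\lambda_1 \leq a}$ for small $a$. The paper's actual proof of the upper bound is entirely different: it uses Cheeger's inequality $\lambda_1 \geq h(X)^2/4$, Mirzakhani's modified Cheeger constant $H(X)$ with $H/(1+H)\leq h\leq H$, and a tail bound $\Pwp{H(X)\leq a} = O(a^2/g)$ proved by Mirzakhani's integration formula over separating multi-curves; the $\sqrt{a}$ scaling in Cheeger's inequality then converts the $a^2/g$ tail into the $a/g$ bound. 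You should abandon the trace-formula route here and use a Cheeger-type argument instead.
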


In particular the rate of growth $g^{-5/4}$ in \cref{thm:FR_contradiction_intro}
is too fast. \Cref{lem:prob_small_eig_intro} is obtained by observing that, for
a small $a>0$, the probability for a random surface to contain a once-holed torus
with a boundary of length $\leq a$ is roughly $a^2/g$. By the min-max principle,
if a surface contains such a piece, then $\lambda_1 \leq a$.

More generally, embedded subsurfaces with a short boundary are linked to the
presence of small eigenvalues, because in this case the surface is poorly
connected \cite{cheeger1970,buser1982}. We call such subsurfaces \emph{tangles}
-- this notion appears in \cite{monk2021a,lipnowski2021}.

The value $3/16$ is the threshold at which tangles start to manifest, because their probability is
of size $1/g$. In order to go past $3/16$, we need to \emph{remove tangles}.  In Friedman's proof of
the Alon conjecture, the presence of tangles is a significant challenge: those issues are explained
in \cite[Section 2]{friedman2003} and are the motivation for introducing a notion of \emph{selective
  trace}.  The main technical challenge to go from \cref{thm:FR_type_proven_intro} to the
  spectral gap $2/9 - \epsilon$ is the necessity to perform a much more detailed
  inclusion-exclusion. We show in~\cite{anantharaman_29} how that can be managed by an explicit
  computation (for this level of precision), at the cost of a tedious topological enumeration.  We
  develop a more systematic approach to the removal of tangles in \cite{Moebius} which, together
  with the proof of \ref{chal:FR_type} for any local type, allows to prove \cref{con:gap} in the
  second article.

\subsection*{Acknowledgements}

The authors would like to express their gratitude to Joel Friedman, for
explaining his proof of Alon's conjecture to us in detail, which lead to
significant advances in our project. We would also like to thank Michael
Lipnowski and Alex Wright for sharing their insight on the spectral gap problem, and Yuhao Xue for useful comments on a first version of the paper.
We are grateful to Nir Avni and Steve Zelditch for the conference they organised
in Northwestern University, where we met Joel Friedman and presented some of
these results for the first time.

This research was funded by the guest program of the Max-Planck Institute
for Mathematics during the year 2021-2022, the EPSRC grant EP/W007010/1 since
2022, the prize L’Oréal-UNESCO Young Talents France for Women in
Science, and by the European Research Council (ERC) under the
European Union’s Horizon 2020
research and innovation programme (Grant agreement No. 101096550).

%%%Local Variables: 
%%% mode: latex
%%% TeX-master: "main"
%%% End: 

\section{Preliminaries}
\label{sec:preliminaries}

In this section, we introduce many objects relevant to this article, for the
sake of clarity and self-containment. For a more detailed exposition of these
notions, we refer the reader to \cite{buser1992} for hyperbolic geometry, and
\cite{wright2020,monk2021} for the theory of random hyperbolic surfaces.

\subsection{Notations}
\label{sec:notations}

For two quantities $F_1, F_2$, we write $F_1 = \O{F_2}$ if there exists a
constant $C>0$ such that $\abso{F_1} \leq C F_2$ for any choice of parameters
within the allowed ranges. If the constant depends on a parameter $\alpha$, we
write $F_1 = \O[\alpha]{F_2}$.

For a locally integrable function $F_1 : \R_{\geq 0} \rightarrow \C$ and a non-decreasing positive
function $F_2 : \R_{\geq 0} \rightarrow \R_{>0}$, we say that $F_1 = \Ow{F_2}$ if $F_1$ is bounded
by $F_2$ in a \emph{weak sense}, i.e. if there exists a constant $C>0$ such that, for all
$L \geq 0$, we have $\int_0^L |F_1(\ell)| \d \ell \leq C F_2(L)$. If the constant $C$ depends on a
parameter~$\alpha$, we rather write $F_1 = \Ow[\alpha]{F_2}$.

\subsection{Hyperbolic geometry and closed geodesics}

\subsubsection{Compact and bordered surfaces}
\label{s:compact_bordered}

All surfaces in this article are assumed to be oriented, connected and of finite
type (with a finitely generated fundamental group).

A \emph{compact hyperbolic surface} $X$ is a closed surface equipped with a Riemannian metric of
constant curvature $-1$. The topology of $X$ is therefore entirely determined by its genus
$g \geq 2$. By the Gauss--Bonnet formula, $X$ has finite area, equal to $2 \pi \chi(X)$, where
$\chi(X) = 2g-2 > 0$ is the absolute Euler characteristic of $X$.

The study of compact hyperbolic surfaces is the core focus of this
article. However, in doing so, we will need to cut these surfaces along some
simple closed geodesics -- which shall lead us to consider surfaces with a
geodesic boundary. A \emph{bordered hyperbolic surface} is a surface equipped
with a Riemannian metric of curvature $-1$, with a (finite) set of boundary
components, labelled $\{1, \ldots, n\}$, which are either closed geodesics or
cusps (which we will abusively refer to as components of length $0$).  The
signature of~$X$ is the pair $(g,n)$, where $g$ is its genus. The Gauss--Bonnet
formula extends to this setting, with $\chi(X) = 2g-2+n$. The case $n=0$
corresponds to the compact case above.

\subsubsection{Primitive closed geodesics}
\label{sec:clos-paths-geod}

A \emph{loop} on~$X$ is a piece-wise smooth map~\mbox{$\gamma : \R \diagup \Z \rightarrow X$}. Notice
that our loops are oriented. Two loops $\gamma_0$ and $\gamma_1$ are \emph{homotopic} if there
exists a continuous map \mbox{$h : [0,1] \times \R \diagup \Z \rightarrow X$} such that
$h_{|\{0\} \times \R \diagup \Z} = \gamma_0$ and $h_{|\{1\} \times \R \diagup \Z} = \gamma_1$% ; we
% then write $\gamma_0 \homo \gamma_1$
. We say the loop~$\gamma$ is \emph{non-primitive} if there exists an integer $m \geq 2$ and a loop
$\gamma_0$ such that $\gamma$ is homotopic to $\gamma_0^m$, and primitive otherwise. A loop is
called \emph{essential} if it is neither contractible nor homotopic to a boundary component or a
cusp of $X$ (the second condition only matters if $X$ is a bordered surface).

We denote as~$\mathcal{G}(X)$ the set of homotopy-classes of primitive essential
loops on~$X$. It can alternatively be seen as the set of \emph{primitive
  oriented closed geodesics} on $X$, because each homotopy class in
$\geod(X)$ contains a unique geodesic representative. For
$\gamma \in \geod(X)$, we denote as~$\ell_X(\gamma)$ the length of the
geodesic representative in the homotopy class~$\gamma$.

In the following, we will often abusively refer to elements of $\geod(X)$ as homotopy classes,
loops, or closed geodesics; in the latter two cases we will always talk about them up to
homotopy. In particular, we say that two elements $\gamma_0$ and $\gamma_1$ of $\geod(X)$ are
\emph{distinct} (and denote $\gamma_0 \neq \gamma_1$) if $\gamma_0$ is not homotopic to $\gamma_1$.
These elements are called \emph{disjoint} if $\gamma_0 \neq \gamma_1$,
$\gamma_0 \neq \gamma_1^{-1}$, and if the homotopy classes $\gamma_0$, $\gamma_1$ admit
representatives which have no intersections. An element of $\geod(X)$ is \emph{simple} if it admits a
representative with no self-intersections (which implies that the geodesic representative also has
no self-intersections). A \emph{multi-curve} is an ordered family $(\gamma_1, \ldots, \gamma_k)$ of
disjoint simple elements of $\geod(X)$; taking the geodesic representative of each homotopy class in
this family yields a family of simple, disjoint geodesics on $X$ (i.e. the geodesics have no
self-intersections and no mutual intersections). Note that, with this definition, we require that
the components of a multi-curve are all essential, but in the following, we will sometimes relax
this and allow for some of them to be homotopic to boundary components of $X$ if $X$ is bordered.

\begin{rem}
  \label{rem:convention_oriented}
  In most papers of the field,
  e.g. \cite{mirzakhani2007,mirzakhani2013,mirzakhani2019,wu2022,lipnowski2021,rudnick2022},
  geodesics are considered to be non-oriented, and orbits and stabilisers are
  defined for non-oriented loops, and therefore different from ours. Here, we
  choose to consider all loops and multi-curves to be oriented, because the
  Selberg trace formula classically runs over all oriented geodesics. We believe
  this convention to make a few discussions about constants appearing in
  formulas slightly simpler.
\end{rem}

\subsubsection{Geodesic counting}
The set of primitive closed geodesics on a hyperbolic surface is discrete, and
we shall need to count geodesics of a bounded length. Several counting arguments
will appear in this article, the simplest being the following.

\begin{lem}
  \label{lem:bound_number_closed_geod}
  Let $X$ be a hyperbolic surface, compact or bordered. For any $L>0$,
  \begin{equation}
    \# \{ \gamma \in \geod(X) \, | \, \ell_X(\gamma) \leq L \}
    \leq 205 \, \chi(X) \, e^{L}.
  \end{equation}
  {
  As a consequence, if $F$ is a bounded function supported in $[0, L]$, then
  \begin{align}\label{e:basic}
  \sum_{\gamma \in \geod(X)}| F(\ell(\gamma))| \leq 560 \, {\chi(X)} (L+1) \norm{F(\ell)e^{\ell}}_\infty.
  \end{align}}  
\end{lem}

\begin{proof}
  First, if $X$ is compact of genus $g$, then by \cite[Theorem 4.1.6 and Lemma
  6.6.4]{buser1992},
  \begin{equation*}
    \# \{ \gamma \text{ primitive geodesic } | \, \ell_X(\gamma) \leq L \}
    \leq 3g-3 + (g-1) \, e^{L+6} 
  \end{equation*}
  which implies the result, because $\chi(X) = 2g-2$.

  Following the proof of \cite[Proposition 4.5]{mirzakhani2019}, we extend the result to surfaces
  with a boundary, by doubling the surface: we take two copies of the surface $X$ and glue them
  along their matching boundary components. We obtain a compact surface $X'$, of absolute Euler
  characteristic $2 \chi(X)$ by additivity of the Euler characteristic. Each primitive closed
  geodesic on $X$ can be sent injectively on two primitive closed geodesics on~$X'$ of the same
  length, and hence the number of primitive closed geodesics~$\leq L$ on~$X$ is smaller than half
  the number of primitive closed geodesics on $X'$.

  {
  The bound \eqref{e:basic} is obtained by observing that
  \begin{align*}
    \sum_{\gamma \in \geod(X)}| F(\ell(\gamma))|
    & \leq \norm{F(\ell)e^{\ell}}_\infty \sum_{\substack{\gamma \in \geod(X) \\ \ell_X(\gamma) \leq L}} e^{- \ell_X(\gamma)}
  \end{align*}
  and then cutting the sum in small intervals,
  \begin{equation*}
    \sum_{\substack{\gamma \in \geod(X) \\ \ell_X(\gamma) \leq L}} e^{- \ell_X(\gamma)}
    \leq \sum_{k=0}^{\lfloor L \rfloor} e^{-k} \# \{\gamma \in \geod(X) \, : \, k \leq
    \ell_X(\gamma) < k+1\} \leq 205 (L+1) e^{L+1} \chi(X) .
  \end{equation*}}
\end{proof}

\subsubsection{Filling geodesics and Wu--Xue's improved geodesic counting}
\label{sec:fill-geod-impr}

When studying a closed geodesic $\gamma$ on a surface $X$, it is often very
convenient to introduce a subsurface of $X$ that is filled by $\gamma$ in the
following sense.

\begin{defa}
  \label{def:fill}
  Let $\Sf$ be a topological surface, possibly with a boundary. We say a loop
  $\gamma$ on $\Sf$ \emph{fills} the surface $\Sf$ if each connected component of
  $\Sf \setminus \gamma$ is either contractible or an annular region around a
  boundary component of $\Sf$.
\end{defa}

For a fixed $\Sf$, one can wonder how many geodesics of length $\leq L$ fill
$\Sf$. An impressive counting result on this quantity was obtained by Wu--Xue in
\cite{wu2022}.

\begin{thm}
  \label{thm:wu_xue_counting}
  For any $\eta > 0$, any topological surface $\Sf$ with boundary, there
  exists a constant $C_{\chi(\Sf), \eta}>0$ such that, for any hyperbolic metric $Y$
  on $\Sf$, any $L > 0$,
  \begin{equation*}
    \# \{\gamma \text{ primitive loop filling } \Sf \, | \, \ell_Y(\gamma) \leq L \}
    \leq C_{\chi(\Sf), \eta}
    \exp \paren*{L - \frac{1 - \eta}{2} \, \ell_Y(\partial \Sf)}.
  \end{equation*}
\end{thm}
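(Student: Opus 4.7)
The plan is to refine \cref{lem:bound_number_closed_geod} by exploiting the fact that a loop filling $S$ must interact with every boundary component, at a length cost proportional to half of each boundary. The key tool is the collar lemma: around each boundary component $\partial_i$ of length $\ell_i$ sits a standard hyperbolic collar $C_i$ of width $w_i = \arcsinh(1/\sinh(\ell_i/2))$, with Fermi coordinates $(r,s) \in (0, w_i) \times \R/\ell_i\Z$ and metric $dr^2 + \cosh^2(r)\,ds^2$. Unfolding this collar to a strip in $\mathbb{H}^2$ gives an explicit formula for the hyperbolic length of any arc in the collar in terms of its winding and its depth.

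Fix $\eta > 0$ and a hyperbolic metric $Y$. I would cut each primitive filling geodesic $\gamma$ along the inner boundaries $\partial C_i$, producing a decomposition into an \emph{outer skeleton} lying in $S \setminus \bigsqcup_i C_i$ and a collection of \emph{inner arcs}, each lying inside some $C_i$ and carrying an integer winding number $w_i$ around $\partial_i$. Unfolding $C_i$ as above, an inner arc winding $k$ times around $\partial_i$ with endpoints on $\partial C_i$ has length $2\arcsinh(\sinh(k\ell_i/2)\cosh(w_i))$, which is bounded below by $|k|\ell_i$. Because $\gamma$ is primitive and fills $S$, the outer skeleton is non-empty and its free-homotopy type in $S \setminus \bigsqcup_i C_i$, together with the windings $(w_1, \ldots, w_n)$, determines $\gamma$ up to cyclic permutation.

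The counting then unfolds in two layers. First, a crude application of \cref{lem:bound_number_closed_geod} (or its natural extension to bordered surfaces) to the compact core $S \setminus \bigsqcup_i C_i$ gives an estimate $C_S e^{\Lambda}$ for the number of homotopy classes of outer skeletons of total length $\leq \Lambda$. Second, for each outer skeleton with prescribed endpoints on $\bigsqcup_i \partial C_i$, the inner arcs are determined up to the windings $(w_1,\ldots,w_n)$; summing the resulting geometric series $\sum_{w_1,\ldots,w_n} \exp(\Lambda - \sum_i |w_i|\ell_i)$ under the global constraint $\Lambda + \sum_i |w_i|\ell_i \leq L$ yields a bound of the form $C_{S,\eta}\exp(L - (1-\eta)\ell_Y(\partial S)/2)$. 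The sharp exponent $(1-\eta)/2$ emerges from balancing the entropy of outer skeletons against the energy cost of inner windings, together with the subtlety that a filling loop need not wind around every $\partial_i$ but must still leave a non-trivial trace in each collar.

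The principal obstacle is extracting the \emph{sharp} factor $(1-\eta)/2$, uniformly in the boundary lengths: when $\ell_i$ is small the collar is wide and windings are cheap (so many arcs contribute), whereas when $\ell_i$ is large the collar is thin but even a single crossing costs a length comparable to $\ell_i$. A more elegant alternative, which I would pursue in parallel, is to double $S$ along $\partial S$ into a closed hyperbolic surface $DS$ in which the boundary becomes a multi-curve of total length $\ell_Y(\partial S)$; a filling loop on $S$ then lifts to a loop on $DS$ that intersects this multi-curve, and one invokes a refined counting of closed geodesics on $DS$ transverse to a fixed multi-curve, trading the collar analysis for a cleaner combinatorial problem on the closed double.
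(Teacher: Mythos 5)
The paper cites this theorem from Wu--Xue \cite{wu2022} and does not supply its own proof, so there is no internal argument to compare against; I will assess your proposal on its own merits.

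The collar decomposition is a natural starting point, and your formula for the length of an inner arc winding $k$ times, $2\arcsinh\bigl(\sinh(k\ell_i/2)\cosh(w_i)\bigr)$, is correct. But the counting in your second layer does not produce the stated decay, and the gap sits precisely at the subtlety you flag at the end without resolving. For a fixed winding vector $\vec{w}$, the constraint $\Lambda + \sum_i |w_i|\ell_i \leq L$ allows skeletons of total length up to $L - \sum_i |w_i|\ell_i$, so your two-layer count is bounded by
\[
\sum_{\vec{w}\in\Z^n} e^{L - \sum_i |w_i|\ell_i}
\;=\; e^L \prod_{i=1}^{n}\coth(\ell_i/2),
\]
which tends to $e^L$ as the $\ell_i$ grow. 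This is exactly the regime where the theorem's decay factor $\exp\bigl(-\tfrac{1-\eta}{2}\ell_Y(\partial S)\bigr)$ is needed, and in your computation it is absent. The structural reason is that the filling condition does not force any winding: every $w_i$ can be zero, and a filling loop can live entirely in the thick core, contributing nothing to your geometric series. The length inequality $\ell_Y(\gamma) \geq \ell_Y(\partial S)/2$ of \cref{lem:length_fill} does not rescue the argument, because removing geodesics shorter than a threshold from a count whose growth rate is $e^L$ does not change that rate once $L$ exceeds the threshold.

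The doubling alternative has a separate difficulty. A loop in the interior of $S$ lifts to a loop in the double $DS$ that is \emph{disjoint} from the image of $\partial S$, not transverse to it; and a counting result for closed geodesics on a closed surface disjoint from a prescribed long multi-curve, with a decay factor in the multi-curve's length, is not a standard fact one can simply invoke — it is essentially the same problem restated. The theorem appears to require an argument that genuinely exploits how a filling geodesic must distribute its length relative to the boundary (for instance via the convex structure of the universal cover), rather than a collar-by-collar bookkeeping that a filling loop is free to evade.
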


Here $\ell_Y(\partial \Sf)$ is the total length of the boundary of $\Sf$ for the metric $Y$.  This
result is an improvement of the naive bound from \cref{lem:bound_number_closed_geod}, thanks to the
decaying properties of the term $\exp(-(1-\eta)\ell_Y(\partial \Sf)/2)$.  It is a central part of
Wu--Xue's proof that typical surfaces have a spectral gap at least $3/16 - \epsilon$.

\subsection{Random hyperbolic surfaces}
\label{sec:rand-hyperb-surf}

Let $g \geq 2$. In this article, we sample random hyperbolic surfaces of genus
$g$ according to the Weil--Petersson probabilistic setting, which we shall now
introduce briefly.

\subsubsection{The moduli space}

We sample our random surfaces in the \emph{moduli space}
\begin{equation*}
  \mathcal{M}_g :=
  \{ \text{compact hyperbolic surfaces } X \text{ of genus } g\} \diagup \text{isometry}.
\end{equation*}
In order to study the moduli space, it is very convenient to introduce its
universal covering, the \emph{Teichm\"uller space} $\mathcal{T}_g$, which can be
seen as the set of \emph{marked hyperbolic surfaces}. More precisely, we fix a
surface $S_g$ of genus $g$, which we call the \emph{base surface}. Then,
\begin{equation*}
  \mathcal{T}_g = \left\{ (X, \phi), \quad
  \parbox{6.2cm}{$X$ compact hyperbolic surface \\
    $\phi : S_g \rightarrow X$  positive homeomorphism} \right\}
\diagup \tei
\end{equation*}
where the quotient is defined by saying that $(X_1, \phi_1) \tei (X_2, \phi_2)$
if there exists an isometry $m : X_1 \rightarrow X_2$ such that $m \circ \phi_1$
and $\phi_2$ are isotopic.  The \emph{mapping class group}
\begin{equation*}
  \mcg_g :=
  \{\text{positive homeomorphisms } \psi : S_g \rightarrow S_g \} \diagup \text{isotopy}
\end{equation*}
naturally acts on the Teichm\"uller space by precomposition of the marking:
\begin{equation*}
\psi \cdot (X, \phi) := (X, \phi \circ \psi^{-1}).
\end{equation*}
Then the moduli space, as the space of ``unmarked'' hyperbolic surfaces, is
obtained by forgetting the marking, i.e. $\mathcal{M}_g = \mathcal{T}_g \diagup \mcg_g$.

\subsubsection{Length functions}
\label{s:length_functions}

Closed geodesics on a marked surface $(X,\phi)$ are in a natural correspondence with
homotopy-classes of loops on the base surface $S_g$, thanks to the marking
$\phi : S_g \rightarrow X$. Indeed, for any $(X, \phi) \in \mathcal{T}_g$, the marking
$\phi : S_g \rightarrow X$ provides a one-to-one correspondence between $\geod(S_g)$ and
$\geod(X)$. We can therefore define, for a $(X, \phi) \in \mathcal{T}_g$ and $\gamma \in \geod(S_g)$,
the length $\ell_{(X,\phi)}(\gamma) := \ell_X(\phi(\gamma))$ to be the length of the geodesic
representative in the homotopy class $\phi(\gamma)$ on $X$. Note that we will often abusively remove
the mention of the marking, so that we will sometimes write $\ell_X(\gamma)$ for a
$X \in \mathcal{T}_g$ and $\gamma \in \geod(S_g)$; in this case, it is implied that the overall
quantity that we are studying is $\mcg_g$-invariant, so that the marking does not need to be
emphasised.

The mapping class group $\mcg_g$ naturally acts on loops on the base surface
$S_g$, by composition $\psi \cdot \gamma := \psi \circ \gamma$. The orbit of
$\gamma$ for this action is denoted as $\orb(\gamma)$, and the stabilisator
$\Stab(\gamma)$.  We write $\gamma_1 \eqmcg \gamma_2$ if there exists a
$\psi \in \mcg_g$ such that $\psi \cdot \gamma_1 = \gamma_2$, in which case
$\gamma_1$ and $\gamma_2$ are said to have the same \emph{(global) topological
  type}.  This action also extends naturally to an action on multi-curves, or on
families of loops, and we use the same notations in these cases.

\subsubsection{Weil--Petersson form and probability measure}
\label{sec:weil-peterss-volume}

The \emph{Weil--Petersson form} $\omega_{g}^{\mathrm{WP}}$ is a natural
symplectic structure on the Teichm\"uller space $\mathcal{T}_{g}$, which is
invariant by the action of $\mcg_{g}$ and therefore descends to the moduli space
$\mathcal{M}_{g}$ \cite{weil1958}.

A \emph{pair of pants} is a surface of signature $(0,3)$, and a \emph{pair of
  pants decomposition} of $S_g$ is a multi-curve
$(\gamma_1, \ldots, \gamma_{3g-3})$, that cuts $S_g$ into $2g-2$ pairs of
pants. For $(X, \phi) \in \mathcal{T}_g$, after homotopy, this multi-curve is
sent to a decomposition of $X$ in hyperbolic pairs of pants, with boundary
lengths and twists $(\ell_i, \tau_i)_{1 \leq i \leq 3g-3}$. These numbers,
called \emph{Fenchel--Nielsen parameters}, are global coordinates on the
Teichm\"uller space $\mathcal{T}_g \simeq (\R_{>0} \times \R)^{3g-3}$.  Wolpert
proved in \cite{wolpert1981} that Fenchel--Nielsen coordinates are symplectic
coordinates for the Weil--Petersson form:
\begin{equation}
  \label{eq:Wolpert_magic}
  \omega_{g}^{\mathrm{WP}}
  = \sum_{i=1}^{3g-3} \d \ell_i \wedge \d \tau_i.
\end{equation}

As any symplectic form does, the Weil--Petersson form induces a volume form on
the Teichm\"uller space and moduli space, defined by
$\Vol_{g}^{\mathrm{WP}} := (\omega_{g}^{\mathrm{WP}})^{\wedge
  (3g-3)}/(3g-3)!$. This volume form is the Lebesgue measure
$\d \ell_1 \d \tau_1 \ldots \d \ell_{3g-3} \d \tau_{3g-3}$ in Fenchel--Nielsen
parameters.  The total mass of the moduli space is finite, and we shall denote
it as $V_{g}$. As a consequence, we can renormalize the Weil--Petersson volume
form, and hence equip the moduli space $\mathcal{M}_g$ with the
\emph{Weil--Petersson probability measure}
\begin{equation*}
\mathbb{P}_g^{\mathrm{WP}} := \frac{1}{V_g} \Vol_g^{\mathrm{WP}}.
\end{equation*}

\subsubsection{Spaces of bordered surfaces}
\label{s:bordered_teich}

As mentioned in \cref{s:compact_bordered}, we will need to consider not only compact surfaces but
also bordered surfaces for the purposes of this article. The definitions above naturally extend to
define, for $(g,n)$ such that $2g-2+n>0$ and $n>0$, $\x = (x_1,
\ldots, x_n) \in \R_{\geq 0}^n$, the moduli space
\begin{equation*}
  \mathcal{M}_{g,n}(\x)
  := \left\{
    \parbox{9cm}{bordered hyperbolic surface $X$ of signature $(g,n)$ \\
      $\forall i$, the $i$-th component of $X$ has length $x_i$}
  \right\} \diagup \text{isometry}
\end{equation*}
where the quotient is over positive isometries that preserve each individual boundary component
setwise.  Similarly, we fix a base surface $S_{g,n}$ of signature $(g,n)$, which allows us to write
\begin{equation*}
\mathcal{M}_{g,n}(\x) = \mathcal{T}_{g,n}(\x) \diagup \mcg_{g,n},
\end{equation*}
where the Teichm\"uller space $\mathcal{T}_{g,n}(\x)$ is the space of marked bordered hyperbolic
surfaces and $\mcg_{g,n}$ is the mapping class group of $S_{g,n}$ (considering only homeomorphisms
fixing each individual boundary component of $S_{g,n}$ setwise).

In this more general setting, there is also a Weil--Petersson symplectic form
$\omega_{g,n,\x}^{\mathrm{WP}}$ defined on both $\mathcal{M}_{g,n}(\x)$ and
$\mathcal{T}_{g,n}(\x)$, which has the same expression \eqref{eq:Wolpert_magic}
for any pair of pants decomposition $(\gamma_1, \ldots, \gamma_{3g-3+n})$ of the
base surface $S_{g,n}$. The volume form induced by this symplectic structure is
denoted as $\Vol_{g,n,\x}^{\mathrm{WP}}$. The quantity $V_{g,n}(\x)$ denotes the
total mass of the moduli space, with the exception that
$V_{1,1}(x) := \frac{1}{2} \Vol_{1,1,x}(\mathcal{M}_{1,1}(x))$ (this symmetry
constant $1/2$ reflects the existence of an involution symmetry for every
once-holed torus with boundary -- see \cite[Section 2.8]{wright2020}). We shall
omit the mention of the length-vector $\x$ whenever it is equal to
$(0, \ldots, 0)$, i.e. when all boundary components are cusps, hence making
sense of the notations $\mathcal{M}_{g,n}$, $\mathcal{T}_{g,n}$ and $V_{g,n}$.

\subsection{Mirzakhani's integration formula}
\label{s:mirz_int}

Let $g \geq 2$, $k \geq 1$ and $\gamma = (\gamma_1, \ldots, \gamma_k)$ be a
multi-curve on the base surface $S_g$.  For a measurable function
$F : \R_{\geq 0}^k \rightarrow \R$ bounded with compact support (or decaying fast
enough) and an element $X \in \mathcal{M}_g$, we define
\begin{equation}
  \label{eq:def_F_gamma}
  F^{\gamma}(X) := \sum_{(\alpha_1, \ldots, \alpha_k) \in \orb(\gamma)}
  F(\ell_X(\alpha_1), \ldots, \ell_X(\alpha_k)).
\end{equation}
These functions are called \emph{geometric functions}.

For any $(X,\phi) \in \mathcal{T}_g$, cutting the surface $X$ along the
multi-geodesic representative of the multi-curve $\phi(\gamma)$ yields a family of
$q \geq 1$ bordered hyperbolic surfaces. We pick a numbering for these surfaces,
and for $1 \leq i \leq q$ denote as $(g_i,n_i)$ the signature of the $i$-th
surface. If $\x \in \R_{\geq 0}^k$ is a list of values for the respective
lengths of $\phi(\gamma_1), \ldots, \phi(\gamma_k)$ on $X$, then for every $i$,
the lengths of the boundary components of the $i$-th surface is a vector
$\x^{(i)} \in \R_{\geq 0}^{n_i}$. Note that each component of $\x$ is present
exactly twice in the overall family of vectors $(\x^{(i)})_{1 \leq i \leq q}$,
and $\sum_{i=1}^{q} n_i=2k$, because the $\phi(\gamma_i)$s each have two sides.
Then, Mirzakhani's integration formula reads as follows.

\begin{thm}[\cite{mirzakhani2007}]
  \label{thm:int_mirz}
  For $g \geq 3$, the integral of $F^\gamma$ over the moduli space is equal to
  \begin{equation}
    \label{eq:int_mirz}
    \int_{\mathcal{M}_g} F^\gamma(X) \d \Vol_g^{\mathrm{WP}}(X)
    = \int_{\R_{\geq 0}^k} F(\x) \, \prod_{i=1}^q  V_{g_i,n_i}(\x^{(i)}) \,
    \prod_{i=1}^k x_i \d x_i.
  \end{equation}
\end{thm}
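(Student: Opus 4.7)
The plan is to follow the standard unfolding argument of Mirzakhani, adapted to the oriented conventions used in the paper. First, I would rewrite the left-hand side as an integral over the Teichm\"uller space modulo the stabilizer of $\Gamma$. By $\mcg_g$-invariance of the Weil--Petersson volume form and the fact that the orbit $\orb(\Gamma)$ is naturally in bijection with $\mcg_g / \Stab(\Gamma)$, one obtains
\begin{equation*}
  \int_{\mathcal{M}_g} F^\Gamma(X) \d \Vol_g^{\mathrm{WP}}(X)
  = \int_{\mathcal{T}_g / \Stab(\Gamma)} F(\ell_X(\gamma_1), \ldots, \ell_X(\gamma_k)) \d \Vol_g^{\mathrm{WP}}(X),
\end{equation*}
where the integrand is now a well-defined $\Stab(\Gamma)$-invariant function of the marked surface.

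Next, I would introduce Fenchel--Nielsen coordinates adapted to $\Gamma$. Since $\Gamma$ is a multi-curve, it can be completed into a pair of pants decomposition $(\gamma_1, \ldots, \gamma_k, \gamma_{k+1}, \ldots, \gamma_{3g-3})$ of $S_g$. The associated Fenchel--Nielsen parameters $(\ell_i, \tau_i)_{1 \leq i \leq 3g-3}$ give global coordinates on $\mathcal{T}_g$, and by Wolpert's formula \eqref{eq:Wolpert_magic} the Weil--Petersson volume form is simply the Lebesgue measure in these coordinates. The key geometric observation is that cutting a marked surface $(X,\phi)$ along $\phi(\Gamma)$ produces marked bordered surfaces $(X_1, \ldots, X_q)$ with boundary lengths $(\x^{(i)})_{1 \leq i \leq q}$ determined by $\x = (\ell_1,\ldots,\ell_k)$, and the remaining Fenchel--Nielsen parameters $(\ell_j,\tau_j)_{j>k}$ are precisely the Fenchel--Nielsen parameters of the pieces (for a pair of pants decomposition of each $S_{g_i,n_i}$).

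The core step is then to identify $\mathcal{T}_g / \Stab(\Gamma)$ as a fibration. The stabilizer $\Stab(\Gamma)$ is generated by the Dehn twists along each $\gamma_i$ (which act as $\tau_i \mapsto \tau_i + \ell_i$) together with the mapping class groups $\mcg_{g_i, n_i}$ of the individual pieces (which act on the complementary Fenchel--Nielsen coordinates only). Quotienting: the twist parameters $\tau_1, \ldots, \tau_k$ become periodic of periods $\ell_1, \ldots, \ell_k$, while the complementary data exactly describes a point in $\prod_{i=1}^q \mathcal{M}_{g_i, n_i}(\x^{(i)})$. Applying Wolpert's formula separately on each factor and using Fubini, the right-hand side becomes
\begin{equation*}
  \int_{\R_{\geq 0}^k} F(\x) \, \Bigl( \prod_{i=1}^k \int_0^{x_i} \d \tau_i \Bigr)  \prod_{i=1}^q V_{g_i, n_i}(\x^{(i)}) \, \prod_{i=1}^k \d x_i
  = \int_{\R_{\geq 0}^k} F(\x) \, \prod_{i=1}^q V_{g_i, n_i}(\x^{(i)}) \, \prod_{i=1}^k x_i \d x_i,
\end{equation*}
which is the desired formula.

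The main obstacles will be bookkeeping rather than conceptual. One must carefully verify the identification of $\Stab(\Gamma)$ as a semi-direct product of Dehn twists and piece-wise mapping class groups, which is cleanest when $\chi_g < -1$ (ensuring that no piece is a once-holed torus carrying an extra involution, or that this involution is accounted for by the symmetry factor in the convention $V_{1,1}(x)=\frac{1}{2}\mathrm{Vol}_{1,1,x}$ recalled in \cref{s:bordered_teich}). A second subtlety is the oriented convention of \cref{rem:convention_oriented}: since we sum over oriented loops, we do not need to introduce the factor of $2$ that appears in some formulations when a geodesic and its inverse have to be distinguished, and this must be tracked consistently through the unfolding.
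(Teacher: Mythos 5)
The paper does not prove this theorem; it is quoted directly from Mirzakhani \cite{mirzakhani2007}, and the paper only adds a remark explaining how its oriented conventions and the factor $V_{1,1}(x) := \tfrac12 \Vol_{1,1,x}(\mathcal{M}_{1,1}(x))$ absorb the usual symmetry constants. Your sketch is the standard unfolding argument that Mirzakhani uses, and it is correct in outline: the orbit--stabilizer bijection $\orb(\Gamma) \leftrightarrow \mcg_g/\Stab(\Gamma)$, the passage to $\mathcal{T}_g/\Stab(\Gamma)$, the Fenchel--Nielsen fibration with Wolpert's formula, the semi-direct product description of $\Stab(\Gamma)$ by Dehn twists along the $\gamma_i$ and the mapping class groups of the cut pieces, and the Fubini step are all in the right order and each is justified.

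One small imprecision in your discussion of the hypotheses: you attribute the need for $\chi_g < -1$ (equivalently $g \geq 2$) to once-holed torus pieces, but in fact these are two distinct issues handled by two distinct devices. The hyperelliptic involution of a once-holed torus \emph{piece} is what is absorbed into the definition $V_{1,1}(x) = \tfrac12 \Vol_{1,1,x}$, and this would occur for any $g$ once some piece has signature $(1,1)$. The hypothesis $g \geq 3$ in the theorem is there for a different reason, namely the hyperelliptic involution of the \emph{closed genus-two surface} itself, which makes $\mcg_2$ act with generic stabilizer of order $2$ on $\mathcal{T}_2$ and would introduce an extra global factor of $\tfrac12$. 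Your appeal to the condition $\chi_g < -1$ conflates the two. The remaining structural claims — in particular that $\Stab(\Gamma)$ is exactly the extension of $\prod_j \mcg_{g_j,n_j}$ by the free abelian group of twists, with no extra elements permuting pieces because $\Gamma$ is an \emph{ordered oriented} tuple — are correct and are the content of the cut map in Farb--Margalit; it is worth stating this explicitly since that is where a naive argument could go wrong.
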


\begin{exa}
  \label{exa:formule_mirz_simple}
  Let us demonstrate how we can use \cref{thm:int_mirz} to compute the average~$\av[\mathbf{s}]{F}$,
  defined by the sum \eqref{e:def_av_simple} over all primitive simple closed geodesics.  We define
  the following loops on the base surface $S_g$.
  \begin{itemize}
  \item We take $\gamma_0$ to be a simple loop such that $S_g \setminus \gamma_0$ is
    connected (we call such a loop a \emph{non-separating} loop).
  \item For $1 \leq i \leq g-1$, $\gamma_i$ is a simple loop such that
    $S_g \setminus \gamma_i$ has two connected components: on the left side of
    $\gamma_i$, a surface of signature $(i,1)$, and on the right side, a surface
    of signature $(g-i,1)$.
  \end{itemize}
  Then, any simple (oriented) loop on $S_g$ lies in the orbit of exactly one
  $\gamma_i$ for a $i \geq 0$. Hence,
  \begin{equation*}
    \av[\mathbf{s}]{F}
    = \Ewp{\sum_{\gamma \text{ simple}} F(\ell_X(\gamma))}
    = \frac{1}{V_g} \sum_{i=0}^{g-1} \int_{\mathcal{M}_g} F^{\gamma_i}(X) \d \Vol_g^{\mathrm{WP}}(X).
  \end{equation*}
  We then apply Mirzakhani's integration formula to each of these multi-curves,
  to conclude that
  \begin{equation*}
    \av[\mathbf{s}]{F}
    = \frac{1}{V_g} \int_0^{+ \infty} F(\ell)
    \paren*{V_{g-1,2}(\ell, \ell) + \sum_{i=1}^{g-1} V_{i,1}(\ell) V_{g-i,1}(\ell)} \ell \d \ell.
  \end{equation*}
\end{exa}

\begin{rem}
  \Cref{thm:int_mirz} appears in the literature in various forms, and there is
  always a symmetry factor $c_\gamma \in (0,1]$ in the right hand side of
  \cref{eq:int_mirz}
  \cite{mirzakhani2007,mirzakhani2013,wright2020,wu2022,lipnowski2021}. No such
  constant appears for us due to the combination of the following choices.
  \begin{itemize}
  \item A factor $2^{-M(\gamma)}$, where $M(\gamma)$ is the number of components of
    $S_g \setminus \gamma$ that are of signature $(1,1)$, is removed thanks to our convention
    $V_{1,1}(x) := \frac{1}{2} \Vol_{1,1,x}^{\mathrm{WP}}(\mathcal{M}_{1,1}(x))$.
  \item There is often a symmetry factor $1/\mathrm{Sym}(\gamma)$, which varies
    throughout literature depending on the conventions that are adopted. For
    instance, in \cite{mirzakhani2007}, $\mathrm{Sym}(\gamma)$ is said to be the
    index of the subgroup $\bigcap_{i=1}^k \Stab(\gamma_i)$ of
    $\Stab(\gamma)$. The reason for this discrepancy is that, in
    \cite{mirzakhani2007}, the function $F^\gamma$ is defined by averaging a
    function $F$ that is invariant by permutations, and hence, when $\gamma$ has
    non-trivial symmetries, several terms in the function $F^\gamma$ are
    systematically identical.
  \item Additional factors, depending on the symmetries of $\gamma$ with respect
    to changing orientations of some of its components, appear in
    \cite{wright2020,wu2022,lipnowski2021}. They come from the fact that the
    multi-curves are usually considered to be non-oriented, as opposed to our
    convention (see \cref{rem:convention_oriented}, and
    \cref{rem:constant_not_important} below).
  \item The presence of an additional factor $1/2$ whenever $g=2$, mentioned in \cite{wright2020},
    due to the existence of the hyper-elliptic involution for surfaces of genus~$2$, is the reason
    why we assume that $g \geq 3$.
  \end{itemize}
\end{rem}

\begin{rem}
  \label{rem:constant_not_important}
  In their proof of the $3/16 - \epsilon$ spectral gap result, both
  teams \cite{wu2022,lipnowski2021} rely heavily on the presence of
  a factor $1/2$ in the right hand side of \cref{eq:int_mirz},
  whenever we apply \cref{thm:int_mirz} to a single simple
  non-separating closed geodesic $\gamma$. This argument is
  reproduced in \cref{sec:trivial_eig}. The distinction here comes
  from the fact that $\gamma$ is non-oriented for them, and oriented
  for us. We can compare the two formulas by observing that
  \begin{equation*}
    \sum_{\gamma \text{ non-oriented}} F(\ell_X(\gamma))
    = \frac 12 \sum_{\gamma \text{ oriented}} F(\ell_X(\gamma)).
  \end{equation*}
  Contrarily, our new approach does not require much knowledge on the constants appearing (or not)
  in \cref{thm:int_mirz}.
\end{rem}

\subsection{Estimates on Weil--Petersson volumes}
\label{sec:estim-weil-peterss}

\Cref{thm:int_mirz} allows us to reduce the question of estimating
$\Ewpo[g][F^\gamma]$ to the study of the Weil--Petersson volumes. 
Many estimates are known on the behaviour of
$V_{g,n} = V_{g,n}(0, \ldots, 0)$ in terms of $g$ and $n$
\cite{mirzakhani2013,mirzakhani2015,mirzakhani2019,nie2023}.  We
shall use several of these estimates throughout this article,
referencing them carefully.

In terms of asymptotic expansions, Mirzakhani and Zograf have proved
in \cite{mirzakhani2015} the existence of coefficients
$(a_{k,n})_{k >0}$ and $(b_{k,n})_{k >0}$, for $n \geq 0$, such that for $\ord \geq 0$,
\begin{align}
  \label{eq:asymp_dev_ratio_same_euler}
  \frac{V_{g-1,n+2}}{V_{g,n}} = 1 + \sum_{k=1}^\ord \frac{a_{k,n}}{g^k} + \O[\ord,n]{\frac{1}{g^{\ord+1}}} \\
  \label{eq:asymp_dev_ratio_add_cusp}
  \frac{V_{g,n+1}}{8 \pi^2 g V_{g,n}} = 1 + \sum_{k=1}^\ord \frac{b_{k,n}}{g^k} + \O[\ord,n]{\frac{1}{g^{\ord+1}}}.
\end{align}

As a function of $\x$, Mirzakhani has proven in \cite{mirzakhani2007} that
$V_{g,n}(\x)$ is a polynomial function of degree $6g-6+2n$. The bound \cite[Lemma
3.2]{mirzakhani2013} on its coefficients directly implies the following two
upper bounds:
\begin{align}
   \label{lem:increase_Vgn}
    & V_{g,n}(\x) \leq V_{g,n} \left( 1+ \max_i |x_i| \right)^{6g-6+2n} \\  
  \label{e:increase_Vgn_bound}
    & V_{g,n}(\x) \leq V_{g,n} \exp \div{x_1+ \ldots +x_n}.
\end{align}
The former is good to use for fixed values of $g, n$ while the latter is better-suited to the
description of the large-genus limit.
The first-order approximation of $V_{g,n}(\x)$ in the large-genus limit is well-known (see
\cite[Proposition 3.1]{mirzakhani2019} and \cite[Proposition 2.5]{anantharaman2022}):
\begin{equation}
  \label{e:Vgn_first_order}
  \frac{V_{g,n}(\x)}{V_{g,n}}
  = \prod_{i=1}^n \frac{2}{x_i} \sinh \div{x_i}
  + \O[n]{\frac{1+\max_i |x_i|}{g} \exp \div{x_1+\ldots+x_n}}.
\end{equation}
In our previous paper, we have shown the following asymptotic expansion, which
will be useful for expanding the averages $\av[T]{F}$.

\begin{thm}[{\cite[Corollary 1.4]{anantharaman2022}}]
  \label{thm:expansion_volume}
  Let $n \geq 1$. There exists a unique family $(v_{k,n})_{k \geq 0}$ of
  functions $\R_{\geq 0}^n \rightarrow \R$ such that, for any order $\ord \geq 0$, any genus
  $g \geq 1$, any $\x \in \R_{\geq 0}^n$,
  \begin{equation}
    \label{eq:expansion_volumes}
    \frac{V_{g,n}(\x)}{V_{g,n}}
    = \sum_{k=0}^\ord \frac{v_{k,n}(\x)}{g^k}
    + \O[\ord,n]{ \frac{(1+\max_i |x_i|)^{3\ord+1}}{g^{\ord+1}} \exp \div{x_1+\ldots+x_n}}.
  \end{equation}
  Furthermore, for any $k \geq 0$, the function $v_{k,n}$ is a linear
  combination of functions
  \begin{equation}
    \label{eq:expansion_volumes_form}
    \x \mapsto \prod_{i=1}^n x_i^{2k_i} \prod_{i \in V_+} \cosh \div{x_i}
    \prod_{i \in V_-} \frac{1}{x_i}\sinh \div{x_i}
  \end{equation}
  where $(k_i)_{1 \leq i \leq n}$ are integers and $V_\pm$ are two
  disjoint subsets of $\{1, \ldots n\}$.
\end{thm}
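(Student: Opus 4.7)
The plan is to combine Mirzakhani's explicit polynomial formula for $V_{g,n}(\x)$ with large-genus asymptotics for its coefficients, then to re-sum the resulting $1/g$-expansion at each order to produce the structure announced in \cref{eq:expansion_volumes_form}. Mirzakhani's formula expresses $V_{g,n}(\x)$ as a polynomial of degree $6g-6+2n$,
\begin{equation*}
  V_{g,n}(\x) = \sum_{\alpha \in \Z_{\geq 0}^n,\, |\alpha| \leq 3g-3+n} C_{g,n,\alpha} \prod_{i=1}^n \frac{x_i^{2\alpha_i}}{(2\alpha_i+1)!},
\end{equation*}
where $C_{g,n,\alpha}$ is proportional to an intersection number of $\psi$- and $\kappa$-classes on $\overline{\mathcal{M}}_{g,n}$. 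Dividing by $V_{g,n}$ reduces the problem to understanding the ratios $C_{g,n,\alpha}/V_{g,n}$.

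First I would establish that, for each fixed $\alpha$, the ratio $C_{g,n,\alpha}/V_{g,n}$ admits an asymptotic expansion in powers of $1/g$, whose coefficients $r_{k,n,\alpha}$ depend polynomially on $\alpha$ of degree roughly $k$. This step relies on Mirzakhani--Zograf type asymptotics for normalised intersection numbers, made uniform in $\alpha$. Setting
\begin{equation*}
  v_{k,n}(\x) := \sum_{\alpha \in \Z_{\geq 0}^n} r_{k,n,\alpha} \prod_{i=1}^n \frac{x_i^{2\alpha_i}}{(2\alpha_i+1)!},
\end{equation*}
the identities $\sum_{a \geq 0} (x/2)^{2a}/(2a+1)! = 2\sinh(x/2)/x$ and $\sum_{a \geq 0} (x/2)^{2a}/(2a)! = \cosh(x/2)$, combined with the fact that multiplying the summand by a polynomial in $\alpha$ of total degree $m$ produces a linear combination of derivatives of these series with respect to $x_i$, imply after resummation that $v_{k,n}$ is a linear combination of terms of the form \cref{eq:expansion_volumes_form}. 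In particular, $v_{0,n}(\x) = \prod_i 2\sinh(x_i/2)/x_i$, recovering \cref{e:Vgn_first_order}.

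The main obstacle is error control uniform in $\x$. The sum over $\alpha$ ranges over the expanding set $|\alpha| \leq 3g-3+n$, so truncating the $1/g$-expansion of each coefficient requires uniform-in-$\alpha$ bounds on the Mirzakhani--Zograf remainder, together with a summable majorant for the tail at large $|\alpha|$. I would combine this uniform remainder estimate with the crude bound \cref{e:increase_Vgn_bound}, exploiting that the excess contribution of large-$|\alpha|$ terms is absorbed by the factor $\exp((x_1+\ldots+x_n)/2)$; the polynomial prefactor $(1+\max_i|x_i|)^{3K+1}$ in the announced remainder reflects the degree of the corrections $v_{k,n}$ for $k \leq K$. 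Uniqueness of the family $(v_{k,n})$ is then standard: if two families produced the same expansion at every order $K$, their difference would be a polynomial of degree $\leq K$ in $1/g$ satisfying a bound $\O[K,n]{g^{-K-1}}$ uniformly, forcing all its coefficients to vanish.
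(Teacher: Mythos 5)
This statement is imported from \cite[Corollary~1.4]{anantharaman2022}; the present paper gives no proof, so there is no in-paper argument to compare against. Your proposal is a plausible high-level strategy, but it differs from the route taken in the cited reference: there, the expansion is extracted by iterating Mirzakhani's topological recursion for the volume polynomials themselves, whereas you propose to pass through Mirzakhani's explicit polynomial formula and large-genus asymptotics of the individual normalised intersection numbers $C_{g,n,\alpha}/V_{g,n}$, then resum. The resummation step is fine as far as it goes: applying powers of $x\,\partial_x$ to $\sum_a (x/2)^{2a}/(2a+1)!=2\sinh(x/2)/x$ and to $\sum_a (x/2)^{2a}/(2a)!=\cosh(x/2)$ does produce linear combinations of the basis functions in \cref{eq:expansion_volumes_form}, so the claimed structural form would follow \emph{if} the coefficients $r_{k,n,\alpha}$ were known to depend polynomially on $\alpha$.

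That is exactly where the gap lies. The assertion that $C_{g,n,\alpha}/V_{g,n}$ admits a full $1/g$-expansion whose $k$-th coefficient is a polynomial in $\alpha$, with degree and error controlled uniformly over the whole range $|\alpha|\leq 3g-3+n$, is the entire content of the theorem; you state it as something to ``establish'' but supply no argument, and Mirzakhani--Zograf give only the leading-order limit, not an all-orders expansion with this polynomial-in-$\alpha$ structure nor the uniformity required when $|\alpha|$ grows with $g$. Without this, the resummation is formal: you cannot define $v_{k,n}$ by your infinite sum without first knowing that $r_{k,n,\alpha}$ exists, is polynomial in $\alpha$, and that the remainders sum to the stated $\O{(1+\max_i|x_i|)^{3K+1}\,g^{-K-1}\exp(\sum x_i/2)}$. (Incidentally, the exponent $3K+1$ suggests the polynomial degree at order $K$ is roughly $3K$, not ``roughly $k$'' as you wrote.) The cited proof circumvents this by working directly with the volume polynomials through the topological recursion, which bundles the sum over $\alpha$ from the start and so avoids needing uniform intersection-number asymptotics; if you wish to pursue your intersection-number route, you would need to prove the uniform expansion of $C_{g,n,\alpha}/V_{g,n}$ as a separate and substantial result.
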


\begin{rem}
  The fact that the powers $x_i^{2k_i}$ in \cref{eq:expansion_volumes_form} are only even is not
  explicitly stated in \cite{anantharaman2022}, but comes as a straightforward consequence of the
  fact that $V_{g,n}(\x)$ is even in every variable. 
\end{rem}

%%%Local Variables: 
%%% mode: latex
%%% TeX-master: "main"
%%% End: 

\section{Friedman--Ramanujan functions}
\label{sec:def_FR}

In this section, we introduce and study the main object of this article, Friedman--Ramanujan
functions. We explain in \cref{sec:simple-geod} how these functions naturally appear in random
hyperbolic geometry. We prove their stability by convolution in \cref{sec:stab-conv}, and explain
their relevance to the spectral gap question in \cref{sec:canc-selb-trace}.

\subsection{Definition and notations}
\label{sec:definition}

\begin{defa}
  \label{def:FR}
   {
  A locally integrable function $f: \R_{\geq 0} \rightarrow \C$ is said to be a
  \emph{Friedman--Ramanujan function} if there exists a polynomial $p \in \C[X]$ and constants
  $c>0$, $\rN \geq 1$ such that
  \begin{equation}
    \label{eq:def_FR}
    \forall \ell \geq 0,
    \quad \abso{f(\ell)-p(\ell) \, e^{\ell}}
    \leq c \, (\ell+1)^{\rN-1} e^{\frac \ell 2}.
  \end{equation}
  It is said to be a \emph{Friedman--Ramanujan function in the weak sense} if there exists a
  polynomial $p \in \C[X]$ and constants $c>0$, $\rN \geq 1$ such that
  \begin{equation}
    \label{eq:def_FR_w}
    \forall \ell \geq 0, \quad
    \int_{0}^{\ell} \abso{f(s) - p(s) \,e^s}
    \d \ell \leq c \, (\ell+1)^{\rN-1} e^{\frac \ell 2}.
  \end{equation}}
\end{defa}

Of course, these sets of functions form two vector spaces, that we denote as
$\FR$ and $\FR_w$ respectively. As the name suggests, the strong definition
implies the weak one. 
 
If $f$ is a Friedman--Ramanujan function (weakly or strongly), then the polynomial~$p$ satisfying
the definition is uniquely defined.  The term $p(\ell) \, e^\ell$ is called the \emph{principal
  term} of $f$, and~$p$ its \emph{polynomial}.  The space of Friedman--Ramanujan functions with no
principal term, also called \emph{remainders}, are denoted as $\FRrem$ and $\FRrem_w$.

In the  following, it will be convenient to split the spaces $\FR$, $\FRrem$ (and their weak
versions) more precisely depending on the exponents appearing. 

{
\begin{nota}
  For $\rK \geq 0$, $\rN \geq 1$, we denote as $\FR^{\rK,\rN}$ and $\FR_w^{\rK,\rN}$ the set of
  Friedman--Ramanujan functions (strong and weak, respectively), of polynomial of degree $< \rK$,
  and satisfying \eqref{eq:def_FR} or \eqref{eq:def_FR_w} with the constant $\rN$. We shall denote
  as $\FRrem^\rN=\cF^{0,\rN}$ and $\FRrem_w^\rN=\FR_w^{0,\rN}$ the sets of remainders dominated by $(\ell+1)^{\rN-1} e^{\frac \ell 2}$.
\end{nota}
}

For the sake of convenience in our following estimates, we introduce a family of norms on~$\FR$ and
$\FR_w$, using the $\ell^\infty$-norm $\| \, \cdot \,\|_{\ell^\infty}$ on the set of polynomials.

{
\begin{defa}
  We define the norm $\| \cdot \|_{\FR^{\rK,\rN}}$ on $\FR^{\rK,\rN}$ by setting
  \begin{equation}
    \label{eq:def_norm_FR}
    \| f \|_{\FR^{\rK,\rN}} := \| p \|_{\ell^\infty}
    + \sup_{\ell \geq 0} \frac{|f(\ell) - p(\ell) e^\ell|}{(\ell+1)^{\rN-1}e^{\ell/2}}
  \end{equation}
  for any Friedman--Ramanujan function $f$ of polynomial $p$.  We similarly define the weak norm
  \begin{equation}
    \label{eq:eq_def_norm_FR_w}
    \| f \|_{\FR^{\rK,\rN}}^w := \| p \|_{\ell^\infty}
    + \sup_{\ell \geq 0} \frac{\int_0^\ell|f(s) - p(s) e^s| \d s}{(\ell+1)^{\rN-1}e^{\ell/2}}.
  \end{equation}
\end{defa}}

\subsection{Motivation to geodesic counting: the case of simple geodesics}
\label{sec:simple-geod}

One of the motivations to study Friedman--Ramanujan functions is that they appear
naturally when counting closed geodesics on random hyperbolic surfaces
(or closed paths on random $d$-regular graphs, for Friedman).  Let us illustrate
this in the most elementary case, the counting of \emph{simple} closed
geodesics.

We saw in \cref{exa:formule_mirz_simple} that Mirzakhani provided an explicit
formula for a function $V_g^{\mathbf{s}} : \R_{>0} \rightarrow \R$ such that,
for any bounded measurable function $F$ with compact support,
\begin{equation*}
  \av[\mathbf{s}]{F} = 
  \Ewp{\sum_{\gamma \text{ simple}} F(\ell_X(\gamma))}
  =  \frac{1}{V_g} \int_0^{+ \infty} F(\ell) V_g^{\mathbf{s}}(\ell) \d \ell.
\end{equation*}
We prove the following.

\begin{prp}
  \label{prop:simple}
  There exists a unique family of functions $(f_k^{\mathbf{s}})_{k \geq 0}$ such
  that, for any integer $\ord \geq 0$, any $\ell > 0$, any large enough $g$,
  \begin{equation}
    \label{e:asymp_exp_simple}
     \frac{V_g^{\mathbf{s}}(\ell)}{V_g}
     = \sum_{k=0}^\ord \frac{f_k^{\mathbf{s}}(\ell)}{g^k}
     + \O[\ord]{\frac{(\ell+1)^{c_\ord} e^\ell}{g^{\ord+1}}}.
  \end{equation}
  Furthermore, for all $k$, $\ell \mapsto \ell f_k^{\mathbf{s}}(\ell)$ is a
  Friedman--Ramanujan function.
\end{prp}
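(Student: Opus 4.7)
The plan is to start from the explicit formula obtained in Example~\ref{exa:formule_mirz_simple} via Mirzakhani's integration formula,
\[
\frac{V_g^{\mathrm{s}}(\ell)}{V_g}
= \ell \cdot \frac{V_{g-1,2}(\ell,\ell)}{V_g}
+ \ell \sum_{i=1}^{g-1} \frac{V_{i,1}(\ell)\,V_{g-i,1}(\ell)}{V_g},
\]
and analyse the non-separating contribution (first term) and the separating contribution (second sum) in turn. The uniqueness of the expansion is a formal consequence of the existence: any two families of coefficients would differ by an $O_K(\ell^{c_K}e^\ell/g^{K+1})$-type remainder at every order $K$, forcing them to coincide on $(0,+\infty)$ by induction on $k$.

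For the non-separating contribution, I would write
\[
\frac{V_{g-1,2}(\ell,\ell)}{V_g}
= \frac{V_{g-1,2}(\ell,\ell)}{V_{g-1,2}} \cdot \frac{V_{g-1,2}}{V_g}
\]
and then combine the asymptotic expansion of the first factor from Theorem~\ref{thm:expansion_volume} (with $n=2$ and $\x=(\ell,\ell)$) with the ratio expansion~\eqref{eq:asymp_dev_ratio_same_euler}. Truncating at order $K$ and controlling the cross terms using the uniform bound~\eqref{e:increase_Vgn_bound} (applied at $\ell+\ell$) would produce an expansion in $1/g$ whose coefficients are finite linear combinations of the model functions~\eqref{eq:expansion_volumes_form}. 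At $\x=(\ell,\ell)$ these become products $\ell^{2m}\cdot \cosh^{a}\!\div{\ell}\cdot(\sinh\!\div{\ell}/\ell)^{b}$ with $a+b\le 2$: hence after multiplication by the external factor $\ell$, each such coefficient is a polynomial in $\ell$ times $\cosh^a\!\div{\ell}\sinh^b\!\div{\ell}$, which satisfies Definition~\ref{def:FR} with principal term of the form $p(\ell)e^\ell$ (coming from the unique contribution $a+b=2$) and remainder bounded by a polynomial times $e^{\ell/2}$. This is exactly the Friedman--Ramanujan condition.

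For the separating sum, the aim is to show that the whole sum contributes only to coefficients $f_k^{\mathrm{s}}$ with $k\ge 1$, and that each such contribution is a remainder in $\FRrem$ (so does not affect the $e^\ell$-part already captured by the non-separating term). The strategy is to use the Mirzakhani--Zograf asymptotics~\eqref{eq:asymp_dev_ratio_add_cusp} iteratively to write $V_{i,1}(\ell)V_{g-i,1}(\ell)/V_g$ as a ratio of volumes with explicit powers of $g$, times the normalised polynomials $V_{i,1}(\ell)/V_{i,1}$ and $V_{g-i,1}(\ell)/V_{g-i,1}$ expanded via Theorem~\ref{thm:expansion_volume}. The key quantitative input is that $V_iV_{g-i}/V_g$ decays like $1/\binom{2g-3}{2i-2}$ in $g$, so after summing $i$ from $1$ to $g-1$ the whole sum is dominated by the extreme indices $i=1$ and $i=g-1$, each producing an order $1/g$ contribution, with the remaining indices giving contributions of order $1/g^N$ for arbitrary $N$ (which can be absorbed in the remainder). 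Since $V_{1,1}(\ell)$ is a fixed polynomial in $\ell$, the principal asymptotic shape of the $i=1$ term is $\text{poly}(\ell)\cdot \sinh\!\div{\ell}/\ell$, hence after multiplication by $\ell$ grows only like $e^{\ell/2}$ times a polynomial, i.e.\ lies in $\FRrem$.

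Assembling the two parts, the coefficients $f_k^{\mathrm{s}}$ are well-defined continuous functions, and $\ell\mapsto \ell f_k^{\mathrm{s}}(\ell)$ is the sum of a Friedman--Ramanujan function coming from the non-separating part (which alone determines the principal polynomial $p_k$) and a remainder coming from the separating part, hence is Friedman--Ramanujan. The main obstacle will be the bookkeeping in the separating sum: combining the large-$g$ asymptotics for $V_{g,n}$ with the $\ell$-dependent expansion of Theorem~\ref{thm:expansion_volume} uniformly in $i\in\{1,\ldots,g-1\}$, and verifying that all cross remainders remain bounded by $(1+\ell)^{c_K}e^\ell/g^{K+1}$. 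Noting that at leading order $(k=0)$ only the non-separating part contributes with $\ell f_0^{\mathrm{s}}(\ell)=\frac{4}{\ell}\sinh^2\!\div{\ell}\cdot \ell = 4\sinh^2\!\div{\ell}$, which is indeed a Friedman--Ramanujan function with principal term $e^\ell$, provides a useful consistency check.
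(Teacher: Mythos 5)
Your proposal follows essentially the same route as the paper: start from the expression obtained via Mirzakhani's integration formula, expand the non-separating factor $\ell^2 V_{g-1,2}(\ell,\ell)/V_{g-1,2}$ through Theorem~\ref{thm:expansion_volume} together with the ratio expansion~\eqref{eq:asymp_dev_ratio_same_euler}, and control the separating sum using Mirzakhani--Zograf volume estimates. Your added observation that the separating sum contributes only to $\FRrem$, so that the principal polynomial of $\ell f_k^{\mathrm{s}}$ comes entirely from the $(g-1,2)$ term, is correct and a pleasant refinement the paper does not spell out.

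One sentence is however too strong as written. You say the indices other than $i=1,\,g-1$ give contributions of order $1/g^{N}$ for arbitrary $N$; but for any fixed $i$, the factor $V_{i,1}V_{g-i,1}/V_g$ is of order $g^{-(2i-1)}$, so $i=2$ alone produces a genuine $g^{-3}$ term that cannot be absorbed into a $g^{-(K+1)}$ remainder once $K\geq 3$. To obtain the expansion to order $K$ you must retain every index with $\min(i,g-i)\leq \lceil K/2\rceil$, expand each of those individually (each is a fixed polynomial $\ell V_{i,1}(\ell)$ against an expanded $\ell V_{g-i,1}(\ell)/V_{g-i,1}$ and a ratio expansion of $V_{g-i,1}/V_g$), and discard only the middle range via the uniform bound~\eqref{e:increase_Vgn_bound} together with \cite[equation (3.19)]{mirzakhani2013}, which gives $\sum_{K/2+1\leq i\leq g-K/2-1}V_{i,1}(\ell)V_{g-i,1}(\ell)=\O[K]{e^{\ell}V_g/g^{K+1}}$. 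With that correction your argument matches the paper's and goes through.
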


In other words, Friedman--Ramanujan functions naturally appear when computing the terms of the
asymptotic expansion of $\av[\mathbf{s}]{F}$. This result means that \ref{chal:FR_type} holds for the
local topology ``simple''.

\begin{rem}
  \label{rem:first_order_simple}
  One can show that $f_0^{\mathbf{s}}(\ell) = \frac{4}{\ell} \sinh^2 \div{\ell}$
  by using the expression of $V_g^{\mathbf{s}}$ and the first-order estimates
  \eqref{eq:asymp_dev_ratio_same_euler} and \eqref{e:Vgn_first_order}, as well
  as \cite[Lemma 3.3]{mirzakhani2013}. It is clear that
  $\ell \mapsto \ell f_0^{\mathbf{s}}(\ell) = 4 \sinh^2 \div{\ell}$ is a
  Friedman--Ramanujan function.
\end{rem}

\begin{proof}
  Let us fix a $\ord \geq 0$. We recall that the expression of $V_g^{\mathbf{s}}$
  is:
  \begin{equation}
    \label{eq:expression_Vgsimple}
    V_g^{\mathbf{s}}(\ell) =  \ell \, V_{g-1,2}(\ell, \ell)
    +  \sum_{i=1}^{g-1} \ell \, V_{i,1}(\ell) V_{g-i,1}(\ell).
  \end{equation}
  Let us break down this expression and examine its terms.

  We first observe that we can reduce the number of terms in
  \cref{eq:expression_Vgsimple} so that it only depends on $\ord$, and not on
  $g$. Indeed, applying \eqref{e:increase_Vgn_bound} and \cite[equation
  (3.19)]{mirzakhani2013} yields:
  \begin{equation*}
    \sum_{\frac \ord 2 +1 \leq i \leq g-\frac \ord 2 -1}
    V_{i,1}(\ell) V_{g-i,1}(\ell) = \O[\ord]{\frac{e^{\ell} V_g}{g^{\ord+1}}}.
  \end{equation*}
  Hence, provided that $g$ is large enough, we can rewrite \eqref{eq:expression_Vgsimple} as 
  \begin{equation}
    \label{e:exp_vsimple_comvenient2}
    \frac{\ell V_g^{\mathbf{s}}(\ell)}{V_g}
    = \frac {V_{g-1,2}}{V_g}  \frac{\ell^2 V_{g-1,2}(\ell, \ell)}{V_{g-1,2}}
    +  2 \sum_{i=1}^{\lceil \frac \ord2 \rceil} \ell V_{i,1}(\ell) \frac{V_{g-i,1}}{V_{g}}
    \frac{\ell V_{g-i,1}(\ell)}{V_{g-i,1}} + \O[\ord]{\frac{\ell \,e^\ell}{g^{\ord+1}}}.
  \end{equation}
  Note that we have used the symmetry of the sum to only have terms for which $i \leq \lceil \frac
  \ord 2 \rceil$. 

  Now, we observe that \cref{thm:expansion_volume} taken with $n=1$ and $2$
  directly implies that for any fixed $i$,
  \begin{equation*}
    \frac{\ell V_{g-i,1}(\ell)}{V_{g-i,1}}
    \qquad \text{and} \qquad
    \frac{\ell^2 V_{g-1,2}(\ell, \ell)}{V_{g-1,2}}
  \end{equation*}
  admit an asymptotic expansion of the desired form, with all coefficients
  belonging in $\FR$. Indeed, after multiplication by $\ell^n$, the coefficients
  of these expansions are proven to be linear combinations of
  functions of the form
  \begin{itemize}
  \item $\ell^{2k+1} \cosh \div{\ell}$, $\ell^{2k} \sinh \div{\ell}$
    and $\ell^{2k+1}$ for $n=1$;
  \item $\ell^{2k+2} \cosh^2 \div{\ell}$,
    $\ell^{2k+1} \cosh \div{\ell} \sinh \div{\ell}$,
    $\ell^{2k} \sinh^2 \div{\ell}$, $\ell^{2k+2} \cosh \div{\ell}$,
    $\ell^{2k+1} \sinh \div{\ell}$ and $\ell^{2k+2}$ for $n=2$;
  \end{itemize}
  for $k$ non-negative integers, all of which are Friedman--Ramanujan functions.
  
  We know by equations \eqref{eq:asymp_dev_ratio_same_euler} and
  \eqref{eq:asymp_dev_ratio_add_cusp} that the quantities $V_{g-1,2}/V_g$ and
  $V_{g-i,1}/V_g$ (for any fixed $i$) have an asymptotic expansion in powers of
  $1/g$. Also, for any fixed $i$, the function
  $\ell \mapsto \ell \, V_{i,1}(\ell)$ is a polynomial function. This is all we
  need to conclude to the existence and form of the asymptotic expansion.

  Now that the existence of an expansion is established, the uniqueness is
  obtained by fixing an arbitrary value of $\ell$ and using the uniqueness of
  asymptotic expansions in powers of~$1/g$.
\end{proof}

\subsection{Stability by convolution}
\label{sec:stab-conv}

When proving Alon's conjecture, Friedman proved a statement analogous to \cref{prop:simple}, for
more complicated paths. This is achieved by using a decomposition of a general path into simple
paths, together with the result for simple paths. In doing so, a key argument is the stability of the class of
$d$-Ramanujan functions by \emph{convolution}, Theorem 7.2 in \cite{friedman2003}. Indeed, as
represented in \cref{fig:convolution_graph}, the length $\ell$ of a non-simple path can be written
as a sum of lengths $\ell_1, \ell_2$ of simpler closed paths. In the expectation for a random graph,
this becomes a sum over all possible values of $\ell_1, \ell_2$ such as $\ell = \ell_1+\ell_2$,
i.e. a convolution.

\begin{figure}[h]
  \centering
    \includegraphics[scale=0.5]{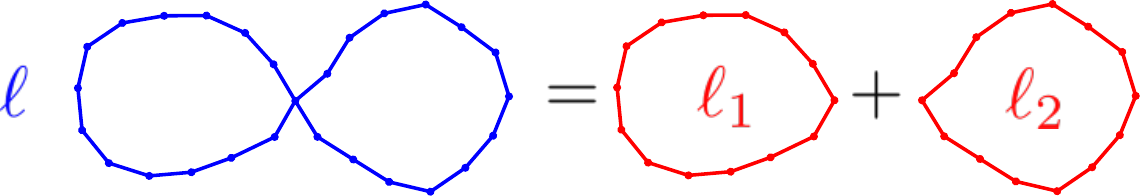}
  \caption{Decomposition of a figure-eight on a graph.}
  \label{fig:convolution_graph}
\end{figure}

In our new setting, we shall also prove that the class $\FR$ is
stable by convolution.  For two continuous functions
$f_1, f_2 : \R_{>0} \rightarrow \C$, we define
\begin{equation*}
  f_1 * f_2(\ell) := \int_{0}^\ell f_1(t)f_2(\ell-t) \d t = \int_{0}^\ell f_1(\ell-t)f_2(t) \d t.
\end{equation*}
Let us prove the following, which is a direct adaptation of the proof
given by Friedman in the case of graphs \cite[Theorem
7.2]{friedman2003}.

\begin{prp}
  Let $f_1, f_2 \in \FR$. Then, $f_1* f_2\in \FR$.
\label{p:conv}
\end{prp}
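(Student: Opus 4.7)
The plan is to exploit the vector space structure of $\FR$. Write each $f_i = p_i(\ell) e^\ell + r_i(\ell)$ with $p_i \in \C[X]$ and remainders satisfying $|r_i(\ell)| \leq c_i (\ell+1)^{d_i} e^{\ell/2}$. By bilinearity of convolution,
\begin{equation*}
f_1 \ast f_2 = (p_1 e^{(\cdot)}) \ast (p_2 e^{(\cdot)}) + (p_1 e^{(\cdot)}) \ast r_2 + r_1 \ast (p_2 e^{(\cdot)}) + r_1 \ast r_2,
\end{equation*}
and I will show each of the four pieces lies in $\FR$.

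The pure-principal piece is the cleanest: factoring out $e^\ell$,
\begin{equation*}
\bigl( (p_1 e^{(\cdot)}) \ast (p_2 e^{(\cdot)}) \bigr)(\ell) = e^\ell \int_0^\ell p_1(t) p_2(\ell - t) \d t,
\end{equation*}
which is $e^\ell$ times a polynomial in $\ell$, hence manifestly in $\FR$. The pure-remainder piece is controlled by a direct bound: $|r_1(t) r_2(\ell - t)| \leq c_1 c_2 (t+1)^{d_1} (\ell - t + 1)^{d_2} e^{\ell/2}$, so integrating gives $|r_1 \ast r_2(\ell)| \leq C (\ell + 1)^{d_1 + d_2 + 1} e^{\ell/2}$, putting this piece in $\FRrem$.

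The subtlety lies in the mixed pieces, and by symmetry it is enough to treat $(p_1 e^{(\cdot)}) \ast r_2$. After the change of variables $u = \ell - t$,
\begin{equation*}
\bigl( (p_1 e^{(\cdot)}) \ast r_2 \bigr)(\ell) = e^\ell \int_0^\ell p_1(\ell - u) e^{-u} r_2(u) \d u.
\end{equation*}
The key observation is that $e^{-u} r_2(u) = O((u+1)^{d_2} e^{-u/2})$ is integrable on $[0, +\infty)$, so the integral converges when $\ell \to +\infty$. Expanding $p_1(\ell - u) = \sum_k \ell^k Q_k(u)$ via the binomial theorem (the $Q_k$ are polynomials in $u$) and splitting $\int_0^\ell = \int_0^\infty - \int_\ell^\infty$ gives
\begin{equation*}
\bigl( (p_1 e^{(\cdot)}) \ast r_2 \bigr)(\ell) = e^\ell P(\ell) - e^\ell \sum_k \ell^k \int_\ell^{+\infty} Q_k(u) e^{-u} r_2(u) \d u,
\end{equation*}
where $P(\ell) := \sum_k \ell^k \int_0^\infty Q_k(u) e^{-u} r_2(u) \d u$ is a polynomial. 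The tail integrals are bounded by $C_k (\ell+1)^{c_k} e^{-\ell/2}$ for large $\ell$ by an elementary estimate on the upper incomplete Gamma function, so the whole error term is $O((\ell+1)^c e^{\ell/2})$, placing this mixed piece in $\FR$.

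The main obstacle is the mixed term: one must recognise that the convolution of an $e^\ell$-growing factor against an $e^{\ell/2}$-decaying remainder still produces a genuine $e^\ell$ contribution, and the trick of completing the integral to $[0,+\infty)$ is what cleanly separates the new polynomial coefficient $P$ from the genuine remainder. Summing the four contributions, we conclude that $f_1 \ast f_2 \in \FR$, with polynomial equal to the sum of the polynomials produced by the first three pieces.
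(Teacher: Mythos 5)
Your proof is correct and takes essentially the same route as the paper: the same four-term decomposition, the same bound on the remainder–remainder convolution, and the same key device for the mixed term — completing $\int_0^\ell$ to $\int_0^{+\infty}$ and treating the tail separately. The only cosmetic difference is that you expand $p_1(\ell-u)$ via the binomial theorem before estimating the tail, whereas the paper bounds the tail directly in terms of $\|p_1\|_{\ell^\infty}$ and uses one integration by parts; these yield the same conclusion.
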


\begin{rem}
  In the following, we will not use Proposition \ref{p:conv} exactly
  as such: unfortunately, in hyperbolic geometry, when we
  ``concatenate'' two closed geodesics by creating an intersection
  point, the length of the newly created closed geodesic is not the
  sum of the two original length (see \cref{fig:eight_pop}). This is
  a major difference between negative (but finite) curvature, and
  curvature $-\infty$ (i.e. the case of graphs).  However, we
  believe the proof is quite enlightening in its simplicity -- very
  similar techniques, yet more complex, are used in
  \cref{sec:generalised_convolutions}.
\end{rem}

\begin{proof}
  Write $f_i (\ell) = p_i(\ell) \, e^\ell + r_i(\ell)$ with
  $|r_i(\ell)| \leq c_i (\ell+1)^{\rN_i-1} \, e^{\frac \ell 2}$. Then,
  \begin{align*}
    f_1* f_2=(p_1 \exp)* (p_2 \exp)+ (p_1 \exp)*  r_2+ (p_2 \exp)*  r_1 + r_1* r_2.
  \end{align*}
  First, we observe that
  \begin{equation*}
    (p_1 \exp)* (p_2 \exp)(\ell) =
    \int_{0}^\ell p_1(t) \, e^t \, p_2(\ell-t) \, e^{\ell-t} \d t
    = P(\ell) \, e^\ell
  \end{equation*}
  where $P = p_1* p_2$ is a polynomial.
  Next, we have
  \begin{align*}
    |r_1* r_2(\ell)|\leq c_1c_2 \, e^{\frac \ell 2}\int_0^\ell (\ell-t+1)^{\rN_1-1} (t+1)^{\rN_2-1} \d t
    \leq c_1 c_2 e^{\frac \ell 2} (\ell+1)^{\rN_1+\rN_2-1}.
  \end{align*}
  Finally, we examine the crossed term $(p_1 \exp)* r_2$.
  \begin{align*}
    (p_1 \exp)*   r_2(\ell)
    &= e^\ell \int_0^\ell p_1(\ell-t) \, r_2(t) \, e^{-t} \d t\\
    & = e^\ell \int_0^\infty p_1(\ell-t) \, r_2(t) \, e^{-t} \d t
      - e^\ell \int_\ell^\infty p_1(\ell-t) \, r_2(t) \, e^{-t} \d t.
  \end{align*}
  The function $\ell \mapsto \int_0^\infty p_1(\ell-t) \, r_2(t) \, e^{-t} \d t$
  is a polynomial function. For the last term, it is bounded by
  \begin{align}\label{e:PR}
    \|p_1\|_{\ell^\infty} c_2 \, e^\ell \int_\ell^\infty (t-\ell+1)^{\deg p_1} (t+1)^{\rN_2-1}e^{- \frac t 2} \d t.
  \end{align}
  By an integration by parts, \eqref{e:PR} is a function of the form
  $\ell \mapsto q(\ell)e^{\frac \ell 2}$ with $q$ a polynomial, and in particular is bounded by
  $c' (\ell+1)^{\rN'-1} e^{\frac \ell 2}$ for constants $c'>0$, $\rN'\geq 1$. The same argument of
  course applies to $(p_2 \exp)* r_1$ and shows the announced result.
\end{proof}

\subsection{Cancellations in the Selberg trace formula}
\label{sec:canc-selb-trace}

We have seen in \cref{sec:simple-geod} that Friedman--Ramanujan functions arise
naturally when computing expectations of sums over (simple) closed
geodesics. The aim of this section is now to show how this information can be
used, in particular in the study of the spectrum of the
Laplacian through the Selberg trace formula.

This section is entirely dedicated to the link between the length spectrum and the Laplacian spectrum, so we invite the reader only
interested in our new geometric techniques to skip it at first read. Indeed, while the ideas and
results that we present below are a motivation for many results developed below, they only come into
play in Sections \ref{sec:second-order-term} and the second part of this article, when we
  actually study the spectral gap of random hyperbolic surfaces.

\subsubsection{The Selberg trace formula}
\label{sec:selb-trace-form}

This beautiful formula, proven by Selberg in \cite{selberg1956}, relates the
spectrum of the Laplacian on a hyperbolic surface to the lengths of all its
closed geodesics. It reads, for a smooth even function $h : \R \rightarrow \R$,
\begin{equation}
  \label{eq:selberg}
  \sum_{j=0}^{+ \infty} \hat{h}(r_j(X))
  =  (g-1) \int_\R \hat{h}(r) \tanh (\pi r) r \d r
  + \sum_{\gamma \in \geod(X)} \sum_{k=1}^{+ \infty}
  \frac{\ell_X(\gamma) \, h(k \ell_X(\gamma))}{2 \sinh \div{k \ell_X(\gamma)}}
\end{equation}
where for all $j$, $r_j(X) \in \R \cup i [- \frac 12, \frac 12]$ is
a solution of $\lambda_j(X) = \frac 14 + r_j(X)^2$, and the Fourier
transform $\hat{h}$ is defined by
$\hat{h}(r) := \int_{\R} h(\ell) \, e^{-i r \ell} \d \ell$.  The
formula is valid for a class of ``nice'' functions $h$; for our
purposes, we will only consider functions $h$ of compact support, in
which case the Selberg trace formula holds and both sums are absolutely
convergent \cite[Theorem 5.8]{bergeron2016}.

Let us briefly describe the three terms of \cref{eq:selberg}.
\begin{itemize}
\item The left hand side term is called the \emph{spectral side} of the trace
  formula, and we will use this term to try and access information on the
  spectral gap $\lambda_1 = \frac 14 + r_1^2$.
\item The first term on the right hand side is called the \emph{topological
    term}, or \emph{integral term}. The name ``topological'' refers to the fact
  that this term does not depend on the hyperbolic structure on the surface $X$,
  but only on its genus $g$. In particular, when studying random hyperbolic
  surfaces of genus $g$, this term is deterministic.
\item The last term is the \emph{geometric term}, in which appear
  every closed geodesic on the surface. We draw the attention to the
  fact that non-simple geodesics appear here, and there is a priori
  no known similar formula including only simple geodesics. Dealing
  with non-simple closed geodesics in the Selberg trace formula is
  one of the challenges we address in this article.
\end{itemize}

\subsubsection{Spectral gap v.s. exponential growth}
\label{sec:spectr-gap-expon}

Due to the presence of a Fourier transform, and summations on the whole spectrum
and all closed geodesics, the link made by the Selberg trace formula between
geometry and spectrum is quite intricate, and using this formula requires a good
choice of test function. A classic approach to access information on the
spectral gap $\lambda_1$ using the Selberg trace formula, used in
\cite{wu2022,lipnowski2021} notably, is to observe that, if
$\lambda_1 = \frac 14 + r_1^2 < \frac 14$, then $r_1 \in i\R$, and hence
\begin{equation*}
  \hat{h}(r_1) = 2 \int_0^{+ \infty} h(\ell) \cosh(\ell \abso{r_1}) \d \ell.
\end{equation*}
The Fourier transform is therefore an integral against a growing exponential, at
the rate $\abso{r_1} = \sqrt{\frac 14 - \lambda_1}$, rather than an oscillatory
term.

We make the following choice of test function, similarly to
\cite{wu2022,lipnowski2021}, that will allow us to exploit this
exponential increase.

\begin{nota}
  \label{nota:h_L}
  Let $h : \R \rightarrow \R_{\geq 0}$ be a smooth even function, with compact
  support $[-1,1]$, such that $\hat{h}$ is non-negative on
  $\R \cup i[- \frac 12, \frac 12]$. For any $L \geq 1$, let
  $h_L(\ell) := h(\frac{\ell}{L})$.
\end{nota}
\begin{rem}
  Such a $h$ can be obtained by taking a square convolution
  $h := H * H$ of a smooth function $H \geq 0$ supported on
  $[-\frac 12, \frac 12]$, so that $\hat{h} = \hat{H}^2 \geq 0$.
\end{rem}

\begin{rem}
  The scaling parameter $L$ plays the role of a
  length-scale. Indeed, since the support of $h_L$ is $[-L,L]$, only
  geodesics of length $\leq L$ will contribute to the geometric term
  of the Selberg trace formula applied to $h_L$.

  By analogy with graphs, we expect the natural length-scale we need to consider for the spectral
  gap problem to be $L = A \log(g)$, where $A \geq 1$ is a fixed constant. In
  \cite{wu2022,lipnowski2021}, the value $A=4$ is used to obtain the spectral gap $3/16 -
  \epsilon$. We take $A=6$ to prove that $\lambda_1 \geq 2/9 - \epsilon$ in
    \cite{anantharaman_29}, and see that arbitrarily large values of $A$ are required to reach
  $1/4 - \epsilon$.
\end{rem}

The following lemma allows us to relate the size of the spectral gap of a
surface $X$ with the rate of exponential growth of the term $\hat{h}_L(r_1(X))$
of the Selberg trace formula.

\begin{lem}
  \label{lem:growth_h_r1}
  Let $\alpha \in (0, \frac 12)$. For any
  $0 < \epsilon < \frac 1 4 - \alpha^2$, there exists a constant
  $C_{\alpha,\epsilon} > 0$ (depending on $h$) such that, for any
  hyperbolic surface $X$, any $L \geq 1$,
  \begin{equation}
    \label{eq:growth_h_r1}
    \lambda_1(X) \leq \frac 14 - \alpha^2 - \epsilon
    \quad \Rightarrow \quad
    \hat{h}_{L}(r_1(X)) \geq C_{\alpha,\epsilon} \, e^{(\alpha + \epsilon) L}.
  \end{equation}
\end{lem}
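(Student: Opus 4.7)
The plan is to exploit the fact that $\lambda_1<1/4$ forces $r_1=r_1(X)$ to be purely imaginary, turning $\hat h_L(r_1)$ from an oscillatory Fourier integral into a Laplace-type integral against the non-negative function $h_L$.

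First I would translate the spectral hypothesis. If $\lambda_1(X)\leq 1/4-\alpha^2-\epsilon<1/4$, then $r_1(X)\in i(-1/2,1/2)$; since $h_L$ is even, $\hat h_L$ is even too, so we may write $r_1(X)=it$ with $t=\sqrt{1/4-\lambda_1(X)}\geq \sqrt{\alpha^2+\epsilon}$ and $t\leq 1/2$. Next, I would scale out $L$: a direct change of variable gives
\[
\hat h_L(it)=L\,\hat h(itL)=L\int_{-1}^{1} h(u)\,\cosh(tLu)\,\d u,
\]
where I have used that $h$ is even to rewrite $\hat h(itL)$ as a cosh-integral.

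The key point is now to choose a point $u_0$ in the support of $h$ that is close enough to $1$. Set $u^\star:=(\alpha+\epsilon)/\sqrt{\alpha^2+\epsilon}$. A short computation shows that $u^\star<1$ if and only if $2\alpha+\epsilon<1$, and the hypothesis $\epsilon<1/4-\alpha^2$ together with $\alpha<1/2$ implies this: indeed $1/4-\alpha^2=(1/2-\alpha)(1/2+\alpha)<1-2\alpha$ on $(0,1/2)$. Since $h$ is a non-negative continuous function with $\mathrm{supp}(h)=[-1,1]$, the set $\{h>0\}$ is dense in $[-1,1]$, so one can pick $u_0\in(u^\star,1)$ and $\delta_0>0$ with $[u_0-\delta_0,u_0+\delta_0]\subset(u^\star,1)$ and $h(u)\geq h(u_0)/2>0$ on this subinterval. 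Both $u_0$ and $\delta_0$ are allowed to depend on $\alpha,\epsilon$ (and $h$).

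Restricting the integral to $[u_0-\delta_0,u_0+\delta_0]$ and using the crude bound $\cosh(tLu)\geq e^{tLu}/2$ then yields
\[
\hat h_L(it)\;\geq\; \frac{L\,h(u_0)}{4}\int_{u_0-\delta_0}^{u_0+\delta_0} e^{tLu}\,\d u\;\geq\; C\,e^{tLu_0}
\]
for some $C=C_{\alpha,\epsilon}>0$, where the last inequality uses that $t\in[\sqrt{\alpha^2+\epsilon},1/2]$ is bounded and bounded away from $0$, so that the prefactor $L/t\cdot\sinh(tL\delta_0)$ dominates a constant uniformly in $L\geq 1$. Finally, by the very choice of $u_0$,
\[
t\,u_0\;\geq\;\sqrt{\alpha^2+\epsilon}\cdot u_0\;>\;\sqrt{\alpha^2+\epsilon}\cdot u^\star=\alpha+\epsilon,
\]
which gives $\hat h_L(r_1(X))\geq C_{\alpha,\epsilon}\,e^{(\alpha+\epsilon)L}$ as desired.

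The only non-routine step is the inequality $(\alpha+\epsilon)/\sqrt{\alpha^2+\epsilon}<1$: this is where the specific range $\epsilon<1/4-\alpha^2$ and $\alpha<1/2$ really matters. It encodes the fact that the gain from $t^2\geq\alpha^2+\epsilon$ (rather than the naive $t\geq\alpha+\epsilon$) buys enough slack to beat the loss incurred by being forced to restrict to $u_0<1$, since $h$ need not be uniformly positive all the way up to the endpoint of its support.
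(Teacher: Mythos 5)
Your proposal is correct and follows essentially the same route as the paper's proof: both use that $\lambda_1<1/4$ forces $r_1$ to be purely imaginary, both identify $u^\star=(\alpha+\epsilon)/\sqrt{\alpha^2+\epsilon}<1$ via the same algebraic manipulation from $\epsilon<1/4-\alpha^2<1-2\alpha$, and both extract a positive constant from $\int h>0$ near the endpoint of $\mathrm{supp}(h)=[-1,1]$. The only cosmetic difference is that you restrict to a small subinterval $[u_0-\delta_0,u_0+\delta_0]$ where $h$ is bounded below, whereas the paper integrates directly over the whole tail $[u^\star,1]$ and sets $C_{\alpha,\epsilon}=2\int_{u^\star}^1 h$; your version is also slightly more careful about the $\cosh$ versus $e^{tLu}$ distinction.
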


\begin{proof}
  If $\lambda_1 \leq \frac 14 - \alpha^2 - \epsilon < \frac 14$, then in
  particular $r_1 \in i \R$. Then, by definition of $h_L$,
  \begin{equation}
    \label{eq:fourier_h_L}
    \hat{h}_L(r_1)
    = 2 L \int_0^1 h(\ell) \, e^{\abso{r_1} L \ell} \d \ell.
  \end{equation}
  The hypothesis on $\lambda_1$ further implies that
  $\abso{r_1} \geq \sqrt{\alpha^2 + \epsilon}$.  For $\alpha < 1/2$,
  we have that $\frac 14-\alpha^2 < 1-2\alpha$, and hence
  $\epsilon < 1 - 2 \alpha$, which implies
  $\alpha + \epsilon < \sqrt{\alpha^2 + \epsilon}$. Hence,
  \begin{equation*}
    \hat{h}_L(r_1)
    \geq 2 \int_{\frac{\alpha + \epsilon}{\sqrt{\alpha^2 + \epsilon}}}^1
    h(\ell) \, e^{\sqrt{\alpha^2 + \epsilon} \, L \ell} \d \ell
    \geq C_{\alpha, \epsilon} \, e^{(\alpha + \epsilon)L}
  \end{equation*}
  since $h \geq 0$ by the hypothesis in \cref{nota:h_L}. The implied
  constant can be taken to be
  $C_{\alpha, \epsilon} := 2 \int_{(\alpha +
    \epsilon)/\sqrt{\alpha^2 + \epsilon}}^1 h(\ell) \d \ell$, which
  is positive because the support of the non-negative function $h$
  is exactly $[-1,1]$.
\end{proof}

The following handy lemma clears up the Selberg trace formula, so that we can
focus only on the terms which shall be crucial to our analysis.

\begin{lem}
  \label{lem:bound_r1}
  Let $L \geq 1$, and $F : \R \rightarrow \R$ be a smooth even function, supported
  on $[-L,L]$, with $\hat{F} \geq 0$ on $\R \cup i [- \frac 12, \frac
  12]$. Then, for any $g \geq 2$,
  \begin{equation*}
    \Ewpo \brac*{\hat{F}(r_1(X))}
    \leq \avb{\ell \, F(\ell) \, e^{- \frac \ell 2}}
    + C_{F} \, L^2 g
  \end{equation*}
  for a constant
  $C_{F} := c \norm{F}_\infty
  + \norm{r \hat{F}(r)}_{\infty}< + \infty$, where
  $c$ is a universal constant independent of $g$, $L$ and $F$.
\end{lem}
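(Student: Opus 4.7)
The strategy is to apply the Selberg trace formula \eqref{eq:selberg} to the test function $F$, use the hypothesis $\hat F \geq 0$ to keep only the term $\hat F(r_1(X))$ on the spectral side, and then bound separately the topological and geometric contributions on the other side.

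First, since $F$ is smooth, even and compactly supported, \eqref{eq:selberg} applies. The hypothesis $\hat F \geq 0$ on $\R\cup i[-\tfrac 12,\tfrac 12]$, combined with the fact that every $r_j(X)$ lies in this set, yields
\begin{equation*}
  \hat F(r_1(X)) \;\leq\; \sum_{j=0}^{+\infty} \hat F(r_j(X))
  \;=\; (g-1)\!\int_\R \hat F(r)\,r\tanh(\pi r)\d r
  + \sum_{\gamma\in\mathcal P(X)}\sum_{k\geq 1} \frac{\ell_X(\gamma)\,F(k\ell_X(\gamma))}{2\sinh \div{k\ell_X(\gamma)}}.
\end{equation*}
Taking the Weil--Petersson expectation then reduces the problem to estimating the topological term (which is deterministic) and the expected geometric term.

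For the topological term, I would split $\int_\R \hat F(r) r\tanh(\pi r)\d r$ into the regions $|r|\leq 1/L$ and $|r|> 1/L$ (or, equivalently, $|r|\leq 1$ and $|r|>1$, depending on what gives the cleanest constants). In the low-frequency region one uses the Paley--Wiener bound $|\hat F(r)|\leq 2L\|F\|_\infty$ together with $|r\tanh(\pi r)|\leq \pi r^2$ near zero; in the high-frequency region one uses $|\hat F(r)|\leq \|r\hat F\|_\infty/|r|$ and the rapid decay furnished by $\tanh(\pi r)\to \pm 1$, combined with the identity $r\tanh(\pi r)=|r|-2|r|/(e^{2\pi|r|}+1)$ so that the only potentially divergent piece can be handled by an integration by parts exploiting the regularity of $F$. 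Collecting, one obtains $|(g-1)\int \hat F(r)\,r\tanh(\pi r)\d r|\leq C L^2 g\,(\|F\|_\infty+\|r\hat F\|_\infty)$ as desired.

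For the geometric term, I would first isolate the $k=1$ contribution and write
\begin{equation*}
  \frac{1}{2\sinh \div{\ell}} \;=\; e^{-\ell/2} + \frac{e^{-\ell/2}}{e^{\ell}-1},
\end{equation*}
so that the $k=1$ piece equals $\sum_\gamma \ell_\gamma F(\ell_\gamma) e^{-\ell_\gamma/2}$ plus an error controlled pointwise by $|F(\ell_\gamma)|\leq \|F\|_\infty\,\one_{[0,L]}(\ell_\gamma)$, using $\ell e^{-\ell/2}/(e^\ell-1)\leq 1$ for $\ell>0$. Taking expectations, the main piece is exactly $\avb{\ell F(\ell) e^{-\ell/2}}$. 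The $k\geq 2$ terms of the geometric side are bounded similarly: since $2\sinh(k\ell/2)\geq 2\sinh(\ell)\geq \ell$, they contribute at most $\|F\|_\infty\sum_\gamma\sum_{k\geq 2}\one_{k\ell_\gamma\leq L}$, which again counts only short closed geodesics. The residual sum $\Ewpo\brac{\sum_\gamma \one_{\ell_\gamma\leq L}}$ is then controlled via Mirzakhani's integration formula together with the volume estimates of \cref{sec:estim-weil-peterss}, yielding a bound of order $L^2 g\,\|F\|_\infty$, safely absorbed into $C_F L^2 g$.

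The main obstacle is the topological-term bound: the constant $C_F=c\|F\|_\infty+\|r\hat F\|_\infty$ only encodes two rather weak norms of $\hat F$, so the proof has to exploit the parity of $F$, the fact that $\hat F$ is entire of exponential type $L$, and the exact structure of $r\tanh(\pi r)$ in order to avoid invoking higher derivatives of $F$. The delicate point in the geometric side is controlling the expected number of short closed geodesics so that the ``non-primitive'' and small-$\ell$ contributions do not exceed $L^2 g\,\|F\|_\infty$; here one invokes the probabilistic bounds provided by Mirzakhani's formula and \eqref{e:Vgn_first_order}.
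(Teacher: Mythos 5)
Your overall strategy (apply the Selberg trace formula to $F$, use $\hat F \ge 0$ to keep only $\hat F(r_1)$, then estimate the topological and geometric sides separately) is the same as the paper's, but there is a serious gap in the way you bound the geometric side.

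Your key mistake is that the bounds $\ell\,e^{-\ell/2}/(e^\ell-1)\le 1$ (for $k=1$) and $2\sinh(k\ell/2)\ge \ell$ (for $k\ge 2$) throw away the exponential decay in $\ell$ that the factor $1/\sinh$ provides, and that decay is precisely what makes the geometric error term small. After these crude bounds you are left trying to control, e.g., $\Ewpo\brac*{\#\{\gamma : \ell_X(\gamma)\le L\}}$, which is of order $e^L/L$ (this is essentially Huber's prime geodesic theorem, and also visible from $f_0^{\mathrm s}(\ell)=\tfrac{4}{\ell}\sinh^2(\ell/2)$), not of order $L^2 g$. The factor you discard is exactly what compensates this exponential growth: the difference $\tfrac{1}{2\sinh(\ell/2)}-e^{-\ell/2} = \tfrac{1}{2e^\ell\sinh(\ell/2)}$ is $\O{e^{-\ell}/\min(\ell,1)}$, and for the $k\ge2$ tail one similarly keeps a factor $e^{-k\ell/2}$; after summing over $k$, both error terms carry a factor $e^{-\ell_X(\gamma)}$, and the crucial estimate is $\sum_{\gamma : \ell_X(\gamma)\le L} e^{-\ell_X(\gamma)} = \O{Lg}$ (split into dyadic shells and use the deterministic counting bound $\#\{\gamma : \ell_X(\gamma)\le j\} = \O{g\,e^j}$). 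This is where the polynomial-in-$L$, linear-in-$g$ behaviour comes from; your version is off by an exponential.

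A second, smaller omission: once you keep the decaying factor, the error has a $1/\min(\ell,1)$ singularity at small $\ell$, which on a surface with a short systole can be large. To integrate this you need $\Ewpo\brac*{1/\min(\mathrm{sys}(X),1)} = \O{1}$, which is Mirzakhani's estimate (\cite[Corollary 4.2]{mirzakhani2013}); your argument never confronts this. Your treatment of the topological term, while vaguer than a complete proof, is structurally fine and is not where the difficulty lies (and the paper's own treatment of that piece is terse as well).
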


\begin{rem}
  The constant $C_{F}$ is finite because $F$ is compactly supported, and hence
  $\hat{F}$ decays faster than any polynomial at infinity.
\end{rem}

\begin{rem}
  The function $h_L$ defined in \cref{nota:h_L} clearly satisfies the hypotheses
  of the lemma. We have formulated the result in terms of a function $F$ with
  precise hypotheses because we shall later apply it to other test functions.
\end{rem}

Lemmas \ref{lem:growth_h_r1} and \ref{lem:bound_r1} provide us with a strategy
to prove probabilistic lower bounds on $\lambda_1$. First, we use
\cref{lem:growth_h_r1} to write
\begin{equation*}
  \Pwp{\lambda_1 \leq \frac{1}{4} - \alpha^2 - \epsilon}
  \leq \Pwp{\hat{h}_L(r_1) \geq C_{\alpha,\epsilon} \,  e^{(\alpha + \epsilon) L}}.
\end{equation*}
Using Markov's inequality allows us to obtain that
\begin{equation*}
  \Pwp{\lambda_1 \leq \frac{1}{4} - \alpha^2 - \epsilon}
  \leq \frac{\Ewpo \brac*{\hat{h}_L(r_1)}}{C_{\alpha,\epsilon} \, e^{(\alpha + \epsilon) L}}.
\end{equation*}
We can then use \cref{lem:bound_r1} to obtain that, for $L := A \log(g)$,
\begin{equation}
  \label{eq:trace_method_before_canc}
  \Pwp{\lambda_1 \leq \frac{1}{4} - \alpha^2 - \epsilon}
  = \O[\alpha, \epsilon, A]{
    \frac{\avb{\ell h_L(\ell) \, e^{- \frac \ell 2}}}{g^{(\alpha + \epsilon) A}}
    + (\log g)^2 g^{1-(\alpha+\epsilon) A}},
\end{equation}
since the constant $C_{h_L}$ can be bounded uniformly in $L$.

Let us pick a value of $A > 1/(\alpha + \epsilon)$, such as
$A := 1 / \alpha$, so that
$(\log g)^2 g^{1-(\alpha+\epsilon)A} \rightarrow
0$. \Cref{eq:trace_method_before_canc} then reduces the spectral gap
problem to proving that the geometric average
$\av{\ell h_L(\ell) \, e^{- \frac \ell2}}$ is negligible compared to
$e^{(\alpha + \epsilon) L}$.

As a conclusion, the trace method allows to bound
$ \Pwp{\lambda_1 \leq \frac{1}{4} - \alpha^2 - \epsilon}$ in terms
of the geometric average $\av{\ell h_L(\ell) \, e^{- \ell/2}}$ for
$L=A \log(g)$. The parameter~$\alpha$ needs to be small in order to
obtain a spectral gap close to $1/4$. This naturally requires to
look a length scale $L = A \log(g)$ with $A \geq 1/\alpha$, due to
the presence of linear terms in the Selberg trace formula.

\begin{proof}[Proof of \cref{lem:bound_r1}]
  By the positivity hypothesis on $\hat{F}$, $\Ewpo \brac{\hat{F}(r_1)}$ is
  smaller than the expectation of the Selberg trace formula. Its integral term
  is independent of the surface, and smaller than $g \norm{r \hat{F}(r)}_\infty$
  because $\tanh \leq 1$. We hence are left with comparing
  \begin{equation}
    \label{eq:proof_lemr1_exp_to_look}
    \Ewp{\sum_{\gamma \in \geod(X)}
      \sum_{k=1}^{+ \infty}
      \frac{\ell_X(\gamma) \, F(k \ell_X(\gamma))}{2 \sinh \div{k \ell_X(\gamma)}}}
  \end{equation}
  with the average $\av{\ell F(\ell) \, e^{- \frac \ell 2}}$.

  Let us first prove a bound on the sum over $k \geq 2$, i.e. the
  sum for non-primitive geodesics.  Note that
  $x/\sinh \div{x} = \O{(x+1) \, e^{-\frac x 2}}$ for $x>0$. Hence,
  for any $0 < \ell \leq L$,
  \begin{equation*}
    \sum_{k=2}^{+ \infty} \frac{\ell \, F(k \ell)}{2 \sinh \div{k \ell}}
    = \O{L \| F \|_\infty \sum_{k=2}^{+ \infty} e^{- \frac{k \ell}{2}}}
%    = \O{L \| F \|_\infty \frac{e^{- \ell}}{1 - e^{- \frac \ell 2}}}
    = \O{L \| F \|_\infty \frac{e^{- \ell}}{\min(\ell, 1)}}
  \end{equation*}
  because $\sum_{k=2}^{+ \infty} e^{- k \ell/2} = e^{-
    \ell}/(1-e^{- \ell/2})$ and $1-e^{- \ell/2} \geq c \min(\ell,1)$
  for a $c>0$.
  
  We recall that $F$ is identically equal to zero outside $[-L,L]$. Hence, for a
  compact hyperbolic surface $X$ of genus $g$, we can apply the previous
  estimate to each $\ell = \ell_X(\gamma)$ appearing in the contribution of $X$
  to the expectation \eqref{eq:proof_lemr1_exp_to_look} and deduce
  \begin{equation}
    \label{eq:proof_lemr1_one_surf}
    \sum_{\gamma \in \geod(X)}
    \sum_{k=2}^{+ \infty} \frac{\ell_X(\gamma) \, F(k \ell_X(\gamma))}{2 \sinh \div{k
        \ell_X(\gamma)}}
    = \O{\frac{L \| F \|_\infty}{\min(\mathrm{sys}(X),1)}
    \sum_{\substack{\gamma \in \geod(X) \\ \ell_X(\gamma) \leq L}}
    e^{- \ell_X(\gamma)}}
  \end{equation}
  where $\mathrm{sys}(X)$ is the length of the systole of $X$, its shortest
  closed geodesic.  We bound uniformly in $X$ the sum above:
  \begin{align*}
    \sum_{\substack{\gamma \in \geod(X) \\ \ell_X(\gamma) \leq L}}
    e^{- \ell_X(\gamma)}
     \leq \sum_{j=0}^{\lceil L \rceil} e^{-j} \, \#\{\gamma \in \geod(X) \, : \, j \leq
    \ell_X(\gamma) < j+1\}
     = \O{L g}
  \end{align*}
  by \cref{lem:bound_number_closed_geod}. As a consequence, taking
  the average of \cref{eq:proof_lemr1_one_surf} yields
  \begin{equation*}
    \Ewp{\sum_{\gamma \in \geod(X)}
    \sum_{k=2}^{+ \infty} \frac{\ell_X(\gamma) \, F(k \ell_X(\gamma))}{2 \sinh \div{k
        \ell_X(\gamma)}}}
    =\O{L^2 g \|F\|_\infty \, \Ewp{\frac{1}{\min(\mathrm{sys}(X), 1)}}}. 
    \end{equation*}
    Mirzakhani proved in \cite[Corollary 4.2]{mirzakhani2013} that
    the expectation above is finite and bounded uniformly in $g$,
    which is enough to conclude for this term.

    All that is left to do to conclude is to substitute the
    $2 \sinh\div{\ell}$ by the very close value $e^{\ell /2}$ in the
    term $k=1$ of \cref{eq:proof_lemr1_exp_to_look}. Because
    \begin{equation*}
      \frac{1}{2 \sinh \div{\ell}} - e^{- \frac \ell 2} = \frac{1}{2 \, e^\ell \, \sinh \div{\ell}}
      \leq \frac{e^{- \ell}}{\ell},
    \end{equation*}
    the error in doing so is bounded by
    \begin{equation*}
      \|F\|_\infty \,
      \Ewp{\frac{1}{\min(\mathrm{sys}(X), 1)}
      \sum_{\substack{\gamma \in \geod(X) \\ \ell_X(\gamma) \leq L}} 
      e^{- \ell_X(\gamma)}} 
    \end{equation*}
    which we proved is bounded by a constant multiple of $gL \|F\|_\infty$.
\end{proof}

\subsubsection{Necessity of expansions in powers of $1/g$}

In \cite{wu2022,lipnowski2021}, Wu--Xue and Lipnowski--Wright
obtained the spectral gap $3/16 - \epsilon$ by using the method
above. More precisely, the intermediate value $3/16$ arises because
the average $\av{\ell h_L(\ell) \, e^{- \frac \ell 2}}$ is estimated
at the \emph{leading order} as $g \rightarrow + \infty$, i.e. the
computations are made up to errors with a $1/g$ decay. We now
explain why, in order to reach the optimal spectral gap
$1/4 - \epsilon$, we need to go further and perform \emph{asymptotic
  expansions} of averages $\av{F}$ in powers of $1/g$, which is one
of the core objectives of this article.

Let us imagine that we are able to compute, exactly, any average
$\av{F}$ up to error terms decaying as $1/g^{\ord+1}$ for a $\ord \geq 0$.
This means that we will know the average
$\av{\ell h_L(\ell) \, e^{- \frac \ell 2}}$ up to errors of size
roughly $e^{L/2}/g^{\ord+1}$, because the number of primitive closed
geodesics shorter than $L$ behaves like $e^L/L$, by \cite{huber1974}
(the factor $e^{L/2}$ comes from the presence of the exponential
decay $e^{-\ell/2}$ in the average).

We recall that we saw in \cref{eq:trace_method_before_canc} that, for $\alpha > 0$, in order to
prove that $\Pwp{\lambda_1 \leq 1/4 - \alpha^2 - \epsilon}$ goes to $0$ as $g \rightarrow + \infty$,
we need to prove that $\av{\ell h_L(\ell) \, e^{- \frac \ell 2}}$ grows slower than $e^{\alpha L}$,
for $L = A \log(g)$ and $A \geq 1/\alpha$. In particular, we will need the error term
  $\approx e^{L/2}/g^{\ord+1}$ produced when estimating this average to be smaller than $e^{\alpha L}$,
  which requires to assume that $A/ 2 - \ord - 1 \leq \alpha A$. Hence, the hypotheses made so far on
  the parameters in the trace method can be listed as:
\begin{equation*}
  \frac A 2 - \ord -1 \leq \alpha A
  \quad \text{and} \quad
  A \geq \frac 1 \alpha.
\end{equation*}
These conditions imply that {$\alpha \geq 1/(2(\ord+2))$}, which is a
lower bound on the precision~$\alpha$ that can be attained.

In other words, computing asymptotic expansions with remainders
decaying like {$1/g^{\ord+1}$} puts a natural limitation on the spectral
gap $\lambda_1 \geq 1/4 - \alpha^2 - \epsilon$ that can be
obtained. These critical levels are summed up in
\cref{tab:values_param}; we see that the spectral gap
$1/4 - \epsilon$ requires expansions of arbitrary precision.

\begin{table}[h]
  \begin{tabular}{|c|c|c|c|}
    \hline
    Order in expansion & Length scale $L$ & Parameter $\alpha$ & Hoped spectral gap \\
    \hline
    Leading (error $1/g$) & $4 \log g$ & $1/4$ & $\lambda_1 \geq \frac{3}{16} - \epsilon$ \\
    \hline
    Second (error $1/g^2$) & $6 \log g$ & $1/6$ &$\lambda_1 \geq \frac{2}{9} - \epsilon$ \\
    \hline
    \ldots & \ldots & \ldots & \ldots \\
    \hline
    Error $1/g^{\ord+1}$ & $2(\ord+2) \log g$ & $1/(2(\ord+2))$ & $\lambda_1 \geq \frac{1}{4} - \frac{1}{4(\ord+2)^2} -
                                                                          \epsilon$ \\
    \hline 
  \end{tabular}
  \vspace{1mm}
  \caption{The spectral gap one can hope to obtain using the trace method,
    depending on the order of the asymptotic expansion in powers of $1/g$ at
    which we compute $\av{F}$.}
\label{tab:values_param}
\end{table}

This game of parameters explains how, with additional work, we can prove a spectral gap
$2/9 - \epsilon$ using the results in this article in \cite{anantharaman_29}: that threshold
corresponds to fully understanding the second-order term, as is done here. In the companion
  paper, we understand the structure of terms of all orders, leading to the optimal spectral gap
  $1/4 - \epsilon$.

\subsubsection{The issue of the trivial eigenvalue}
\label{sec:trivial_eig}

Unfortunately, the contribution of the trivial eigenvalue $\lambda_0 = 0$, for
which $r_0 = i / 2$, will always be the dominant term in the Selberg trace
formula. Indeed, $\hat{h}_L(i/2)$ grows almost like $e^{L/2}$ by
\cref{eq:fourier_h_L}. This is much bigger than the size $e^{\alpha L}$ we need
to bound it with in order to prove that $\lambda_1 \geq 1/4 - \alpha^2 - \epsilon$ with high
probability.  Actually, this rate of growth $e^{L/2}$
is exactly what we obtain by using Huber's counting result \cite{huber1974} on
the number of closed geodesics $\leq L$.

As a consequence, the method sketched in \cref{sec:spectr-gap-expon} will
necessarily fail, if one does not find a mechanism to deal with the contribution
of the trivial eigenvalue $\lambda_0 = 0$. The fact that the spectral gap only
appears as a sub-dominant contribution in the trace method, which is hidden by a
much bigger leading order, is always a challenge in spectral gap problems, see
\cite{friedman2003,bordenave2020} for instance in the case of graphs.

In \cite{wu2022,lipnowski2021}, when proving that $\lambda_1 \geq 3/16-\epsilon$
typically, both teams rely on quite a miraculous phenomenon. They observe that
the contribution of the trivial eigenvalue, $\hat{h}_L(i/2)$, and the average of
the term corresponding to primitive \emph{simple} geodesics in the Selberg trace
formula are very close at the first order in $1/g$. Indeed, by using our
first-order approximation for simple geodesics, \cref{prop:simple}, and the
value of $f_0^{\mathbf{s}}$ from \cref{rem:first_order_simple}, we obtain that
\begin{align*}
  \left\langle \frac{\ell \, h_L(\ell)}{2 \sinh \div{\ell}} \right\rangle_g^{\mathbf{s}}
  & = \int_{0}^{+ \infty} \frac{\ell \, h_L(\ell)}{2 \sinh \div{\ell}} \,
    \frac{4}{\ell} \sinh^2 \div{\ell} \d \ell
  + \O{\frac{L^{c} e^{\frac L 2}\norminf{h_L}}{g}}\\
  & = \underbrace{2 \int_{0}^{+ \infty} h_L(\ell) \cosh \div{\ell} \d \ell}_{\hat{h}_L(i/2)}
  + \O{\norminf{h_L} \paren*{1+\frac{L^{c} e^{\frac L 2}}{g}}}.
\end{align*}
It is difficult to see how this approach could still function beyond the
first-order estimate, and if it did, it would require tremendous effort and very
accurate computation of all the coefficients appearing in the asymptotic
expansion.

{
We follow a fundamentally different approach to the one used in~\cite{wu2022,lipnowski2021},
which is more robust and ultimately allows us to reach the optimal bound
$\lambda_1 \geq \frac 14 - \epsilon$.}  The idea is to modify our test function to \emph{create a
  cancellation} at the trivial eigenvalue $\lambda_0 = 0$. More precisely, we want to apply the
Selberg trace formula to a function of Fourier transform $(\frac 14 + r^2)^m \hat{h}_L(r)$, which
therefore has a zero of order $m$ at $r_0=i/2$. This is achieved by considering the new test
function $\D^m h_L$, where $\D$ is the differential operator $\frac 14 - \partial^2$.  We prove the
following reformulation of the spectral gap problem.

\begin{lem}
  \label{lem:selberg_reformulated}
  Let $h$ be a function satisfying the hypotheses of \cref{nota:h_L}, and let us
  fix real numbers $\alpha \in (0, 1/2)$ and $A \geq 1$. For any $\delta > 0$,
  any $0 < \epsilon < \frac 14 - \alpha^2$, any integer $m \geq 1$, there exists a
  constant $C = C(h,\alpha, A, \epsilon, \delta, m)$ such that, for any large
  enough integer $g$ and for the length-scale $L = A \log g$,
  \begin{equation*}
    \Pwp{\delta \leq \lambda_1 \leq \frac 1 4 - \alpha^2 - \epsilon} 
    \leq \frac{C}{g^{(\alpha + \epsilon) A}}
    \avb{\ell \, e^{- \frac \ell 2} \, \mathcal{D}^m h_L(\ell) }
    + C \, \frac{g (\log g)^2}{g^{(\alpha + \epsilon) A}} \cdot
  \end{equation*}
\end{lem}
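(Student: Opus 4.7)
The strategy is to apply the machinery of \cref{lem:bound_r1} not to $h_L$ itself but to the modified test function $F := \mathcal{D}^m h_L$, where $\mathcal{D} = \tfrac14 - \partial^2$. The Fourier transform becomes $\hat F(r) = \bigl(\tfrac14 + r^2\bigr)^m \hat h_L(r)$, which vanishes to order $m$ at $r_0 = i/2$, so the trivial eigenvalue $\lambda_0 = 0$ no longer contributes to the spectral side. This is the cancellation mechanism foreshadowed in the introductory discussion preceding the lemma. The rest of the proof is then a standard Markov-type argument combining the hypothesis $\lambda_1 \geq \delta$ with \cref{lem:growth_h_r1}.

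First I would verify that $F = \mathcal{D}^m h_L$ satisfies the hypotheses of \cref{lem:bound_r1}: it is smooth, even, and supported in $[-L,L]$ (differentiation preserves the support), and on $\mathbb R \cup i[-\tfrac12,\tfrac12]$ one has $\tfrac14 + r^2 \in [0,+\infty)$, so $\hat F(r) = (\tfrac14 + r^2)^m \hat h_L(r) \geq 0$ by positivity of $\hat h_L$. Next I would check that the constant $C_F = c \|F\|_\infty + \|r\hat F(r)\|_\infty$ can be bounded uniformly in $L \geq 1$ by a constant $C(h,m)$: each derivative of $h_L$ picks up a factor $L^{-1}$, so $\|\mathcal D^m h_L\|_\infty$ is bounded in terms of finitely many $\|h^{(j)}\|_\infty$; and the change of variable $s = Lr$ in $r(\tfrac14 + r^2)^m L \hat h(Lr)$ together with the rapid decay of $\hat h$ yields a uniform bound on $\|r\hat F(r)\|_\infty$. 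Applying \cref{lem:bound_r1} to $F$ then gives
\begin{equation*}
  \Ewpo\bigl[\hat F(r_1(X))\bigr]
  \leq \avb{\ell \, \mathcal{D}^m h_L(\ell) \, e^{-\ell/2}} + C(h,m) \, L^2 g.
\end{equation*}

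On the event $\mathcal E := \{\delta \leq \lambda_1 \leq \tfrac14 - \alpha^2 - \epsilon\}$, I would write $r_1 = i s_1$ with $s_1 \in (0,1/2)$ and observe two things: first, $\tfrac14 + r_1^2 = \lambda_1 \geq \delta$, hence $(\tfrac14 + r_1^2)^m \geq \delta^m$; second, by \cref{lem:growth_h_r1}, $\hat h_L(r_1) \geq C_{\alpha,\epsilon} \, e^{(\alpha+\epsilon)L}$. Multiplying these lower bounds gives
\begin{equation*}
  \hat F(r_1(X)) \geq \delta^m \, C_{\alpha,\epsilon} \, e^{(\alpha+\epsilon)L}
  \quad \text{on } \mathcal E.
\end{equation*}
Since $\hat F(r_1) \geq 0$ unconditionally, the elementary inequality $\Ewpo[\hat F(r_1)] \geq \Pwpo(\mathcal E) \cdot \delta^m C_{\alpha,\epsilon} e^{(\alpha+\epsilon)L}$ combined with the upper bound above yields
\begin{equation*}
  \Pwp{\mathcal E}
  \leq \frac{1}{\delta^m C_{\alpha,\epsilon} e^{(\alpha+\epsilon)L}}
  \left( \avb{\ell \, \mathcal{D}^m h_L(\ell) \, e^{-\ell/2}} + C(h,m) L^2 g \right).
\end{equation*}
Substituting $L = A \log g$ so that $e^{(\alpha+\epsilon)L} = g^{(\alpha+\epsilon)A}$ and $L^2 = A^2 (\log g)^2$ yields exactly the announced inequality, with the constant $C = C(h,\alpha,A,\epsilon,\delta,m)$ absorbing $(\delta^m C_{\alpha,\epsilon})^{-1}$, $A^2$, and $C(h,m)$.

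I do not anticipate a significant obstacle: the proof is essentially a combination of \cref{lem:growth_h_r1} and \cref{lem:bound_r1} applied to the modified test function, together with the elementary lower bound $(\tfrac14 + r_1^2)^m \geq \delta^m$ on the event $\mathcal E$. The only mildly delicate point is the uniform-in-$L$ control of $\|r\hat F(r)\|_\infty$, which is handled by the scaling argument above.
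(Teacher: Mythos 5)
Your proposal is correct and follows essentially the same route as the paper's own proof: apply \cref{lem:growth_h_r1} to get a lower bound on $\hat h_L(r_1)$ on the event, multiply by the lower bound $(\tfrac14+r_1^2)^m = \lambda_1^m \geq \delta^m$, apply Markov's inequality using the unconditional non-negativity of $\hat F(r_1)$, and control the resulting expectation by \cref{lem:bound_r1} applied to $F = \mathcal D^m h_L$ after verifying its hypotheses and the uniform-in-$L$ bound on $C_F$. The only cosmetic difference is the order in which the Markov step and the trace-formula step are invoked, which does not change the argument.
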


\begin{rem}
  The parameter $\delta$ is there because, if $\lambda_1$ is very small, then
  $\lambda_1^m \hat{h}_L(r_1)$ will be small. However, this shall not matter,
  because we already know that $\Pwp{\lambda_1 \leq \delta}$ goes to $0$ as
  $g \rightarrow + \infty$ provided $\delta$ is small enough
  \cite{mirzakhani2013,wu2022,lipnowski2021}.
\end{rem}

The mechanism at play here is that, by cancelling the leading order $\hat{h}_L(i/2)$ on the spectral
side of the Selberg trace formula, we create cancellations on the geometric side, so that the
average $\av{\ell e^{- \frac \ell 2} \, \mathcal{D}^mh_L(\ell)}$ is much smaller than it would be
without the application of the differential operator $\mathcal{D}^m$. We shall provide methods to
exhibit such cancellations in \cref{sec:friedm-raman-funct2}, which will further demonstrate the
importance of Friedman--Ramanujan functions for the study of the spectral problem.

The proof is almost the same as the one sketched in \cref{sec:spectr-gap-expon},
with a few small modifications. We provide the details here, because
multiplication by the operator $\mathcal{D}^m$ makes us loose the fact that all
terms in the Selberg trace formula are non-negative, which means we need to
proceed with extra caution.

\begin{proof}
  By \cref{lem:growth_h_r1} applied to the function $h_L$, if
  $\delta \leq \lambda_1(X) \leq 1/4 - \alpha^2 - \epsilon$, then
  $ \lambda_1(X)^m \hat{h}_L(r_1(X)) \geq \delta^m C_{\alpha, \epsilon} \,
  e^{(\alpha + \epsilon)L}$ for a constant $C_{\alpha, \epsilon} > 0$ (depending
  on $h$).  As a consequence, by Markov's inequality, using the non-negativity
  of $\lambda_1^m \hat{h}_L(r_1)$,
  \begin{equation*}
    \Pwp{\delta \leq \lambda_1 \leq \frac 1 4 - \alpha^2 - \epsilon}
      \leq \frac{1}{C_{\alpha,\epsilon} \delta^m g^{(\alpha+\epsilon)A}} \,
      \Ewpo \brac*{\lambda_1^m \hat{h}_L(r_1)}.
  \end{equation*}
  We then apply the Selberg trace formula to the function $\mathcal{D}^m h_L$,
  and more precisely the simplified version we have proven in
  \cref{lem:bound_r1}. Note that the function $\mathcal{D}^m h_L$ satisfies the
  hypotheses of \cref{lem:bound_r1}, because it is even, its support is included
  in the support of $h_L$, which is $[-L,L]$, and its Fourier transform is
  non-negative on $\R \cup i [- \frac 12, \frac 12]$. Then,
  \begin{equation*}
    \Pwp{\delta \leq \lambda_1 \leq \frac 1 4 - \alpha^2 - \epsilon}
    \leq \frac{C_{\alpha,\epsilon,\delta}}{g^{(\alpha+\epsilon)A}} \,
    \paren*{\av{\ell e^{- \frac \ell 2} \, \mathcal{D}^mh_L(\ell)}
      + C_{\mathcal{D}^m h_L} L^2 g}
  \end{equation*}
  and there is a universal constant $c$ such that
  \begin{equation*}
    C_{\mathcal{D}^m h_L}
    \leq c \norminf{\mathcal{D}^m h_L}
    + \norminf*{r \, \left(\frac 14 + r^2 \right)^m \hat{h}_L(r)}.
  \end{equation*}
  This quantity is bounded by a constant depending only on $m$ and
  $h$, because, for $L \geq 1$, the derivatives of
  $h_L(\ell) := h(\ell/L)$ are controlled by the derivatives of $h$,
  and because $\hat{h}_L(r) = L \hat{h}(rL)$, so the second norm is
  bounded by $\norminf{r \left(1/4+r^2\right)^m \hat{h}}$.
\end{proof}

\subsubsection{Friedman--Ramanujan functions and cancellations}
\label{sec:friedm-raman-funct2}

The reason for the introduction of Ramanujan functions in Friedman's work
\cite{friedman2003} is that they are functions which exhibit some
\emph{cancellations} when computing \emph{averages of trace formulas}. We shall
extend this observation to hyperbolic surfaces: we show that Friedman--Ramanujan
functions are functions which create non-trivial on-average cancellations in the
Selberg trace formula.

{
\begin{prp}
  \label{prp:int_FR}
  Let $f \in \cF^{\rK,\rN}_w$. Then, for any integer $m \geq \rK$, any $L \geq 1$,
  \begin{equation}
    \label{eq:int_FR}
    \abso*{\int_0^{L}  f(\ell) \,e^{- \frac \ell 2} \, \D^m h_L(\ell) \d \ell}
    \leq C_{\rN,m}  \norm{f}_{\cF^{\rK,\rN}}^w L^{\rN}.
  \end{equation}
\end{prp}}
 
\begin{rem}
  The constant $C_{\rN,m}$ depends on our fixed test function $h$. More precisely, it
  can be bounded by $C_{\rN,m}' \max_{0 \leq i \leq 2m} \norminf{h^{(i)}}$ for a
  constant $C_{\rN,m}'$ depending only on the integers $\rN$ and $m$.
\end{rem}

In other words, the integral in \cref{prp:int_FR} has at-most polynomial growth in $L$, as opposed
to the exponential growth one could expect, due to the fact that $f(\ell) \, e^{- \frac \ell 2}$ is
of size at most $\ell^{\rK-1} \, e^{\ell/2}$ for large $\ell$. The definition of Friedman--Ramanujan
function is made so that their principal term is always cancelled in integrals of the form
\eqref{eq:int_FR}. The reason for these cancellations is that functions of the form
$p(\ell) \, e^{\ell/2}$ with $p$ a polynomial function of degree $<m$ lie in the kernel of the
operator $\D^m$.

\begin{proof}[Proof of \cref{prp:int_FR}]
{  We write $f(\ell) = p(\ell) \, e^{\ell} + r(\ell)$ for a polynomial function
  $p$ of degree $< \rK$ and a remainder $r$ satisfying
  $\int_0^{L} \abso{r(\ell)} \d \ell
    \leq \norm{f}_{\cF^{\rK,\rN}}^w (L+1)^{\rN-1} e^{\frac L 2}$.

  Let us first estimate the integral of the remainder term. We write 
  \begin{align*}
    \abso*{\int_0^{L} r(\ell) \, e^{-\frac \ell 2} \, \D^mh_L(\ell)  \d \ell}& \leq
    \sum_{n=0}^{\lceil L \rceil -1}\int_{n}^{n+1} |r(\ell) \, e^{-\frac \ell 2} \, \D^mh_L(\ell) |\ \d \ell\\
     & \leq \norminf{\D^mh_L}  
   \sum_{n=0}^{\lceil L \rceil -1} e^{-n/2} \int_{n}^{n+1} |r(\ell)|  \ \d \ell  \\
&\leq   
 \norminf{\D^mh_L}     \norm{f}_{\cF^{\rK,\rN}}^w
   \sum_{n=0}^{\lceil L \rceil -1} e^{-n/2}    (n+2)^{\rN-1} e^{(n+1)/2}\\
  &\leq   
   C_{\rN,m}  \norm{f}_{\cF^{\rK,\rN}}^w L^{\rN} .
  \end{align*}
  We note that the derivatives of $h_L(\ell) = h(\ell/L)$ are bounded by that of $h$ for $L \geq 1$
  so that $\norminf{\D^mh_L} = \O[h,m]{1}$.   }

  We are therefore left with the integral
$ \int_0^L  p(\ell) \, e^{\frac \ell 2} \, \D^mh_L(\ell) \d \ell$.
  We can estimate it using several integration by parts. Indeed, for any smooth
  functions $H_1$, $H_2$, if $\ell \mapsto H_2(\ell)$ is identically equal to
  zero for $\ell \geq L$, then
  \begin{equation}
    \label{eq:ibp_H}
    \int_0^L H_1(\ell) \, \D H_2(\ell) \d \ell = \int_0^L \D H_1(\ell) \, H_2(\ell) \d \ell
    - H_1'(0) H_2(0) + H_1(0) H_2'(0)
  \end{equation}
  because $H_2(L) = H_2'(L)=0$. We apply this $m$ times to our integral, using
  the fact that $h_L$ and its derivatives vanish above $L$, and obtain that
  \begin{equation}
    \label{eq:FR_implies_small_IBP}
     \int_0^L  (p(\ell) \, e^{\frac \ell 2}) \, \D^mh_L(\ell) \d \ell = 
    \int_0^L \D^m [p(\ell) e^{\frac \ell 2}](\ell) \, h_L(\ell) \d \ell
    + \O[h,m]{\norm{f}_{\cF^{\rK,\rN}}^w}
  \end{equation}
  because the boundary terms appearing in \eqref{eq:ibp_H} are
  linear combinations of products of the form
  $\partial^i h_L(0) \, \frac{\partial^{j}}{\partial \ell^j}
  [p(\ell) e^{\ell/2}](0)$ for $i, j \leq 2m$. The integral in the
  right hand side of \cref{eq:FR_implies_small_IBP} is equal to
  zero, because $\D^m[p(\ell) \, e^{\frac \ell 2}] \equiv 0$ as soon
  as $m > \deg p$.
\end{proof}

In order to conclude this section with a strong motivation for the study of
Friedman--Ramanujan functions in the context of the spectral gap question, we
prove the following consequence of \cref{prp:int_FR}. This last statement uses
some notations and results obtained in Sections \ref{sec:topol-types-curv} and
\ref{sec:average-over-local}, but will not be used until Section
\ref{sec:second-order-term}.

\begin{prp}
  \label{cor:FR_implies_small}
  Let $\type$ be a local topological type.  If \ref{chal:FR_type} is true, then for any integer
  $\ord \geq 0$, there exists constants $c_\ord^\type, m_\ord^\type \geq 0$ such that for any large enough $g$,
  any $m \geq m_\ord^\type$, any $\eta >0$ and $L \geq 1$,
  \begin{equation*}
    \avb[\type]{\ell \, e^{- \frac \ell 2} \, \D^m h_L(\ell)}
    = \O[m,\type,\ord,\eta]{L^{c_\ord^\type} + \frac{e^{\frac L 2 + \eta L}}{g^{\ord+1}}}.
  \end{equation*}
\end{prp}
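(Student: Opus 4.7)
The plan is to start from the representation
\[
  \avb[T]{\ell\, e^{-\ell/2}\,\D^m h_L(\ell)} \;=\; \int_0^{+\infty} \ell\, e^{-\ell/2}\,\D^m h_L(\ell)\;\frac{V_g^T(\ell)}{V_g}\,\d\ell,
\]
to insert the asymptotic expansion of $V_g^T(\ell)/V_g$ from \cref{thm:exist_asympt_type_intro} to order $K$, treat the main terms via the cancellation mechanism \cref{prp:int_FR}, and bound the remainder by a direct pointwise estimate. A key preliminary observation is that $h_L(\ell)=h(\ell/L)$ is supported in $[-L,L]$, and since the differential operator $\D=\tfrac14-\partial^2$ produces derivatives scaled by $L^{-2j}$, we have $\|\D^m h_L\|_\infty\leq C_{m,h}$ uniformly in $L\geq 1$, with support $[0,L]$.

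For the remainder term in the expansion, which is $\Ow{e^{(1+\epsilon')\ell}/g^{K+1}}$, I will use the defining property of $\Ow{\cdot}$: any integrand $\phi(\ell)$ integrated against this weak remainder is bounded by a constant times $\sup_\ell |\phi(\ell)|\,e^{(1+\epsilon')\ell}/g^{K+1}$. Applied with $\phi(\ell)=\ell\, e^{-\ell/2}\,\D^m h_L(\ell)$ (supported on $[0,L]$), this gives a contribution $\O[m,h]{L\, e^{(1/2+\epsilon')L}/g^{K+1}}$. Choosing $\epsilon'=\eta/2$, the polynomial factor $L$ is absorbed by $e^{(\eta/2)L}$ uniformly in $L\geq 1$, and one power of $g$ absorbs the remaining constant, yielding the desired $\O{e^{(1/2+\eta)L}/g^{K}}$ for $g$ large enough.

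For each main term $k=|\chi_S|,\ldots,K$, the contribution is $g^{-k}\int_0^L \ell\, f_k^T(\ell)\, e^{-\ell/2}\,\D^m h_L(\ell)\,\d\ell$. I will observe that, under \ref{chal:FR_type}, the function $g_k(\ell):=\ell\, f_k^T(\ell)$ is Friedman--Ramanujan in the weak sense in both cases: when $s_T=1$ this is precisely the hypothesis, and when $s_T=0$ the hypothesis on $f_k^T$ itself combined with the trivial inequality $\ell\leq (\ell+1)\cdot (\ell+1)^{e}/(\ell+1)^{e}$ shows that multiplication by $\ell$ preserves $\FR_w$ at the cost of raising the exponent by $1$. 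Then \cref{prp:int_FR} applied to $g_k$ gives, provided $m>\deg(g_k)$, a bound of the form $C_{m,T,K}\bigl(1+(L+1)^{e_{g_k}+1}\bigr)$. Setting $m_K^T:=1+\max_{|\chi_S|\le k\le K}\deg(g_k)$ and $c_K^T:=1+\max_k(e_{g_k}+1)$, each main term is $\O[m,T,K]{L^{c_K^T}}$, and summing over the finitely many $k$ preserves this bound.

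Combining the two contributions yields the stated estimate. The main conceptual point is the compatibility of the exponent bookkeeping: the bound $V_g^T(\ell)/V_g\lesssim e^{(1+\epsilon)\ell}/g^{K+1}$ in the remainder is compensated by the $e^{-\ell/2}$ factor in the test function, leaving exactly the slack needed to absorb the polynomial $L$ into a tiny exponential $e^{\eta L}$. The only mildly delicate point is ensuring $m$ is chosen larger than the maximal degree of the polynomials attached to $f_k^T$ for $k\leq K$, which is why $m_K^T$ depends on $K$ and $T$; this is harmless since the statement allows $m\geq m_K^T$. No further results beyond \cref{thm:exist_asympt_type_intro}, \cref{prp:int_FR} and elementary manipulations of $\FR_w$ are needed.
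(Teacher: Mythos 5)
Your proof is correct and follows essentially the same route as the paper: insert the asymptotic expansion from \cref{thm:exist_asympt_type_intro}, estimate the weak remainder directly against the test function supported in $[0,L]$, and apply \cref{prp:int_FR} to the Friedman--Ramanujan main terms. The only cosmetic differences are that you apply \cref{prp:int_FR} term by term rather than to the single combined function $\sum_{k<K}\ell f_k^T(\ell)/g^k$, and you spell out the harmless step (which the paper leaves implicit) that for $s_T=0$ multiplying $f_k^T$ by $\ell$ stays in $\FR_w$ at the cost of one more unit of exponent.
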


This result is our motivation to prove \ref{chal:FR_type} in all generality, {which we do in
  the second article}.  Indeed, recall that we presented in
\cref{lem:selberg_reformulated} a reformulation of the trace method, where we reduced the spectral
gap problem to proving that the average $\av{\ell \, e^{- \frac \ell 2} \, \D^m h_L(\ell)}$ is
negligible compared to $g^{(\alpha+\epsilon)A}$, where $L = A \log g$, $A = 2(\ord+2) = 1/\alpha$,
as explained in \cref{tab:values_param}. Here, \cref{cor:FR_implies_small} tells us that, \emph{if
  \ref{chal:FR_type} is true for a local type $\type$}, then this objective is attained for the
contribution of geodesics of local type $\type$ to the overall geometric average
$\av{\ell \, e^{- \frac \ell 2} \, \D^m h_L(\ell)}$.

For now, we have proved that \ref{chal:FR_type} holds for the type ``simple'' in \cref{prop:simple},
and will extend it to any loop filling a pair of pants or once-holed torus in
Sections~\ref{sec:generalised_convolutions} and~\ref{sec:other-geodesics}. In particular,
\cref{cor:FR_implies_small} is true in these cases.

\begin{proof}[Proof of \cref{cor:FR_implies_small}]
  Let $\ord \geq 0$. By \cref{thm:exist_asympt_type_intro},
  \begin{equation*}
    \avb[\type]{\ell \, e^{- \frac \ell 2} \, \D^m h_L(\ell)}
    = \int_{0}^L  F^\type_{g,\ord}(\ell) \, e^{- \frac \ell 2} \, \D^m h_L(\ell) \d \ell
    + \O[\type,\ord,\eta]{\frac{\norminf{e^{\frac \ell 2 + \eta \ell} \D^m h_L}}{g^{\ord+1}}},
  \end{equation*}
  where $F_{g,\ord}^\type(\ell) := \sum_{k=0}^{\ord} \ell f_k^\type(\ell)/g^k$. {By hypothesis,
  $F_{g,\ord}^\type$ is a Friedman--Ramanujan function in the weak sense. More precisely, for every
  integer $k$, \ref{chal:FR_type} tells us that there exists $m_k^\type$, $c_k^\type$ such that
  $\ell f_{k}^\type(\ell) \in \cF_w^{m_k^\type,c_k^\type}$. We assume w.l.o.g. that these indices
  are increasing functions of the order $k$. Then, by linearity,
  $F_{g,\ord}^\type \in \cF_w^{m_\ord^\type,c_\ord^\type}$ and furthermore
  \begin{equation*}
    \norm{F_{g,\ord}^\type}^w_{\cF^{m_{\ord}^\type,c_\ord^\type}}
    \leq \sum_{k=0}^{\ord} \norm{\ell f_k^{\type}(\ell)}^w_{\cF^{m_{\ord}^\type,c_\ord^\type}} =
    \O[\type,\ord]{1}
  \end{equation*}
  where here we use the fact that the embedding $\cF_w^{\rK,\rN} \rightarrow \cF_w^{\rK',\rN'}$ is
  continuous as soon as $\rK \leq \rK'$ and $\rN \leq \rN'$.}  We apply \cref{prp:int_FR} to
conclude.
\end{proof}

%%%Local Variables: 
%%% mode: latex
%%% TeX-master: "main"
%%% End: 

\section{Local topological types of loops}
\label{sec:topol-types-curv}

One of the aims of this article is to generalise methods to compute averages for
\emph{simple} geodesics to more elaborate topologies. In order to do so, we need
to introduce a few notations and concepts related to non-simple closed geodesics
on a surface.

\subsection{Surface filled by a loop}

A challenge faced when studying general loops is that the machinery developed
by Mirzakhani in \cite{mirzakhani2007,mirzakhani2013} only applies to
\emph{multi-curves}, i.e. families of \emph{simple} disjoint loops. A way around this
difficulty, already used in \cite{mirzakhani2019,wu2022,lipnowski2021}, is to
associate to any loop a surface that it fills, using the following procedure.

\begin{defa}
  \label{def:surf_filled}
  Let $X$ be a compact hyperbolic surface, and $\gamma$ be a loop on $X$. We
  assume that~$\gamma$ is in minimal position, i.e. that it minimises the number
  of self-intersections in its homotopy class.  We define the \emph{surface
    $S(\gamma)$ filled by $\gamma$} the following way.
  \begin{enumerate}
  \item We take a regular neighbourhood of $\gamma$ in $X$,
    $\mathcal{N}_\epsilon(\gamma) := \{ x \in X \, : \, \dist(x,\gamma) < \epsilon \}$ for
    $\epsilon>0$ small enough so that $\mathcal{N}_\epsilon(\gamma)$ retracts to $\gamma$.
  \item The bordered surface $X \setminus \mathcal{N}_\epsilon(\gamma)$ has
    $q_0 \geq 1$ connected components $C_1 \sqcup \ldots \sqcup C_{q_0}$. We
    take
    \begin{equation*}
      S(\gamma) := \mathcal{N}_\epsilon(\gamma) \cup \bigcup_{i: C_i \text{ is a disk}} C_i,
    \end{equation*}
    i.e. we add every disk to $\mathcal{N}_\epsilon(\gamma)$, to form $S(\gamma)$.
  \end{enumerate}
\end{defa}

The surface $S(\gamma)$ is a subsurface of $X$, possibly with a boundary, filled
by $\gamma$. It does not depend on the choice of $\epsilon >0$, in the sense
that the filled surfaces obtained using two small values of $\epsilon$ are
isotopic. Similarly, the metric on $X$ is solely used to define the regular
neighbourhood, and replacing it by another metric yields the same filled surface,
up to isotopy. The notion of filled surface only depends on the topology of the
loop $\gamma$ within the topological surface $X$.

The boundary of $S(\gamma)$ is a family of simple loops, which we orient so that $S(\gamma)$ lies on
the left side of each boundary components.  The motivation for introducing $S(\gamma)$ is that the
boundary the surface filled by $\gamma$ is (almost) a multi-curve, and can hence be dealt with using
Mirzakhani's tools (almost because there are two boundary loops $\gamma_1$ and $\gamma_2$ such that
$\gamma_1$ is homotopic to $\gamma_2^{-1}$ whenever there is a cylinder in $X \setminus S(\gamma)$).

\begin{figure}[h]
  \centering
  \begin{subfigure}[b]{0.33\textwidth}
    \centering
    \includegraphics[scale=0.45]{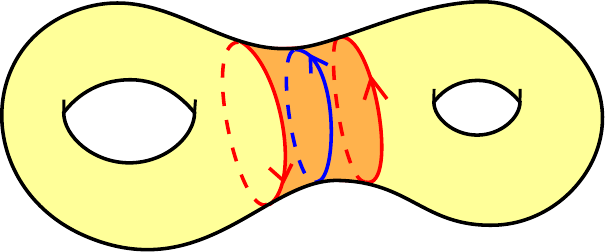}
    \caption{A simple loop.}
    \label{fig:filled_simple}
  \end{subfigure}%
  \begin{subfigure}[b]{0.67\textwidth}
    \centering
    \includegraphics[scale=0.45]{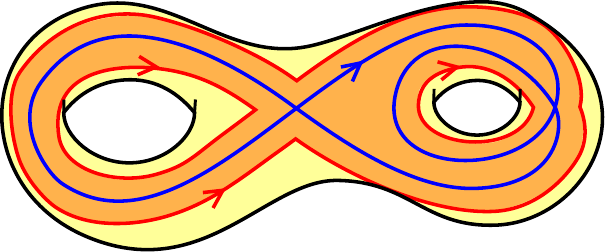}
    \includegraphics[scale=0.45]{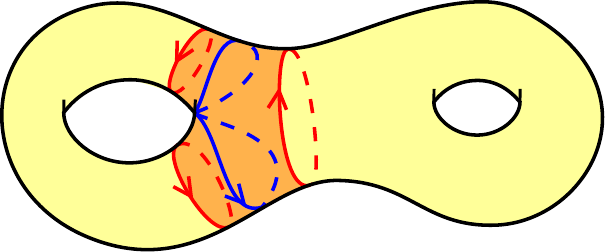}
    \caption{Two non-simple loops filling a pair of pants.}
    \label{fig:filled_non_simple}
  \end{subfigure}%
  \caption{Examples of filled surfaces.}
  \label{fig:filled_ex}
\end{figure}

\begin{exa}
  The surface filled by a simple non-contractible loop is a cylinder. If $\gamma$ is a loop with exactly
  one self-intersection, then $S(\gamma)$ is a pair of pants.
\end{exa}

Three examples of loops and their filled surfaces are represented in \cref{fig:filled_ex}. Note
that, in the first example of \cref{fig:filled_non_simple}, we added a disk to the regular
neighbourhood to form the filled surface. In the last picture, we replaced the filled surface by
another isotopic surface, for the sake of readability.

We prove the following, using a classic result of Graaf--Schrijver~\cite{graaf1997}.

\begin{lem}
  \label{lem:equivalent}
  Let $X$ be a compact hyperbolic surface.  If $\gamma$ and $\gamma'$ are two  loops in
  minimal position in $X$ in the same homotopy class, then there exists an isotopy of $X$
  sending $S(\gamma)$ on $S(\gamma')$ and $\gamma$ on a loop  homotopic to $\gamma'$ in
  $S(\gamma')$.
\end{lem}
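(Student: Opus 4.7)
The strategy is to apply the de Graaf--Schrijver theorem, which asserts that two closed curves in minimal position on a surface that are freely homotopic can be connected by a finite sequence of ambient isotopies of $X$ and triangle (third Reidemeister) moves. It therefore suffices to establish the conclusion when $\gamma$ and $\gamma'$ differ by a single such elementary modification, and then concatenate the resulting isotopies. The case of an ambient isotopy is essentially tautological: the construction of $S(\gamma)$ depends only on a regular neighbourhood of $\gamma$ and on the topology of $X \setminus \gamma$, both of which are transported by any ambient isotopy of $X$.

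For a triangle move, there is a disk $D \subset X$ in which $\gamma$ consists of three arcs pairwise crossing and bounding a triangular region $T$, and the move replaces $T$ by an isotopic triangle $T'$ on the opposite side of the three crossings; outside $D$, $\gamma$ and $\gamma'$ agree. The key observation is that the minimal-position hypothesis forces $T$ to be a topological disk component of $X \setminus \gamma$: otherwise another arc of $\gamma$ would enter $T$ and, together with two sides of $T$, bound a bigon whose removal would decrease the number of self-intersections, contradicting minimality. Hence $T$ is one of the disks added in the construction of $S(\gamma)$, and symmetrically $T'$ is added in the construction of $S(\gamma')$.

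To conclude, I would enlarge $D$ slightly into a disk $D' \supset D$ contained in a small neighbourhood of the three arcs involved. Outside $D'$, the two filled surfaces already coincide, while inside $D'$ both $S(\gamma) \cap D'$ and $S(\gamma') \cap D'$ are topological disks agreeing along $\partial D'$. The Alexander trick then yields an ambient isotopy of $X$ supported in $D'$ sending $S(\gamma)$ onto $S(\gamma')$. Its image of $\gamma$ coincides with $\gamma'$ outside $D'$ and differs from $\gamma'$ only on a sub-disk of $S(\gamma')$, so it is freely homotopic to $\gamma'$ inside $S(\gamma')$, as required.

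The main obstacle is the local case analysis inside $D$: depending on the precise configuration of the three arcs and on which complementary components of $X \setminus \gamma$ abut $T$, one has to verify that replacing $T$ by $T'$ preserves the disk-or-non-disk status of every neighbouring complementary region, so that exactly the same set of non-triangle pieces is absorbed into the filled surface on each side. Minimal position (via the bigon criterion) eliminates the degenerate configurations, but the bookkeeping of these local pictures is the most technical ingredient of the argument.
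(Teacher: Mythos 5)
Your proposal follows the same route as the paper: reduce via de Graaf--Schrijver to isotopies and single triangle moves, then observe that the filled-in triangle $T$ (respectively $T'$) is precisely one of the disks absorbed into $S(\gamma)$ (respectively $S(\gamma')$), and conclude by a local ambient isotopy. One small inaccuracy: the fact that $T$ is a contractible component of $X\setminus\gamma$ is immediate from the definition of a triangle move (it takes place in a disk $D$ meeting $\gamma$ in exactly three arcs), and your attempt to derive it from minimal position via the bigon criterion is not quite right as stated---an arc entering and leaving $T$ through two distinct sides would cut off a smaller triangle, not a bigon, so no innermost bigon is produced. This does not affect the conclusion, since no other strand of $\gamma$ can enter $D$ in the first place, but the justification should be replaced by the simpler one.
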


As a consequence, the surface $S(\gamma)$ filled by $\gamma$ is well-defined (up to isotopy) for a
homotopy class $\gamma \in \mathcal{G}(X)$. We shall see in the proof that this is true thanks
to the fact that we added all disks to the regular neighbourhood of $\gamma$.

\begin{proof}
  First, we observe that if $\gamma$ and $\gamma'$ are \emph{isotopic}, that is
  to say if there exists an isotopy $\phi_t : X \rightarrow X$,
  $0 \leq t \leq 1$, such that $\phi_1 \circ \gamma = \gamma'$, then the claim
  is trivially satisfied. Indeed, in this case, for small enough $\epsilon >0$,
  we can modify the isotopy $(\phi_t)_t$ to obtain a new isotopy $(\psi_t)_t$
  which coincides with $(\phi_t)_t$ on all points of $\gamma$ and sends the
  regular neighbourhood $\mathcal{N}_\epsilon(\gamma)$ onto the regular
  neighbourhood $\mathcal{N}_\epsilon(\gamma')$ of $\gamma'$. Then, the isotopy
  $\psi_1$ is an homeomorphism from each connected components of
  $X \setminus \mathcal{N}_{\epsilon}(\gamma)$ to each component of
  $X \setminus \mathcal{N}_{\epsilon}(\gamma')$, and in particular sends
  contractible components to contractible components. Hence, the isotopy
  $(\psi_t)_t$ sends $S(\gamma)$ on $S(\gamma')$ and $\gamma$ on $\gamma'$, and
  our claim is satisfied.
  
  More generally, by \cite{graaf1997}, because $\gamma$ and $\gamma'$ are  homotopic and both
  in minimal position, there is a finite sequence of third Reidemeister moves that send $\gamma$ to
  a loop $\tilde \gamma$ isotopic to $\gamma'$. As a consequence, we simply need to prove our claim
  for two loops $\gamma$, $\gamma'$ differing by a third Reidemeister move, as represented in
  \cref{fig:reid_move_curves}.
  \begin{figure}[h]
    \centering
    \includegraphics[scale=0.5]{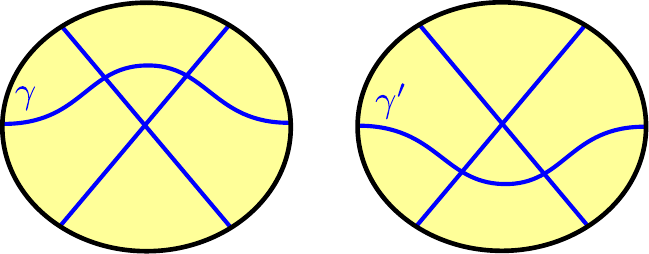} \hspace{1cm}
    \includegraphics[scale=0.5]{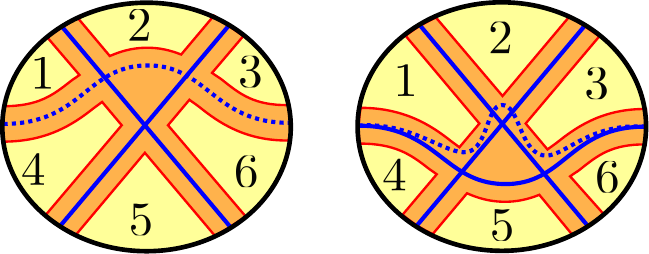}
    \caption{On the left hand side, a portion of two loops $\gamma$, $\gamma'$ differing only by a
      third Reidemeister move. On the right hand side, in orange, their respective regular
      neighbourhood, to which we added the central disk. We labelled the (possibly non-distinct)
      connected components of the complement of this region to highlight their correspondence.}
    \label{fig:reid_move_curves}
  \end{figure}

  We observe on \cref{fig:reid_move_curves} that, thanks to the
  addition of the central contractible component in the construction
  of $S(\gamma)$ and $S(\gamma')$, there exists an isotopy
  $(\phi_t)_{0 \leq t \leq 1}$ (identically equal to the identity
  outside the neighbourhood where the Reidemeister move occurs)
  sending $S(\gamma)$ to $S(\gamma')$. The image of $\gamma$ by such
  an isotopy is represented by the dotted line in the last part of
  \cref{fig:reid_move_curves}, and it is clear that
  $\phi_1 \circ \gamma$ and $\gamma'$ are  homotopic within
  $S(\gamma')$.
\end{proof}

The following observation on the boundary length of $S(\gamma)$ will be useful.

\begin{lem}
  \label{lem:length_fill} Let $X$ be a compact hyperbolic surface, and $\gamma \in
  \mathcal{G}(X)$. Let $S_X(\gamma)$ denote the surface isotopic to $S(\gamma)$ in $X$ with
  geodesic boundary. Then, $\ell_X(\partial S_X(\gamma)) \leq 2 \ell_X(\gamma)$.
\end{lem}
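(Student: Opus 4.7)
The plan is to bound the length of each boundary component of $\hat S(\gamma)$ by the length of a piecewise-geodesic representative obtained from arcs of $\gamma$ itself, and then to invoke a simple combinatorial bookkeeping that accounts each arc of $\gamma$ at most twice — once per side.

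First I would realise $\gamma$ by its geodesic representative in $X$, which is automatically in minimal position. Its self-intersection points form a finite vertex set; together with the geodesic arcs of $\gamma$ between consecutive self-intersections, they form a $4$-valent embedded graph $G \subset X$. For $\epsilon > 0$ small enough, the regular neighbourhood $\mathcal{V}_\epsilon(\gamma)$ is a ribbon neighbourhood of $G$, and its boundary is a disjoint union of simple closed curves in $X$. By the construction of $S(\gamma)$, every component of $X \setminus \mathcal{V}_\epsilon(\gamma)$ that is a disk has been absorbed, so $\partial S(\gamma) \subseteq \partial \mathcal{V}_\epsilon(\gamma)$.

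The bookkeeping step is the heart of the argument: each boundary component $b$ of $\partial \mathcal{V}_\epsilon(\gamma)$ consists, up to short corners near the vertices of $G$, of long segments running parallel to a sequence of edges of $G$, each edge being traversed along exactly one of its two sides. Let $L(b)$ denote the total $X$-length of the edges of $G$ paralleled by $b$. Since every edge of $G$ has two sides and each side is paralleled by exactly one component of $\partial \mathcal{V}_\epsilon(\gamma)$, summing yields
\begin{equation*}
  \sum_{b \subset \partial \mathcal{V}_\epsilon(\gamma)} L(b) = 2\, \ell_X(\gamma).
\end{equation*}

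To conclude, I would observe that each boundary component $b$ of $S(\gamma)$ is freely homotopic in $X$ — via a homotopy contained in $\mathcal{V}_\epsilon(\gamma)$, pushing each parallel segment onto its underlying arc — to the piecewise-geodesic loop traced by the arcs of $\gamma$ it parallels, a loop of $X$-length exactly $L(b)$. The corresponding boundary component $\hat b$ of $\hat S(\gamma)$ is the geodesic representative of $b$ in its free homotopy class and therefore minimises length, so $\ell_X(\hat b) \leq L(b)$. Summing and using the identity above,
\begin{equation*}
  \ell_X(\partial \hat S(\gamma)) = \sum_b \ell_X(\hat b) \leq \sum_{b \subset \partial S(\gamma)} L(b) \leq \sum_{b \subset \partial \mathcal{V}_\epsilon(\gamma)} L(b) = 2\, \ell_X(\gamma).
\end{equation*}
The only delicate point is the ribbon-graph bookkeeping — ensuring that the parallel structure of $\partial \mathcal{V}_\epsilon(\gamma)$ really does assign each edge-side to exactly one boundary component — but this is standard for transverse $4$-valent graphs and is handled by taking $\epsilon$ small enough that the ribbon model is exact at every vertex and along every edge.
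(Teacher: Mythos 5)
Your proof is correct and takes essentially the same approach as the paper: both bound $\ell_X(\partial\hat S(\gamma))$ by the length of the boundary of the regular neighbourhood, which is (approximately) $2\,\ell_X(\gamma)$ because each arc of $\gamma$ contributes once per side, and then use that absorbing disk components and passing to geodesic representatives only decrease length. Your ribbon-graph bookkeeping gives the factor $2$ exactly by pushing each component onto a piecewise-geodesic loop of length $L(b)$, whereas the paper simply picks $\epsilon$ small so the boundary has length $\leq 2\ell_X(\gamma)+\eta$ and lets $\eta\to 0$ — a cosmetic difference, not a different route.
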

\begin{proof}
  For any $\eta > 0$, we can pick the $\epsilon$ for defining the regular
  neighbourhood of $\gamma$ such that the length of its boundary is
  $\leq 2 \ell_X(\gamma) + \eta$. Then, the length of the boundary only
  diminishes when adding disks to the complement, and when replacing every
  component of the boundary by a geodesic representative, so
  $\ell_X(\partial S_X(\gamma)) \leq 2 \ell_X(\gamma) + \eta$. We obtain the
  result by letting $\eta \rightarrow 0$.
\end{proof}

\subsection{Definition of local topological type}
\label{sec:def_type}

Let us define a notion of \emph{local (topological) type}. Examples of local types are presented in
\cref{fig:def_type}, the type ``simple'' being the leftmost one.

\begin{figure}[h]
    \centering
    \includegraphics[scale=0.42]{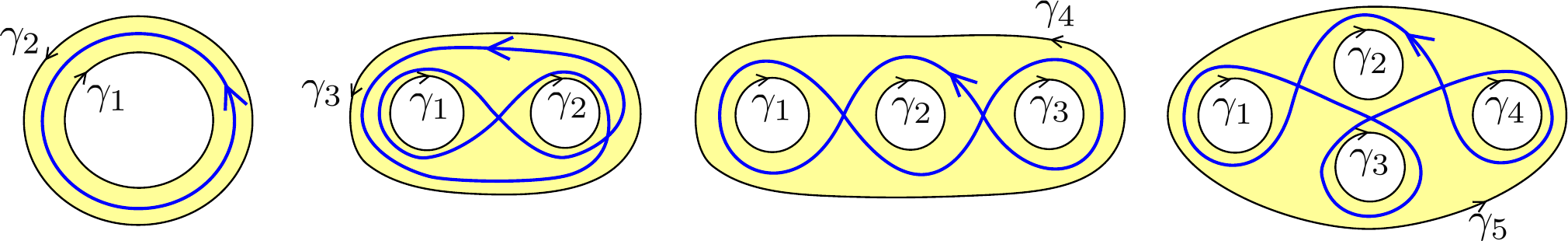}
  \caption{Examples of local topological types.}
  \label{fig:def_type}
\end{figure}

\begin{nota}
  Let $(g_\Sf, n_\Sf)$ be two non-negative integers. We assume that the absolute Euler
  characteristic $\chi(\Sf) := 2g_\Sf-2+n_\Sf$ is positive or that $(g_\Sf,n_\Sf)=(0,2)$. We shall
  associate to the pair $(g_\Sf,n_\Sf)$ a \emph{fixed} smooth oriented surface $\Sf$ of signature
  $(g_\Sf,n_\Sf)$. We further fix a numbering of the $n_\Sf$ boundary components of $\Sf$, and
  denote for each $1 \leq i \leq n_\Sf$ as $\gamma_i$ the $i$-th boundary loop of $\Sf$, oriented so
  that $\Sf$ lies on the left-hand-side of $\gamma_i$.  The data of the pair of integers
  $(g_\Sf, n_\Sf)$, or equivalently of the surface $\Sf$, is called a \emph{filling type}.
\end{nota}

\begin{defa}
  \label{def:local_type}
  A \emph{local loop} is a pair $(\Sf,\curve)$, where $\Sf$ is a filling type and $\curve$ is a
  primitive loop filling $\Sf$. Two local loops $(\Sf,\curve)$ and $(\Sf',\curve')$ are said to be
  \emph{locally equivalent} if $\Sf=\Sf'$ (i.e.  $g_\Sf = g_{\Sf'}$ and $n_\Sf=n_{\Sf'}$), and there
  exists a positive homeomorphism $\psi : \Sf \rightarrow \Sf$, possibly permuting the boundary
  components of $\Sf$, such that $\psi \circ \curve$ is homotopic to~$\curve'$. This defines an
  equivalence relation $\eq$ on local loops. Equivalence classes for this relation are denoted as
  $\type=\eqc{\Sf, \curve}$ and called \emph{local (topological) types} of loops.
\end{defa}

\begin{exa}
  \begin{figure}[h]
    \centering
    \includegraphics[scale=0.5]{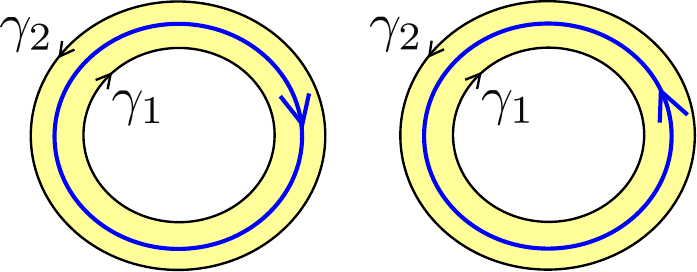}
    \caption{Two representatives of the local type ``simple''.}
    \label{fig:def_type_0-2}
  \end{figure}
  There is exactly one local topological type filling a cylinder (i.e. of filling type $(0,2)$),
  which we shall refer to as the local type ``\emph{simple}''. Indeed, there are exactly two homotopy
  classes of primitive loops filling a cylinder, represented in \cref{fig:def_type_0-2}. Taking a
  positive homeomorphism permuting the two boundary components of the cylinder allows to observe that
  these two local loops are equivalent.
\end{exa}

\begin{exa}
  \begin{figure}[h]
    \centering
    \includegraphics[scale=0.5]{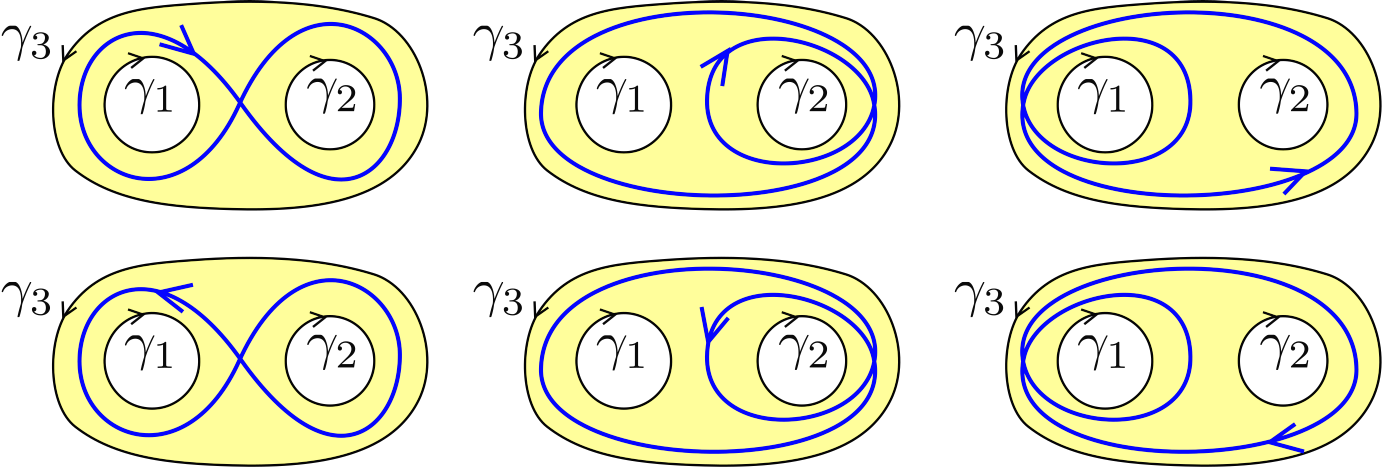}
    \caption{Six representatives of the local type ``eight''.}
    \label{fig:def_type_0-3-eight}
  \end{figure}
  We call ``\emph{figure-eight}'' the local type of which six representatives are depicted in
  \cref{fig:def_type_0-3-eight}. The figure-eight is of filling type $(0,3)$, i.e. it fills a pair
  of pants.  As in the previous example, these different representatives can be shown to be equivalent
  by applying a positive homeomorphism of the pair of pants permuting its boundary components. More
  precisely, the six representatives correspond to the six permutations of the set $\{1,2,3\}$; in
  reading order, $\id$, $(123)$, $(132)$, $(12)$, $(13)$, $(23)$.
\end{exa}

{
\begin{nota}
  We define the absolute Euler characteristic of a local type $\chi(\type) := \chi(\Sf)$ as the
  absolute Euler characteristic of its filling type.
\end{nota}

In particular, the local type ``simple'' is the only local type of Euler characteristic $0$. }

\subsection{Local topological type of a loop on a surface of genus $g$}
\label{sec:surf-fill-clos}

Let us now define a notion of local topological type for loops on a compact
hyperbolic surface of genus $g$, for $g \geq 2$. We shall do this for loops on
the base surface $S_g$, which we endow with a fixed hyperbolic metric for the
purpose of defining regular neighbourhoods.

\begin{defa}
  Let $\eqc{\Sf, \curve}$ be a local topological type.  A loop $\gamma$ on the base surface~$S_g$ is
  said to \emph{belong to the local topological type}~$\eqc{\Sf, \curve}$ if there exists a positive
  homeomorphism $\phi : S(\gamma) \rightarrow \Sf$ such that the loops $\phi \circ \gamma$ and
  $\curve$ are homotopic in~$\Sf$. In that case, we write $\gamma \sim \eqc{\Sf, \curve}$.  We
  say that two loops $\gamma$, $\gamma'$ on $S_g$ are \emph{locally equivalent}, and write
  $\gamma \eq \gamma'$, if $\gamma$ and $\gamma'$ belong to the same local topological type.
\end{defa}

It is clear that the definition does not depend on the choice of the representative $(\Sf,\curve)$ in
the local topological type. The fact that it does not depend on the choice of representative in the
homotopy classes $\gamma$ and $\gamma'$ either is a consequence of \cref{lem:equivalent}.

\begin{exa}
  \label{ex:simple_loc_eq}
  The loops on the base surface $S_g$ belonging to the local type ``simple'' are exactly all 
  simple loops on $S_g$.
  Indeed, take $\gamma \in \mathcal{G}(S_g)$.
  \begin{itemize}
  \item If $\gamma$ is simple, then its regular neighbourhood is a
    cylinder. Because $\gamma$ is not contractible, $S(\gamma)$
    is exactly this cylinder, and then $\gamma$ is its core. It
    directly follows that $\gamma$ belong to the local type
    ``simple''.
  \item If $\gamma$ belongs to the local type ``simple'', then $S(\gamma)$ is a topological
    cylinder, and $\gamma$ is homotopic to its core. As a consequence, the homotopy class
    $\gamma$ admits a simple representative, which means that $\gamma$ is simple.
  \end{itemize}
\end{exa}
 
\subsection{Comparison with mapping-class-group equivalence}
\label{sec:comp-with-mapp}

\Cref{ex:simple_loc_eq} shows that the notion of local equivalence and $\mcg_g$-equivalence do not
coincide. Indeed, there are several distinct orbits of simple loops for $\mcg_g$: non-separating
loops, and loops separating two surfaces on their left and right side, of respective signatures
$(i,1)$ and $(g-i,1)$ for a $1 \leq i \leq g-1$. This is a general fact: equivalence classes for
$\mcg_g$ are included in equivalence classes for $\eq$, as shown in the following lemma.

\begin{lem}
  \label{lem:local_vs_mcg}
  Let $\gamma$, $\gamma' \in \mathcal{G}(S_g)$. If $\gamma \eqmcg \gamma'$, then
  $\gamma \eq \gamma'$.
\end{lem}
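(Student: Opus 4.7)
The plan is to transport the mapping-class-group equivalence down to a positive homeomorphism between the filled surfaces $S(\gamma_g)$ and $S(\gamma_g')$ carrying $\gamma_g$ onto a loop freely homotopic to $\gamma_g'$. Once such a homeomorphism is produced, composing it with the reference homeomorphism $S(\gamma_g) \to S$ realising $\gamma_g \in \eqc{S, \gamma}^g$ will exhibit $\gamma_g' \in \eqc{S, \gamma}^g$ as well, which is exactly $\gamma_g \eq \gamma_g'$.

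The first step is to replace $\gamma_g$ and $\gamma_g'$ by minimal-position representatives, and to fix a positive homeomorphism $\psi : S_g \to S_g$ representing the mapping class, so that $\psi \circ \gamma_g$ is freely homotopic to $\gamma_g'$. Because homeomorphisms preserve the number of self-intersections, $\psi \circ \gamma_g$ is automatically in minimal position. Pushing the regular-neighbourhood construction of \cref{def:surf_filled} forward by $\psi$ shows that $\psi(S(\gamma_g))$ is itself a filled surface for $\psi \circ \gamma_g$. Hence, by the isotopy-invariance of the filled surface stated just after \cref{def:surf_filled}, $\psi(S(\gamma_g))$ and $S(\psi \circ \gamma_g)$ are ambient-isotopic in $S_g$.

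The second step is to apply \cref{lem:equivalent} to the pair $(\psi \circ \gamma_g, \gamma_g')$: both are in minimal position and freely homotopic, so the lemma yields an isotopy $(\Phi_t)_{t \in [0,1]}$ of $S_g$ such that $\Phi_1$ sends $S(\psi \circ \gamma_g)$ onto $S(\gamma_g')$ and $\psi \circ \gamma_g$ onto a loop freely homotopic to $\gamma_g'$ inside $S(\gamma_g')$. Combining with the isotopy from Step~1, the positive homeomorphism $F := \Phi_1 \circ \psi$ of $S_g$ satisfies $F(S(\gamma_g)) = S(\gamma_g')$ and sends $\gamma_g$ to a loop freely homotopic to $\gamma_g'$ inside $S(\gamma_g')$. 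Its restriction $F|_{S(\gamma_g)} : S(\gamma_g) \to S(\gamma_g')$ is then the desired positive homeomorphism between filled surfaces, and the conclusion follows directly from the definition of local equivalence (possibly composing with a homeomorphism of $S$ that permutes boundary components to match the chosen numbering).

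The only mildly delicate point is the identification $\psi(S(\gamma_g)) \simeq S(\psi \circ \gamma_g)$ up to ambient isotopy in Step~1. The filled surfaces are constructed from regular neighbourhoods with respect to a fixed hyperbolic metric on $S_g$, so $\psi(S(\gamma_g))$ is a priori a neighbourhood of $\psi \circ \gamma_g$ relative to the pushed-forward metric $\psi_\ast$ rather than the fixed one. This is resolved by the observation right after \cref{def:surf_filled} that two metrics produce isotopic filled surfaces, because the filled surface depends only on the topology of the loop inside the ambient surface.
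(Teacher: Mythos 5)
Your proof is correct and follows essentially the same route as the paper's: take a homeomorphism $\psi$ of $S_g$ representing the mapping class, observe that $\psi$ carries $S(\gamma_g)$ to a filled surface for $\psi \circ \gamma_g$, invoke the well-definedness of filled surfaces up to isotopy (which is \cref{lem:equivalent}) to conclude that this is isotopic to $S(\gamma_g')$, and compose to obtain a homeomorphism of $S_g$ taking $S(\gamma_g)$ onto $S(\gamma_g')$ and $\gamma_g$ to a loop freely homotopic to $\gamma_g'$. The paper's version is terser and treats ``sending $\gamma_g$ on $\gamma_g'$'' at the level of free homotopy classes without spelling out the role of \cref{lem:equivalent} or the metric-independence remark, but these are exactly the points you make explicit; there is no genuine difference in the argument.
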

\begin{proof}
  We assume that there exists a homeomorphism $\phi : S_g \rightarrow S_g$ sending $\gamma$ on
  $\gamma'$. Then, by definition of the filled surface associated to a loop, the image of
  $S(\gamma)$ by $\phi$ is isotopic to $S(\gamma')$. As a consequence, by composition, there
  exists a homeomorphism $\psi : S_g \rightarrow S_g$ sending $S(\gamma)$ on $S(\gamma')$ and
  $\gamma$ on $\gamma'$, which implies our claim.
\end{proof}

The choice of the name ``local equivalence'' comes from the fact that this notion only en-captures
the topology of the filled surface $S(\gamma)$ and of the loop $\gamma$ within it, but does no
say anything about the topology of the complement $S_g \setminus S(\gamma)$.  To the contrary, if
two loops $\gamma, \gamma'$ are in the same $\mcg_g$-orbit, then the topologies of
$S_g \setminus S(\gamma)$ and $S_g \setminus S(\gamma')$ need to be ``the same'', in a way which
will be made more precise in the following section.

\subsection{Realizations of a filling type}
\label{sec:real-fill-type}

In order to describe more precisely the way that a local equivalence class $\eqc{\Sf,\curve}$ is
partitioned in $\mcg_g$-orbits, we introduce the following notion.

\begin{defa}
  \label{def:realizations}
  Let $\Sf$ be a filling type, and $g \geq 2$. We call \emph{realization of $\Sf$
  in $S_g$} any pair $\mathfrak{R} = (\vec{I}, \vec{g})$, where:
  \begin{itemize}
  \item $\vec{I}$ is a partition of $\partial \Sf = \{1, \ldots, n_\Sf\}$ into
    $\mathfrak{q} \geq 1$ non-empty sets $I_1, \ldots, I_\mathfrak{q}$, numbered such that
    $j \mapsto \min I_j$ is an increasing function;
  \item $\vec{g} = (g_1, \ldots, g_\mathfrak{q})$ is a vector of non-negative integers;
  \item for any $1 \leq j \leq \mathfrak{q}$, if  $n_j := \# I_j$, then the absolute Euler
    characteristic $\chi_j := 2g_j-2+n_j$ is positive or $(g_j,n_j)=(0,2)$;
  \item for $\chi(S_g) = 2g-2$ and $\chi(\Sf) := 2g_\Sf-2+n_\Sf$, we have
   \begin{equation}
     \label{eq:add_euler_realization}
     \chi(\Sf) + \sum_{j=1}^\mathfrak{q} \chi_j = \chi(S_g).
   \end{equation}   
 \end{itemize}
 The set of realizations of $\Sf$ in $S_g$ is denoted as $R_g(\Sf)$.
\end{defa}

{Realizations enumerate all possible embeddings of the filling type $\Sf$ (and hence the loop
  $\curve$) into a surface of genus $g$. Indeed, if $\mathfrak{R} =(\vec{I}, \vec{g}) \in R_g(\Sf)$,
  we construct an embedding of $\Sf$ into a compact surface of genus $g$ by gluing, for all
  $1 \leq j \leq \mathfrak{q}$, a surface of signature $(g_j, n_j)$ on the boundary components of
  $\Sf$ belonging in $I_j$.   }

\begin{rem}
  Note that, if $(g_j,n_j) = (0,2)$, then the surface we glue is a cylinder, which corresponds to
  gluing two boundary components of $\Sf$ together. In all other cases, we glue an honest surface of
  negative Euler characteristic to $\partial \Sf$.
\end{rem}

{By \cref{eq:add_euler_realization}, the resulting surface is a compact surface of genus $g$,
  which we identify with our base surface $S_g$. In particular, any loop $\curve$ on $\Sf$ is now
  sent on a loop on the base surface $S_g$, depending on the realization $\mathfrak{R}$.  Different
  realizations might yield mapping-class-group equivalent loops on $S_g$, leading to multiplicities
  in our enumeration, described in the following lemma.

  \begin{lem}
    \label{lem:mult_real}
  Let $(\Sf,\curve)$ be a local loop. Two realizations $\mathfrak{R} = (\vec{I},\vec{g})$ and
  $\mathfrak{R}'=(\vec{I}',\vec{g}')$ of $\Sf$ in $S_g$ for a genus $g \geq 2$ yield the same
  mapping-class-group orbit for the loop $\curve$ in $S_g$ if and only if the following conditions
  are satisfied:
  \begin{itemize}
  \item there exists a positive homeomorphism $\phi : \Sf \rightarrow \Sf$, possibly permuting the
    boundary components of $\Sf$, such that $\phi(\curve)$ and $\curve$ are  homotopic in
    $\Sf$;
  \item for all $1 \leq j \leq \mathfrak{q}$, $\phi$ sends the components of $\partial \Sf$ lying in $I_j$ on the
    components lying in $I_{j'}'$ for a $1 \leq j' \leq \mathfrak{q}$ such that $g_j=g_{j'}'$.
  \end{itemize}
\end{lem}

\begin{rem}
  We notice that, whenever $\phi$ is isotopic to the identity, these conditions imply that
  $\mathfrak{R}=\mathfrak{R}'$ thanks to our numbering convention for $\vec{I}$.  As a consequence,
  several realizations can get associated to the same mapping-class-group orbit of loop, but only in
  the case where the loop $\curve$ has non-trivial symmetries in $\Sf$.
\end{rem}

  The following volume, associated to a realization, will play a key role in our integration formula.
\begin{nota}
  To any realization $\mathfrak{R} \in R_g(\Sf)$ we associate a \emph{volume function}
  \begin{equation*}
    V_{\mathfrak{R}}(x_1, \ldots, x_{n_\Sf})
    = \prod_{j=1}^{\mathfrak{q}} V_{g_j,n_j}(x_i, i \in I_j),
  \end{equation*}
  where we recall that $V_{g',n'}(\cdot)$ is the total volume of the moduli space
  $\cM_{g',n'}(\cdot)$ whenever $2g'-2+n'>0$ (with an additional factor $1/2$ if $(g',n')=(1,1)$)
  and with the convention that $V_{0,2}(x,y) := \frac 1 x \, \delta(x-y)$ where $\delta$ is the
  Dirac delta distribution.
\end{nota}}

\begin{rem}
  We observe that, since $\vec{I}$ is a partition of $\partial \Sf$, we have that
  $\sum_{j=1}^\mathfrak{q} n_j = n_\Sf$, and we can therefore rewrite \cref{eq:add_euler_realization} as
  \begin{equation}
    \label{eq:add_euler_realization_2}
    \sum_{j=1}^\mathfrak{q} g_j + g_\Sf + n_\Sf - \mathfrak{q} = g.
  \end{equation}
\end{rem}

\subsection{Multiplicity of a local type}

We introduce several combinatorial factors associated to a local type, which will allow to remove
the multiplicities described in \cref{lem:mult_real}.

\begin{defa}
  \label{def:def_mult}
  { Let $\type = \eqc{\Sf,\curve}$ be a local topological type.  We call \emph{multiplicity
      of~$\type$} the cardinality $n(\type)$ of the group of positive homeomorphisms (up to isotopy) of
    $\Sf$ stabilizing the homotopy class of $\curve$. We denote as $n_0(\type)$ the cardinality of the
    subgroup of those homeomorphisms which act trivially the boundary components of $\Sf$, and
    $m(\type) = n(\type)/n_0(\type)$ its index.}
\end{defa}

\begin{exa}
  The image of the figure-eight by all permutations of $\{1,2,3\}$ is represented in
  \cref{fig:def_type_0-3-eight}; we see that, in this case, $n(\type) = n_0(\type) =
  1$. {However, in the third example of \cref{fig:def_type}, the multiplicity is $n(\type)=2$,
    whilst $n_0(\type)=1$, because the permutation $(13)$ stabilises the loop $\curve$.}
\end{exa}

%%%Local Variables: 
%%% mode: latex
%%% TeX-master: "main"
%%% End: 

\section{Average over a local type}
\label{sec:average-over-local}

The aim of this section is to define the average $\av[\type]{F}$ of a test function over a local type
$\type$, and to provide a method to express and estimate it. In particular, we shall:
\begin{itemize}
\item provide a formula for $\av[\type]{F}$ in terms of Weil--Petersson volumes in
  \cref{thm:express_average};
\item prove it can be written as a density in \cref{prp:existence_density};
\item expand it in powers of $1/g$ in \cref{thm:exist_asympt_type}.
\end{itemize}
\subsection{Definition}
\label{sec:defin-local-types}

Let us define the average $\av[\type]{F}$ of a test function $F$ over a local topological type $\type$.

\begin{defa}
  We call \emph{test function} any measurable function $F : \R_{\geq 0} \rightarrow \C$ that is
  either bounded and compactly supported, or non-negative.
\end{defa}

\begin{rem}
  Actually, the results in this paper hold for a more general class of test
  functions, we only need to assume that they decay sufficiently fast at
  infinity so that all quantities mentioned converge.
\end{rem}

The invariance of local types by action of the mapping-class group allows us to make the following
definition.

\begin{defa}
  Let $\type$ be a local topological type. For any  test function $F$, any $g \geq 2$, we
  define the \emph{$\type$-average} of $F$ over surfaces of genus $g$ to be
  \begin{equation}
    \label{eq:def_av_T}
    \av[\type]{F} := \Ewp{\sum_{\gamma \sim \type} F(\ell_X(\gamma))}.
  \end{equation}
\end{defa}

We notice that this coincides with the definition of $\av[\mathbf{s}]{F}$ for
the local type ``simple''.  Obviously, we have that
\begin{equation*}
  \av{F} = \sum_{\type \text{ local type}} \av[\type]{F}.
\end{equation*}

\begin{rem}
  An interesting benefit from splitting the average $\av{F}$ by local
  topological type rather than $\mcg_g$-orbit is that the set of local types is
  fixed and independent of the genus $g$, whilst the number of $\mcg_g$-orbits of
  loops grows as a function of~$g$.
\end{rem}

\subsection{Integration formula for averages over a local type}
\label{sec:integr-form-aver}

We are now ready to write an integration formula for the average $\av[\type]{F}$, for any local type
$\type$. {The integration will take place on the following space, the space of metrics on the
filled surface $\Sf$.}

\begin{nota}
    Let $\Sf$ be a filling type. We define
  \begin{equation*}
    \mathcal{T}_{g_\Sf, n_\Sf}^{\star} := \{(\x, Y) \, : \, \x \in \R_{> 0}^{n_\Sf}, \,
    Y \in \mathcal{T}_{g_\Sf,n_\Sf}(\x)\}.
  \end{equation*}
\end{nota}

This space is the natural space in which we can define the function $\ell_Y(\curve)$, the length of
the geodesic representative of the loop $\curve$ on the surface $\Sf$ equipped with a metric $Y$. 

\begin{lem}
  \label{lem:teich_analytic}
  The space $\mathcal{T}_{g_\Sf, n_\Sf}^{\star}$ is a real-analytic manifold that can be
  identified with $\R_{>0}^{n_\Sf} \times (\R_{>0} \times \R)^{3g_\Sf-3+n_\Sf}$
  through Fenchel-Nielsen coordinates. The measure
  \begin{equation*}
    \d \Volwp[g_\Sf,n_\Sf](\x,Y) := \d \x \d \Volwp[g_\Sf,n_\Sf,\x](Y)
  \end{equation*}
  is the Lebesgue measure in these coordinates. Furthermore, for any filling
  loop $\curve$ on $\Sf$, the function
  \begin{equation*}
    \begin{cases}
      \mathcal{T}_{g_\Sf, n_\Sf}^{\star}
      & \rightarrow \R_{>0} \\
      (\x, Y) & \mapsto \ell_Y(\curve)
    \end{cases}
  \end{equation*}
  is a real-analytic function, which satisfies
  \begin{equation}
    \label{eq:teich_analytic_bound}
    \forall (\x, Y) \in \mathcal{T}_{g_\Sf,n_\Sf}^\star, \quad
    \ell_Y(\curve) \geq \frac{x_1 + \ldots +x_{n_\Sf}}{2} \cdot
  \end{equation}
\end{lem}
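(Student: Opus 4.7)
The plan is to build a global real-analytic chart on $\mathcal{T}_{g_S,n_S}^{\star}$ by combining the boundary lengths $\x$ with interior Fenchel--Nielsen coordinates; all four claims then follow either immediately from this chart or, in the case of the lower bound, from a short topological argument.

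First, I would fix a pants decomposition of the base surface $S_{g_S,n_S}$ by $3g_S-3+n_S$ interior simple closed curves (none boundary-parallel). For each $\x\in\R_{>0}^{n_S}$, the classical Fenchel--Nielsen theorem gives a real-analytic diffeomorphism $\mathcal{T}_{g_S,n_S}(\x)\simeq(\R_{>0}\times\R)^{3g_S-3+n_S}$ via the interior length--twist parameters $(\ell_j,\tau_j)_j$. Letting $\x$ vary assembles these fiberwise diffeomorphisms into a global real-analytic bijection
\[
\Phi:\mathcal{T}_{g_S,n_S}^{\star}\longrightarrow\R_{>0}^{n_S}\times(\R_{>0}\times\R)^{3g_S-3+n_S},\qquad(\x,Y)\longmapsto\bigl(\x,(\ell_j(Y),\tau_j(Y))_j\bigr),
\]
which endows $\mathcal{T}_{g_S,n_S}^{\star}$ with the claimed manifold structure. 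Wolpert's formula \eqref{eq:Wolpert_magic} reads fiberwise $\d\Volwp[g_S,n_S,\x]=\prod_j\d\ell_j\,\d\tau_j$, so $\d\x\times\d\Volwp[g_S,n_S,\x](Y)$ pushes forward under $\Phi$ to the Lebesgue measure on the coordinate space.

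For the real-analyticity of $\ell_{\,\cdot\,}(\gamma)$, I would use the standard facts that each hyperbolic pair of pants is uniquely determined by its three boundary lengths in an analytic way (via the hyperbolic trigonometric identities) and that the twist gluings are real-analytic. This yields a holonomy representation $\rho_{\x,Y}:\pi_1(S)\to\mathrm{PSL}_2(\R)$ depending real-analytically on $(\x,Y)$. Since $\gamma$ fills $S$, it is essential, so $\rho_{\x,Y}(\gamma)$ is hyperbolic and $\ell_Y(\gamma)=2\argcosh(|\tr\rho_{\x,Y}(\gamma)|/2)$ is real-analytic on $\mathcal{T}_{g_S,n_S}^{\star}$.

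The only point requiring genuine work is the lower bound \eqref{eq:teich_analytic_bound}. Since $\gamma$ fills $S$ in the sense of Definition \ref{def:fill}, for each $i\in\{1,\ldots,n_S\}$ the component of $S\setminus\gamma$ adjacent to $\partial_i S$ is a topological annulus $A_i$, whose other boundary $\alpha_i$ is a closed curve composed of arcs of $\gamma$ and freely homotopic to $\partial_i S$ inside $\overline{A_i}\subset Y$. As $\partial_i S$ is a closed geodesic of length $x_i$ on $Y$, hence the length-minimizer of its free-homotopy class, we get $\ell_Y(\alpha_i)\geq x_i$. Now view $\gamma$ as a graph embedded in $S$ with vertices at its self-intersections: each arc of $\gamma$ bounds exactly two faces of the graph, one on each side, so summing the $\gamma$-side of the boundary of every face (the annuli $A_i$ and the disks) counts each arc twice and gives $\sum_{i=1}^{n_S}\ell_Y(\alpha_i)+\sum_{\text{disks }D}\ell_Y(\partial D)=2\,\ell_Y(\gamma)$. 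Dropping the nonnegative disk contributions yields $\sum_i x_i\leq\sum_i\ell_Y(\alpha_i)\leq 2\,\ell_Y(\gamma)$, which is exactly \eqref{eq:teich_analytic_bound}.
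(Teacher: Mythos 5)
Your proposal is correct and takes essentially the same route as the paper: the chart structure and Lebesgue form of $\d\Volwp[g_S,n_S,\star]$ are cited from Wolpert and Buser, and the lower bound \eqref{eq:teich_analytic_bound} is exactly the content of \cref{lem:length_fill}, which the paper proves by taking a regular $\epsilon$-neighbourhood of $\gamma$ (whose boundary length is $\leq 2\ell_Y(\gamma)+\eta$) and letting $\epsilon,\eta\to 0$. Your face-counting on the embedded graph $\gamma$ is an equivalent $\epsilon=0$ reformulation of the same observation — each arc of $\gamma$ borders exactly two faces, the annular faces contain curves freely homotopic to the geodesic boundary components — so there is no genuinely different idea here.
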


\begin{proof}
  We refer to \cite[Section 6.3]{buser1992} for a description of the
  real-analytic structure of Teichm\"uller spaces. The inequality on the length
  function is exactly \cref{lem:length_fill}.
\end{proof}

The integration formula then reads as below.
{
\begin{thm}
  \label{thm:express_average}
  Let $\type = \eqc{\Sf,\curve}$ be a local topological type. Then, for any $g \geq 3$, any test
  function $F$,
  \begin{equation}
    \label{eq:orbit_decomposition}
    \av[\type]{F}
    = \frac{1}{n(\type)}
    \int_{\cT^*_{g_\Sf,n_\Sf}}
    F(\ell_Y(\curve))
   \,  \phi_g^\Sf(\x)
    \d \Volwp[g_\Sf, n_\Sf](\x,Y)  
  \end{equation}
  where the function $\phi_g^\Sf$ is defined as a sum over realizations by
  \begin{equation}
    \label{eq:def_phi_S}
    \phi_g^\Sf(\x) := 
    \frac{x_1 \ldots x_{n_\Sf}}{V_g} \sum_{\mathfrak{R} \in R_g(\Sf)} V_{\mathfrak{R}}(\x).
  \end{equation}
  
\end{thm}

In other words, the average $\av[\type]{F}$ can be computed as an integral on the space of metrics
on $\Sf$ of the function $F(\ell_Y(\curve))$, multiplied by a density $\phi_g^\Sf(\x)$ which counts
the number of possible geometries for the complement of $\Sf$ in a surface of genus $g$. A striking
aspect of this formula is the fact that it completely disentangles the dependency of $\av[\type]{F}$
in the genus $g$ (which only appears in $\phi_g^\Sf$) and the specific loop $\curve$ filling $\Sf$
(which only appears in $F(\ell_Y(\curve))$).

Let us explicit the formula in two very simple examples.

\begin{exa}
  \label{exa:int_simple}
  The multiplicity of the type ``simple'' is $1$. We have
  \begin{equation}
    \label{eq:int_simple}
    \phi_g^{(0,2)}(x,y)
    = \frac{xy}{V_g} \left(V_{g-1,2}(x,y) + \sum_{i=1}^{g-1} V_{i,1}(x) V_{g-i,1}(y)\right).
  \end{equation}
    We therefore recover the expression for $\av[\mathbf{s}]{F}$ that was obtained in
  \cref{sec:simple-geod} by integrating against $\d \Volwp[0,2](\x,Y) = \frac{\delta(x-y)}{y} \d x
  \d y$.
\end{exa}}

\begin{exa}
  \label{exa:int_pop}
  Let $\curve$ be a loop filling the pair of pants $\mathbf{P}$ (i.e. our fixed surface
  of signature $(0,3)$). Then, the length of $\curve$ is an analytic function of
  the lengths of the three boundary components $(x_1, x_2, x_3)$ of $\mathbf{P}$, which
  we shall denote as $h_\curve : \R_{>0}^3 \rightarrow \R_{> 0}$. We
  then have that
  \begin{equation*}
    \av[\eqc{\mathbf{P}, \curve}]{F}
    = \frac{1}{n(\curve)}
    \iiint_{\R_{>0}^3} F(h_\curve(x_1, x_2, x_3)) \, \phi_g^{(0,3)}(x_1, x_2, x_3)
    \d x_1 \d x_2 \d x_3
  \end{equation*}
  where {$n(\curve)$ counts the positive homeomorphisms of $\mathbf{P}$ stabilizing $\curve$
    (including possibly some acting non-trivially on the boundary components of $\mathbf{P}$)} and
  \begin{multline*}
    \phi_g^{(0,3)}(\x)
    = \frac{x_1 x_2 x_3}{V_g}
    \left[ V_{g-2,3}(x_1,x_2,x_3)
    + \sum_{g_1+g_2+g_3=g} V_{g_1,1}(x_1) V_{g_2,1}(x_2) V_{g_3,1}(x_3)
    \right.  \\
    \left.
    + \sum_{\substack{\{i_1, i_2, i_3\} \\ = \{1,2,3\}}}
    \left( \frac{\delta(x_{i_1} - x_{i_2})}{x_{i_1}} V_{g-1,1}(x_{i_3})
    + \sum_{i=1}^{g-2}   V_{i,2}(x_{i_1}, x_{i_2}) V_{g-i-1,1}(x_{i_3}) \right)
     \right]. 
  \end{multline*}
\end{exa}

\begin{rem}
  In the two previous examples, {the integration over $(\x,Y) \in \cT^*_{g_\Sf,n_\Sf}$ is very
    simple because the length of the loop $\curve$ is entirely determined by the lengths $\x$ of the
    boundary components of the filled surface. However, in all cases but these ones, the integration
    on $\cT_{g_\Sf,n_\Sf}^*$ is much more complex to describe. Understanding the integration on
    $\cT_{g_\Sf,n_\Sf}^*$ is one of the key challenges tackled in the sequel of this article.}
\end{rem}

{
\begin{proof}[Proof of \cref{thm:express_average}]
  Let us fix a local loop~$(\Sf,\curve)$ in the equivalence class $\type$.  By definition,
  \begin{equation*}
    \av[\type]{F}
    = \Ewp{\sum_{\gamma \sim \type} F(\ell_X(\gamma))}
    = \frac{1}{m(\type)} \,
    \Ewp{\sum_{(Y,\gamma)} F(\ell_X(\gamma))}
  \end{equation*}
  where the sum runs over all the images $(Y, \gamma) = \phi(\Sf,\curve)$ of positive embeddings
  $\phi$ of $\Sf$ into the base surface $S_g$. Note that, while $Y$ has numbered boundary components, the coefficient $1/m(\type)$ removes the possible
  redundancies.

  We are now ready to split the orbit according to realizations.  For each realization
  $\mathfrak{R} \in R_g(\Sf)$, we fix a positive embedding
  $\phi_{g}^{\mathfrak{R}} : \Sf \rightarrow S_g$ associated to $\mathfrak{R}$. Then,
  \begin{equation}
    \label{eq:proof_int_form_type_1}
    \av[\type]{F}
    = \frac{1}{m(\type)} \sum_{\mathfrak{R} \in R_g(\Sf)}
    \Ewp{\sum_{\gamma \in \orb_g(\phi_{g}^{\mathfrak{R}}(\curve))}
     F(\ell_X(\gamma))}
 \end{equation}
 where $\orb_g(\phi_{g}^{\mathfrak{R}}(\curve))$ is the orbit of the loop $\phi_{g}^{\mathfrak{R}}(\curve)$ for
 the action of $\mcg_g$.

 We shall now see each individual term in this sum as a geometric function as per the conventions
 defined in \cref{s:mirz_int}. Indeed, for
 $\mathfrak{R} \in R_g(\Sf)$, we have
 \begin{equation*}
   \Ewp{\sum_{\gamma \in \orb_g(\phi_{g}^{\mathfrak{R}}(\curve))} F(\ell_X(\gamma))}
   = \Ewpo \big[{\Psi_{\mathfrak{R},\curve}^{\beta_g^\mathfrak{R}}}\big]
 \end{equation*}
 where the geometric function $\Psi_{\mathfrak{R},\curve}^{\beta_g^\mathfrak{R}}$ is associated to
 the following objects.
 \begin{itemize}
 \item The multi-curve $\beta_g^\mathfrak{R}$ is the image of $\partial \Sf$ by the homeomorphism
   $\phi_g^{\mathfrak{R}}$, with the numbering and orientation of $\partial \Sf$, and the following
   convention. For any index $j \in \{1, \ldots, \mathfrak{q}\}$ such that $(g_j,n_j)=(0,2)$, we do
   not include the largest component of $I_j$ in $\beta_g^{\mathfrak{R}}$. The resulting family of
   curves is therefore a multi-curve on the base surface $S_g$ with $k \leq n_\Sf$ components.
 \item The function $\Psi_{\mathfrak{R},\curve} : \R_{>0}^{k} \rightarrow \R$ is defined by
   \begin{equation*}
     \Psi_{\mathfrak{R},\curve}(x_{i_1}, \ldots, x_{i_k})
     := \int_{\cM_{g_\Sf,n_\Sf}(\x)} \sum_{\gamma \in \orb_{\Sf}(\curve)} F(\ell_Y(\gamma))  \d \Volwp[g_\Sf,n_\Sf,\x](Y)
   \end{equation*}
   with $\x \in \R_{>0}^{n_{\Sf}}$ the length vector obtained by completing $(x_{i_1}, \ldots, x_{i_k})$ with
   the identifications from the missing components of $\phi_g^{\mathfrak{R}}(\partial \Sf)$ corresponding to
   cylinders, and $\orb_{\Sf}(\curve)$ the mapping-class-group orbit of $\curve$ in $\Sf$.
 \end{itemize}
 Note that unfolding the integral defining $\Psi_{\mathfrak{R},\curve}$ allows to rewrite it as
 \begin{equation*}
   \Psi_{\mathfrak{R},\curve}(x_{i_1}, \ldots, x_{i_k})
   = \frac{1}{n_0(\type)}
   \int_{\cT_{g_\Sf,n_\Sf}(\x)} F(\ell_Y(\gamma))  \d \Volwp[g_\Sf,n_\Sf,\x](Y).
 \end{equation*}
 
 We now apply Mirzakhani's integration formula to compute the average of this geometric function,
 and obtain that $   \Ewpo \big[{\Psi_{\mathfrak{R},\curve}^{\beta_g^\mathfrak{R}}}\big]$ is equal to
 \begin{equation*}
\frac{1}{V_g} \int_{\R_{>0}^{k}}
\Psi_{\mathfrak{R},\curve}(x_{i_1}, \ldots, x_{i_k})
\,    x_{i_1} \ldots x_{i_k}
   \prod_{j: \chi_j >0} V_{g_j,n_j}(x_i, i \in I_j) \,
 \d x_{i_1} \ldots \d x_{i_k}.
 \end{equation*}
 We now observe that our definitions of $V_{0,2}$, $V_{\mathfrak{R}}$ and $\cT_{g_\Sf,n_\Sf}^*$
 allow us to rewrite this integral as
 \begin{equation*}
   \Ewpo \big[{\Psi_{\mathfrak{R},\curve}^{\beta_g^\mathfrak{R}}}\big]
   = \frac{1}{n_0(\type)} \frac{1}{V_g}
   \int_{\cT_{g_\Sf,n_\Sf}^*} F(\ell_Y(\curve)) \,
   x_{1} \ldots x_{n_\Sf} V_{\mathfrak{R}}(\x)
       \d \Volwp[g_\Sf, n_\Sf](\x,Y)  .
 \end{equation*}
 Summing over all $\mathfrak{R} \in R_g(\Sf)$ yields the claimed result since $m(\type)
 n_0(\type)=n(\type)$ and due to the expression of $\phi_g^\Sf$.
\end{proof}}

\subsection{Writing of the average as a density}
\label{s:density_writing}

Let us now justify that the averages $\av[\type]{F}$ can be written as densities
against the Lebesgue measure.

\begin{prp}
  \label{prp:existence_density}
  For any local type $\type$, there exists a unique locally integrable function
  $V_g^\type : \R_{> 0} \rightarrow \R_{\geq 0}$ such that, for any test function $F$,
  \begin{equation*}
    \av[\type]{F} = \frac{1}{V_g} \int_{0}^{+\infty} F(\ell) V_g^{\type}(\ell) \d \ell.
  \end{equation*}
\end{prp}

\begin{defa}
  We call $V_g^{\type}$ the \emph{volume function} associated with local type~$\type$ on surfaces of genus
  $g$.
\end{defa}

\begin{rem}
  By the collar lemma (see e.g. \cite[Theorem 4.2.2]{buser1992}), the length of any non-simple
  closed geodesic on a compact hyperbolic surface is greater than $2 \argcosh(3)$. In particular,
  for any local type $\type$ other than ``simple'' and any $g \geq 2$, the volume function $V_g^\type$ is
  identically equal to $0$ on $[0, 2 \argcosh(1)]$. In this article, we will focus mostly on the
  behaviour of $V_g^\type$ at infinity.
\end{rem}

The proof relies on the two following lemmas.

\begin{lem}
  \label{lem:abstract_density_writing}
  Let $\Omega \subset \R^d$ be a connected open set, and let $\lambda_\Omega$
  denote the Lebesgue measure on $\Omega$. For any non-constant real-analytic
  function $f : \Omega \rightarrow \R$, if
  \begin{equation}
    \label{eq:hyp_abstract_density_writing}
    \forall L > 0, \quad \int_\Omega \1{[0,L]}(f(\x)) \d \lambda_\Omega(\x) < + \infty,
  \end{equation}
  then, the push-forward of $\lambda_{\Omega}$ under $f$ admits a continuous
  density.  This statement also holds when pushing forward
  $v(\x) \d \lambda_{\Omega}(\x)$ for any continuous function
  $v : \Omega \rightarrow \R_{\geq 0}$, replacing $\lambda_\Omega$ in
  \eqref{eq:hyp_abstract_density_writing} with the measure $v(\x) \d \lambda_{\Omega}(\x)$.
\end{lem}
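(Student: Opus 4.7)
The strategy is to apply the coarea formula after removing a Lebesgue-negligible set of critical points of $f$. Set $C := \{ \x \in \Omega : \nabla f(\x) = 0 \}$. Since $f$ is non-constant and real-analytic on the connected open set $\Omega$, the set $C$ is a proper real-analytic subvariety of $\Omega$, hence satisfies $\lambda_\Omega(C) = 0$. The push-forward $f_* \lambda_\Omega$ is therefore unchanged if we discard $C$, and on $\Omega \setminus C$ the function $f$ is a submersion.

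\textbf{Density via coarea.} The classical coarea formula then gives, for every Borel set $A \subset \R$,
\[
\lambda_\Omega(f^{-1}(A)) \;=\; \int_A \rho(t) \, \d t, \qquad
\rho(t) \;:=\; \int_{f^{-1}(t) \setminus C} \frac{\d \mathcal{H}^{d-1}(\x)}{|\nabla f(\x)|}.
\]
The hypothesis \eqref{eq:hyp_abstract_density_writing} ensures that $\rho$ is locally integrable on the range of $f$, so $\rho$ is a density for $f_* \lambda_\Omega$. Uniqueness of a continuous representative follows from the Lebesgue differentiation theorem.

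\textbf{Continuity.} For each $t_0$ that is not a critical value, I will prove continuity of $\rho$ at $t_0$. Around each point of $f^{-1}(t_0)$, the implicit function theorem provides local real-analytic coordinates $(y_1, \ldots, y_d)$ in which $f(\y) = y_1$ and $\d \lambda_\Omega = J(\y) \, \d y_1 \cdots \d y_d$ for an analytic positive Jacobian $J$. In such a chart, the local contribution to $\rho(t)$ becomes $\int J(t, y_2, \ldots, y_d) \, \d y_2 \cdots \d y_d$, which is continuous in $t$ by dominated convergence. A partition of unity subordinate to a countable chart cover of $f^{-1}([t_0 - \varepsilon, t_0 + \varepsilon])$, combined with the Lebesgue-finiteness of this preimage supplied by \eqref{eq:hyp_abstract_density_writing}, yields a global integrable majorant and assembles the local continuity into continuity of $\rho$ at $t_0$.

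\textbf{Main obstacle and weighted case.} The delicate point is producing a dominating function that is uniform in $t$ across the entire chart cover, since $f^{-1}(t_0)$ need not be compact; the crucial observation is that the hypothesis turns each coarea integral over $\Omega$ into a genuinely finite integral once we restrict to the preimage of a compact interval around $t_0$, which is exactly what is needed to run dominated convergence globally. The weighted version, with $v(\x) \d \lambda_\Omega(\x)$ replacing $\d \lambda_\Omega(\x)$, is identical: one replaces $1/|\nabla f|$ by $v/|\nabla f|$ in the coarea formula and uses continuity of $v$ inside the dominated convergence step, the dominating function being provided by the analogous finiteness hypothesis on the measure $v \, \d \lambda_\Omega$.
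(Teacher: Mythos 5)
Your proof follows essentially the same route as the paper's: observe that the critical set $C$ of a non-constant real-analytic function has Lebesgue measure zero (proved in the paper by induction on the dimension, applied to the partial derivatives), discard $C$, rectify $f$ to a coordinate on a countable cover of $\Omega\setminus C$, and read off the density on each chart. The paper phrases this with a countable cover of $\Omega \setminus C$ by sets $\Omega_i$ with diffeomorphisms $\varphi_i$ such that $f\circ\varphi_i(x_1,\dots,x_d)=x_1$ and simply asserts that the push-forward on each $\Omega_i$ ``obviously has a continuous density''; your coarea-formula formulation plus a partition of unity is the same idea with more of the bookkeeping written out. Where you differ is that you actually try to say something about how the countable sum of chart contributions is controlled: you invoke the finiteness hypothesis \eqref{eq:hyp_abstract_density_writing} to produce an integrable majorant for dominated convergence. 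The paper leaves the passage from ``each $\rho_i$ is continuous'' to ``$\sum_i \rho_i$ is continuous'' entirely implicit, so on that point your write-up is, if anything, the more careful of the two (though I would push you to spell out precisely why the $L^1$ bound coming from \eqref{eq:hyp_abstract_density_writing} upgrades to the local uniform tail bound needed for the sum of continuous functions to remain continuous; local integrability of $\rho$ on its own does not give this).

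One caveat worth flagging, which affects both your argument and the paper's equally: you only establish continuity of $\rho$ at regular values $t_0$, and you say so explicitly. The paper's proof likewise never returns to the critical values it discards. As stated, the lemma asks for a continuous density with no restriction, and at a critical value the density can genuinely fail to be (finitely) continuous: for $f(x,y)=xy$ on $(-1,1)^2$, the hypothesis holds trivially, yet the density of the push-forward behaves like $2\log(1/\lvert t\rvert)$ near $t=0$, which is a critical value. So the delicate point you identify is not quite the one you address; the partition-of-unity/dominated-convergence issue is real but secondary, while the behaviour at critical values is where the statement as written actually breaks. Since the paper's own proof shares this omission, your proposal is faithful to the intended argument, but if you want a watertight statement you should either restrict the conclusion to the set of regular values or add a hypothesis ruling out degenerate critical values in the relevant range.
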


\begin{proof}
  First, we observe that under these hypotheses, the set $C \subset \Omega$ of critical points of
  $f$ has $0$-Lebesgue measure. Indeed, we prove by induction on the dimension $d$ that, for any
  real-analytic function $\tilde f : \Omega \rightarrow \R$ not identically equal to $0$, the set of
  zeros of $\tilde f$ has $0$-Lebesgue measure. For $d=1$, this comes from the fact that zeros are
  isolated. The induction from $d$ to $d+1$ uses the Fubini theorem. Our claim then follows by
  applying this intermediate result to the partial derivatives of $f$.

  Then, the Lebesgue measure on $\Omega$ coincides with the Lebesgue measure restricted to
  $\Omega \setminus C$. We can cover $\Omega \setminus C$ by a countable number of open sets
  $\Omega_i$, such that on each of those we can find a diffeomorphism $\varphi_i$ such that
  $f\circ\varphi_i(x_1, \ldots, x_d)=x_1$.  The push-forward of $\d \lambda_{\Omega_i}$, or
  $v(\x) \d \lambda_{\Omega_i}(\x)$, then obviously is absolutely continuous (the density is smooth, except on the critical set).
\end{proof}

\begin{proof}[Proof of \cref{prp:existence_density}]
  Let $\type$ be a local type of filling type $\Sf$.  Let us first notice that, by
  \cref{lem:bound_number_closed_geod}, there exists a constant $C>0$ such that
  for any $L \geq 0$,
  \begin{equation}
    \label{eq:bound_density_proof}
    \av[\type]{\1{[0,L]}}
    \leq \av{\1{[0,L]}}
    \leq C g e^L < + \infty.
  \end{equation}
  \Cref{thm:express_average} tells us that, in order to prove our claim, it is enough to
  apply \cref{lem:abstract_density_writing} to push forward the measure
  \begin{equation*}
    \frac{1}{n(\type)} \frac{1}{V_g}
    \, x_1 \ldots x_{n_\Sf}  V_{\mathfrak{R}}(\x) \d \Volwp[g_\Sf,n_\Sf] (\x, Y)
  \end{equation*}
  under the function $(\x, Y) \in \mathcal{T}_{g_\Sf,n_\Sf}^{\star} \mapsto \ell_Y(\curve)$, for
  each realization $\mathfrak{R} \in R_g(\Sf)$.
  
  Provided that $\mathfrak{R}$ is a realization for which $(g_j, n_j) \neq (0,2)$ for all $j$, the hypotheses
  of the lemma are satisfied, thanks to \cref{eq:bound_density_proof} and
  \cref{lem:teich_analytic}. Indeed, \cref{eq:teich_analytic_bound} implies that
  $(\x,Y) \mapsto \ell_Y(\curve)$ is not a constant function.

  Let us now briefly explain how to treat the case where some of the indices
  $1 \leq j \leq \mathfrak{q}$ satisfy $(g_j,n_j)=(0,2)$.  Rather than applying
  \cref{lem:abstract_density_writing} to the whole space $\mathcal{T}_{g_\Sf,n_\Sf}^{\star}$, we
  apply it to the lower-dimensional subspace where $x_{i}=x_{i'}$ for every pair of indices
  $(i, i')$ such that $I_j = \{i, i'\}$ and $g_j=0$. Once again, \eqref{eq:teich_analytic_bound}
  implies the length function is also non-constant on this new space, and we can conclude the same
  way.
\end{proof}

\subsection{Existence of an asymptotic expansion}

Let us now prove that the averages $\av[\type]{F}$ admit an asymptotic expansion in powers of $1/g$.

\begin{thm}
  \label{thm:exist_asympt_type}
  Let $\type$ be a local topological type. There exists a unique family of locally integrable functions
  $(f_k^\type)_{k \geq \chi(\type)}$ such that, for any $\ord \geq 0$, $\epsilon >0$, any large
  enough $g$,
  \begin{equation*}
    \frac{V_g^\type(\ell)}{V_g}
    = \sum_{k=\chi(\type)}^\ord \frac{f_k^\type(\ell)}{g^k}
    + \Ow[\ord,\chi(\type),\epsilon]{\frac{\exp \paren*{(1+\epsilon)\ell}}{g^{\ord+1}}}.
  \end{equation*}
\end{thm}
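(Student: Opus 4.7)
The plan is to start from the integration formula (Theorem \ref{thm:express_average}) and the density writing (Proposition \ref{prp:existence_density}), which together express $V_g^T(\ell)/V_g$ as the pushforward under the length function $(\x,Y) \mapsto \ell_Y(\gamma^T)$ on $\mathcal{T}^\star_{g_S,n_S}$ of a measure built from $\phi_g^S(\x)$. The task thus reduces to obtaining an asymptotic expansion
\[
\frac{\phi_g^S(\x)}{V_g} \,=\, \sum_{k=|\chi_S|}^K \frac{\phi_k^S(\x)}{g^k} \,+\, \Ow[K,S,\epsilon]{\frac{P_K(\x)\, e^{(x_1+\cdots+x_{n_S})/2}}{g^{K+1}}}
\]
for some polynomial $P_K$, and then pushing this expansion forward to the variable $\ell$.

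For the expansion itself, the main difficulty is that $|R_g(S)|$ grows with $g$. I would first show that only a $g$-independent, finite family of ``shapes'' of realisations $R=(\vec I,\vec g)$ contributes at any fixed order $K$: the constraint \eqref{eq:add_euler_realisation_2} forces $\sum_j g_j$ to be of order $g$, and combining the crude bound \eqref{e:increase_Vgn_bound} with the Mirzakhani--Zograf asymptotics for $V_{g,n}$, realisations in which more than one $g_j$ grows with $g$ produce contributions below the threshold $1/g^{K+1}$. The surviving \emph{concentrated} realisations are those where a single dominant index $j_0$ has $g_{j_0} = g + O(1)$ and all other $g_j$'s are bounded by a constant depending on $K$; for each combinatorial shape of ``small'' pieces there are only finitely many realisations to consider.

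For each concentrated realisation, the ratio $V_{g_{j_0},n_{j_0}}/V_g$ is expanded in powers of $1/g$ by iterating \eqref{eq:asymp_dev_ratio_same_euler} and \eqref{eq:asymp_dev_ratio_add_cusp}, the $\x$-dependence of $V_{g_{j_0},n_{j_0}}(\x^{(j_0)})/V_{g_{j_0},n_{j_0}}$ is expanded via Theorem \ref{thm:expansion_volume}, and the bounded pieces contribute the polynomials $V_{g_j,n_j}(\x^{(j)})$ with no $g$-dependence. Careful bookkeeping of the powers of $g$ produced by \eqref{eq:asymp_dev_ratio_add_cusp} (each added cusp costs a factor $\approx 8\pi^2 g$), in the light of the Euler-characteristic compatibility $\chi_S+\sum_j \chi_j=\chi_g$, shows that the leading contribution of $\phi_g^S(\x)/V_g$ is of order $g^{-|\chi_S|}$, so the expansion starts precisely at $k=|\chi_S|$. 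The individual remainders coming from each of Theorem \ref{thm:expansion_volume}, \eqref{eq:asymp_dev_ratio_same_euler} and \eqref{eq:asymp_dev_ratio_add_cusp} combine into the displayed $\Ow{\cdot}$ term.

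The pushforward step is then carried out as in Proposition \ref{prp:existence_density}: applying Lemma \ref{lem:abstract_density_writing} to the measure associated to each $\phi_k^S$ produces a continuous density $f_k^T$ on $\R_{>0}$. Because of the key inequality $\ell_Y(\gamma^T) \geq (x_1+\cdots+x_{n_S})/2$ (Lemma \ref{lem:length_fill}, equivalently \eqref{eq:teich_analytic_bound}), the exponential factor in the remainder is bounded by $e^\ell$; absorbing the polynomial $P_K$ into an $e^{\epsilon\ell}$ correction yields the required $\Ow[K,S,\epsilon]{e^{(1+\epsilon)\ell}/g^{K+1}}$ in the weak sense. Uniqueness of the family $(f_k^T)$ follows from the uniqueness of asymptotic expansions in $1/g$ at each fixed $\ell$, combined with continuity. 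The principal obstacle I foresee is making the ``concentrated realisation'' reduction quantitatively uniform: one must show that the tails of the sum over $R_g(S)$ are dominated by a single integrable $\x$-dependent function, and that all intermediate expansions are controlled by the same polynomial-in-$\x$ remainder, uniformly across the finitely many surviving shapes.
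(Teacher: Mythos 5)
Your argument for expanding $\phi_g^S$ in powers of $1/g$ is essentially the paper's: the reduction to finitely many realisation ``shapes'' is the paper's Lemma~\ref{lem:limit_rank} (using the notion of rank of a realisation), the large piece $V_{g_{j_0},n_{j_0}}(\x^{(j_0)})$ is expanded via Theorem~\ref{thm:expansion_volume} together with the Mirzakhani--Zograf ratios, and the identification of the leading order $g^{-|\chi_S|}$ from the Euler-characteristic constraint is as in Proposition~\ref{lem:claim_exp_phi_T}. So far so good.

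The pushforward step, however, has a genuine gap. You assert that because $\ell_Y(\gamma^T)\geq\tfrac12\sum_i x_i$, the factor $e^{\sum x_i/2}$ in the remainder of the $\phi_g^S$-expansion is ``bounded by $e^\ell$'', and that this transfers to the required $\Ow$-bound on $\mathrm{Err}_{g,K}^T(\ell)$. But this is a pointwise estimate on the integrand, not a bound on the pushforward measure. What you actually need to bound is
\[
\left|\int_0^\infty F(\ell)\,\mathrm{Err}_{g,K}^T(\ell)\,\d\ell\right|
\;=\;\frac{1}{m(T)}\left|\int_{\R_{>0}^{n_S}}\mathbb{E}_S[F^T \mid \ell(\partial S)=\x]\,\mathrm{err}_{g,K}^S(\x)\,\d\x\right|
\]
by a constant times $g^{-K-1}\norminf{F(\ell)e^{(1+\epsilon)\ell}}$. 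The conditional expectation $\mathbb{E}_S[F^T\mid\ell(\partial S)=\x]$ involves a sum over the entire $\mcg(S)$-orbit of $\gamma^T$, averaged over $\mathcal{M}_{g_S,n_S}(\x)$. With the naive geodesic count (Lemma~\ref{lem:bound_number_closed_geod}) and the length bound $\ell_Y(\alpha)\geq\tfrac12\sum x_i$, you only get $\mathbb{E}_S[|F|^{\mathrm{all}}\mid\ell(\partial S)=\x]=\O{\norminf{F(\ell)e^{(1+\epsilon)\ell}}}$, with no decay in $\sum x_i$. Multiplied against the remainder's growth $e^{\sum x_i/2}$, the $\x$-integral diverges: the bound you claim simply does not follow.

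The missing ingredient is Wu--Xue's improved counting, Theorem~\ref{thm:wu_xue_counting}, packaged in Lemma~\ref{lem:bound_exp_wu_xue}. That lemma shows $\mathbb{E}_S[|F|^{\mathrm{all}}\mid\ell(\partial S)=\x]=\O[S,\epsilon]{\norminf{F(\ell)e^{(1+\epsilon)\ell}}\exp(-\tfrac{1+\epsilon/4}{2}\sum x_i)}$, with a genuine negative exponent. The factor $\exp(-\tfrac{1+\epsilon/4}{2}\sum x_i)$ overcompensates the remainder's $\exp(\tfrac12\sum x_i)$ and makes the $\x$-integral converge. Without such a counting improvement over the trivial bound, your pushforward estimate fails, and this is exactly the subtlety the paper isolates in Lemma~\ref{lem:bound_exp_wu_xue}.
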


\begin{rem}
  We notice that the leading term of the asymptotic expansion of $\av[\type]{F}$ has order
  $1/g^{\chi(\type)}$. In particular, in all cases but the local type ``simple'', the leading order of
  $\av[\type]{F}$ decays as $1/g$ at least.
\end{rem}

\subsubsection{Rank of a realization}
\label{sec:rank-realization}

In order to compute asymptotic expansions in powers of $1/g$, it will be
convenient to introduce a notion of rank for a realization, which corresponds to
the height at which it appears in the expansion in powers of $1/g$.

\begin{defa}
  Let $\Sf$ be a filling type, and $g \geq 2$. We define the \emph{rank} {$\mathfrak{r}(\mathfrak{R})$} of a realization $\mathfrak{R} \in R_g(\Sf)$ by
  \begin{equation*}
    \mathfrak{r}(\mathfrak{R})
    := \chi(S_g) - \max_{1 \leq j \leq \mathfrak{q}} \chi_j
    = \chi(\Sf) + \sum_{j \neq j_+} \chi_{j} \geq 0
  \end{equation*}
  where $j_+$ is an index in $\{1, \ldots, \mathfrak{q}\}$ realizing
  $\max \{\chi_j, 1 \leq j \leq \mathfrak{q}\}$.
\end{defa}

\begin{rem}
  \label{rem:connected_realization}
  {Recall that, for us, we denote by $\chi$ the \emph{absolute} Euler characteristic, which is
    always non-negative.}  The rank of a realization is always greater than $\chi(\Sf)$.  In
  particular, the only filling type that can have realizations of rank $0$ is the filling type
  $(0,2)$, corresponding to simple loops.
\end{rem}

\begin{rem}
  Because cylinders, which have Euler characteristic $0$, are allowed in the definition of a
  realization, there exists several realizations of minimal rank $\chi(\Sf)$. Amongst them, there is
  a special one that we call the \emph{connected realization}, obtained by taking $\mathfrak{q}=1$,
  $I_1 = \partial \Sf$ and $g_1 = g - g_\Sf - n_\Sf + 1$. This is the only realization for which the
  complement of $\Sf$ in $S_g$ is connected.
\end{rem}

\begin{exa}
  Let us compute the ranks of all realizations of the filling type $(0,2)$.
  \begin{itemize}
  \item The connected realization $\mathfrak{q}=1$, $I_1 = \{1, 2\}$ and $g_1=g-1$, has rank $0$.
  \item Any other realization satisfies $\mathfrak{q}=2$, $I_1 = \{1\}$, $I_2 = \{2\}$, and
    $g_1, g_2 \geq 1$ such that $g_1 + g_2 = g$.  The rank of such a realization
    is $2 \min(g_1, g_2) - 1 \geq 1$.
  \end{itemize}  
  Note that the Weil--Petersson volume that appears in Mirzakhani's integration formula is
  $V_{g-1,2}/V_g \sim 1$ for the connected realization, and
  $V_{i,1} V_{g-i,1}/V_g \sim C(i)/g^{2i-1}$ for realizations of fixed rank $2i-1$. The notion of
  rank does correspond to the height of the realization in the asymptotic expansion of
  $\av[\mathbf{s}]{F}$ in powers of $1/g$, as we intended.
\end{exa}

The following lemma will allow us to reduce the number of ranks appearing when computing the
densities $\phi_g^\Sf$ introduced in \cref{thm:express_average}.

\begin{lem}
  \label{lem:limit_rank}
  For any filling type $\Sf$, any integer $\ord \geq \chi(\Sf)$, any large enough
  $g$,
  \begin{equation*}
    \frac{V_{g_\Sf,n_\Sf}}{V_g}
    \sum_{\substack{\mathfrak{R} \in R_g(\Sf) \\ \mathfrak{r}(\mathfrak{R}) \geq \ord}} \,
    \prod_{\substack{1 \leq j \leq \mathfrak{q} \\\chi_j>0}} V_{g_j, n_j}
    = \O[\chi(\Sf),\ord]{\frac{1}{g^\ord}}.
  \end{equation*}
\end{lem}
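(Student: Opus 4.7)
The plan is to regroup realisations by rank and bound each group's total contribution using the volume asymptotics \eqref{eq:asymp_dev_ratio_same_euler}--\eqref{eq:asymp_dev_ratio_add_cusp}.

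First, I would split the sum as $\sum_{r \geq K} \Sigma_r$, where $\Sigma_r$ collects realisations of rank exactly $r$. For each such realisation, I select an index $j_+$ realizing $\max_j |\chi_j|$; by the definition of the rank, $|\chi_{j_+}| = |\chi_g| - r$ while $\sum_{j \neq j_+,\, \chi_j<0} |\chi_j| = r - |\chi_S|$. The aim is to show $\Sigma_r \leq C_S\, P(r)\, (C/g)^r$ for a polynomial $P$ depending on $S$, so that the geometric sum over $r \geq K$ yields the claimed $O_{S,K}(g^{-K})$.

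For the combinatorial count, the number of realisations of rank $r$ is bounded by a polynomial in $r$ of degree depending only on $n_S$. Indeed, the partition $\vec I$ of $\partial S$ belongs to a finite set depending only on $S$; the index $j_+$ has at most $n_S$ choices; the small signatures $(g_j,n_j)_{j\neq j_+,\,\chi_j<0}$ form a collection with $\sum |\chi_j| = r-|\chi_S|$ among at most $n_S-1$ parts, giving $O_S(r^{n_S-1})$ configurations; finally, $g_{j_+}$ is uniquely determined by the Euler condition \eqref{eq:add_euler_realisation_2}.

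For the volume contribution, I would use the uniform bound $V_{h,n} \leq C^{|\chi_{h,n}|}\,|\chi_{h,n}|!$ together with the Mirzakhani--Zograf equivalent $V_g \asymp \alpha^{2g-3}(2g-3)!$, both consequences of iterating the volume asymptotics. Using $\prod_j |\chi_j|! \leq \bigl(\sum_j|\chi_j|\bigr)!$, one obtains
\[
\frac{V_{g_S,n_S}}{V_g}\,\prod_{j:\chi_j<0}V_{g_j,n_j} \;\leq\; \frac{C_S\,(r-|\chi_S|)!\,(|\chi_g|-r)!}{|\chi_g|!\,\alpha^{r}}.
\]
For $r \leq g$, elementary manipulation of factorials bounds this by $(C'/g)^r$; for $g < r \leq |\chi_g|$, Stirling's formula makes the right-hand side exponentially small in $g$. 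Combining with the combinatorial count yields $\Sigma_r \leq C_S\, r^{n_S}\,(C/g)^r$ for $r \leq g$, and a super-polynomially small contribution otherwise. Summing over $r \geq K$ for $g$ large enough that $C/g \leq 1/2$ gives the geometric bound $O_{S,K}(g^{-K})$.

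The main obstacle is the regime where $r$ is a substantial fraction of $|\chi_g| = 2g-2$: in that case the ``big'' piece $(g_{j_+}, n_{j_+})$ can be arbitrarily small, so the step-by-step estimates \eqref{eq:asymp_dev_ratio_same_euler}--\eqref{eq:asymp_dev_ratio_add_cusp} do not directly give uniform decay, and invoking the factorial asymptotics of $V_g$ is essential.
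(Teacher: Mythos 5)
Your proposal takes a genuinely different route from the paper. The paper's proof is modular: it fixes the partition $\vec I$ and the set $J$ of cylinder indices, applies the aggregation bound of \cite[Lemma~3.2]{mirzakhani2019} (which controls $\sum_{(g_i)}\prod V_{g_i,n_i}$ by $V_{g,n}/g^{k-1}$) together with \cite[Lemma~3.2]{mirzakhani2013} to settle the base case $K=|\chi_S|$, and then for $K>|\chi_S|$ isolates the big piece $j_+$ and invokes \cite[Corollary~3.7]{mirzakhani2013} to control the remaining two-factor sum over $g_{j_+}$. You instead regroup realisations by rank and estimate each rank's contribution directly from the Mirzakhani--Zograf factorial asymptotic, paying a polynomial combinatorial cost and harvesting a factor $\sim 1/\binom{|\chi_g|}{r}$. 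Both strategies work: the paper's buys reusability of known lemmas at the cost of two separate regimes; yours is more self-contained and treats all ranks uniformly, at the cost of manipulating the raw asymptotics.

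Several of your intermediate claims would need repair in a full write-up. First, the Mirzakhani--Zograf equivalent is $V_{g,n}\sim C\,(2g-3+n)!\,(4\pi^2)^{2g-3+n}/\sqrt{g}$; dropping the $1/\sqrt g$ when you use it as a \emph{lower} bound on $V_g$ is false and would cost an extraneous $\sqrt g$ in the final estimate, spoiling the claimed $O(g^{-K})$. This is repaired by applying the sharp two-sided asymptotic (with the $1/\sqrt{g_{j_+}}$ factor) to the big piece $j_+$ rather than the crude $V_{h,n}\leq C^{|\chi|}\,|\chi|!$, noting $g_{j_+}\asymp g$ once $r\leq g$. Second, the factorial computation actually yields something of the shape $(r/g)^r$ or $1/\binom{|\chi_g|}{r}$, not $(C'/g)^r$ with $C'$ uniform in $r$, and the summation over $r\geq K$ therefore is not a plain geometric series; it still closes, since $(r/g)^r$ is decreasing on $K\leq r\leq g/e$ and the exact binomial makes the range $r\gtrsim g$ super-polynomially small, but this needs more care than your sketch indicates. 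Third, $V_g\asymp(4\pi^2)^{2g-3}(2g-3)!/\sqrt g$ is the main theorem of Mirzakhani--Zograf, not a consequence of iterating \eqref{eq:asymp_dev_ratio_same_euler}--\eqref{eq:asymp_dev_ratio_add_cusp}: those control ratios at bounded Euler-characteristic offsets only, and $O(g)$ iterations accumulate uncontrolled multiplicative errors. None of these issues is fatal to your approach, but as written the $\sqrt g$ loss is a genuine error that must be addressed.
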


\begin{proof}[Proof of \cref{lem:limit_rank}]
  We start by proving the result for the order $\ord = \chi(\Sf)$.  First, we observe that the
  factor $V_{g_\Sf,n_\Sf}$ is unimportant because bounded by a constant depending only on
  $\chi(\Sf)$. We then split the quantity that we want to estimate depending on the partition
  $\vec{I}$ of $\partial \Sf$. Because $\partial \Sf$ has $\O[n_{\Sf}]{1}$ partitions, estimating
  each individual term is enough.  For a large enough~$g$, the condition that
  $\sum_{j=1}^\mathfrak{q} g_j = g - g_\Sf - n_\Sf + \mathfrak{q}$ implies that at least one
  coefficient $g_j$ is positive. We shall further split the sum we want to estimate depending on the
  subset $J \subsetneq \{1, \ldots, \mathfrak{q}\}$ of indices for which $(g_j,n_j)=(0,2)$, because
  there are $\O[n_\Sf]{1}$ such subsets.  As a consequence, we are left to bound for a fixed $\vec{I}$
  and $J \subsetneq \{ 1, \ldots, \mathfrak{q}\}$ the sum
  \begin{equation*}
    \sum_{\substack{\vec{g}: (\vec{I}, \vec{g}) \in R_g(\Sf)
        \\ \text{and } \chi_j =0 \Leftrightarrow j \in J}}
    \hspace{2pt}
    \prod_{j \notin J} V_{g_j, n_j}.
  \end{equation*}
  We observe that the condition on the indices $\vec{g}$ to obtain a realization can be rewritten as 
  \begin{equation*}
    \sum_{j \notin J} (2 g_j-2+n_j) = 2(g-g_\Sf) - n_\Sf = 2g'-2+n_\Sf,
  \end{equation*}
  for $g' := g-g_\Sf-n_\Sf+1$, because the summand is equal to zero for indices in
  $J$.

  By \cite[Lemma 3.2]{mirzakhani2019}, for any $k \geq 1$,
  $n_1, \ldots, n_k>0$ such that $\sum_{j=1}^k n_j$ and $n$ have the same
  parity, there exists $C = C(n,k,(n_i)_i)$ such that, for any large enough~$g$,
  \begin{equation}
    \label{lemm:cut_into_pieces}
    \sum_{(g_i)_i} \, \prod_{i=1}^k V_{g_i, n_i}
    \leq C \frac{V_{g,n}}{g^{k-1}}
  \end{equation}
  where the sum runs over all families of integers $(g_i)_{1 \leq i \leq k}$ such that
  $2g_i-2+n_i \geq 1$ and $\sum_{i=1}^k (2g_i-2+n_i) = 2g-2+n$.
  We apply this result with $k= \mathfrak{q} - \# J \geq 1$, the integers
  $(n_j)_{j \notin J}$, $n = n_\Sf$ and the genus $g' = g-g_\Sf-n_\Sf+1$. The parity
  condition on the integers is true because $\sum_{j \notin J} n_j$ has the same
  parity as $\sum_{j=1}^\mathfrak{q} n_j = n_\Sf$. We obtain
  \begin{equation*}
    \sum_{\substack{\vec{g}: (\vec{I}, \vec{g}) \in R_g(\Sf)
        \\ \text{and }\chi_j =0 \Leftrightarrow j \in J}}
    \hspace{2pt}
    \prod_{j \notin J} V_{g_j, n_j}
    = \O[\chi(\Sf)]{\frac{V_{g-g_\Sf-n_\Sf+1,n_\Sf}}{(g-g_\Sf-n_\Sf+1)^{\mathfrak{q}-\# J-1}}}
    = \O[\chi(\Sf)]{\frac{V_{g}}{g^{\chi(\Sf)}}}
  \end{equation*}
  because $\mathfrak{q}-\# J \geq 1$ and $V_{g-g_\Sf-n_\Sf+1,n_\Sf} = \O[\chi(\Sf)]{V_g/g^{2g_\Sf-2+n_\Sf}}$
  by \cite[Lemma 3.2]{mirzakhani2013}. This is exactly our claim for $\ord=\chi(\Sf)$.

  Let us now prove the result when $\ord > \chi(\Sf)$. The connected
  realization is the only realization for which $\mathfrak{q}=1$, and it has rank
  $\chi(\Sf) < \ord$; as a consequence, all realizations in the sum now satisfy
  $\mathfrak{q} \geq 2$. Let us pick a realization $(\vec{I},\vec{g})$ of rank $\geq \ord$,
  and let $j_+$ denote an index such that
  $\chi_{j_+} = \max_j \chi_j$.  By definition of the rank,
  $2g-2g_{j_+}-n_{j_+} \geq \ord$. On the other hand,
  \begin{equation*}
    \chi_{j_+} \geq \frac 1 {\mathfrak{q}} \sum_{j=1}^\mathfrak{q} \chi_j \geq \frac{2(g-g_\Sf) +
    n_\Sf}{n_\Sf} \underset{g \to \infty}{\longrightarrow} + \infty
  \end{equation*}
  and hence, provided $g$ is large enough,
  $2g_{j_+}-2+n_{j_+} \geq \ord$.

  Let us single out the term $V_{g_{j_+},n_{j_+}}$ in the quantity we want to estimate. We use the
  previous method to bound the summation over all possible $(g_j)_{j \neq j_+}$, in which we now
  include $V_{g_\Sf,n_\Sf}$, observing that
  \begin{align*}
    (2g_\Sf-2+n_\Sf) + \sum_{\substack{1 \leq j \leq \mathfrak{q} \\ j \neq j_+}} (2g_j-2+n_j) 
     = 2(g-g_{j_+})-n_{j_+}
     = 2 g''-2+n_{j_+}
  \end{align*}
  for $g'' := g-g_{j_+}-n_{j_+}+1$.  We therefore obtain that the sum we wish to
  estimate is bounded by a constant depending only on $\chi(\Sf)$ times
  \begin{equation}
    \label{eq:proof_rank_estimate_1}
    \sum_{\mathfrak{q} = 2}^{n_\Sf} \sum_{j_+=1}^{\mathfrak{q}} \sum_{n_{j_+}=1}^{n_\Sf-1}
    \sum_{\substack{g_{j_+} : \, 2g_{j_+}-2+n_{j_+} \geq \ord \\ \text{and } 2g-2g_{j_+}-n_{j_+} \geq \ord}}
    V_{g_{j_+},n_{j_+}} V_{g-g_{j_+}-n_{j_+}+1,n_{j_+}}.
  \end{equation}
  But \cite[Corollary 3.7]{mirzakhani2013} states that for any $\ord, n \geq 1$ and
  any large enough $g$,
  \begin{equation}
    \label{lem:cut_bounded_euler}
    \sum_{\substack{g_1, g_2 \\ g_1+g_2+n-1=g \\2g_i-2+n \geq \ord}}
    V_{g_1, n} V_{g_2, n} = \O[\ord,n]{\frac{V_{g}}{g^\ord}}.
  \end{equation}
  This allows us to conclude that \eqref{eq:proof_rank_estimate_1} is
  $\O[\chi(\Sf),\ord]{V_g/g^{\ord}}$, as announced.
\end{proof}

\subsubsection{Asymptotic expansion of the function $\phi_g^\Sf$}
\label{sec:asympt-expans-funct}

The key ingredient to proving \cref{thm:exist_asympt_type} is the following
asymptotic expansion on the function $\phi_g^\Sf$ associated to a filling type
$\Sf$.

\begin{prp}
  \label{lem:claim_exp_phi_T}
  For any filling type $\Sf$, there exists a unique family of distributions
  $(\psi_k^\Sf)_{k \geq \chi(\Sf)}$ satisfying the following. For any $\ord \geq 0$, there exists a
  constant $c_\ord^\Sf$ such that, for any large enough $g$, any $\x \in \R_{>0}^{n_\Sf}$,
  \begin{equation}
    \label{e:claim_exp_phi_T}
    \phi_g^\Sf(\x) = \sum_{k=\chi(\Sf)}^\ord \frac{\psi_k^\Sf(\x)}{g^k}
    + \mathrm{err}(\x)
  \end{equation}
  {with the error term satisfying the weak estimate, for any $L >0$,
  \begin{equation*}
    \int_{\sum_{i=1}^{n_\Sf} x_i \leq L} |\mathrm{err}(x_1, \ldots, x_{n_\Sf})| \d x_1 \ldots \d x_{n_\Sf}
    = \O[\ord,\chi(\Sf)]{\frac{(L+1)^{c_\ord^\Sf}}{g^{\ord+1}} e^{L/2}}.
  \end{equation*}}
  Furthermore, for all $k$, the function $\psi_k^\Sf$ can be uniquely written as a
  linear combination of distributions of the form
  \begin{equation}
    \label{eq:form_coeff_phi}
    \prod_{i \in V_0} x_i^{2k_i+1}
    \prod_{i \in V_+} x_i^{2k_i+1} \cosh \div{x_i}
    \prod_{i \in V_-} x_i^{2k_i} \sinh \div{x_i}
    \prod_{j=1}^k x_{i_j} \delta(x_{i_j}-x_{i_j'}),
  \end{equation}
  where {$V_0, V_+, V_-$ are disjoint subsets of $\{1, \ldots, n_\Sf\}$, of union denoted as
    $V$,} $(k_i)_{i \in V}$ are integers, and $\bigsqcup_{j=1}^k \{i_j,i_j'\}$ is a perfect matching
  of $\partial \Sf \setminus V$.
\end{prp}

The proof of this proposition is similar to the proof of \cref{prop:simple}.

\begin{proof}[Proof of \cref{lem:claim_exp_phi_T}]
  It comes as an easy consequence of \cref{lem:limit_rank} and the upper
  bound~\eqref{e:increase_Vgn_bound} on Weil--Petersson volumes that it is
  equivalent to prove this expansion for the full function $\phi_g^\Sf$ or its
  restriction $\phi_g^{\Sf, \ord}$ to all realizations of rank $= \ord$ for all
  $\ord \geq 0$. We shall therefore do the latter. The number of possibilities for
  the partition $\vec{I}$ of $\partial \Sf$ is fixed and independent of $g$, so it
  is furthermore enough to prove the result for the function
  $\phi_g^{\Sf,\ord,\vec{I}}$ restricted to realizations of partition $\vec{I}$, for
  every fixed partition~$\vec{I}$.

  Let $\vec{g}$ be a vector of integers such that $(\vec{I}, \vec{g})$ is a
  realization of rank $\ord$ of $\Sf$ in $S_g$. Let $j_+$ be an integer in
  $\{1, \ldots, \mathfrak{q}\}$ realizing the maximum $\max_j \chi_j$. The
  definition of the rank implies that, for all $j \neq j_+$,
  $\chi_j \leq \ord$. But the fact that $(\vec{I},\vec{g})$ is a realization in $S_g$
  means that
  \begin{equation*}
    \sum_{j=1}^\mathfrak{q} \chi_j = 2(g - g_\Sf) - n_\Sf
    \underset{g \rightarrow + \infty}{\longrightarrow} +\infty
  \end{equation*}
  and hence there exists an index $g_0 = g_0(\Sf,\ord)$ such that $\chi_{j_+} > \ord$ if
  $g \geq g_0$. In particular, if $g \geq g_0$, the maximal index $j_+$ is a uniquely defined index
  in $\{1, \ldots, \mathfrak{q}\}$. Hence we can further restrict ourselves to studying the restriction of the
  function $\phi_g^{\Sf,\ord,\vec{I}}$ to all realizations for which the maximal index is exactly $j_+$,
  for every fixed $1 \leq j_+ \leq \mathfrak{q}$. For the sake of readability, we shall only treat the case
  $j_+ = 1$, the others are similar.

  For such a realization to exist, by definition of the rank and of $R_g(\Sf)$, we need to assume that
  $\ord+n_1$ is an even integer $2m$, and then the genus $g_1$ of the surface attached to the
  boundary components of $\Sf$ lying in $I_1$ is determined to be equal to $g - m$. The quantity we
  need to estimate can therefore be rewritten as
  \begin{equation}
    \label{eq:proof_claim_exp_phi_T_rewriting}
    \frac{(\prod_{i \in I_1} x_i)V_{g-m, n_1}(x_i, i \in I_1)}{V_g}
    \sum_{\substack{g_2, \ldots, g_\mathfrak{q} \\ \sum_{j=2}^\mathfrak{q} \chi_j = \ord}}
    \prod_{j=2}^\mathfrak{q} \Big( \prod_{i \in I_j} x_i \Big) V_{g_j,n_j}(x_i, i \in I_j).
  \end{equation}
  The only term in this equation that depends on the genus $g$ is the ratio
  \begin{equation*}
    \frac{(\prod_{i \in I_1} x_i)V_{g-m, n_1}(x_i, i \in I_1)}{V_g}
    = \frac{V_{g-m, n_1}}{V_g}
    \frac{(\prod_{i \in I_1} x_i)V_{g-m, n_1}(x_i, i \in I_1)}{V_{g-m, n_1}}
  \end{equation*}
  which we can expand in powers of $1/g$ using \cref{thm:expansion_volume} and the expansions
  \eqref{eq:asymp_dev_ratio_same_euler} and \eqref{eq:asymp_dev_ratio_add_cusp} by Mirzakhani and
  Zograf. The dependency of the terms of this expansion as a function of $(x_i)_{i \in I_1}$ is made
  explicit in \cref{thm:expansion_volume}. The conclusion then follows directly, because
    $(\prod_{i \in I_j} x_i) V_{g_j,n_j}(x_i, i \in I_j)$ for $j \geq 2$ are polynomial functions in
  $(x_i)_{i \in I_j}$ odd in each variable if $(g_j,n_j) \neq (0,2)$, or equal to
  $x_i \delta(x_i - x_{i'})$ if $(g_j,n_j)=(0,2)$ and $I_j=\{i,i'\}$.

  The bound on the remainder comes from the fact that, by
  \cref{thm:expansion_volume}, the error term in the approximation of order $\ord$ of
  $V_{g-m,n_1}(x_i, i \in I_1)/V_{g-m,n_1}$ is
  \begin{equation*}
    \O[\ord,n_1]{\frac{(\norm{\x} +1)^{3\ord+1}}{g^{\ord+1}} \,
      \exp \Big( \frac 12 \sum_{i \in I_1} x_i \Big)}
  \end{equation*}
  which, once multiplied by the other volume polynomials and Dirac masses,
  yields a remainder that is of the claimed form.
\end{proof}

\subsubsection{Proof of \cref{thm:exist_asympt_type}}
\label{sec:expans-single-topol}

We are now ready to prove \cref{thm:exist_asympt_type}.

{
\begin{proof}
  For a $k \geq \chi(\Sf)$, inspired by \cref{thm:express_average}, we define
  \begin{equation}
    a_k^\type[F] :=
    \frac{1}{n(\type)}
    \int_{\cT_{g_\Sf,n_\Sf}^*}
    F(\ell_Y(\curve)) \, \psi_k^\Sf(\x) \d \Volwp[g_\Sf,n_\Sf](\x,Y)
  \end{equation}
  where we recall that $\psi_k^\Sf$ is the $k$-th term of the asymptotic expansion of $\phi_g^\Sf$.
  By the results of \cref{s:density_writing}, there exists a unique 
  density $f_k^{\type}$ such that
  \begin{equation*}
    a_k^\type[F] = \int_{0}^{+\infty} F(\ell) f_k^\type(\ell) \d \ell
  \end{equation*}
  for any test function $F$. 
  In order to conclude, we simply need to weakly bound the error made in this order $\ord$
  expansion or, in other words, the integral $    \int_0^L | \mathrm{Err}(\ell) | \d \ell$ where 
  \begin{equation*}
    \mathrm{Err}(\ell)
    := \frac{V_g^\type(\ell)}{V_g} - \sum_{k=\chi(\type)}^\ord
    \frac{f_k^\type(\ell)}{g^{k}},
  \end{equation*}
  for a $L >0$.
  We pick the test function $F(\ell) := \mathrm{sign}(\mathrm{Err}(\ell)) \1{[0,L]}(\ell)$ so that
  $$\int_0^L | \mathrm{Err}(\ell) | \d \ell = \int_0^\infty F(\ell) \, \mathrm{Err}(\ell) \d
  \ell.$$ Then, by definition of $V_g^\type$ and $f_k^\type$ for $\chi(\type) \leq k \leq \ord$, we
  can rewrite
  \begin{equation*}
    \int_0^L | \mathrm{Err}(\ell) | \d \ell
    = \frac{1}{m(\type)}
    \int_{\R_{>0}^{n_\Sf}}
    \int_{\cM_{g_\Sf,n_\Sf}(\x)}
    \sum_{\gamma \in \mathrm{Orb}_\Sf(\curve)} F(\ell_Y(\gamma)) \, \mathrm{err}(\x)
    \d \Volwp[g_\Sf,n_\Sf,\x](Y) \d \x 
  \end{equation*}
  where $\mathrm{err}(\x)$ is the remainder of the asymptotic expansion of order $\ord$ of
  $\phi_g^\Sf$. We bound quite roughly the quantity above by the number of geodesics filling $\Sf$,
  and obtain
  \begin{equation*}
    \int_0^L | \mathrm{Err}(\ell) | \d \ell
    \leq
    \int_{\R_{>0}^{n_\Sf}} \Bigg[
    \int_{\cM_{g_\Sf,n_\Sf}(\x)} \# \{ \gamma \text{ filling } \Sf : \ell_Y(\gamma) \leq L\}
     \d \Volwp[g_\Sf,n_\Sf,\x](Y)  \Bigg] |\mathrm{err}(\x)|  \d \x.
   \end{equation*}

   Let us note that the cardinal above is equal to $0$ unless $\sum_{i=1}^{n_{\Sf}} x_i \leq 2L$ by
   \cref{lem:length_fill}.  We now use \cref{thm:wu_xue_counting}
   and obtain
   \begin{align*}
     \# \{ \gamma \text{ filling } \Sf : \ell_Y(\gamma) \leq L \}
     = \O[\chi(\Sf),\epsilon]{e^{(1+\epsilon/2) L}
     \exp \Big(-\frac{1}{2}  \sum_{i=1}^{n_\Sf} x_i \Big)}.
   \end{align*}
   As a consequence, 
   \begin{align*}
     \int_0^L | \mathrm{Err}(\ell) | \d \ell
     & = \O[\chi(\Sf),\epsilon]{e^{(1+\epsilon/2)L}
       \int_{\sum_i x_i \leq 2L} V_{g_\Sf,n_\Sf}(\x) |\mathrm{err}(\x)|
       \exp \Big(-\frac{1}{2} \sum_{i=1}^{n_\Sf} x_i \Big) \d \x}.
   \end{align*}
   We use the naive bound \eqref{lem:increase_Vgn} on the factor $V_{g_\Sf,n_\Sf}(\x)$ and split the
   integral to apply the bound on $\mathrm{err}(\x)$ from \cref{lem:claim_exp_phi_T}. We obtain that
   \begin{align*}
     \int_0^L | \mathrm{Err}(\ell) | \d \ell
     & = \O[\chi(\Sf),\epsilon]{(L+1)^{3\chi(\Sf)} e^{(1+\epsilon/2)L}
       \sum_{j=0}^{\floor{2L}} e^{-j/2}
       \int_{j \leq \sum_{i}x_i < j+1}  |\mathrm{err}(\x)|
        \d \x} \\
     & = \O[\ord,\chi(\Sf),\epsilon]{(L+1)^{3\chi(\Sf)+c_\ord^\Sf+1}\frac{ e^{(1+\epsilon/2)L}}{g^{\ord+1}}
       } 
   \end{align*}
   which leads to the claimed result.
\end{proof}}

\begin{rem}
  \label{rem:proof_asymptotic}
  Let $\type = \eqc{\Sf,\curve}$ be a local topological type.
  The proof  yields that, for any $k \geq 0$, the $k$-th term $f_k^\type$ of the asymptotic
  expansion of $V_g^\type/V_g$ can be computed using the following relation, true for any test
  function $F$: {
  \begin{align}
    \label{eq:def_fk}
      \int_0^{+ \infty} F(\ell) \, f_k^\type(\ell) \d \ell
      % & = \frac{1}{m(\type)} \int_{\R_{>0}^{n_\Sf}} \mathbb{E}_\Sf[F^\type \, | \, \ell(\partial \Sf) = \x]
      %   \, \psi_k^\Sf(\x) \d \x \\
      & = \frac{1}{n(\type)}
        \int_{\mathcal{T}_{g_\Sf,n_\Sf}^\star}
        F(\ell_Y(\curve)) \, \psi_k^\Sf(\x) \d \Volwp[g_\Sf,n_\Sf](\x,Y) 
  \end{align}}
  where $\psi_k^\Sf$ is the $k$-th term of the asymptotic expansion of $\phi_g^\Sf$ from
  \cref{lem:claim_exp_phi_T}.
\end{rem}

\subsection{Useful generalizations}
\label{sec:gener-bord-hyperb}

We briefly explain here how to extend the notations and results of Sections
\ref{sec:topol-types-curv} and \ref{sec:average-over-local} to broader settings.

A first useful observation is that we can extend the definition of local
topological types to types $\type = \eqc{\Sf, \vec{\curve}}$, where $\vec{\curve}$ is
a \emph{multi-loop} filling $\Sf$, i.e. an ordered family
$(\curve_1, \ldots, \curve_\cc)$ of closed loops such that all connected
components of $\Sf \setminus (\bigcup_{i=1}^\cc \curve_i)$ are disks or annular
regions around boundary components of $\Sf$. For any family of  test
functions $F_i: \R_{\geq 0} \rightarrow \R$, $1 \leq i \leq \cc$, we can
naturally define
\begin{equation*}
  \av[\type]{F_1, \ldots, F_\cc}
  := \Ewp{\sum_{(\gamma_1, \ldots, \gamma_\cc) \sim \type}
    \prod_{i=1}^\cc F_i(\ell_X(\gamma_i))}.
\end{equation*}
Note that, more generally, we could also have averaged any test function
$\R_{>0}^\cc \rightarrow \R$ (i.e. not necessarily a product of $\cc$ functions).

In this new setting, the integration formula proven in \cref{thm:express_average} remains identical:
\begin{equation*}
  \av[\type]{F_1, \ldots, F_\cc}
  = \frac{1}{n(\type)}
  \int_{\cT_{g_\Sf,n_\Sf}^*} \prod_{i=1}^\cc F_i(\ell_Y(\curve_i)) \, \phi_g^\Sf(\x) \d \Volwp[g_\Sf,n_\Sf](\x)
\end{equation*}
with the multiplicity constant $n(\type)$ now counting positive homeomorphisms of $ \Sf$ stabilising
the multi-loop $\vec{\curve}$ (up to isotopy). Notably, the function $\phi_g^\Sf$ present in the
integration formula is unchanged. We can therefore write the average $\av[\type]{F_1, \ldots, F_\cc}$
as an integral against a generalised volume function $V_g^\type$, now a function of $\cc$ variables:
\begin{equation*}
  \av[\type]{F_1, \ldots, F_\cc}
  = \frac{1}{V_g} \int_{\R_{>0}^\cc}
  \paren*{\prod_{i=1}^\cc F_i(x_i)} V_g^\type(\x) \d \x.
\end{equation*}
The ratio $V_g^\type/V_g$ can be expanded in powers of $1/g$ by replacing the
function $\phi_g^\Sf$ by its asymptotic expansion as in the case of one single
loop.

Actually, the multi-loop $(\curve_1, \ldots, \curve_\cc)$ does not necessarily need to fill the
surface $\Sf$ for the definition of the average $\av[\type]{F}$ to make sense. A case that can be
interesting to consider is the case where $\cc := n_\Sf$ and
$(\curve_1, \ldots, \curve_{n_\Sf}) := \partial \Sf$.

Another useful generalisation consists in considering families of local types
$\vec{\type} = (\type_1, \ldots, \type_m)$, where for all $i$, $\type_i = \eqc{\Sf^i, \curve_i}$,
where the family of filling types is given by some integer vectors
$\mathbf{g} = (g_i)_{1 \leq i \leq m}$ and $\mathbf{n} = (n_i)_{1 \leq i \leq m}$. We then say a
family of loops $(\gamma_1, \ldots, \gamma_m)$ on $S_g$ belongs to the local type $\type$ if:
\begin{itemize}
\item the filled surfaces $S(\gamma_i)$ for $1 \leq i \leq m$  are
  \emph{disjoint};
\item there exists a family of positive homeomorphisms
  $\phi_i : S(\gamma_i) \rightarrow \Sf^i$ such that the loops
  $\phi \circ \gamma_i$ and $\curve_i$ are homotopic in $\Sf^i$.
\end{itemize}
We then can write for a family of test functions
$F_i : \R_{\geq 0} \rightarrow \R$
\begin{equation*}
  \av[\vec{\type}]{F_1, \ldots, F_m}
  = \Ewp{\sum_{(\gamma_1, \ldots, \gamma_m) \sim \vec{\type}}
    \prod_{i=1}^m F_i(\ell_X(\gamma_i))}
  =  \frac{1}{V_g} \int_{\R_{>0}^m} \prod_{i=1}^m F_i(x_i) \, V_g^{\vec \type}(\x) \d \x
\end{equation*}
for a generalized density $V_{g}^{\vec \type}$, which can be computed using
\begin{equation*}
  \av[\vec \type]{F_1, \ldots, F_m}
  = \prod_{i=1}^m\frac{1}{n(\type_i)}
  \int_{\cT_{\mathbf{g}, \mathbf{n}}^*}
  \prod_{i=1}^m F_i(\ell_{Y_i}(\curve_i)) 
  \, \phi_g^{\vec{\Sf}}(\x) \d \Volwp[\mathbf{g},\mathbf{n}](\vec{\x}, \vec{Y})
\end{equation*}
where:
\begin{itemize}
\item the integration takes place on the product space $\cT_{\mathbf{g}, \mathbf{n}}^* =
  \prod_{i=1}^m \cT_{g_i,n_i}^*$  equipped with the Weil--Petersson volume $\prod_{i=1}^m \d
  \Volwp[g_i,n_i](\x^{(i)},Y_i)$;
\item the function $\phi_g^{\vec{\Sf}}$ enumerates all possible realizations of
  $\Sf^1 \sqcup \ldots \sqcup \Sf^m$ in $S_g$ (as in the case $m=1$, we allow realizations
  containing cylinders, i.e. it is possible to glue a boundary component of $\Sf^i$ to a boundary
  component of $\Sf^{i'}$ for $i \neq i'$, unless they are both cylinders).
\end{itemize}
In this situation, the function
$\phi_g^{\vec{\Sf}}$ can once again be expanded in powers of $1/g$, which yields an asymptotic
expansion of $V_g^{\vec{\type}}/V_g$ in powers of $1/g$.

%%%Local Variables: 
%%% mode: latex
%%% TeX-master: "main"
%%% End: 

\section{Average over all geodesics}
\label{sec:existence-asym-ext}

Let us now extend some of the observations of \cref{sec:average-over-local} to the average $\av{F}$
over \emph{all} primitive closed geodesics.  First, we observe that adding all local topological
types leads to the following statement, extending \cref{thm:express_average} to $\av{F}$.

\begin{thm}
  \label{thm:express_average_all}
  For any $g \geq 3$, any  test function $F$, 
  \begin{equation}
    \label{eq:orbit_decomposition_all}
    \av{F}
    = \sum_{\Sf \text{ filling type}} \frac{1}{n_\Sf!}
    \int_{\cM^*_{g_\Sf,n_\Sf}(\x)}
    \sum_{\gamma \text{ filling } \Sf} F(\ell_Y(\gamma))
    \, \phi_g^\Sf(\x) \d \Volwp[g_\Sf,n_\Sf](\x,Y).
  \end{equation}
  % where $\cM_{g_\Sf,n_\Sf}^* = \{(\x,Y), \x \in \R_{>0}^{n_\Sf}, Y \in
  % \cM_{g_\Sf,n_\Sf}(\x)\}$} is equipped with the Weil--Petersson volume
  % $\d \Volwp[g_\Sf,n_\Sf](\x,Y) = \d \x \d \Volwp[g_\Sf,n_\Sf,\x](Y)$.
\end{thm}

We also observe that the average $\av{F}$ can be written as a density.

\begin{prp}
  \label{prp:existence_density_all}
  There exists a unique locally integrable function
  $V_g^{\mathrm{all}} : \R_{>0} \rightarrow \R_{\geq 0}$ such that, for any
   test function $F$,
  \begin{equation*}
    \av{F} = \frac{1}{V_g} \int_{0}^{+\infty} F(\ell) V_g^{\mathrm{all}}(\ell) \d \ell.
  \end{equation*}
\end{prp}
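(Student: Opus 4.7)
The plan is to define $V_g^{\mathrm{all}}$ as the pointwise sum $\sum_T V_g^T$ over local topological types, where each $V_g^T$ is the continuous density produced by Proposition \ref{prp:existence_density}, and then verify that this sum defines a continuous function on $\R_{>0}$. Uniqueness of such a density is immediate: two continuous functions yielding the same integral against every compactly supported continuous $F$ must coincide pointwise.

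First I would show that the sum is locally integrable and satisfies the integral identity. By Lemma \ref{lem:bound_number_closed_geod}, we have the bound $\av{\1{[0,L]}} \leq 205|\chi_g| e^L < +\infty$ for every $L>0$. Since each $V_g^T$ is non-negative, Tonelli's theorem gives
\begin{equation*}
\int_0^L \sum_T V_g^T(\ell) \, \d\ell = V_g \, \av{\1{[0,L]}} < +\infty,
\end{equation*}
so $V_g^{\mathrm{all}} := \sum_T V_g^T$ is defined almost everywhere, lies in $L^1_{\mathrm{loc}}(\R_{>0})$, and the announced identity $\av{F} = V_g^{-1} \int F \, V_g^{\mathrm{all}}$ holds for every admissible $F$ by monotone (respectively dominated) convergence.

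The main obstacle is continuity of the limit. To address it I would use Theorem \ref{thm:express_average_all} to reduce to a finite sum: every filling type $S$ actually arising as a subsurface of $S_g$ satisfies $|\chi_S| \leq 2g-2$, so only finitely many pairs $(g_S, n_S)$ contribute. For each such $S$, I would argue in the spirit of the proof of Proposition \ref{prp:existence_density}: the $S$-contribution is the push-forward of the positive measure $\phi_g^S(\x) V_{g_S,n_S}(\x)^{-1} \d\mathrm{Vol}^{\mathrm{WP}}_{g_S,n_S,\star}(\x, Y)$ on $\mathcal{T}_{g_S,n_S}^\star$ (a Lebesgue-type density, possibly tensored with Dirac masses coming from the $V_{0,2}$ convention, and thus handled on lower-dimensional slices as in Proposition \ref{prp:existence_density}) under the family of real-analytic length functions $(\x,Y) \mapsto \ell_Y(\alpha)$ indexed by primitive loops $\alpha$ filling $S$. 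Lemma \ref{lem:abstract_density_writing} applied individually to each $\alpha$ yields a continuous density $\rho_{S,\alpha}$, and it remains to show that $\sum_\alpha \rho_{S,\alpha}$ converges uniformly on compact subsets of $\R_{>0}$.

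The hard part will be this last uniform convergence, which I would establish by combining several ingredients already present in the paper. Lemma \ref{lem:length_fill} forces $\ell_Y(\alpha) \geq \tfrac{1}{2}\sum_i x_i$, so only parameters $(\x,Y)$ with $\sum_i x_i \leq 2\ell$ contribute to the density at height $\ell$, localising the integral to a region depending only on $\ell$. On this region, Wu--Xue's Theorem \ref{thm:wu_xue_counting} bounds the number of $\alpha$ with $\ell_Y(\alpha) \leq b$ by $C_{S,\eta} \exp(b - \tfrac{1-\eta}{2}\sum_i x_i)$, and this exponential decay in $\sum_i x_i$ beats the $\exp(\tfrac{1}{2}\sum_i x_i)$ growth of the Weil--Petersson volume factors in $\phi_g^S$ coming from \cref{e:increase_Vgn_bound}. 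The resulting summable upper bound on $\sum_\alpha \rho_{S,\alpha}$, uniform on compact intervals of $\R_{>0}$, provides the dominated convergence needed to conclude that $V_g^{\mathrm{all}}$ is continuous.
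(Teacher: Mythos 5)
Your overall plan — define $V_g^{\mathrm{all}}$ as a sum of the single-type densities and then establish continuity of the sum — is the right reading of the paper's (extremely terse) proof, which only says that the argument is ``the same'' as for a single local type. Your $L^1_{\mathrm{loc}}$ step via Tonelli and \cref{lem:bound_number_closed_geod} is fine, your uniqueness argument is fine, and your reduction to finitely many filling types $S$ (those with $\abso{\chi_S}\leq 2g-2$) is correct and is the one place where the paper's ``same proof'' comment silently requires a remark. You also correctly identify the genuine new difficulty: for a fixed filling type $S$ one must sum the push-forward densities $\rho_{S,\alpha}$ over the \emph{infinitely many} $\mcg(S)$-classes $\alpha$ of filling loops, a problem that does not arise in \cref{prp:existence_density} where there is a single $\gamma^T$.

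The gap is in the last paragraph, which is where the real work lies. \cref{thm:wu_xue_counting} bounds the \emph{counting function} $N(\x,Y,L)=\#\{\alpha:\ell_Y(\alpha)\leq L\}$ pointwise in $(\x,Y)$. Integrating this against the measure on $\mathcal T^\star_{g_S,n_S}$ controls $\int_0^L\sum_\alpha\rho_{S,\alpha}(\ell)\,\d\ell$, i.e. a cumulative ($L^1$-type) quantity, which you already obtained more cheaply from \cref{lem:bound_number_closed_geod} via Tonelli. It does \emph{not} give a bound on $\sup_{\ell\in[a,b]}\rho_{S,\alpha}(\ell)$, and hence does not produce the Weierstrass-type majorant $M_\alpha$ with $\sum_\alpha M_\alpha<\infty$ that would give uniform convergence on compacta. ``Dominated convergence'' is also not quite the right notion here: continuity of the sum requires uniform convergence of the partial sums, not an integrable dominant. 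To close the gap one would need a pointwise bound on each $\rho_{S,\alpha}$, for instance by combining the co-area formula with a lower bound on the Weil--Petersson gradient of the length functions $(\x,Y)\mapsto\ell_Y(\alpha)$ on the relevant compact region (uniform in $\alpha$), or by an argument directly bounding the measure of the thickened level set $\{|\ell_Y(\alpha)-\ell|<\delta\}$ in a way that is summable in $\alpha$ and $O(\delta)$. A smaller inaccuracy: the Wu--Xue rate $\exp\!\big(-\tfrac{1-\eta}{2}\sum_i x_i\big)$ does not literally ``beat'' the volume growth $\exp\!\big(\tfrac12\sum_i x_i\big)$ since $(1-\eta)/2<1/2$; what saves the day is the restriction $\sum_i x_i\leq 2\ell$ forced by \cref{lem:length_fill}, which makes the integral over $\x$ finite anyway.
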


The proofs of both these statements are the same that in the case of one local
type.

Let us now prove the following result, which is an expansion for the average $\av{F}$ obtained by
summing over \emph{all} closed geodesics.

\begin{thm}
  \label{thm:existence_asym_all}
  There exists a unique family of continuous functions $(f_k^{\mathrm{all}})_{k \geq 0}$ such that,
  for any integer $A \geq 1$, $\ord \geq 0$, $\epsilon > 0$, and any large enough $g$,
  $L = A \log(g)$,
  \begin{equation}
    \label{eq:existence_asym_all}
    \frac{V_g^{\mathrm{all}}(\ell)}{V_g} \1{[0, L]}(\ell)
    = \sum_{k=0}^{\ord} \frac{f_k^{\mathrm{all}}(\ell)}{g^k} \1{[0, L]}(\ell)
    + \Ow[\epsilon,\ord,A]{\frac{\exp{((1+\epsilon) \ell)}}{g^{\ord+1}}}.
  \end{equation}
\end{thm}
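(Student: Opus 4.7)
The plan is to derive the expansion for $\av{F}$ from the one already established for a single local type $T$ (Theorem~\ref{thm:exist_asympt_type}), by summing over local types, regrouped by their filling type $S$. The key structural observation is that, in the expansion of $\phi_g^S$ from Proposition~\ref{lem:claim_exp_phi_T}, the first nontrivial term is of order $g^{-|\chi_S|}$; hence, for fixed $k$, only the finitely many filling types $S$ with $|\chi_S| \leq k$ can contribute to the coefficient $f_k^{\mathrm{all}}$. The length restriction $\ell \leq A\log(g)$ will enter crucially to control the remaining infinite sum over filling types of larger complexity.

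More precisely, I would first define the candidates $f_k^{\mathrm{all}}$ by
\begin{equation*}
  \int_0^{+\infty} F(\ell) \, f_k^{\mathrm{all}}(\ell) \, d\ell
  := \sum_{S : |\chi_S| \leq k} \frac{1}{n_S!}
  \int_{\R_{>0}^{n_S}} \mathbb{E}_S\big[F^{\mathrm{all}} \,\big|\, \ell(\partial S) = \x\big] \, \psi_k^S(\x) \, d\x ,
\end{equation*}
using the coefficients $\psi_k^S$ from Proposition~\ref{lem:claim_exp_phi_T}; since this is a \emph{finite} sum of filling types, existence and uniqueness of the continuous density $f_k^{\mathrm{all}}$ follows exactly as in Proposition~\ref{prp:existence_density_all}. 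I would then decompose the error as $\mathrm{Err}_{g,K}^{\mathrm{in}} + \mathrm{Err}_{g,K}^{\mathrm{out}}$, where the ``in'' part gathers the contributions of filling types with $|\chi_S|\leq K$ (coming from the individual remainders of the expansion of each $\phi_g^S$), and the ``out'' part the contributions of filling types with $|\chi_S| > K$. The ``in'' part involves finitely many filling types, so combining the remainder bound from Proposition~\ref{lem:claim_exp_phi_T} with the Wu--Xue bound from Lemma~\ref{lem:bound_exp_wu_xue} yields the required weak error $\Ow{e^{(1+\epsilon)\ell}/g^{K+1}}$ by the very same argument used to prove Theorem~\ref{thm:exist_asympt_type}.

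The hard part is controlling $\mathrm{Err}_{g,K}^{\mathrm{out}}$, since the set of filling types with $|\chi_S| > K$ is infinite. Here, applying uniform volume bounds (notably \eqref{e:increase_Vgn_bound} combined with Lemma~\ref{lem:limit_rank}) together with Lemma~\ref{lem:bound_exp_wu_xue} gives, for every admissible $F$,
\begin{equation*}
  \abso*{\frac{1}{n_S!} \int_{\R_{>0}^{n_S}} \mathbb{E}_S\big[F^{\mathrm{all}} \,\big|\, \ell(\partial S) = \x\big] \, \phi_g^S(\x) \, d\x}
  = \Ow[S,\epsilon]{\frac{\norminf{F(\ell) e^{(1+\epsilon)\ell}}}{g^{|\chi_S|}}} .
\end{equation*}
Testing $\mathrm{Err}_{g,K}^{\mathrm{out}}$ against $F(\ell):=\mathrm{sign}(\mathrm{Err}_{g,K}^{\mathrm{out}}(\ell)) \, e^{-(1+\epsilon)\ell} \1{[0,A\log g]}(\ell)$, the $L^\infty$-norm $\norminf{F e^{(1+\epsilon)\ell}}$ equals~$1$, so each filling type contributes $\O[S,\epsilon]{g^{-|\chi_S|}}$. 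The length cutoff $\ell\leq A\log(g)$ is precisely what makes the sum of these contributions over $|\chi_S|>K$ finite: the exponential growth $e^{(1+\epsilon)\ell}$ is converted into a polynomial factor $g^{(1+\epsilon)A}$, which is defeated by the decay $g^{-|\chi_S|}$ as soon as $|\chi_S|$ is large enough in terms of $A,K,\epsilon$. The main technical step is therefore to make the constants in Proposition~\ref{lem:claim_exp_phi_T} and Lemma~\ref{lem:bound_exp_wu_xue} sufficiently explicit in $S$ to ensure that the tail sum over $S$ with $|\chi_S|$ large converges; only finitely many ``borderline'' filling types, with $K < |\chi_S| \lesssim (1+\epsilon)A + K$, then need to be absorbed individually in the constant of $\Ow[\epsilon,K,A]{\cdot}$. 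Uniqueness of the family $(f_k^{\mathrm{all}})$ follows as in Theorem~\ref{thm:exist_asympt_type} from the uniqueness of asymptotic expansions in powers of~$1/g$, applied to test functions supported in any fixed bounded interval $[0,L]$ and letting~$g \to +\infty$.
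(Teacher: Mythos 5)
Your proposal correctly identifies the overall structure — split the sum by filling type, treat finitely many $S$ with the machinery of Theorem~\ref{thm:exist_asympt_type}, and use the length cutoff $\ell \leq A\log g$ to control the rest — but the argument for the tail $\mathrm{Err}_{g,K}^{\mathrm{out}}$ has a genuine gap, and the paper takes a fundamentally different route there.

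You propose to sum, over the infinitely many filling types with $\abso{\chi_S}>K$, per-type bounds of the form $\O[S,\epsilon]{g^{-\abso{\chi_S}}}$, and you acknowledge that this requires making the $S$-dependence of the constants in Proposition~\ref{lem:claim_exp_phi_T} and Lemma~\ref{lem:bound_exp_wu_xue} explicit so that the tail over $\abso{\chi_S}$ large converges. This is not a mere ``technical step'': the Wu--Xue constant $C_{S,\eta}$ in Theorem~\ref{thm:wu_xue_counting} is not explicit in $S$ anywhere in the available literature, and there is no reason to believe it grows slowly enough in $\abso{\chi_S}$; the same is true of the constants inherited from the volume-comparison lemmas of \cite{mirzakhani2013}. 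Moreover, the length cutoff $\ell\leq A\log g$ alone does \emph{not} restrict which filling types occur: on a surface containing a tangle, a short geodesic can fill a subsurface of arbitrarily large Euler characteristic, so the cutoff does not by itself reduce the sum over $S$ to a finite one. Your bookkeeping (converting $e^{(1+\epsilon)\ell}$ into $g^{(1+\epsilon)A}$ and beating it with $g^{-\abso{\chi_S}}$) would be sound \emph{if} the per-$S$ constants were summable, but that is precisely the quantity you cannot control.

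The paper avoids this entirely by using a probabilistic truncation rather than an analytic one. Lemma~\ref{lem:TF_gene_coro} (the tangle-free hypothesis at higher order) gives a set $\mathcal{A}_g^{K+2}$ of probability $\geq 1-g^{-K-2}$ on which every geodesic of length $\leq A\log g$ fills a surface of Euler characteristic $\geq -M$ for an $M=M(A,K)$. On $\mathcal{A}_g^{K+2}$, the sum over $S$ is therefore genuinely finite and $g$-independent, so one may treat each of the finitely many $S$ as in Theorem~\ref{thm:exist_asympt_type}. On the complement, the sum over \emph{all} of $\mathcal{P}(X)$ is bounded naively by Lemma~\ref{lem:bound_number_closed_geod}, and integrating this uniform bound against $1-\1{\mathcal{A}_g^{K+2}}$ yields an error $\O{\norminf{(\ell+1)^2 F(\ell)e^\ell}/g^{K+1}}$. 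No $S$-uniform constants are ever needed. If you wanted to salvage your purely analytic approach, you would essentially have to re-prove Wu--Xue with explicit dependence on $\abso{\chi_S}$, which is a significant and unresolved problem; the probabilistic route is both simpler and the one actually available with the tools in the paper.

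Your definition of the candidate densities $f_k^{\mathrm{all}}$ as finite sums over $\abso{\chi_S}\leq k$, and the uniqueness argument at the end, are consistent with the paper.
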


The proof of this result is very similar to the proof of \cref{thm:exist_asympt_type}.  The only
difference is that we would \emph{a priori} need to sum over all possible filling types $\Sf$ in
order to expand $\av{F}$, which would be an issue since all constants in
\cref{thm:exist_asympt_type} depend on $\Sf$.  It is in order to address this difficulty that we
restrict ourselves to examining geodesics of length $\leq L = A \log(g)$, by including $\1{[0,L]}$
in \cref{eq:existence_asym_all}. This allows to use the following proposition to restrict the number
of filling types in the sum.

 \begin{prp}\label{p:apriori}
   Let $L \geq 1$. For any large enough $g$, the probability for a random hyperbolic surface of
   genus $g$ to contain a multi-loop of length $\leq L$ filling a surface of absolute Euler
   characteristic $> \chi$ is $\O[\chi]{L^{c(\chi)} e^{L} /g^{\chi+1}}$ for a constant $c(\chi)$.
\end{prp} 
\begin{proof}
  If there is a multi-loop of length $\leq L$ filling a surface of absolute Euler characteristic
  greater than $\chi$, then there exists a subsurface of absolute Euler characteristic $\chi+1$ and
  boundary length $\leq 2L$, by \cite[Lemma 4.13]{Moebius}.  By Markov's inequality,
  \begin{align*}
    & \Pwp{X \text{ contains a multi-loop } \gamma : \ell_X(\gamma) \leq L
      \text{ and }\chi(S(\gamma)) > \chi} \\
    & \leq \sum_{\Sf: \chi(\Sf)=\chi+1}
      \Ewpo \brac*{\#  \{ Y\subset X : Y \text{ homeomorphic to } \Sf,
      \ell(\partial Y) \leq 2L\}} \\
    & \leq \sum_{\Sf: \chi(\Sf)=\chi+1} \frac{1}{n_\Sf!} 
      \int_{\R_{>0}^{n_\Sf}} \1{[0,2L]}(x_1+\ldots+x_{n_\Sf}) \,
      \phi_g^\Sf(\x) V_{g_\Sf,n_\Sf}(\x) \d \x
  \end{align*}
  as soon as $2g-2 > 2 (\chi+1)$, by Mirzakhani's integration formula, with $(\phi_g^\Sf)_\Sf$ the
  functions defined in \cref{eq:def_phi_S}. Let $\Sf$ be a filling type. Using the upper
  bounds~\eqref{lem:increase_Vgn} and \eqref{e:increase_Vgn_bound} on Weil--Petersson volume
  polynomials, we obtain that the integral above is at most
  % \begin{equation*}
  %   \phi_g^\Sf(\x)
  %   = \Ow[\Sf,\chi]{\frac{\|\x\|^{n_\Sf}}{g^{\chi}} \exp \div{x_1+\ldots+x_{n_\Sf}}}.
  % \end{equation*}
  % We note that constants depending on $\Sf$ can be bounded in terms of $\chi$ alone because there is a
  % finite number of filling types $\Sf$ such that $\chi(\Sf)=\chi$. Then,
  \begin{align*}
    % \int_{\R_{>0}^{n_\Sf}} \1{[0,2L]}(x_1+\ldots+x_{n_\Sf}) \,
    % \phi_g^\Sf(\x) V_{g_\Sf,n_\Sf}(\x)\d \x
    % &
       (2L)^{2n_\Sf+3\chi(\Sf)} e^L \frac{V_{g_\Sf,n_\Sf}}{V_g}
      \sum_{\mathfrak{R}  \in R_g(\Sf)}
       \prod_{\substack{1 \leq j \leq \mathfrak{q} \\ \chi_j>0}} V_{g_j,n_j}
  \end{align*}
  which leads to our claim using \cref{lem:limit_rank} for $\ord=\chi(\Sf) = \chi+1$.
\end{proof}

{
\begin{proof}[Proof of \cref{prp:existence_density_all}]
  Applying \cref{p:apriori} together with Lemma \ref{lem:bound_number_closed_geod}
  allows to decompose the average $\av[\text{all}]{F \1{[0,L]}} $ into
  \begin{align*} \label{e:bound_chi}
    \av[\text{all}]{F \1{[0,L]}}
    & = \sum_{\type : \chi(\type) \leq \chic} \av[\type]{F}
      + \Ewp{\sum_{\substack{\gamma \in \mathcal{G}(X) \\ \chi(S(\gamma)) > \chic}}
    F(\ell_X(\gamma)) \1{[0,L]}(\ell_X(\gamma))} \\
   & = \sum_{\type : \chi(\type) \leq \chic} \av[\type]{F}
    + \cO_\chic\Big(\frac{L^{c(\chic)+1} e^{L}}{g^{\chic}} \norm{F(\ell) e^\ell}_\infty\Big) 
  \end{align*}
  by a simple $L^\infty$ upper bound on the expectation (note that here we used the fact that the
  Euler characteristic of a surface of genus $g$ is linear in $g$). Taking $L=A\log g$, when we
  specify $\chic= A + \ord +3$, then the error term is
  $\cO_{\ord,A}( \norm{F (\ell) e^{\ell}}_\infty/g^{\ord+1})$. As a consequence, it is enough to
  study the term $\sum_{\chi(\type) \leq \chic} \av[\type]{F \1{[0,L]}}$ and apply the asymptotic
  expansion to each term separately.
\end{proof}}

%%%Local Variables: 
%%% mode: latex
%%% TeX-master: "main"
%%% End: 

\section{The case of the figure-eight}
\label{sec:generalised_convolutions}

The aim of this section is to prove \ref{chal:FR_type} in the case of a figure-eight
in a pair of pants, which is the local topological type represented in
\cref{fig:eight_pop}.

\begin{figure}[h]
  \centering
  \includegraphics[scale=0.5]{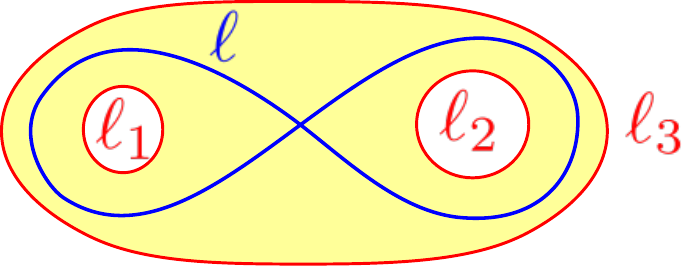}
  \caption{A figure-eight in a pair of pants.}
  \label{fig:eight_pop}
\end{figure}

The figure-eight is the simplest example of loop filling the pair of pants, which is why we address it
first.  We prove the following.

\begin{thm}
  \label{thm:eight}
  Let $\curve$ be a figure-eight filling a pair of pants $\mathbf{P}$, and
  $\type = [\mathbf{P}, \curve]_{\mathrm{loc}}$. Then, for all $k \geq 0$, the function
  $f_k^{\type}$ is a Friedman--Ramanujan function.
\end{thm}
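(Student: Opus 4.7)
The plan is to start from the integral representation of $f_k^T$ given by \cref{rem:proof_asymptotic} combined with the formula for $\phi_g^P$ of \cref{exa:int_pop}. Explicitly,
\begin{equation*}
  \int_0^{+\infty} F(\ell) \, f_k^T(\ell) \d\ell
  = \frac{1}{m(T)} \int_{\R_{>0}^3} F\bigl(\mathcal{L}_\alpha(\x)\bigr) \, \psi_k^P(\x) \d\x,
\end{equation*}
where the length function $\mathcal{L}_\alpha$ is defined implicitly by the relation
\begin{equation*}
  \cosh \div{\mathcal{L}_\alpha(\x)} = 2 \cosh \div{x_1} \cosh \div{x_2} + \cosh \div{x_3},
\end{equation*}
and where \cref{lem:claim_exp_phi_T} asserts that $\psi_k^P$ is a finite linear combination of distributions of the form prescribed by \eqref{eq:form_coeff_phi}, i.e. products of monomials $x_i^{2k_i+\epsilon_i}$, of $\cosh \div{x_i}$ or $\sinh \div{x_i}$, and of delta distributions $x_i \delta(x_i-x_{i'})$. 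By linearity, it thus suffices to show that each elementary term yields a Friedman--Ramanujan function $\ell \mapsto f(\ell)$.

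The first main step is to perform a change of variables. Writing $\cosh\div{x_i} = \frac{1}{2}(e^{x_i/2} + e^{-x_i/2})$ and expanding the products, each elementary integrand is a sum of terms of the form $x_1^{a_1} x_2^{a_2} x_3^{a_3} \, e^{(\epsilon_1 x_1 + \epsilon_2 x_2 + \epsilon_3 x_3)/2}$ with $\epsilon_i \in \{\pm 1\}$, integrated against the Leray measure on the level set $\{\mathcal{L}_\alpha(\x) = \ell\}$. The relation defining $\mathcal{L}_\alpha$ shows that for large $\ell$ the dominant region is $\epsilon_1 = \epsilon_2 = +1$ with $x_3$ small, in which regime $e^{(x_1+x_2)/2} \approx e^{\ell/2}$; every other sign configuration, or the regime of large $x_3$, can be shown to produce a contribution to $f(\ell)$ bounded by $\O{(\ell+1)^c e^{\ell/2}}$, hence belonging to $\FRrem$.

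The second step is to analyse the principal region. Parametrising the level set by $(x_3, t)$ where, say, $t = x_1$ and $x_2$ is solved from the constraint, one finds
\begin{equation*}
  2 \cosh\div{x_2} = \frac{\cosh\div{\ell} - \cosh\div{x_3}}{\cosh\div{t}},
\end{equation*}
which makes $x_2$ a smooth function of $(t, x_3, \ell)$ for $t$ in a suitable range. One then extracts the Jacobian of this change of variables and expands it asymptotically. Up to remainders of size $e^{\ell/2}$ times a polynomial, the principal contribution takes the form of a generalised convolution
\begin{equation*}
  \int h_3(x_3) \int_0^{\ell - x_3 + O(1)} h_1(t) h_2(\ell - t - x_3 + O(1)) \d t \, \d x_3,
\end{equation*}
where $h_1, h_2, h_3$ are Friedman--Ramanujan. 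Applying (a mild strengthening of) \cref{p:conv}, which handles such perturbed convolutions with polynomially-bounded Jacobian corrections, shows that the principal region produces $p(\ell) e^\ell + \O{(\ell+1)^c e^{\ell/2}}$.

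The main obstacle, as already flagged in the remark following \cref{p:conv}, is that the length of the figure-eight is not the sum of the lengths of the two simple loops composing it; concretely, the map $(x_1,x_2,x_3) \mapsto \mathcal{L}_\alpha(\x)$ is transcendental, so the ``convolution'' structure only appears after careful unfolding on the level set, with non-polynomial Jacobian factors and boundary corrections that must be tracked with precision. Controlling these corrections, and checking that every error term produced by the asymptotic expansion of $\cosh$/$\sinh$ outside the principal region can indeed be absorbed into the $\O{(\ell+1)^c e^{\ell/2}}$ remainder class, is the technical heart of the argument. The restriction to Euler characteristic $-1$ is precisely what keeps the moduli-space integral low-dimensional enough (and $\mathcal{L}_\alpha$ explicit enough) for this direct Leray-measure analysis to go through.
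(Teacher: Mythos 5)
Your opening reduction is the same as the paper's: start from \cref{rem:proof_asymptotic} and \cref{exa:int_pop}, expand $\psi_k^P$ into elementary terms of the form \eqref{eq:form_coeff_phi}, and reduce to showing that each resulting level-set integral
\begin{equation*}
  \ell \mapsto \iint_{\mathfrak{L}(\ell_1,\ell_2,\ell_3)=\ell}
  \prod_{i=1}^3 h_i(\ell_i)\,\frac{\d\ell_1\d\ell_2\d\ell_3}{\d\ell}
\end{equation*}
is Friedman--Ramanujan. Up to here you match \cref{lem:expression_before_change_var}. The paper then deals with degenerate cases (\cref{lem:easy_cases_remainder}) and switches to the geometric coordinates $(L_1,L_2,u)$ of \cref{sec:change-variables}, where $L_1,L_2$ are the lengths of the two sub-loops of the figure-eight and $u=\cos^2(\theta/2)$; crucially $L_1+L_2=\ell$ \emph{exactly}, so the convolution structure is in $(L_1,L_2)$, with $u\in(0,1)$ a bounded free parameter that is integrated out.

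Your proposed alternative has a genuine gap at precisely this step. You parametrise the level set by $(x_1,x_3)$, solve for $x_2$, and assert that the principal contribution is
$\int h_3(x_3)\int h_1(t)\,h_2(\ell-t-x_3+O(1))\d t\d x_3$, i.e.\ a (perturbed) triple convolution $h_1\ast h_2\ast h_3$, with the dominant region being $x_3$ small. Neither claim is correct. From $\cosh\div\ell = 2\cosh\div{x_1}\cosh\div{x_2}+\cosh\div{x_3}$, for large $\ell$ and $x_3\leq \ell-C$ one has $e^{(x_1+x_2)/2}\approx e^{\ell/2}-e^{x_3/2}\approx e^{\ell/2}$, hence $x_2\approx \ell-x_1+O(1)$ \emph{independently of $x_3$} in that range; the relation $x_1+x_2+x_3\approx \ell$ never holds on the level set (one has $x_1+x_2\leq\ell$ and $x_3\leq\ell$ separately, as in \eqref{eq:bounds_ell_ell_i}). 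Moreover, in the principal region $x_3$ is not small: in the paper's coordinates $\ell_3 \approx \ell + 2\log(1-u)$, so as $u$ sweeps $(0,1)$, $\ell_3$ ranges from $O(1)$ up to $\approx\ell$, and the natural $\ell_3$-density is $\sim e^{(\ell_3-\ell)/2}$, peaked near $\ell_3\approx\ell$. Consequently your heuristic misidentifies both the dominant regime and the convolution structure, and the ``mild strengthening of \cref{p:conv}'' you invoke would be applied to an integral that is not a triple convolution. The correct structure is a two-fold convolution in $(L_1,L_2)$ multiplied by a bounded $u$-integral, which is exactly what the change of variables of \cref{lem:l123_calc} and \cref{lem:ch_var} exposes, and what the technical expansions of \cref{sec:two-technical-lemmas} and \cref{cor:expansion_ell} are designed to control.
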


By ``simplest'', we mean that it is the loop with the least self-intersections filling the pair of
pants~$\mathbf{P}$, as opposed to more complicated loops such as the one represented in the left part
of \cref{fig:filled_non_simple}. However, when we prove \ref{chal:FR_type} for all other loops
filling $\mathbf{P}$ in \cref{sec:other-geodesics}, we will notice that it is actually the one for
which the analysis is the hardest.

\subsection{Expression of the density function as an integral}
\label{sec:expression-as-an}

Let $F : \R_{\geq 0} \rightarrow \R$ be a test function. We recall that, by definition, the
functions $f_k^{\type}$ will appear when computing the asymptotic expansion of the average
\begin{equation*}
  \av[{\type}]{F}
  = \Ewp{\sum_{\gamma \sim {\type}} F(\ell_X(\gamma))}.
\end{equation*}
We saw in \cref{thm:express_average} that this expectation can be expressed as
\begin{equation}
  \label{eq:eight_pop_integral}
  \av[{\type}]{F}
  =  \iiint_{\R_{>0}^3} F(h(\ell_1, \ell_2, \ell_3)) \,
  \phi_g^{\mathbf{P}}(\ell_1, \ell_2, \ell_3) \,
  \d \ell_1 \d \ell_2 \d \ell_3,
\end{equation}
where 
\begin{itemize}
\item $\phi_g^{\mathbf{P}}$ is the sum over all realisations of the pair of pants, defined
  in \eqref{eq:def_phi_S} and computed in \cref{exa:int_pop};
\item the length function $h : \R_{>0}^3 \rightarrow \R_{>0}$ associates to
  $(\ell_1, \ell_2, \ell_3)$ the length of the figure-eight going around the components $1$ and $2$
  in the pair of pants of boundary components of lengths $\ell_1, \ell_2, \ell_3$. It is computed in
  \cite[equation 4.2.3]{buser1992}:
  \begin{equation}
    \label{eq:formula_length_eight}
    \cosh \div{h(\ell_1, \ell_2, \ell_3)}
    = 2 \cosh \div{\ell_1} \cosh \div{\ell_2} + \cosh \div{\ell_3}.
  \end{equation}
\end{itemize}

By \cref{lem:claim_exp_phi_T}, the function $\phi_g^\mathbf{P}(\ell_1, \ell_2, \ell_3)$ has an
expansion in powers of $1/g$, of which the dependency with respect to $(\ell_i)_{1 \leq i \leq 3}$
is detailed.  When substituting $\phi_g^\mathbf{P}$ by its expansion in
\eqref{eq:eight_pop_integral}, we obtain that any term of the asymptotic expansion of the average
$\av[{\type}]{F}$ in powers of $1/g$ is a linear combination of integrals of the form
\begin{align}
  \label{eq:term_F_exp}
  & \iiint_{\R_{>0}^3} F(h(\ell_1, \ell_2, \ell_3))
  \, \prod_{i=1}^3 \phi_i(\ell_i) \, \d \ell_1 \d \ell_2 \d \ell_3 \\
  \label{eq:term_F_exp_tor}
  & \iiint_{\R_{>0}^3} F(h(\ell_1, \ell_2, \ell_3))
    \,  \phi_i(\ell_i) \, \ell_j \delta(\ell_j-\ell_k) \d \ell_1 \d \ell_2 \d \ell_3
    \quad \text{for } \{i,j,k\}=\{1,2,3\}
\end{align}
where for all $i$, the functions $\ell_i \mapsto \phi_i(\ell_i)$ is of the form
\begin{equation}
  \label{eq:cases_h_i}
  \ell_i^{2k+1} \quad \text{or} \quad
  \ell_i^{2k+1} \cosh \div{\ell_i} \quad \text{or} \quad
  \ell_i^{2k} \sinh \div{\ell_i}.
\end{equation}

\subsection{Level-set decomposition}

Because we want to view the average $\av[{\type}]{F}$ as the integral of $F$ against a
density, we rewrite the integral \eqref{eq:term_F_exp} as
\begin{equation*}
  \int_0^{+ \infty} F(\ell) \left( \iint_{h(\ell_1, \ell_2, \ell_3) = \ell}
    \, \prod_{i=1}^3 \phi_i(\ell_i) \,
    \frac{\d \ell_1 \d \ell_2 \d \ell_3}{\d \ell} \right) \d \ell.
\end{equation*}
Let us give precise meaning to this writing. We fix $\ell_1, \ell_2 > 0$. Then, the application
\begin{equation*}
  h_{\ell_1, \ell_2} :
  \begin{cases}
    \R_{>0}   \rightarrow I(\ell_1, \ell_2) \\
    \ell_3 \hspace{0.5em}
    \mapsto h(\ell_1, \ell_2, \ell_3) = 2 \argcosh \left( 2 \cosh \div{\ell_1}
      \cosh \div{\ell_2} + \cosh \div{\ell_3} \right)
  \end{cases}
\end{equation*}
is a diffeomorphism from $\R_{>0}$ to an interval
$I(\ell_1, \ell_2)$ of $\R_{>0}$.  We introduce the following notation.

\begin{nota}
  Let $\ell > 0$. For any integrable function $G : \R_{>0}^3 \rightarrow \C$, we
  define
  \begin{align*}
      \iint_{h(\ell_1, \ell_2, \ell_3) = \ell} G(\ell_1, \ell_2, \ell_3) \,
      \frac{\d \ell_1 \d \ell_2 \d \ell_3}{\d \ell}  
     :=
      \iint _{\R_{>0}^2} 
      \frac{ \1{I(\ell_1, \ell_2)}(\ell) \, G(\ell_1, \ell_2, h_{\ell_1, \ell_2}^{-1}(\ell))}
    {h'_{\ell_1, \ell_2}(h_{\ell_1, \ell_2}^{-1}(\ell))}
    \d \ell_1 \d \ell_2.
\end{align*}
\end{nota}

This formula corresponds to considering the variable $\ell_3$ as a function of the variables
$\ell_1, \ell_2, \ell$, once restricted to the $2$-dimensional level-set
$\{(\ell_1, \ell_2, \ell_3) \; : \; h(\ell_1, \ell_2, \ell_3) = \ell\}$. The derivative
that appears corresponds, formally, to writing
\begin{equation*}
  \frac{\d \ell_1 \d \ell_2 \d \ell_3}{\d \ell} =
  \frac{\partial \ell_3}{\partial \ell}(\ell_1, \ell_2, \ell) \d \ell_1 \d \ell_2.
\end{equation*}
It is easy to check using the local inversion theorem that one obtains the same quantity by
performing this operation on the variables $\ell_2$ and $\ell_3$, or $\ell_1$ and $\ell_3$.

We do the same for the integrals of the form \eqref{eq:term_F_exp_tor}. For instance, for
$(i,j,k)=(1,2,3)$, we obtain the integral
\begin{equation}
  \label{eq:term_F_exp_tor_level_set}
  \int_{h(\ell_1,\ell_2,\ell_2)=\ell} \phi_1(\ell_1) \, \ell_2 \,
  \frac{\d \ell_1 \d \ell_2}{\d \ell} \cdot
\end{equation}

\subsection{Reformulation of the question and proof in simple cases}
\label{sec:reform-quest}

It will be handy to observe that, for any of the three cases in~\eqref{eq:cases_h_i}, we can write
\begin{equation*}
  f_i(\ell_i) := \frac{\phi_i(\ell_i)}{\sinh \div{\ell_i}}
  = p_i(\ell_i) + \O{(\ell_i+1)^{c_i} \exp \left(-\frac{\ell_i}{2}\right)}
\end{equation*}
where $p_i$ is a polynomial function (possibly equal to zero) and $c_i \geq 0$,
because 
\begin{equation*}
  \frac{\ell_i}{\sinh \div{\ell_i}} = \O{(\ell_i+1) \, e^{-\frac{\ell_i}{2}} }
  \quad \text{and} \quad
  \cosh \div{\ell_i} = \sinh \div{\ell_i} + e^{-\frac{\ell_i}{2}}.
\end{equation*}
All of the steps taken so far lead us to the following lemma.

\begin{lem}
  \label{lem:expression_before_change_var}
  For any integer $k \geq 0$, the function $\ell \mapsto f_k^{{\type}}(\ell)$ is a linear combination of
  functions of the form
  \begin{equation}
    \label{eq:integral_to_compute_FR}
    \mathrm{Int}[f_1, f_2, f_3] : \ell \mapsto \iint_{h(\ell_1, \ell_2, \ell_3) = \ell} \,
    \prod_{i=1}^{3} f_i(\ell_i) \sinh \div{\ell_i} \, \frac{\d \ell_1 \d \ell_2 \d \ell_3}{\d \ell}
  \end{equation}
  and
  \begin{equation}
    \label{eq:integral_to_compute_FR_tor}
    \begin{cases}
      & \mathrm{Int}^{\mathrm{tor}}_1[f_1] : \ell \mapsto \int_{h(\ell_1, \ell_2, \ell_2)
        = \ell} \, f_1(\ell_1) \sinh \div{\ell_1} \, \ell_2 \, \frac{\d \ell_1 \d
        \ell_2}{\d \ell} \\
      & \mathrm{Int}^{\mathrm{tor}}_2[f_2] : \ell \mapsto \int_{h(\ell_3, \ell_2, \ell_3)
        = \ell} \, f_2(\ell_2) \sinh \div{\ell_2} \, \ell_3 \, \frac{\d \ell_2 \d
        \ell_3}{\d \ell} \\
      & \mathrm{Int}^{\mathrm{tor}}_3[f_3] : \ell \mapsto \int_{h(\ell_1, \ell_1, \ell_3)
        = \ell} \, f_3(\ell_3) \sinh \div{\ell_3} \, \ell_1 \, \frac{\d \ell_1 \d
        \ell_3}{\d \ell} 
    \end{cases}
  \end{equation}
  where for any $i \in \{ 1, 2, 3 \}$, $f_i$ satisfies:
  \begin{equation}
    \label{eq:expansion_assumption_fi}
    \abso{f_i(\ell_i) - p_i(\ell_i)} \leq c_i(\ell_i+1)^{c_i} \exp \left(-\frac{\ell_i}{2}\right)
  \end{equation}
  for a polynomial function $p_i$ and a constant $c_i>0$.
\end{lem}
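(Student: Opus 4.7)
The plan is to derive the claimed decomposition of $f_k^T$ by combining the integration formula \eqref{eq:eight_pop_integral} with the asymptotic expansion of $\phi_g^P$ established in \cref{lem:claim_exp_phi_T}, and then rewriting each resulting integral in the level-set form needed. First I would apply \cref{lem:claim_exp_phi_T} to the filling type $S = P$ (pair of pants, $n_S = 3$, $m(T)=1$) to write
\begin{equation*}
\phi_g^P(\ell_1, \ell_2, \ell_3) = \sum_{k=|\chi_P|}^{K} \frac{\psi_k^P(\ell_1,\ell_2,\ell_3)}{g^k} + \Ow[K]{\tfrac{(\|\boldsymbol\ell\|+1)^{c_K}}{g^{K+1}}\exp\tfrac{\ell_1+\ell_2+\ell_3}{2}},
\end{equation*}
where each $\psi_k^P$ is a linear combination of distributions of the form \eqref{eq:form_coeff_phi} with three variables. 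Substituting this expansion into \eqref{eq:eight_pop_integral} and appealing to the uniqueness of the asymptotic expansion in \cref{thm:exist_asympt_type} together with the uniqueness of the density in \cref{prp:existence_density}, I would identify
\begin{equation*}
\int_0^{+\infty} F(\ell)\, f_k^T(\ell)\, d\ell = \iiint_{\R_{>0}^3} F(\mathfrak{L}(\ell_1,\ell_2,\ell_3))\, \psi_k^P(\ell_1,\ell_2,\ell_3)\, d\ell_1\, d\ell_2\, d\ell_3
\end{equation*}
for every admissible $F$. It is therefore enough to treat a single elementary term of the form \eqref{eq:form_coeff_phi}.

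Next I would split into two cases according to the set $J$ appearing in \eqref{eq:form_coeff_phi}. When $J = \emptyset$, the distribution factorises as $h_1(\ell_1)h_2(\ell_2)h_3(\ell_3)$ with each $h_i$ of one of the three types listed in \eqref{eq:cases_h_i}; applying the level-set decomposition to the variable $\ell_3$ (say) and using the diffeomorphism $\mathfrak{L}_{\ell_1,\ell_2}$ rewrites the integral exactly as \eqref{eq:term_F_exp}, and then as $\mathcal{I}[f_1,f_2,f_3]$ upon setting $f_i(\ell_i) := h_i(\ell_i)/\sinh(\ell_i/2)$. When $J$ is non-empty it must consist of a single pair $(j,k) \subset \{1,2,3\}$, the remaining index $i$ carrying the factor $h_i(\ell_i)$ and an additional factor $\ell_j$ coming from $x_j\delta(x_j-x_k)$. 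Integrating the delta against one of the variables collapses the triple integral to a double integral over the diagonal $\ell_k=\ell_j$, producing \eqref{eq:term_F_exp_tor_level_set} and its analogues; a further level-set decomposition in the two free variables yields one of the three expressions in \eqref{eq:integral_to_compute_FR_tor} with $f_i = h_i/\sinh(\ell_i/2)$.

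Finally I would verify that, in all three possibilities listed in \eqref{eq:cases_h_i}, the function $f_i := h_i/\sinh(\ell_i/2)$ satisfies \eqref{eq:expansion_assumption_fi}. The case $h_i(\ell_i) = \ell_i^{2k}\sinh(\ell_i/2)$ gives $f_i(\ell_i) = \ell_i^{2k}$, a polynomial, so one can take $p_i(\ell_i)=\ell_i^{2k}$ and zero remainder. The case $h_i(\ell_i) = \ell_i^{2k+1}$ gives $f_i(\ell_i) = \ell_i^{2k+1}/\sinh(\ell_i/2)$, which is bounded by $C(\ell_i+1)^{2k}e^{-\ell_i/2}$ uniformly on $\R_{>0}$ (the apparent singularity at $0$ is absorbed by the factor $\ell_i^{2k+1}$ when $k\ge 0$), so $p_i = 0$ works. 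The case $h_i(\ell_i) = \ell_i^{2k+1}\cosh(\ell_i/2)$ uses $\cosh(\ell_i/2) = \sinh(\ell_i/2)+e^{-\ell_i/2}$ to write $f_i(\ell_i) = \ell_i^{2k+1} + \ell_i^{2k+1}e^{-\ell_i/2}/\sinh(\ell_i/2)$, and the same estimate bounds the second term by $C(\ell_i+1)^{2k}e^{-\ell_i/2}$, so $p_i(\ell_i)=\ell_i^{2k+1}$. The proof is essentially bookkeeping; the only substantive point is this verification that dividing by $\sinh(\ell_i/2)$ produces a polynomial plus exponentially small remainder, which allows all three cases in \eqref{eq:cases_h_i} to be absorbed into the common form \eqref{eq:expansion_assumption_fi}. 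Taking finite linear combinations then yields the claim.
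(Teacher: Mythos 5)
Your proposal is correct and follows essentially the same route as the paper: substitute the expansion of $\phi_g^P$ from \cref{lem:claim_exp_phi_T} into \eqref{eq:eight_pop_integral}, identify $f_k^T$ with the level-set integral against $\psi_k^P$ (this is exactly \eqref{eq:def_fk} in \cref{rem:proof_asymptotic}), split according to whether the term of \eqref{eq:form_coeff_phi} carries a Dirac factor, and absorb the three cases of \eqref{eq:cases_h_i} into \eqref{eq:expansion_assumption_fi} by dividing by $\sinh(\ell_i/2)$ and using $\cosh\div{\ell_i}=\sinh\div{\ell_i}+e^{-\ell_i/2}$ and $\ell_i/\sinh\div{\ell_i}=\O{(\ell_i+1)e^{-\ell_i/2}}$. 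The final bookkeeping verification matches the paper's argument in \cref{sec:reform-quest}, so nothing further is needed.
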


In particular, if we prove that the integrals in \eqref{eq:integral_to_compute_FR} and
\eqref{eq:integral_to_compute_FR_tor} are Friedman--Ramanujan for any $(f_i)_{1 \leq i \leq 3}$
satisfying \eqref{eq:expansion_assumption_fi}, then we can conclude that $f_k^{{\type}}$ is too, by
linearity, hence proving \cref{thm:eight}.  Let us prove this in some simple cases.

\begin{lem}
  \label{lem:easy_cases_remainder}
  Let $(f_i)_{1 \leq i \leq 3}$ be measurable functions, each satisfying
  \eqref{eq:expansion_assumption_fi}.
  \begin{enumerate}
  \item \label{it:easy_cases_remainder_1} If $p_1=p_2=0$, then
    $\mathrm{Int}[f_1, f_2, f_3] \in \FRrem \subset \FR$.
  \item \label{it:easy_cases_remainder_3} If $p_3=0$, then
    $\mathrm{Int}[f_1, f_2, f_3] \in \FRrem \subset \FR$.
  \item \label{it:easy_cases_remainder_tor} Without any further hypothesis, the
    integrals in \eqref{eq:integral_to_compute_FR_tor} belong in
    $\FRrem \subset \FR$.
  \end{enumerate}
\end{lem}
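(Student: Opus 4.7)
The plan is to introduce the substitution $u_i := \cosh\div{\ell_i}$ and $u := \cosh\div{\ell}$: the defining relation \eqref{eq:formula_length_eight} becomes the linear identity $u_3 = u - 2u_1u_2$, and since $du_i = \tfrac{1}{2}\sinh\div{\ell_i}\,d\ell_i$ the $\sinh$ factors already present in the integrand combine exactly with the Jacobian. A direct computation gives $\partial_{\ell_3}\mathfrak{L} = \sinh\div{\ell_3}/\sinh\div{\ell}$, so eliminating the delta constraint and passing to the $u_i$ coordinates yields
\begin{equation*}
  \mathcal{I}[f_1,f_2,f_3](\ell)
  \;=\; 4\sinh\div{\ell} \iint_{D'(\ell)} \tilde f_1(u_1)\,\tilde f_2(u_2)\,\tilde f_3(u-2u_1u_2)\,du_1\,du_2,
\end{equation*}
with $\tilde f_i(u_i) := f_i(2\argcosh u_i)$ and $D'(\ell) := \{(u_1,u_2)\in[1,\infty)^2 : 2u_1u_2 \le u-1\}$. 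Since $2\argcosh u_i \le 2\log(2u_i)$ for $u_i \ge 1$, the hypothesis \eqref{eq:expansion_assumption_fi} translates into: $|\tilde f_i(u_i)| \le C(\log u_i+1)^{c_i}/u_i$ when $p_i=0$, and $|\tilde f_i(u_i)| \le C(\log u_i+1)^c$ in general. Because $\log u \le \ell/2 + O(1)$, every polynomial factor in $\log u_i$ is polynomial in $\ell$; as the prefactor $\sinh\div{\ell}$ already provides the full $e^{\ell/2}$ growth allowed for a $\FRrem$ element, it suffices in each case to bound the double integral over $D'(\ell)$ by a polynomial in $\ell$.

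For Case (1), the vanishing of both $p_1$ and $p_2$ gives integrand bounded by $C(\log u)^{C'}/(u_1u_2)$, and the Fubini computation $\iint_{D'(\ell)} du_1\,du_2/(u_1u_2) = \tfrac{1}{2}\log^2\!\left(\tfrac{u-1}{2}\right) = O(\ell^2)$ concludes. For Case (2), the decay of $\tilde f_3$ gives integrand bounded by $C(\log u)^{C'}/(u-2u_1u_2)$; the substitution $v = 2u_1u_2$ at fixed $u_2$ reduces the inner integral to an elementary one, and $\iint_{D'(\ell)} du_1\,du_2/(u-2u_1u_2) = \tfrac{1}{2}\int_1^{u-2}\log(t)/(u-t)\,dt = O((\log u)^2) = O(\ell^2)$. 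In both cases $|\mathcal{I}(\ell)| \le C(\ell+1)^{C''}e^{\ell/2}$, so $\mathcal{I} \in \FRrem$.

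For Case (3), the level sets are one-dimensional. For $\mathcal{I}^{\mathrm{tor}}_3$, the constraint $\cosh\div{\ell} = 2\cosh^2\div{\ell_1} + \cosh\div{\ell_3}$ together with the analogous Jacobian computation collapses the integrand to $\mathcal{I}^{\mathrm{tor}}_3[f_3](\ell) = \sinh\div{\ell}\int f_3(\ell_3)\ell_1\,d\ell_1$ over a bounded range $\ell_1 \le \ell$, and since $|f_3|$ is polynomially bounded this is a remainder. For $\mathcal{I}^{\mathrm{tor}}_1$ (and symmetrically $\mathcal{I}^{\mathrm{tor}}_2$), the constraint $\cosh\div{\ell} = (2\cosh\div{\ell_1}+1)\cosh\div{\ell_2}$ becomes $u_2 = u/(2u_1+1)$, and after simplifying $(2u_1+1)\sqrt{u_2^2-1} = \sqrt{u^2-(2u_1+1)^2}$ the substitution $v = 2u_1+1$ reduces the relevant integral to $\tfrac{1}{2}\int_3^u dv/\sqrt{u^2-v^2} = O(1)$, yielding a polynomial-in-$\ell$ bound and hence a remainder.

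The main obstacle here is really just bookkeeping: because $\sinh\div{\ell}$ already uses the full $e^{\ell/2}$ budget allowed in $\FRrem$, everything else must reduce to polynomial growth in $\ell$, which forces us to exploit the exponential decay of whichever $f_i$'s have vanishing principal polynomial in order to cancel the otherwise exponentially large area of $D'(\ell)$. The genuinely difficult case, where all three $p_i$'s are nonzero, is precisely what the remainder of Section~\ref{sec:generalised_convolutions} addresses via a more delicate analysis.
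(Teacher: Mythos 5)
Your proof is correct, and it takes a genuinely different route from the paper's. The paper stays in the $(\ell_1,\ell_2,\ell_3)$ coordinates throughout: it computes the appropriate partial derivative (solving for whichever variable makes the $\sinh$-factors and decay line up), bounds the integrand pointwise by $C(\ell+1)^c\sinh\div{\ell}$ using $\ell_1+\ell_2,\,\ell_3\le\ell$ together with the decay of the $f_i$'s with $p_i=0$, and then simply integrates this uniform bound over the polynomial-size rectangle $[0,\ell]^2$. Your approach instead substitutes $u_i=\cosh\div{\ell_i}$, which linearises the constraint \eqref{eq:formula_length_eight} to $u_3=u-2u_1u_2$ and makes the $\sinh$ factors cancel against the Jacobian, reducing the problem to bounding explicit double integrals $\iint_{D'(\ell)}du_1\,du_2/(u_1u_2)$ and $\iint_{D'(\ell)}du_1\,du_2/(u-2u_1u_2)$, which you evaluate in closed form as $O((\log u)^2)=O(\ell^2)$. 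The two approaches produce the same polynomial-in-$\ell$ prefactor and are of comparable length; yours is more computational but makes the linear structure of the level sets transparent (and the observation that $\sinh\div{\ell_i}\,d\ell_i=2\,du_i$ is a clean way to see why the $\sinh$ weights in \eqref{eq:integral_to_compute_FR} are the natural ones), while the paper's is more elementary and closer in spirit to the convolution-type estimates that follow. Both are valid; neither has a gap.

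One small bookkeeping remark worth double-checking in your write-up: the hypothesis \eqref{eq:expansion_assumption_fi} with $p_i\neq 0$ only gives $|f_i(\ell_i)|\le C(\ell_i+1)^{c_i}$, not the sharper decay — you use this correctly, but the constant in $|\tilde f_i(u_i)|\le C(\log u_i+1)^{c_i}$ should absorb the behaviour near $u_i=1$ (where $\ell_i\to 0$), which is harmless here since $f_i$ is bounded there; it is worth stating explicitly that the bound holds uniformly down to $u_i=1$.
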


\begin{proof}
  Before we proceed to the proof, let us observe that
  \begin{equation}
    \label{eq:bounds_ell_ell_i}
    \ell_1 + \ell_2 \leq \ell \quad \text{and} \quad
    \ell_3 \leq \ell.
  \end{equation}
  This can be seen on the expressions, or directly on \cref{fig:eight_pop}, by
  minimality of the length of a geodesic in a free homotopy class.
  
  The proof of \cref{lem:easy_cases_remainder} for all integrals of the form
  \eqref{eq:integral_to_compute_FR_tor} are similar, and we therefore detail the computation for
  $\mathrm{Int}_1^{\mathrm{tor}}$ only.  First, we use the level-set condition
  \begin{equation*}
    \cosh \div{\ell} = \cosh \div{\ell_2} \paren*{2 \cosh \div{\ell_1} + 1}
  \end{equation*}
  to compute that
  \begin{equation*}
    \frac{\partial \ell_1}{\partial \ell}
    = \frac{\sinh \div{\ell}}{2 \sinh \div{\ell_1}
        \cosh \div{\ell_2}}
    \leq \frac{\sinh \div \ell}{\sinh \div{\ell_1}} \cdot
  \end{equation*}
  The hypothesis \eqref{eq:expansion_assumption_fi} on $f_1$ implies that there exists a
  constant $c>0$ such that
  \begin{equation*}
    \abso{f_1(\ell_1)} \leq c (\ell_1+1)^{c}.
  \end{equation*}
  It follows, using \eqref{eq:bounds_ell_ell_i}, that
  \begin{equation*}
    \left|f_1(\ell_1) \sinh \div{\ell_1} \ell_2 \frac{\partial \ell_1}{\partial \ell}\right|
    \leq c(\ell+1)^{c+1} \sinh \div{\ell}
  \end{equation*}
  and hence
  \begin{equation*}
    \abso{\mathrm{Int}^{\mathrm{tor}}_1[f_1](\ell)}
    \leq c (\ell+1)^{c+1} \sinh \div{\ell} \int_{0}^{\ell} \d \ell_2
    \leq c (\ell+1)^{c+2} \exp \div{\ell}
  \end{equation*}
  which means that $\ell \mapsto \mathrm{Int}^{\mathrm{tor}}_1[f_1](\ell)$ belongs in $\FRrem$.
  
  Let us now treat case when $p_3=0$. In that case, now using
  \begin{equation*}
    \cosh \div{\ell}
    = 2 \cosh \div{\ell_1} \cosh \div{\ell_2} + \cosh \div{\ell_3},
  \end{equation*}
  we obtain
  \begin{equation}
    \label{e:partial_deri_l2}
    \frac{\partial \ell_2}{\partial \ell}
    = \frac{\sinh \div{\ell}}{2 \cosh \div{\ell_1} \sinh \div{\ell_2}} \cdot
  \end{equation}
  We then observe that our assumptions on $(f_i)_{1 \leq i \leq 3}$ implies that
  there exists a constant $c \geq 0$ such that, for any $\ell_1, \ell_3, \ell$ for
  which $\ell_2$ is well-defined,
  \begin{equation}
    \label{e:bound_noterm_3}
    \left|\prod_{i=1}^{3} f_i(\ell_i) \sinh \div{\ell_i} \, \frac{\partial \ell_2}{\partial \ell}\right|
    \leq c (\ell_1 + \ell_2 + \ell_3+1)^c \sinh \div{\ell}.
  \end{equation}
  which, by \eqref{eq:bounds_ell_ell_i}, implies
  \begin{equation*}
    \mathrm{Int}[f_1, f_2, f_3](\ell)
    = \O{(\ell+1)^c \sinh \div{\ell} \iint_{[0,\ell]^2} \d \ell_1 \d \ell_3}
    = \O{(\ell+1)^{c+2} \exp \div{\ell}},
  \end{equation*}
  which is our claim.

  The proof the remaining case is the same, now expressing $\ell_3$ in terms of $\ell_1, \ell_2$ and
  $\ell$. Indeed,
  \begin{equation*}
    \frac{\partial \ell_3}{\partial \ell}
    = \frac{\sinh \div{\ell}}{\sinh \div{\ell_3}}
  \end{equation*}
  and hence 
  $\prod_{i=1}^{3} f_i(\ell_i) \sinh \div{\ell_i} \, \frac{\partial
    \ell_3}{\partial \ell}$ satisfies the bound \eqref{e:bound_noterm_3} when $p_1=p_2=0$.
\end{proof}

\begin{rem}
  The fact that we obtain a function in $\FRrem$ in cases \eqref{it:easy_cases_remainder_1} and
  \eqref{it:easy_cases_remainder_3} corresponds to the fact that, if
  $\tilde{f}_1, \tilde{f}_2 \in \FRrem$, then $\tilde{f}_1 \star \tilde{f}_2 \in \FRrem$, as
  observed in the proof of the stability of $\FR$ by convolution (\cref{p:conv}), which we invite
  the reader to read at this stage. Another insight that one can gather from proof is that, if
  $\tilde{f}_1, \tilde{f}_2 \in \FR$ are of respective main terms $\tilde{p}_1, \tilde{p}_2$, then
  the main term of $\tilde{f}_1 \star \tilde{f}_2$ is not only $\tilde{p}_1 \star \tilde{p}_2$, but
  actually contains contributions coming from the remainders terms of $\tilde{f}_1, \tilde{f}_2$.
  This is the reason why \cref{lem:easy_cases_remainder} only holds if \emph{both} $f_1$ and $f_2$
  have no polynomial term, and not when only one of them does. We do expect the contributions where
  $p_1 \neq 0$ whilst $p_2=0$ to participate to the main term of $\mathrm{Int}[f_1, f_2, f_3]$.
\end{rem}

\subsection{Change of variables}

In order to study the integrals $\mathrm{Int}[f_1, f_2, f_3]$ more precisely, it will be helpful to
introduce new variables, which transform the level-set integral \eqref{eq:integral_to_compute_FR}
into a convolution-like integral. We shall use the following new variables, which are represented in
\cref{fig:change_var}:
\begin{itemize}
\item $L_1$ and $L_2$ denote the lengths of the geodesic arcs based at the self-intersection
  of~$\alpha$, going around the first and second boundary components of the pair of pants
  respectively;
\item $u := \cos^2 \div{\theta}$, where $\theta$ denotes the outer angle of the self-intersection of
  $\alpha$.
\end{itemize}
We observe that, in these new coordinates, we always have
$h(\ell_1, \ell_2, \ell_3) = L_1 + L_2$, and hence the level-set
integral is an integral on
$\{(L_1, L_2,u) \in \R_{>0}^2\times (0,1) \, : \, L_1+L_2=\ell\}$, similar to a
convolution. This is evocative of the case of graphs, described in
\cref{sec:stab-conv}. A significant difference with the case of graphs is the
presence of an additional parameter $u \in (0,1)$ that is required to describe
the geometry of the pair of pants; in the following, this quantity will mostly
behave like a free parameter in $(0,1)$ that we will integrate out.

\label{sec:change-variables}
\begin{figure}[h]
  \centering
  \includegraphics[scale=0.5]{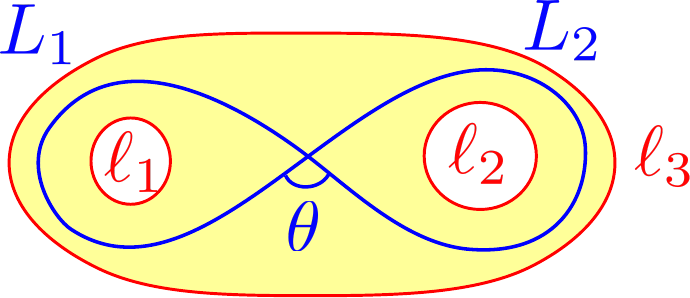}
  \caption{The new variables $(L_1, L_2, u = \cos^2(\theta/2))$ used to describe
    the geometry of the pair of pants.}
  \label{fig:change_var}
\end{figure} 

The following lemma provides an expression for the lengths
$\ell_1, \ell_2, \ell_3$ of the three boundary components of the pair of pants,
in terms of the new variables $(L_1,L_2,u)$.

\begin{lem}
  \label{lem:l123_calc}
  For any $\ell_1, \ell_2, \ell_3 > 0$,
  \begin{numcases}{}
    &  $\cosh \div{\ell_i} = \sqrt{u} \, \cosh \div{L_i}$ \qquad
    for $i \in \{1, 2\}$
    \label{e:l1_calc} \\
    & 
    $\cosh \div{\ell_3} = (1 - u) \cosh \div{L_1+L_2} - u \, \cosh \div{L_1 -
      L_2}.$ \label{e:l3_calc}
  \end{numcases}
\end{lem}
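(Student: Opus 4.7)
The plan is to prove \eqref{e:l1_calc} first, by analysing the geometry of a single geodesic loop $\beta_i$ in the universal cover $\mathbb{H}^2$, and then to deduce \eqref{e:l3_calc} by inserting \eqref{e:l1_calc} into the length formula \eqref{eq:formula_length_eight}.

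For \eqref{e:l1_calc}, fix $i \in \{1,2\}$ and let $\tilde p \in \mathbb{H}^2$ be a lift of the self-intersection $p$. Let $A_i$ denote the hyperbolic isometry of translation length $\ell_i$ representing the free-homotopy class of $\beta_i$: then $\beta_i$ lifts to the geodesic segment from $\tilde p$ to $A_i \tilde p$, of length $L_i$. Writing $h_i = \dist(\tilde p, \mathrm{axis}(A_i))$, the segment together with the two common perpendiculars from $\tilde p$ and $A_i \tilde p$ to the axis of $A_i$ forms a Saccheri quadrilateral of base $\ell_i$, legs of equal length $h_i$, and summit of length $L_i$. Standard hyperbolic trigonometry in this quadrilateral gives the length relation $\sinh \div{L_i} = \cosh(h_i)\, \sinh \div{\ell_i}$ and the summit angle formula $\cos \psi = \cosh \div{\ell_i}/\cosh \div{L_i}$, where $\psi$ denotes the angle at $\tilde p$ between the summit and the equidistant curve through $\tilde p$.

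The next step is the identification $\psi = \theta/2$. Since $\alpha$ is a smooth closed geodesic, the junction condition at $p$ forces the incoming tangent of $\beta_1$ to equal the outgoing tangent of $\beta_2$ and vice versa; hence the two branch tangent lines at $p$ are precisely the outgoing and incoming tangent lines of $\beta_i$ at $p$. In the universal cover, these two lines are the tangent to the lifted $\beta_i$ at $\tilde p$, and the pushforward under $dA_i^{-1}$ of its arrival tangent at $A_i \tilde p$. The reflection symmetry of the Saccheri quadrilateral across the perpendicular bisector of its base (which conjugates $A_i$ to $A_i^{-1}$) shows that these two vectors both make angle $\psi$ with the equidistant direction at $\tilde p$, but on opposite sides, so they are separated by angle $2\psi$; squaring the resulting identity $\cos \div{\theta} = \cosh \div{\ell_i}/\cosh \div{L_i}$ yields \eqref{e:l1_calc}.

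For \eqref{e:l3_calc}, substitute $\cosh \div{\ell_i} = \sqrt{u}\,\cosh \div{L_i}$ for $i=1,2$ into \eqref{eq:formula_length_eight} and use the elementary fact $\mathfrak{L}(\ell_1,\ell_2,\ell_3) = L_1 + L_2$ (the figure-eight is the concatenation of the two loops). This produces
\[
\cosh \div{L_1+L_2} = 2u\,\cosh \div{L_1}\cosh \div{L_2} + \cosh \div{\ell_3},
\]
and the product-to-sum identity $2\cosh \div{L_1}\cosh \div{L_2} = \cosh \div{L_1+L_2} + \cosh \div{L_1-L_2}$ then lets one solve for $\cosh \div{\ell_3}$ and recover \eqref{e:l3_calc}. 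The hardest step will be the angular identification $\theta = 2\psi$: it requires careful bookkeeping of how the single self-intersection in the quotient unfolds into two distinct lifts $\tilde p$ and $A_i \tilde p$, and of the action of $dA_i^{-1}$ on the arrival tangent, in order to express both branches at the crossing as vectors at $\tilde p$ related by the Saccheri reflection.
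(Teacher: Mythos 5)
Your proof is correct and follows essentially the same route as the paper. For \eqref{e:l1_calc}, the paper works directly in the pair of pants: it cuts a trirectangle bounded by (half of) $\alpha_1$, the common perpendicular of $\alpha_1$ and the boundary $b_1$, and the perpendicular from $z$ to $b_1$, then quotes Buser's trirectangle formula [Buser, Thm.~2.3.1(iii)] to get $\cosh\div{\ell_1} = \cos\div{\theta}\cosh\div{L_1}$. Your Saccheri quadrilateral in the universal cover is exactly the double of that trirectangle (its perpendicular bisector is the paper's common perpendicular), so the two computations coincide. You are somewhat more explicit than the paper about identifying the outer crossing angle $\theta$ with $2\psi$; the paper delegates this to a figure, while you phrase it via the $A_i\mapsto A_i^{-1}$ reflection symmetry of the quadrilateral. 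That symmetry argument is sound, though the cleaner statement of the mechanism is that $A_i^{-1}$ (not the reflection itself) preserves the equidistant through $\tilde p$ together with its orientation and its axis side, so the pushforward of the arrival tangent at $A_i\tilde p$ lands at angle $\psi$ from the equidistant on the side opposite the outgoing tangent; the reflection conjugating $A_i$ to $A_i^{-1}$ is what makes the two angles at the summit equal in the first place. For \eqref{e:l3_calc}, your substitution of \eqref{e:l1_calc} into the figure-eight length formula, together with $\mathfrak{L}=L_1+L_2$ and the product-to-sum identity, is word-for-word the paper's derivation. No gaps.
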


\begin{proof}
  Let $z$ denote the intersection point of the figure eight, and $\alpha_1$
  denote the portion of $\alpha$ going around the first boundary component,
  $b_1$, of $\mathbf{P}$. We draw the common perpendicular of $\alpha_1$ and $b_1$, as
  well as the perpendicular of $b_1$ passing through $z$. When cutting along
  $\alpha_1$ and those two perpendiculars, we obtain a trirectangle, represented
  in \cref{fig:proof_change_coord}. By \cite[Theorem 2.3.1(iii)]{buser1992},
  \begin{equation*}
    \cosh \div{\ell_1} = \sin \div{\pi - \theta} \cosh \div{L_1} = \sqrt{u} \cosh \div{L_1},
  \end{equation*}
  and same goes for $\ell_2$.
  \begin{figure}[h]
    \centering
    \includegraphics[scale=0.5]{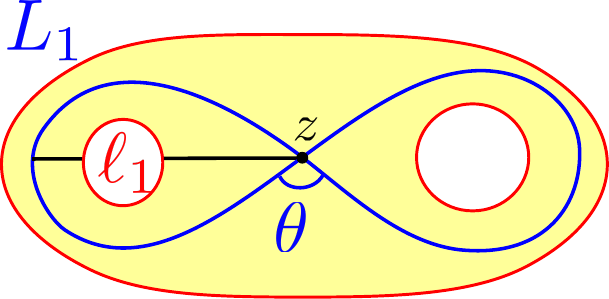}
    \hspace{2cm}
    \includegraphics[scale=0.7]{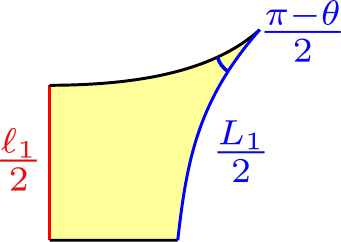}
    \caption{Illustration of the proof of \cref{e:l1_calc}.}
    \label{fig:proof_change_coord}
  \end{figure}

  The formula for $\ell_3$ is obtained directly from the formula for the length
  of the figure eight,
  \begin{equation*}
    \cosh \div{L_1+L_2}
    = 2 \cosh \div{\ell_1} \cosh \div{\ell_2} + \cosh \div{\ell_3}
  \end{equation*}
  together with the fact that, by \eqref{e:l1_calc},
  \begin{equation*}
    2\cosh \div{\ell_1} \cosh \div{\ell_2}
    = 2u \cosh \div{L_1} \cosh \div{L_2} 
  \end{equation*}
  and we have the usual trigonometric formula
  \begin{equation*}
     2 \cosh \div{L_1} \cosh \div{L_2}  =  \cosh \div{L_1+L_2} + \cosh \div{L_1-L_2}.
  \end{equation*}
\end{proof}

This is enough to entirely describe the change of variable
$(\ell_1, \ell_2, \ell_3) \rightarrow (L_1, L_2, u)$, which we do in the following lemma.

\begin{lem}
  \label{lem:ch_var}
  The application $(\ell_1, \ell_2, \ell_3) \mapsto (L_1, L_2, u)$ is a diffeomorphism from the set
  $\R_{>0}^3$ onto the set $\mathfrak{D} \subset \R_{>0}^2 \times (0,1)$ defined by 
  \begin{equation}
    \label{eq:def_E}
    \mathfrak{D} := \left\{ (L_1, L_2, u) \, : \,
      \begin{cases}
        \sqrt u \cosh \div{L_i} > 1  \qquad \text{for } i \in \{1, 2\} \\
        (1-u) \cosh \div{L_1+L_2} > u \cosh \div{L_1-L_2} +1
      \end{cases}
    \right\},
  \end{equation}
  and we have that
  \begin{equation*}
    \d \ell_1 \d \ell_2 \d \ell_3
    = - \dfrac{\sinh \div{L_1+L_2}^2}
    {\sinh \div{\ell_1} \sinh \div{\ell_2} \sinh \div{\ell_3}}
    \d L_1 \d L_2 \d u.
  \end{equation*}
\end{lem}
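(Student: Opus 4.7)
The plan is to address the two claims separately: first that the change of variables is a diffeomorphism onto $\mathcal{E}$, then the Jacobian formula.

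For bijectivity, I would start by noting that the formulas of Lemma 6.5 give an explicit, manifestly smooth map $\Phi: (L_1, L_2, u) \mapsto (\ell_1, \ell_2, \ell_3)$ on any domain where the right-hand sides of \eqref{e:l1_calc} and \eqref{e:l3_calc} exceed $1$ (so that $\argcosh$ applied to them yields a positive number). Since $\cosh \div{\ell_i} > 1 \Leftrightarrow \ell_i > 0$, the set $\mathcal{E}$ is precisely the maximal domain on which $\Phi$ takes values in $\R_{>0}^3$. For surjectivity and injectivity, I would invert the relations. Given $(\ell_1, \ell_2, \ell_3) \in \R_{>0}^3$, formula \eqref{eq:formula_length_eight} determines $\mathfrak{L} := L_1 + L_2$. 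For any $u \in (0, \min(\cosh^{-2}\div{\ell_1}, \cosh^{-2}\div{\ell_2}, 1))$, formula \eqref{e:l1_calc} determines $L_i(u) := 2 \argcosh(\cosh \div{\ell_i}/\sqrt{u}) > 0$, and one checks that $u \mapsto L_1(u) + L_2(u)$ is strictly decreasing from $+\infty$ to $\ell_1+\ell_2$ as $u$ increases. Since $\mathfrak{L} > \ell_1 + \ell_2$ (the figure-eight is strictly longer than the two boundary loops individually, which is seen directly from \eqref{eq:formula_length_eight}), there is a unique $u$ giving $L_1(u)+L_2(u) = \mathfrak{L}$. One verifies that the resulting triple $(L_1, L_2, u)$ lies in $\mathcal{E}$ by going back through \eqref{e:l3_calc} and checking the third condition, which follows since it reproduces the original $\ell_3 > 0$.

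For the Jacobian, the computation proceeds by implicit differentiation of \eqref{e:l1_calc} and \eqref{e:l3_calc}. Writing $\sinh\div{\ell_i} \, (\partial \ell_i/\partial L_i) = \sqrt{u} \sinh\div{L_i}$ and $\sinh\div{\ell_i} \, (\partial \ell_i/\partial u) = u^{-1/2} \cosh\div{L_i}$ for $i = 1, 2$ (with $\partial \ell_i/\partial L_j = 0$ for $i \neq j$, $i,j \in \{1,2\}$), and similarly $\sinh\div{\ell_3} \, (\partial \ell_3/\partial L_1) = (1-u)\sinh\div{L_1+L_2} - u \sinh\div{L_1-L_2}$, $\sinh\div{\ell_3} \, (\partial \ell_3/\partial L_2) = (1-u)\sinh\div{L_1+L_2} + u \sinh\div{L_1-L_2}$, and $\sinh\div{\ell_3}\,(\partial \ell_3/\partial u) = -2(\cosh\div{L_1+L_2} + \cosh\div{L_1-L_2})$, the Jacobian matrix has a sparse structure (zeros in positions $(1,2)$ and $(2,1)$), and expanding along the first row produces three terms.

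The computation is then purely trigonometric: after multiplying through by $\sinh\div{\ell_1}\sinh\div{\ell_2}\sinh\div{\ell_3}$, the three terms simplify using the identities $2\sinh\div{L_i}\cosh\div{L_i} = \sinh L_i$, $2\cosh\div{L_1}\cosh\div{L_2} = \cosh\div{L_1+L_2} + \cosh\div{L_1-L_2}$, and $\sinh\div{L_1}\cosh\div{L_2} \pm \cosh\div{L_1}\sinh\div{L_2} = \sinh\div{L_1 \pm L_2}$, together with $\sinh L_1 \sinh L_2 = \sinh^2\div{L_1+L_2} - \sinh^2\div{L_1-L_2}$. After cancellation, all $\sinh\div{L_1-L_2}$ contributions vanish and the $\sinh^2\div{L_1+L_2}$ pieces combine to yield $-\sinh^2\div{L_1+L_2}$. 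This gives the announced formula. The main obstacle is not conceptual but bookkeeping: keeping track of the signs and carrying out the trigonometric simplifications without error.
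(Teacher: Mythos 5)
Your proof is correct and follows essentially the same approach as the paper: implicit differentiation of the relations from \cref{lem:l123_calc} to obtain the scaled Jacobian entries, then computing the $3\times 3$ determinant. Two small remarks. First, in the surjectivity argument, the claim that $u\mapsto L_1(u)+L_2(u)$ decreases to $\ell_1+\ell_2$ at the right endpoint of the admissible interval is not quite right: at $u=\min(\cosh^{-2}\div{\ell_1},\cosh^{-2}\div{\ell_2})$ the sum equals $2\argcosh(\cosh\div{\ell_1}\cosh\div{\ell_2})$, which is strictly \emph{less} than $\ell_1+\ell_2$. This is harmless — you only need that $\mathfrak{L}>\ell_1+\ell_2$ exceeds the infimum, which it does a fortiori — but the stated endpoint is wrong as given. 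Second, the cofactor expansion along the first row has only two nonzero terms (the $(1,2)$ entry vanishes), not three; again inconsequential. For the determinant itself, the paper streamlines the bookkeeping by adding $2\sqrt{u}\cosh\div{L_2}$ times the first column and $2\sqrt{u}\cosh\div{L_1}$ times the second to the third, which kills the messy cross terms and lets one factor out $\sinh\div{L_1+L_2}$ before expanding; your direct expansion arrives at the same answer after the $u\sinh L_1\sinh L_2$ cancellation, just with a bit more work.
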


\begin{proof}
  The first part of the statement is a straightforward consequence of \cref{lem:l123_calc} together
  with the fact that $\cosh : (0, + \infty) \rightarrow (1, + \infty)$ is a diffeomorphism. The only
  thing we need to check is the expression for the Jacobian of the change of variable. Using
  \cref{lem:l123_calc}, we compute the partial derivatives of $\ell_1, \ell_2, \ell_3$ expressed as
  functions of $L_1, L_2, u$, using the slight variation of \eqref{e:l3_calc}
  \begin{equation*}
    \cosh \div{\ell_3} = \cosh \div{L_1+L_2} - 2u \cosh \div{L_1}\cosh \div{L_2}.
  \end{equation*}
  We obtain that we need to compute the determinant
  \begin{align*}
    & \frac{1}{\prod_{i=1}^3\sinh \div{\ell_i}}
    \begin{vmatrix}
      \sqrt{u}\sinh \div{L_1} &
      0 &
      \sinh \div{L_1+L_2} - 2 u \sinh \div{L_1} \cosh \div{L_2} \\
      0 &
      \sqrt{u}\sinh \div{L_2} &
      \sinh \div{L_1+L_2} - 2 u \cosh \div{L_1} \sinh \div{L_2} \\
      \frac{1}{\sqrt{u}} \cosh \div{L_1} &
      \frac{1}{\sqrt{u}} \cosh \div{L_2} &
      - 4 \cosh \div{L_1} \cosh \div{L_2}
    \end{vmatrix}.
  \end{align*}
  We add $2 \sqrt{u} \cosh \div{L_2}$ times the first column and $2 \sqrt{u} \cosh \div{L_1}$ times
  the second column to the third, which removes its negative terms. We then factor out the
  $\sinh \div{L_1+L_2}$ in the third column, and obtain the determinant
  \begin{align*}
    & \frac{\sinh \div{L_1+L_2}}{\prod_{i=1}^3\sinh \div{\ell_i}}
    \begin{vmatrix}
      \sqrt{u}\sinh \div{L_1} &
      0 &
      1  \\
      0 &
      \sqrt{u}\sinh \div{L_2} &
      1  \\
      \frac{1}{\sqrt{u}} \cosh \div{L_1} &
      \frac{1}{\sqrt{u}} \cosh \div{L_2} &
      0
    \end{vmatrix}
    = - \frac{\sinh^2 \div{L_1+L_2}}{\prod_{i=1}^3\sinh \div{\ell_i}} \cdot
  \end{align*}
\end{proof}

\subsection{Technical lemmas about the change of variables}
\label{sec:two-technical-lemmas}

In order to prove \cref{thm:eight}, which we shall do in the next section, we need a few technical
lemmas describing the change of variable $(\ell_1,\ell_2,\ell_3) \rightarrow (L_1, L_2,u)$ more
precisely. For that purpose, we introduce the following notations.

\begin{nota}
  \label{nota:upm_Linf}
  We set $\ell_0 := 4 \argch (\sqrt{2})$.  For $\ell > 0$, we define
  \begin{equation}
    \label{e:condition_u_pm_ell}
    u_-(\ell) := \frac{1}{\cosh^2 \paren*{\frac \ell 4}}
    \quad \text{and} \quad
    u_+(\ell) := 1 - \frac{1}{\cosh^2 \paren*{\frac \ell 4}} 
  \end{equation}
  and, for any $u \in (0,1)$, we define
  \begin{equation*}
    L_-^\infty(u) :=
    \max \paren*{2\argcosh \paren*{\frac 1 {\sqrt{u}}}, \log \paren*{\frac{u}{1-u}}}.
  \end{equation*}
  We shall refer to the following two subsets of $\R_{>0} \times (0,1)$:
  \begin{align*}
    \mathfrak{D}_{1}
    & := \{ (\ell, u) \in (\ell_0, + \infty) \times (0,1) \, : \,
    u_-(\ell) < u < u_+(\ell) \} \\
    \mathfrak{D}_{2}^\infty
    & := \{ (L, u) \in \R_{>0} \times (0,1) \, : \,
    L > L_-^\infty(u) \}.
  \end{align*}
\end{nota}

Let us list a few elementary properties of the quantities defined above.

\begin{lem}
  \label{lem:elem_upm_Linf} \quad
  \begin{enumerate}
  \item Let $\ell >0$. Then, $u_-(\ell) < u_+(\ell)$ if and only if $\ell > \ell_0$.
  \item \label{i:upm}
    We have for any $\ell >0$, $\abs{\log(u_-(\ell))} = \abs{\log(1-u_+(\ell))} \leq \ell/2$.
    Furthermore, for any integer $m \geq 0$, there exists $C_m>0$ such that, for any $\ell > 0$,
    \begin{equation}
      \label{eq:bound_int_log_upm}
      \int_0^{u_-(\ell)} |\log u|^m \d u \leq C_m (\ell+1)^m e^{- \frac \ell 2}.
    \end{equation}
  \item \label{i:Lminus} The function $u \mapsto L_-^{\infty}(u)$ is equivalent to $-\log(u)$ for
    $u$ close to $0$ and $-\log(1-u)$ for $u$ close to $1$.
  \end{enumerate}
\end{lem}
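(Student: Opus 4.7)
The plan is to handle the three items in sequence; they are all elementary computations with hyperbolic and logarithmic functions, so there is no real obstacle beyond bookkeeping.

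For item (1), I would simply write out the inequality $u_-(\ell)<u_+(\ell)$ using the definitions: it reads $\cosh^{-2}(\ell/4)<1-\cosh^{-2}(\ell/4)$, which is equivalent to $\cosh^2(\ell/4)>2$, i.e. to $\ell>4\argcosh(\sqrt 2)=\ell_0$.

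For item (2), the identity $|\log u_-(\ell)|=|\log(1-u_+(\ell))|$ is immediate since $1-u_+(\ell)=\cosh^{-2}(\ell/4)=u_-(\ell)$ by definition. For the upper bound, write $|\log u_-(\ell)|=2\log\cosh(\ell/4)$ and use $\cosh x\le e^{x}$ to get $2\log\cosh(\ell/4)\le \ell/2$. For the integral, perform the substitution $t=-\log u$ (so $du=-e^{-t}dt$), which turns it into
\begin{equation*}
\int_0^{u_-(\ell)}|\log u|^m\,du=\int_{T(\ell)}^{+\infty}t^{m}e^{-t}\,dt,\qquad T(\ell):=-\log u_-(\ell).
\end{equation*}
Successive integrations by parts give $\int_T^{+\infty}t^{m}e^{-t}dt=e^{-T}\sum_{k=0}^{m}\frac{m!}{k!}T^{k}\le C_m(T+1)^{m}e^{-T}$. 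Since $T(\ell)\le\ell/2$ and $e^{-T(\ell)}=u_-(\ell)\le 4e^{-\ell/2}$ (using $\cosh(\ell/4)\ge e^{\ell/4}/2$), we obtain the desired bound for large $\ell$; for bounded $\ell$ the claim is trivial after possibly enlarging $C_m$, since both sides are bounded above and below by positive constants.

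For item (3), I would expand both terms in the maximum. Using $\argcosh(x)=\log(x+\sqrt{x^{2}-1})$,
\begin{equation*}
2\argcosh\!\bigl(1/\sqrt{u}\bigr)=2\log\!\bigl(1/\sqrt u+\sqrt{1/u-1}\bigr)=-\log u+2\log 2+o(1)\quad\text{as }u\to 0^{+},
\end{equation*}
whereas $\log(u/(1-u))=\log u-\log(1-u)\to -\infty$ as $u\to 0^+$; hence the maximum equals $2\argcosh(1/\sqrt{u})$ near $0$ and is asymptotic to $-\log u$. Similarly, as $u\to 1^{-}$ one has $2\argcosh(1/\sqrt u)\to 0$ while $\log(u/(1-u))=-\log(1-u)+o(1)\to+\infty$, so the maximum equals $\log(u/(1-u))$ near $1$ and is asymptotic to $-\log(1-u)$. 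The hardest step, if any, is just keeping the substitution and integration by parts in item (2) clean; nothing is genuinely difficult.
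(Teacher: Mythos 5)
Your proof is correct and, for the nontrivial part (the integral bound in item 2), follows exactly the paper's route: substitute $t=-\log u$, integrate by parts iteratively, and then specialise $\epsilon=u_-(\ell)$ using $|\log u_-(\ell)|\le\ell/2$ and $u_-(\ell)\le 4e^{-\ell/2}$. The paper simply declares items (1), (3) and the first half of (2) trivial, so your explicit calculations there are just filling in what the authors left unsaid; nothing in your argument diverges from theirs.
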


\begin{proof}
  The first and last points are trivial, and so is the first part of (\ref{i:upm}). The rest of the
  second point can be obtained by proving that for any $\epsilon >0$,
    \begin{equation*}
      \int_0^{\epsilon} |\log(u)|^{m} \d u
      = \int_{-\log(\epsilon)}^{+ \infty} v^m e^{-v}  \d v
      = \O{(\abso{\log(\epsilon)}+1)^m \epsilon}
    \end{equation*}
    by iterated integration by parts, and then taking $\epsilon = u_-(\ell)$.
\end{proof}

Let us now provide a finer description of the set $\mathfrak{D}$ defined in \cref{lem:ch_var}.

\begin{lem}
  \label{lem:E}
  For any $\ell > 0$, any $u \in (0, 1)$, any $L \in (0, \frac \ell 2)$,
  \begin{equation*}
    (L, \ell - L, u) \in \mathfrak{D}
    \quad \Leftrightarrow \quad
    \begin{cases}
      (\ell, u) \in \mathfrak{D}_1 \\
      L > L_-(\ell, u)
    \end{cases}
  \end{equation*}
  where the function $L_- : \mathfrak{D}_1 \rightarrow (0, \frac \ell 2)$ satisfies, for a
  constant $C>0$,
  \begin{equation}
    \label{eq:delta_Lmin}
    \forall (\ell, u) \in \mathfrak{D}_1, \quad
    0 \leq L_-(\ell,u) - L_-^\infty(u) \leq C \ell \frac{e^{- \frac \ell 2}}{1-u} \cdot
  \end{equation}
\end{lem}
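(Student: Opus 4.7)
The plan is to substitute $(L_1, L_2) = (L, \ell - L)$ into each of the three defining inequalities of $\mathcal{E}$ given in \eqref{eq:def_E} and translate them into conditions on $L$, with $(\ell, u)$ viewed as parameters. The two inequalities $\sqrt u\, \cosh(L_i/2) > 1$ ($i = 1, 2$) become $L > 2\argcosh(1/\sqrt u)$ and $L < \ell - 2\argcosh(1/\sqrt u)$; combined with $L < \ell/2$, this range is non-empty precisely when $2\argcosh(1/\sqrt u) < \ell/2$, i.e.\ $u > u_-(\ell)$. For the third inequality $(1-u)\cosh(\ell/2) > u \cosh(\ell/2 - L) + 1$, the map $L \mapsto \cosh(\ell/2 - L)$ decreases strictly on $(0, \ell/2)$ from $\cosh(\ell/2)$ down to $1$, so the inequality is satisfied at some $L < \ell/2$ iff its boundary form $(1-u)\cosh(\ell/2) > 1 + u$ holds, which an elementary manipulation identifies with $u < u_+(\ell)$. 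In that case the inequality is equivalent to $L > L_3(\ell, u)$, with
\[ L_3(\ell, u) \;:=\; \ell/2 - \argcosh \beta(\ell, u), \qquad \beta(\ell, u) \;:=\; \frac{(1 - u)\cosh(\ell/2) - 1}{u} > 1. \]
Setting $L_-(\ell, u) := \max\bigl(2\argcosh(1/\sqrt u),\, L_3(\ell, u)\bigr)$ then yields the claimed equivalence, with existence of an admissible $L$ equivalent to $(\ell, u) \in \mathcal{E}_1$.

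Turning to the estimate \eqref{eq:delta_Lmin}, since the first entry of both maxima coincides, a short case analysis on which entry attains the max in each of $L_-$ and $L_-^\infty$ reduces the problem to the two-sided bound
\[ 0 \;\leq\; L_3(\ell, u) - \log\!\bigl(u/(1-u)\bigr) \;\leq\; C\,\frac{e^{-\ell/2}}{1-u}. \]
For the lower bound, rewriting $L_3 \geq \log(u/(1-u))$ as $\beta \leq \cosh\!\bigl(\ell/2 + \log((1-u)/u)\bigr)$ and expanding both hyperbolic cosines in exponentials, the difference simplifies to $(1/u)\bigl[1 + e^{-\ell/2}(2u-1)/(2(1-u))\bigr]$, which is manifestly nonnegative (the bracketed quantity is at least $1/2$ for any $u \in (0,1)$ and $\ell \geq 0$). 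For the upper bound, the mean value theorem applied to $\argcosh$ gives
\[ L_3(\ell, u) - \log(u/(1-u)) \;\leq\; \frac{\cosh\!\bigl(\ell/2 + \log((1-u)/u)\bigr) - \beta(\ell, u)}{\sqrt{\beta(\ell, u)^2 - 1}}, \]
and the factorisation $\beta^2 - 1 = (1-u)\bigl[(1-u)\cosh(\ell/2) - (1+u)\bigr]\bigl[\cosh(\ell/2) - 1\bigr]/u^2$ yields $\sqrt{\beta^2 - 1} \asymp (1-u)\,e^{\ell/2}/u$ in the regime where $u$ stays away from $u_+(\ell)$; combined with the $O(1/u)$ bound on the numerator, this delivers the claimed order $e^{-\ell/2}/(1-u)$.

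The main delicate point is uniformity near $u = u_+(\ell)$, where $\beta \to 1$ and the large-$\beta$ estimate degenerates. In this boundary layer one has $1 - u \asymp e^{-\ell/2}$, so the target prefactor $\ell\, e^{-\ell/2}/(1-u)$ is itself of order $\ell$, and the crude inequality $L_3(\ell, u) - \log(u/(1-u)) \leq \ell/2 + |\log((1-u)/u)| = O(\ell)$ (using $L_3 \leq \ell/2$ and boundedness of $\log((1-u)/u)$ there) already suffices. Splitting the strip $\mathcal{E}_1$ into $\{\beta \geq 2\}$ and $\{\beta < 2\}$ and applying the refined MVT estimate on the former, the crude estimate on the latter, completes the argument; the extra factor of $\ell$ in \eqref{eq:delta_Lmin} provides exactly the slack needed to merge the two regimes.
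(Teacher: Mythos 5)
Your identification of $L_-$ as $\max\bigl(2\argcosh(1/\sqrt u),\, L_3(\ell,u)\bigr)$, and the translation of the three inequalities defining $\mathcal{E}$, follow the paper's argument exactly. Where you diverge is in establishing the estimate \eqref{eq:delta_Lmin}: the paper writes $\argcosh(x) = \log(2x + \O{1})$ and deduces $L_3 - \log(u/(1-u)) = -\log(1+A)$ with $A = \O{e^{-\ell/2}/(1-u)}$, then splits on whether $A \geq -1/2$ (use $|\log(1+A)| \leq c|A|$) or not (use the crude bound $\leq 2\ell$). You instead write $L_3 - \log(u/(1-u)) = \argcosh\bigl(\cosh(\ell/2 + \log((1-u)/u))\bigr) - \argcosh(\beta)$, apply the mean value theorem using the monotonicity of $\argcosh'$, compute the numerator $\cosh(\cdot)-\beta = \frac1u\bigl(1 + \tfrac{e^{-\ell/2}(2u-1)}{2(1-u)}\bigr)$ in closed form, and factor $\beta^2-1$. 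This buys you two things: the closed-form numerator gives nonnegativity directly (whereas the paper states but does not explicitly prove the lower inequality in \eqref{eq:delta_Lmin}), and the factorisation of $\beta^2-1$ makes the origin of the $1/(1-u)$ visible. Your split on $\beta \geq 2$ versus $\beta < 2$ plays the role of the paper's split on $A$, and in both treatments the extra factor $\ell$ absorbs the degenerate boundary layer near $u_+(\ell)$.

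Two small corrections. First, your announced reduction to the two-sided bound $0 \leq L_3 - \log(u/(1-u)) \leq C e^{-\ell/2}/(1-u)$ drops the factor $\ell$; as you yourself note at the end, the $\ell$ is genuinely needed in the boundary layer $\beta < 2$, so the target of the reduction should carry that factor from the start. Second, the parenthetical ``boundedness of $\log((1-u)/u)$ there'' is inaccurate: in the strip $1 < \beta < 2$ one has $1-u \asymp e^{-\ell/2}$, hence $|\log((1-u)/u)| \asymp \ell/2$, which is $\O{\ell}$ but not bounded; the subsequent conclusion ``$= \O{\ell}$'' is what you actually use, and is correct. Finally, the assertion $\sqrt{\beta^2-1} \asymp (1-u)e^{\ell/2}/u$ in the bulk deserves a line of justification in the sub-regime where $(1-u)\cosh(\ell/2)$ is of order $1$ (there $\beta \geq 2$ forces $u$ to be bounded away from $0$, so $(1-u)\cosh(\ell/2)-(1+u) \geq u \gtrsim 1 \gtrsim (1-u)e^{\ell/2}$); with that added, the argument closes.
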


Note that, here, we are only describing on the part of $\mathfrak{D}$ for which $L_1 < L_2$ (which is
equivalent to $L_1 < \ell/2$ since $L_1+L_2=\ell$). Since $L_1$ and $L_2$ play symmetric roles, this
allows us to describe the entirety of $\mathfrak{D}$.

\begin{proof}
  Let us consider a $\ell > 0$, $0 < u < 1$, and $0 < L < \frac{\ell}{2}$.

  The first condition defining $\mathfrak{D}$ is $\sqrt u \cosh \div{L} > 1$. The existence of number
  $L$ in $(0, \frac \ell 2)$ satisfying this inequality is equivalent to the following condition on $u$
  and $\ell$:
  \begin{equation*}
    \sqrt u \cosh \paren*{\frac \ell 4} > 1 \quad \Leftrightarrow \quad
    u > u_-(\ell).
  \end{equation*}
  Then, $L$ satisfies the condition if and only if 
  \begin{equation*}
%    \label{e:condition_ell1_L}
    L > L^1_{-}(u) := 2\argcosh \left( \frac{1}{\sqrt u} \right).
  \end{equation*}
  
  Now, because we assume that $L < \frac \ell 2$, we have that $\ell - L > L$, and hence the second
  condition of $\mathfrak{D}$ is automatically satisfied once the first is.

  Let us now move on the last condition defining $\mathfrak{D}$,
  \begin{equation*}
    (1-u) \cosh \div{\ell} - u \cosh \paren*{\frac \ell 2 - L} > 1.
  \end{equation*}
  The left hand side of this equation is an increasing function of $L \in (0, \frac \ell 2)$, and is
  therefore maximal at $\ell/2$. As a consequence, there exists a value $L$ in $(0, \frac \ell 2)$
  so that it exceeds $1$ if and only if
  \begin{equation*}
    (1-u) \cosh \div{\ell} - u  > 1 \quad \Leftrightarrow \quad
    u < u_+(\ell).
  \end{equation*}
  In that case, $L$ satisfies the third condition of $\mathfrak{D}$ if and only if
  \begin{equation*}
    L > L^3_{-}(\ell, u)
    := \frac \ell 2 - \argcosh \paren*{\frac{1-u}{u} \cosh \div \ell - \frac{1}{u}}.
  \end{equation*}

  Finally, we observe that we need to have $u_-(\ell) < u_+(\ell)$ for all three conditions of
  $\mathfrak{D}$ to be satisfied together, which we saw in \cref{lem:elem_upm_Linf} is equivalent to
  $\ell > \ell_0$. Then, we have the claimed result with
  \begin{equation*}
    L_-(\ell,u)
    := \max \paren*{L^1_{-}(u), L^3_{-}(\ell, u)}.
  \end{equation*}
  
  To prove the bound on $L_- - L_-^\infty$, we simply need to observe that for $x>1$,
  \begin{equation*}
    \argcosh (x) = \log \paren*{x + \sqrt{x^2-1}} = \log \paren*{2x + \O{1}}
  \end{equation*}
  and hence
  \begin{align*}
    L^3_{-}(\ell, u) 
     = \frac{\ell}{2} - \log \paren*{\frac{1-u}{u} \, e^{\frac \ell 2} + \O{\frac{1}{u}}} 
     = \log \paren*{\frac{u}{1-u}} - \log(1+A(\ell,u))
  \end{align*}
  where the quantity $A(\ell,u)>-1$ satisfies $A(\ell,u) = \O{e^{- \frac \ell2} /(1-u)}$.  There
  exists a constant $c>0$ such that, for any real number $A \geq -1/2$, $|\log(1+A)| \leq c
  |A|$. This is enough to conclude whenever $A(\ell,u) \geq -1/2$, since $\ell > \ell_0$. Otherwise,
  we rather use the trivial bound
  \begin{equation*}
    \abso*{L^3_{-}(\ell, u)  - \log \left (\frac{u}{1-u} \right)}
    \leq \frac{\ell}{2} + |\log(u_-(\ell))| + |\log(1-u_+(\ell))| \leq 2 \ell.
  \end{equation*}
  This leads to the claimed estimate, because if $A(\ell,u) < -1/2$, then $e^{- \frac \ell2} /(1-u)$
  is bounded away from~$0$.
\end{proof}

The next lemma is an expansion of the functions $\ell_i(L_1,L_2,u)$ for $1 \leq i \leq 3$.

\begin{lem}
  \label{lem:expansion_ell}
  There exists functions $r_0, r : \mathfrak{D}_2^\infty \rightarrow \R$ and a constant $C>0$ such
  that, for any $(L_1, L_2, u) \in \mathfrak{D}$,
  \begin{equation*}
    \begin{cases}
      \ell_1(L_1, L_2, u)  = L_1 + \log(u) + r_0(L_1,u) \\
      \ell_2(L_1, L_2, u)  = L_2 + \log(u) + r_0(L_2,u) \\
      \ell_3(L_1, L_2, u)  = L_1 + L_2 + 2 \log(1-u) 
                            + r(L_1,u) + r(L_2,u) + \mathrm{err}(L_1,L_2,u)
    \end{cases}
  \end{equation*}
  where:
  \begin{enumerate}
  \item for any $(L_1, L_2, u) \in \mathfrak{D}$,
    \begin{equation*}
      |\mathrm{err}(L_1,L_2,u)| \leq C (L_1+L_2) \, e^{- \frac{\ell_3(L_1, L_2, u)}{2}};
    \end{equation*}
  \item for any $(\ell,u) \in \mathfrak{D}_1$, any $L > L_-^\infty(\ell,u)$,
    \begin{equation}
      \label{eq:expansion_ell_bound}
      |r_0(L,u)| + |r(L,u)| \leq C \, \ell \, \frac{e^{-L}}{u(1-u)};
    \end{equation}
  \item $r_0$ is bounded on $\mathfrak{D}_2^\infty$, and for any integer $k \geq 0$, any
    $(\ell, u) \in \mathfrak{D}_1$,
    \begin{equation}
      \label{eq:expansion_ell_bound_tail}
      \int_{L_-^\infty(u)}^{L_-(\ell, u)} |r(L,u)|^k \d L  = \O[k]{\ell^{k+1}
        \frac{e^{-\frac \ell 2}}{1-u}}
    \end{equation}
    and for any $k,j \geq 0$, any $u \in (0,1)$,
    \begin{equation}
      \label{eq:bound_r_int_all}
      \int_{L_-^\infty(u)}^{+ \infty} L^{j} |r(L,u)|^k \d L = \O[k]{(|\log(u)| + |\log(1-u)|)^j}.
    \end{equation}
  \end{enumerate}
\end{lem}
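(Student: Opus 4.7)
The plan is to extract explicit formulas for $\ell_1,\ell_2,\ell_3$ as functions of $(L_1,L_2,u)$ from \cref{lem:l123_calc}, then organise each formula as "leading term + single-variable remainder + genuine cross term". For $\ell_1$ and $\ell_2$, the identity $\cosh(\ell_i/2)=\sqrt{u}\,\cosh(L_i/2)$ lets us solve
\begin{equation*}
e^{\ell_i/2}=\sqrt{u}\cosh(L_i/2)+\sqrt{u\cosh^2(L_i/2)-1}=\sqrt{u}\,e^{L_i/2}\,\frac{1+e^{-L_i}}{2}\Bigl(1+\sqrt{1-\tfrac{1}{u\cosh^2(L_i/2)}}\Bigr),
\end{equation*}
so taking logs gives $\ell_i=L_i+\log u+r_0(L_i,u)$ with
\begin{equation*}
r_0(L,u):=2\log\bigl(1+e^{-L}\bigr)+2\log\Bigl(\tfrac{1}{2}\bigl(1+\sqrt{1-\tfrac{1}{u\cosh^2(L/2)}}\bigr)\Bigr).
\end{equation*}
On $\mathcal{E}_2^\infty$ one has $u\cosh^2(L/2)\geq 1$ so $r_0$ is bounded; using $\cosh^2(L/2)\geq e^L/4$ and $|\log(1-x)|\leq 2x$ for $x\leq\tfrac12$ (which holds once $L\geq L_-^\infty(u)$, up to a harmless adjustment handled by the trivial uniform bound on the tiny remaining region) yields $|r_0(L,u)|\leq Ce^{-L}/u$, which is absorbed by the claimed bound $C\ell e^{-L}/(u(1-u))$.

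For $\ell_3$ I would first factor the right-hand side of \eqref{e:l3_calc}:
\begin{equation*}
\cosh(\ell_3/2)=\tfrac{(1-u)e^{(L_1+L_2)/2}}{2}\Bigl[1+e^{-(L_1+L_2)}-\tfrac{u}{1-u}\bigl(e^{-L_1}+e^{-L_2}\bigr)\Bigr],
\end{equation*}
and then use $e^{\ell_3/2}=\cosh(\ell_3/2)+\sqrt{\cosh^2(\ell_3/2)-1}$ to obtain
\begin{equation*}
\ell_3=(L_1+L_2)+2\log(1-u)+2\log(1+A)+2\log\tfrac{1+\sqrt{1-1/\cosh^2(\ell_3/2)}}{2},
\end{equation*}
where $A:=e^{-(L_1+L_2)}-\tfrac{u}{1-u}(e^{-L_1}+e^{-L_2})$. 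The natural candidate for the single-variable remainder is $r(L,u):=2\log\bigl(1-\tfrac{ue^{-L}}{1-u}\bigr)$, so that $r(L_1,u)+r(L_2,u)=2\log\bigl[(1-\tfrac{ue^{-L_1}}{1-u})(1-\tfrac{ue^{-L_2}}{1-u})\bigr]$, and I would define $\mathrm{err}$ to be the remaining difference. A direct algebraic computation shows
\begin{equation*}
1+A=\Bigl(1-\tfrac{ue^{-L_1}}{1-u}\Bigr)\Bigl(1-\tfrac{ue^{-L_2}}{1-u}\Bigr)+\tfrac{(1-2u)}{(1-u)^2}e^{-(L_1+L_2)},
\end{equation*}
so $2\log(1+A)-r(L_1,u)-r(L_2,u)$ is bounded by $Ce^{-(L_1+L_2)}$ divided by the product of the two factors, and the last correction term $2\log\bigl((1+\sqrt{\cdot})/2\bigr)$ is $O(1/\cosh^2(\ell_3/2))=O(e^{-\ell_3})$; combined they give the required control $|\mathrm{err}|\leq C(L_1+L_2)e^{-\ell_3/2}$.

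The bounds on $r(L,u)$ come from the two-regime principle. When $\tfrac{ue^{-L}}{1-u}\leq\tfrac12$, the inequality $|\log(1-x)|\leq 2x$ gives $|r(L,u)|\leq Cue^{-L}/(1-u)$, which implies \eqref{eq:expansion_ell_bound}. For the integrated bounds \eqref{eq:expansion_ell_bound_tail} and \eqref{eq:bound_r_int_all}, I would split the integration at the point $L^\star(u)=\log(2u/(1-u))$ where $\tfrac{ue^{-L}}{1-u}=\tfrac12$: on $[L^\star(u),+\infty)$ use the Taylor bound $|r(L,u)|\leq Cue^{-L}/(1-u)$ and direct integration; on the small interval where $r$ is of the logarithmic blow-up type near $\log(u/(1-u))$ use the explicit primitive of $\log(1-e^{-s})^k$ after the substitution $s=L-\log(u/(1-u))$. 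The estimate in \cref{lem:E} controls $L_-(\ell,u)-L_-^\infty(u)$ by $C\ell e^{-\ell/2}/(1-u)$, and combining with $|r|\lesssim\ell$ uniformly on that tiny interval (which is the crux, since $r$ only grows logarithmically while the gap shrinks exponentially in $\ell$) gives exactly $\ell^{k+1}e^{-\ell/2}/(1-u)$.

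The main obstacle I anticipate is \textbf{(iii)}, the careful bookkeeping of the remainder bounds near the boundary $L=L_-^\infty(u)$ in all the regimes $u\to 0$, $u\to 1$, and $\ell\to\ell_0^+$: the function $r(L,u)$ is singular at $L=\log(u/(1-u))$ and the argument requires the explicit asymptotic $L_-^\infty(u)\approx\max(-\log u,-\log(1-u))$ from \cref{lem:elem_upm_Linf}(\ref{i:Lminus}) to keep the dependence on $u$ manifest.
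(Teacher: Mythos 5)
Your proposal is correct and follows essentially the same route as the paper's proof: solve the identities of \cref{lem:l123_calc} for $e^{\ell_i/2}$, isolate the linear leading term plus the single-variable logarithmic remainders $r_0$ and $r$, show the cross-term $\mathrm{err}$ decays like $(L_1+L_2)e^{-\ell_3/2}$ using the lower bound coming from the third condition defining $\mathcal{E}$, and obtain the integral bounds on $r$ via a two-regime split combined with an integration-by-parts estimate for $\int_0^\epsilon |\log(1-e^{-x})|^k\,\mathrm{d}x$ near the logarithmic singularity. The only cosmetic differences are that you carry explicit closed-form remainders (via the $\sqrt{1-1/\cosh^2}$ factors) rather than $\mathcal{O}(\cdot)$ bookkeeping, and your normalisation $r(L,u)=2\log\bigl(1-\tfrac{u}{1-u}e^{-L}\bigr)$ differs from the paper's by a factor of $2$ which the paper seems to have silently dropped (a typo in the published proof, since $R_u$ carries an overall factor $2$), with no effect on any of the estimates.
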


\begin{proof}[Proof of \cref{lem:expansion_ell}]
  Let us first prove the expansion for $\ell_1(L_1, L_2,u)$. First, we observe that the expression
  \eqref{e:l1_calc} allows to write $\ell_1$ as a function of $L_1$ and $u$ (and hence independent
  of $L_2$), which is well-defined as soon as $L_1 > L_-^\infty(u)$. Then, we write
  $\exp \div{\ell_1} = 2 \cosh \div{\ell_1} / (1+e^{-\ell_1})$, which yields
  \begin{align*}
    \ell_1(L_1, u)
    & = 2 \log \paren*{2 \cosh \div{\ell_1}} + \O{e^{- \ell_1}}  
      = 2 \log \paren*{\sqrt{u} (e^{\frac{L_1}{2}} + e^{-\frac{L_1}{2}})} + \O{e^{- \ell_1}} \\
    & = L_1 + \log(u) + \O{e^{-L_1} + e^{- \ell_1}}.
  \end{align*}
  This implies the claimed estimate taking $r_0(L,u) := \ell_1(L,u) - L - \log(u)$. Indeed, this
  function is defined for any $L > L_-^\infty(u)$, and is $\O{e^{-L} + e^{-\ell_1(L,u)}} = \O{1}$ by
  the bound above. We conclude thanks to the fact that $e^{- \ell_1(L,u)} = \O{e^{-L}/u}$ as soon as
  $L > L_-^\infty(u)$.

  The formula for $\ell_2$ is exactly the same, so we now move on to $\ell_3$. We shall pay
  particular attention to the fact that $\ell_3$ depends on both $L_1$ and $L_2$. First, we write
  similarly as before
  \begin{align*}
     \ell_3(L_1, L_2, u) 
    & = 2 \log \paren*{2 \cosh \div{\ell_3}} + \O{e^{- \ell_3}} \\
    & \hspace{-4pt} \overset{\eqref{e:l3_calc}}{=}
      2 \log \paren*{(1-u) (e^{\frac{L_1+L_2}{2}} + e^{-\frac{L_1+L_2}{2}})
      - u (e^{\frac{L_2-L_1}{2}} + e^{\frac{L_1-L_2}{2}})} + \O{e^{- \ell_3}} \\
    & = L_1 + L_2 + 2 \log(1-u)
      + R_u(L_1, L_2, u) 
      + \O{e^{- \ell_3}}
  \end{align*}
  where
  \begin{equation*}
    R_u(L_1, L_2, u) := 2 \log \paren*{1 + e^{- L_1 - L_2} - \frac{u}{1-u} (e^{-L_1} + e^{-L_2})}.
  \end{equation*}
  In order to write this term in the desired form, we rewrite the argument of the $\log$ as
  \begin{equation*}
     \paren*{1 - \frac{u}{1-u} e^{-L_1}}\paren*{1 - \frac{u}{1-u} e^{-L_2}} +
    \O{\frac{e^{-L_1-L_2}}{(1-u)^2}}. 
  \end{equation*}
  We observe that,
  by the third condition of the definition of $\mathfrak{D}$,
  \begin{equation}
    \label{eq:def_E_3_use}
    (1-u) \, e^{\frac{L_1+L_2}{2}} - u (e^{\frac{L_2-L_1}{2}} + e^{\frac{L_1-L_2}{2}}) > 2 - (1-u) > 1
  \end{equation}
  and hence 
  \begin{equation}
    \label{eq:def_E_3_use_2}
    \paren*{1- \frac{u}{1-u} e^{-L_1}} \paren*{1- \frac{u}{1-u} e^{-L_2}}
    > 1 - \frac{u}{1-u} (e^{-L_1} + e^{-L_2})
    \overset{\eqref{eq:def_E_3_use}}{>} \frac{e^{- \frac{L_1+L_2}{2}}}{1-u} \cdot
  \end{equation}
  This implies that, as soon as $(L_1, L_2, u) \in \mathfrak{D}$,
  \begin{equation}
    \label{eq:write_Ru_r}
    R_u(L_1, L_2, u)
    = r(L_1,u) + r(L_2,u)
    + \log \paren*{1 + \O{\frac{e^{- \frac{L_1+L_2}{2}}}{1-u}}}
  \end{equation}
  for the function $r(L,u) := \log \paren*{1 - \frac{u}{1-u} e^{- L}}$, well-defined for any
  $L>L_-^\infty(u)$.

  We proceed as in the proof of \eqref{eq:delta_Lmin} to prove the bound
  \eqref{eq:expansion_ell_bound} on $r$. Indeed,
  \begin{itemize}
  \item if $ue^{-L}/(1-u) \leq 1/2$, then $r(L,u) = \O{ue^{-L}/(1-u)}$ by the asymptotic behaviour
    of $\log$ near $1$;
  \item otherwise, we rather observe that, by \eqref{eq:def_E_3_use_2}, we have
    $|r(L,u)| \leq \frac{\ell}{2} + |\log(1-u)|$, which is $\leq \ell$ by
    \cref{lem:elem_upm_Linf}.(\ref{i:upm}).
  \end{itemize}
  The integral bound~\eqref{eq:expansion_ell_bound_tail} is obtained by studying for $\epsilon > 0$
  the integral
  \begin{equation*}
    \int_0^\epsilon |\log(1-e^{-x})|^k \d x = \O[k]{\epsilon \, (|\log(1-e^{-\epsilon})| +1)^k}
  \end{equation*}
  by integration by parts, taking $\epsilon := L_-(\ell,u) - \log \paren*{\frac{u}{1-u}}$, and using
  the bound \eqref{eq:delta_Lmin}. The other integral is straightforward.

  Similarly, we can prove that the logarithmic term in \eqref{eq:write_Ru_r} is $\O{\ell \,
    e^{- \frac{L_1+L_2}{2}}/(1-u)}$ for $(L_1,L_2,u) \in \mathfrak{D}$. 
  This allows us to conclude the expansion of $\ell_3$, because
  \begin{equation*}
    \cosh \div{\ell_3(L_1,L_2,u)} \leq (1-u) \cosh \div{L_1+L_2}
  \end{equation*}
  and hence $e^{-\frac{L_1+L_2}{2}} / (1-u)= \O{e^{- \frac{\ell_3(L_1,L_2,u)}{2}}}$.  
\end{proof}

We will actually use the following corollary of \cref{lem:expansion_ell}, which is obtained by
carefully taking powers and linear combinations of the statement.

\begin{cor}
  \label{cor:expansion_ell}
  Let $(f_i)_{1 \leq i \leq 3}$ be functions satisfying \eqref{eq:expansion_assumption_fi}. Let
  \begin{equation*}
    Q_u(L_1,L_2) := p_1(L_1+\log(u)) \, p_2(L_2+\log(u)) \, p_3(L_1+L_2+2\log(1-u))
  \end{equation*}
  where $(p_i)_{1 \leq i \leq 3}$ are the respective polynomials of $(f_i)_{1 \leq i \leq 3}$.
  There exists an integer~$K$ and a family of functions
  $\tilde{r}_j : \mathfrak{D}_2^\infty \rightarrow \R$, for $0 \leq j \leq K$, such that for any
  $(L_1,L_2,u) \in \mathfrak{D}$,
  \begin{align*}
     \prod_{i=1}^{3} f_i(\ell_i(L_1,L_2,u)) 
    & =  \, Q_u(L_1,L_2) + \sum_{0 \leq j \leq K}
      \big(L_1^j \tilde{r}_j(L_2,u) + L_2^j \tilde{r}_j(L_1,u)\big) \\
    & \hphantom{= \,}  + \O{(L_1+L_2)^K \left(e^{- \frac{\ell_3(L_1,L_2,u)}{2}}
      + \frac{e^{-\frac{L_1+L_2}{2}}}{u(1-u)}\right)}
  \end{align*}
  and for any integer $0 \leq j \leq K$, any $(\ell,u) \in \mathfrak{D}_1$, any $L > L_-^\infty(\ell,u)$,
  \begin{equation}
    \label{eq:bound_cor_rem_one_var}
    \tilde{r}_j(L,u) = \O{\ell^K \frac{e^{-L}}{u(1-u)}}. 
  \end{equation}
  Furthermore, for all $j$, there exists an integer $0 \leq k \leq K$ such that
  $\tilde{r}_j = \O{|r|^k}$ on $\mathfrak{D}_2^\infty$, where $r$ is the function from
  \cref{lem:expansion_ell}.
\end{cor}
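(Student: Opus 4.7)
My plan is to prove the expansion by applying the decomposition $f_i = p_i + (f_i - p_i)$ to each factor of the product, then substituting the expansions of $\ell_i(L_1, L_2, u)$ from \cref{lem:expansion_ell} into the polynomial parts $p_i(\ell_i)$ and Taylor expanding. Since each $p_i$ is a polynomial, the Taylor expansion is a finite sum:
\begin{align*}
  p_1(\ell_1) &= \sum_{k \geq 0} \tfrac{1}{k!} p_1^{(k)}(L_1 + \log u) \, r_0(L_1, u)^k, \\
  p_2(\ell_2) &= \sum_{k \geq 0} \tfrac{1}{k!} p_2^{(k)}(L_2 + \log u) \, r_0(L_2, u)^k, \\
  p_3(\ell_3) &= \sum_{k \geq 0} \tfrac{1}{k!} p_3^{(k)}(L_1 + L_2 + 2\log(1-u)) \, \big(r(L_1,u) + r(L_2,u) + \mathrm{err}\big)^k.
\end{align*}
The leading contribution obtained by taking $k=0$ in all three expansions is exactly $Q_u(L_1, L_2)$, confirming the main term in the claim.

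Next, I collect the correction terms produced by the Taylor expansion. By expanding $p_2(L_2 + \log u)$ and $p_3(L_1 + L_2 + 2\log(1-u))$ as polynomials in $L_2$, any correction that involves $r_0(L_1, u)^k$ (with $k \geq 1$) and no other non-polynomial dependence on $L_2$ contributes a finite sum of terms of the form $L_2^j \cdot \text{(function of } L_1, u\text{)}$; the corrections from $r(L_1, u)^k$ alone (from the expansion of $p_3$, using multinomial expansion with the other factors set aside) contribute analogously. Grouping these, we obtain the $L_2^j \tilde{r}_j(L_1, u)$ part of the decomposition, and symmetrically the $L_1^j \tilde{r}_j(L_2, u)$ part. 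The bound \eqref{eq:bound_cor_rem_one_var} on $\tilde{r}_j$ then follows from the estimate $|r_0|, |r| = O(\ell \, e^{-L}/(u(1-u)))$ in \cref{lem:expansion_ell}, combined with the observation that the polynomial factors in $L$, $\log u$, $\log(1-u)$ arising from the Taylor expansion are all $O(\ell^{K'})$ on $\mathcal{E}_1$, since $|\log u|, |\log(1-u)| \leq \ell/2$ there by \cref{lem:elem_upm_Linf}(\ref{i:upm}), and $L \leq \ell$. The final claim $\tilde{r}_j = O(|r|^k)$ is immediate from this construction, because each $\tilde{r}_j$ is built from a fixed power of $r_0$ or $r$ times polynomial factors.

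All remaining terms go into the error. These are of two types: terms containing a factor of $f_i(\ell_i) - p_i(\ell_i)$ for some $i$, and "cross" terms involving a product $r(L_1,u)^a r(L_2,u)^b$ with $a, b \geq 1$ (or involving $\mathrm{err}$). For the first type, using $|f_i(\ell_i) - p_i(\ell_i)| \leq c_i(\ell_i+1)^{c_i} e^{-\ell_i/2}$ together with the asymptotics $\ell_1 \approx L_1 + \log u$, $\ell_2 \approx L_2 + \log u$ (with $r_0$ bounded on $\mathcal{E}_2^\infty$) and the defining inequality $\ell_3 \geq (L_1+L_2)/2 \cdot 2 = L_1+L_2$ from \cref{lem:length_fill}, yields contributions of size $O((L_1+L_2)^K e^{-(L_1+L_2)/2}/(u(1-u)) + (L_1+L_2)^K e^{-\ell_3/2})$. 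The main obstacle is the cross terms: using $|r(L_1,u) r(L_2, u)| \leq C\ell^2 e^{-(L_1+L_2)}/(u(1-u))^2$ and the fact that $1/(u(1-u)) = O(e^{\ell/2})$ on $\mathcal{E}_1$, we get $|r(L_1, u) r(L_2, u)| = O(\ell^2 e^{-(L_1+L_2)/2}/(u(1-u)))$, which fits into the announced error bound provided $K \geq 2$. Contributions from $\mathrm{err}$ are handled directly by the bound $|\mathrm{err}| \leq C(L_1 + L_2) e^{-\ell_3/2}$ from \cref{lem:expansion_ell}. Choosing $K$ large enough (a fixed integer depending on the degrees of the $p_i$) completes the proof.
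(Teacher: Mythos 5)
Your overall plan is the same as the paper's: write $f_i = p_i + (f_i - p_i)$, substitute the expansions of $\ell_i(L_1,L_2,u)$ from \cref{lem:expansion_ell} into the polynomial parts, Taylor expand, and sort the resulting terms into $Q_u$, the $\tilde r_j$ parts, and the error. However, your sorting rule is wrong in one important case, and that creates a genuine gap. You declare that every term containing a factor $f_i(\ell_i) - p_i(\ell_i)$ goes into the error. But consider the subterm
\begin{equation*}
  \big(f_1(\ell_1)-p_1(\ell_1)\big)\cdot p_2(L_2+\log u)\cdot p_3(L_1+L_2+2\log(1-u)),
\end{equation*}
which appears after Taylor-expanding $p_2, p_3$ and keeping the constant (zeroth) Taylor order for both. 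Using the hypothesis \eqref{eq:expansion_assumption_fi} and $\ell_1 = L_1+\log u + r_0(L_1,u)$ with $r_0$ bounded, this is of size $\O{(L_1+L_2)^K\, e^{-L_1/2}/\sqrt u}$, which decays in $L_1$ \emph{only}. This does not fit your announced error bound: for instance with $L_1=L_2=L$, $u=e^{-L/2}$ (so that $(L,L,u)\in\mathcal E$ for $L$ large), one has $e^{-L_1/2}/\sqrt u = e^{-L/4}$ while $e^{-(L_1+L_2)/2}/(u(1-u))\approx e^{-L/2}$ and $e^{-\ell_3/2}\approx e^{-L}$, so the term is exponentially larger than both pieces of the error. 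The paper handles this by putting precisely these single-variable-decay contributions (powers of $r_1(\ell_1(L_1,u))$, $r_0(L_1,u)$, $r(L_1,u)$, with polynomial coefficients in $L_2$) into the $L_2^j\tilde r_j(L_1,u)$ part; only terms with decay in \emph{both} variables, or with an $\mathrm{err}$ factor, or with the factor $f_3-p_3$, go into the displayed error.

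Two smaller points. First, you cite \cref{lem:length_fill} for $\ell_3 \geq L_1+L_2$; this is false — in fact $\ell_3 < L_1+L_2$ (immediate from \eqref{eq:formula_length_eight}, and also recorded as \eqref{eq:bounds_ell_ell_i}). The inequality you actually need is the upper bound $\ell_3 \leq L_1+L_2$, to control the polynomial prefactor $(\ell_3+1)^{c_3}$ by $(L_1+L_2+1)^{c_3}$; the factor $e^{-\ell_3/2}$ is simply kept as is in the error term. Second, the assertion that $\tilde r_j = \O{|r|^k}$ "is immediate from this construction" glosses over the polynomial coefficients in $L$ that arise from the Taylor expansion, which are a priori unbounded on $\mathcal E_2^\infty$ while $|r(L,u)|\to 0$ as $L\to\infty$; this requires a short separate argument rather than being immediate. (The paper's own treatment is also brief here, but the statement deserves more care than your one sentence gives it.)
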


\begin{proof}
  We write
  \begin{equation*}
    \prod_{i=1}^{3} f_i(\ell_i(L_1,L_2,u))
    = \prod_{i=1}^{3} \big( p_i(\ell_i(L_1,L_2,u)) + r_i(\ell_i(L_1,L_2,u)) \big),
  \end{equation*}
  replace $\ell_i$ by its expansion from \cref{lem:expansion_ell} in the polynomial terms, and
  expand everything. We obtain four types of terms.
  \begin{itemize}
  \item On the one hand, some terms are purely polynomial in $L_1$ and $L_2$. We group these terms
    together; they form the term $Q_u$.
  \item Then, some terms are a product of $L_2^j$ for a $j \geq 0$ and a function
    $\tilde{r}_j(L_1,u)$ containing no polynomial term in $L_1$. More precisely, contributions to
    this term are products of powers of contributions
    $r_1(\ell_1(L_1,u)) = \O{e^{-\ell_1}} = \O{e^{-L_1}/u}$, as well as the remainder terms
    $r_0(L_1,u)$ and $r(L_1,u)$ from \cref{lem:expansion_ell}. We prove that $\tilde{r}_j$ satisfies
    the claimed bounds using \eqref{eq:expansion_ell_bound}, the boundedness of $r_0$ and $r_1$, and
    the fact that $e^{-L}/(u(1-u)) = \O{1}$ for $L > L_-^\infty(u)$.
  \item There is also the term containing other crossed contributions, which is equal to
    $L_1^j \tilde{r}_j(L_2,u)$ by symmetry of the roles of $L_1$ and $L_2$.
  \item And then, all other terms contain at least a factor $\O{\ell^Ke^{-\ell_3/2}}$, or two factors
    decaying in $L_1$ and $L_2$ respectively, which gives us the two bounds on the error term.
  \end{itemize}
\end{proof}

\subsection{Proof of \cref{thm:eight}}

We are now finally ready to prove \cref{thm:eight}. This will be achieved using a reasoning very
similar to the proof of \cref{p:conv}, where we showed that the class of functions $\FR$ is stable
by convolution.

\begin{proof}[Proof of \cref{thm:eight}.]
  We note that the property of being Friedman--Ramanujan is an asymptotic property, and hence we can
  restrict ourselves to studying the case that $\ell > \ell_0$.  As shown in Lemmas
  \ref{lem:expression_before_change_var} and \ref{lem:easy_cases_remainder}, the problem reduces to
  showing that, for any $f_1, f_2, f_3$ satisfying \eqref{eq:expansion_assumption_fi},
  % \begin{equation*}
  %   f_i(\ell) = p_i(\ell) + R_i(\ell) 
  % \end{equation*}
  % with a polynomial $p_i$ and a function $R_i : \R_{>0} \rightarrow \R$ such that
  % $R_i(\ell_i) = \O{(\ell_i+1)^{c_i} e^{- \frac{\ell_i} 2}}$,
  the function
  \begin{equation*}
    \mathrm{Int}[f_1, f_2, f_3] : \ell \mapsto \iint_{h(\ell_1, \ell_2, \ell_3) = \ell}
    \prod_{i=1}^3 f_i(\ell_i) \sinh \div{\ell_i}
    \frac{\d \ell_1 \d \ell_2 \d \ell_3}{\d \ell}
  \end{equation*}
  is a Friedman--Ramanujan function. We will now omit the mention of $(f_i)_i$ and denote the
  integral above as $\mathrm{Int}$, to simplify notations.
  
  Let us decompose this integral in two integrals $\mathrm{Int}^{1,2}$, depending on whether
  $\ell_1 \leq \ell_2$ or $\ell_1 \geq \ell_2$. By symmetry of the roles of $\ell_1$ and $\ell_2$,
  if we prove the result for any $(f_i)_i$ for $\mathrm{Int}^1$, the result follows for
  $\mathrm{Int}^2$, and hence for $\mathrm{Int}$. We can therefore assume without loss of generality
  that the integral runs over the set of parameters such that $\ell_1 \leq \ell_2$, and in
  particular $\ell_1 \leq \frac \ell 2$ and $\ell_2 \geq \frac \ell 2$.
  
  Let us perform the change of variable $(\ell_1, \ell_2, \ell_3) \mapsto (L_1, L_2, u)$, and use
  Lemmas \ref{lem:ch_var} and \ref{lem:E} to write
  \begin{equation*}
    \mathrm{Int}^1(\ell)
    = \sinh^2 \div{\ell}
    \int_{u_-(\ell)}^{u_+(\ell)} \int_{L_{-}(\ell, u)}^{\frac \ell 2}
    \prod_{i=1}^{3} f_i(\ell_i(L,\ell -L,u)) \d L \d u.
  \end{equation*}
  We now use \cref{cor:expansion_ell} to express the integrand, and examine the various
  contributions appearing successively.
  \begin{itemize}
  \item We first examine the term coming from the remainder decaying in $e^{-
      \frac{\ell_3}{2}}$, which is bounded by a multiple constant of 
    \begin{equation*}
      \ell^K \sinh^2 \div{\ell} \int_{u_-(\ell)}^{u_+(\ell)}
      \int_{L_-(\ell,u)}^{\frac \ell 2} e^{- \frac{\ell_3(L,\ell-L,u)}{2}} \d L \d u.
    \end{equation*}
    For this term, actually, we return to the old variables $\ell_1, \ell_2, \ell_3$, and obtain
    that our contribution is bounded by
    $\ell^K \, \mathrm{Int}[1,1,e^{- \frac{\cdot}{2}}](\ell)$. By
    \cref{lem:easy_cases_remainder}, $\mathrm{Int}[1,1,e^{- \frac{\cdot}{2}}]$ is a function in
    $\FRrem$ because the functions $(1, 1, e^{- \frac{\cdot}{2}})$ satisfy
    \eqref{eq:expansion_assumption_fi} with $p_3=0$. Hence, this contribution to $\mathrm{Int}^1$ is
    an element of $\mathcal{R} \subset \mathcal{F}$.
  \item The other part of the remainder of \cref{cor:expansion_ell} is bounded by a multiple
    constant of
    \begin{align*}
      &  \ell^{K} \sinh^2 \div{\ell}
        \int_{u_-(\ell)}^{u_+(\ell)} \int_{L_-(\ell,u)}^{\frac \ell 2}
        \frac{e^{- \frac \ell 2}}{u(1-u)} \d L \d u \\
      & \leq \ell^{K+1} e^{\frac \ell 2} \,
        \left[ |\log(u)| + |\log(1-u)| \vphantom{\int} \right]_{u_-(\ell)}^{u_+(\ell)}
        = \O{\ell^{K+2} e^{\frac \ell 2}}.
    \end{align*}
    As a consequence, this contribution is also an element of~$\mathcal{R}$.
  \item We now observe that all of the terms $L_1^j \tilde{r}_j(L_2,u)$ for $0 \leq j \leq K$ yield
    contributions in $\FRrem$, because as soon as $L_1 \leq L_2$,
    \begin{equation*}
       L_1^j\tilde{r}_j(L_2,u) = \O{\ell^{j+K} \frac{e^{- L_2}}{u(1-u)}}
       = \O{\ell^{j+K} \frac{e^{- \frac{L_1+L_2}{2}}}{u(1-u)}}
    \end{equation*}
    which is covered by the previous case.
  \item Let us now examine what we expect to be the ``leading contribution'', coming from $Q_u$. We
    need to estimate
    \begin{equation*}
      \sinh^2 \div{\ell}
      \int_{u_-(\ell)}^{u_+(\ell)} \int_{L_-(\ell, u)}^{\frac \ell 2} Q_u(L, \ell - L) \d L \d u.
    \end{equation*}
    We integrate the polynomial, and expand the powers, to express this quantity as a linear
    combination of integrals of the form
    \begin{equation}
      \label{e:case_Qu_1}
      \ell^{j_0} \sinh^2 \div{\ell} 
      \int_{u_-(\ell)}^{u_+(\ell)} \log(u)^{j_1} \log(1-u)^{j_2} L_-(\ell, u)^{j_3} \d u.
    \end{equation}
    Let us replace $L_-(\ell,u)$ by its approximation $L_-^\infty(u)$ in this expression. We
    obtain the quantity
    \begin{equation}
      \label{e:case_Qu_2}
      \ell^{j_0} \sinh^2 \div{\ell} 
      \int_{u_-(\ell)}^{u_+(\ell)} \log(u)^{j_1} \log(1-u)^{j_2} L_-^{\infty}(u)^{j_3} \d u.
    \end{equation}
    By \cref{lem:elem_upm_Linf}(\ref{i:Lminus}), the function
    $(0,1) \ni u \mapsto \log(u)^{j_1} \log(1-u)^{j_2} L_-^{\infty}(u)^{j_3}$ is integrable. Let
    $C_{\vec{\j}}$ denotes this integral. The tails of $C_{\vec{\j}}$ for $u < u_-(\ell)$ and
    $u > 1 - u_+(\ell)$ are $\O{\ell^{j_1+j_2+j_3} e^{- \frac{\ell}{2}}}$ by
    \cref{lem:elem_upm_Linf}(\ref{i:upm}).  As a consequence, for $|\vec{\j}| := j_0+j_1+j_2+j_3$,
    \cref{e:case_Qu_2} is equal to
    \begin{equation*}
      C_{\vec{\j}} \, \ell^{j_0} \, \sinh^2 \div{\ell} + \O{\ell^{|\vec{\j}|} e^{\frac \ell 2}}
    \end{equation*}
    which means it is an element of $\FR$.  We are therefore left to study the error made when
    replacing $L_-(\ell,u)$ by $L_-^\infty(u)$ in \cref{e:case_Qu_1}. There is nothing to do if
    $j_3=0$, and if $j_3 \geq 1$, by \eqref{eq:delta_Lmin},
    \begin{equation*}
      |L_-(\ell,u)^{j_3} - L_-^\infty(u)^{j_3}|
      = \O{\ell^{j_3-1}|L_-(\ell,u) - L_-^\infty(u)|}
      = \O{\ell^{j_3} \frac{e^{-\frac{\ell}{2}}}{1-u}}
    \end{equation*}
    and hence the error term is bounded by a multiple constant of
    \begin{equation*}
      \ell^{|\vec{\j}|} \, e^{\frac \ell 2} \, \abso*{\int_{u_-(\ell)}^{u_+(\ell)} 
        \frac{\d u}{1-u}} = \O{\ell^{|\vec{\j}|+1} e^{\frac \ell 2}}.
    \end{equation*}
    This means that the difference between \cref{e:case_Qu_1} and \cref{e:case_Qu_2} lies in
    $\FRrem \subset \FR$, which allows us to conclude.
  \item Last but not least, we examine the contributions of the form
    \begin{equation*}
      \sinh^2 \div{\ell}
      \int_{u_-(\ell)}^{u_+(\ell)} \int_{L_-(\ell, u)}^{\frac \ell 2}
      (\ell - L)^j \tilde{r}_j(L,u) \d L \d u
    \end{equation*}
    for $j \in \{ 0, \ldots, K \}$.
    We expand the exponents, so that we are left with a linear combination of integrals of the form
    \begin{equation}
      \label{eq:term_p10_expr}
      \ell^{j_0} \sinh^2 \div{\ell}
      \int_{u_-(\ell)}^{u_+(\ell)} \int_{L_-(\ell, u)}^{\frac \ell 2}
      L^{j_1} \tilde{r}_j(L,u) \d L \d u.
    \end{equation}
    Let us replace the integration over $L \in (L_-(\ell, u), \frac \ell 2)$ by an integration on a
    set independent of $\ell$, the interval $(L_-(u), + \infty)$. 
    \begin{itemize}
    \item First, using \eqref{eq:bound_cor_rem_one_var}, we prove that the tail $L > \frac \ell 2$
      is an element of $\FRrem$;
    \item For the other tail, where $L_-^\infty(u) < L < L_-(\ell,u)$, we use
      \cref{cor:expansion_ell} to bound $\tilde{r}_j$ by a power $r^{k}$ of the function $r$ from
      \cref{lem:expansion_ell}. Then,~\eqref{eq:expansion_ell_bound_tail} allows us to prove that this
      contribution is an element of $\FRrem$. 
    \end{itemize}
    As a consequence, \eqref{eq:term_p10_expr} is equal, modulo elements of $\FRrem$, to
    \begin{align*}
      \ell^{j_0} \sinh^2 \div{\ell}
      \int_{u_-(\ell)}^{u_+(\ell)} \int_{L_-^\infty(u)}^{+ \infty}
      L^{j_1} \tilde{r}_j(L,u) \d L \d u.
    \end{align*}
    Finally, we replace the integration on $(u_-(\ell), u_+(\ell))$ by an integration on $(0,1)$. We
    use \eqref{eq:bound_r_int_all} and \eqref{eq:bound_int_log_upm} to deduce that the cost of this
    substitution is an element of~$\FRrem$.  As a consequence, \eqref{eq:term_p10_expr} is equal
    modulo $\FRrem$ to
     \begin{equation*}
       \ell^{j_0} \sinh^2 \div{\ell}
      \int_{0}^{1} \int_{L_-^\infty(u)}^{+ \infty}
      L^{j_1} \tilde{r}_j(L,u) \d L \d u 
    \end{equation*}
    which is an element of $\FR$, and we have therefore proved our claim. 
  \end{itemize}  
\end{proof}

%%%Local Variables: 
%%% mode: latex
%%% TeX-master: "main"
%%% End: 

\section{Extension to any geodesic filling a surface of Euler characteristic $-1$}
\label{sec:other-geodesics}

We shall now extend the result of the previous section to any loop filling a
pair of pants or a once-holed torus. More precisely, we prove the following.

{
\begin{thm}
  \label{thm:all_chi_-1}
  For any local topological type $\type$ of absolute Euler characteristic $1$, the functions
  $(f_k^\type)_{k \geq 1}$ are Friedman--Ramanujan in the weak sense. More precisely, for any
  $k \geq 1$, there exists a constant $c_k \geq 0$ such that $f_k^\type \in \cF_w^{c_k,c_k}$ and
  $\|f_k^\type\|_{\cF^{c_k,c_k}}^w \leq c_k$.
\end{thm}}

\begin{rem}
  The statement we prove in \cref{thm:all_chi_-1} is stronger than our statement for
  \ref{chal:FR_type}, because we prove an additional uniformity with respect to the local
    type. This uniformity is essential for applications to spectral gap problems, as illustrated in
    the applications in \cite{anantharaman_29} and the companion article.
\end{rem}

The plan of the proof is the following.
\begin{itemize}
\item In \cref{sec:double-fill-curv}, we prove the result for local topological types satisfying an
  additional hypothesis, ``double-filling'' loops.
\item In \cref{sec:class-fill-curv}, we prove that all local topological types $\type$ with
  $\chi(\type)=1$ are either double-filling, a figure-eight, or elements of a class of local
  topological types called ``one-sided iterated eights''.
\item It then follows that the only remaining case to examine is the case of one-sided iterated
  eights, which is done in \cref{sec:one-sided-iterated}.
\end{itemize}

\subsection{The case of double-filling loops}
\label{sec:double-fill-curv}

We make the following definition.

{
\begin{defa}\label{d:double_fill} Let $\mathbf{T} = \eqc{\Sf,\mathbf{c}}$ be a local topological type.
  \begin{enumerate}
  \item A \emph{simple portion} of $\mathbf{c}$ is a maximal open sub-segment of $\mathbf{c}$ which
    does not contain any self-intersection point of $\mathbf{c}$.
  \item A simple portion is said to be \emph{shielded} if it belongs to the boundary of a contractible
    component of $\mathbf{S}\setminus \mathbf{c}$, and \emph{unshielded} otherwise.
  \item We say $\mathbf{c}$ is  \emph{double-filling} if all simple portions are
    shielded.
  \end{enumerate}
\end{defa}
}

In order to determine if a loop is double-filling, we highlight the boundary components that are
contractible in its complement, as done in \cref{fig:double_fill_exp}. The loop is double-filling if
and only if the whole loop is bordered by highlighted boundary components.
\begin{figure}[h]
  \includegraphics[scale=0.15]{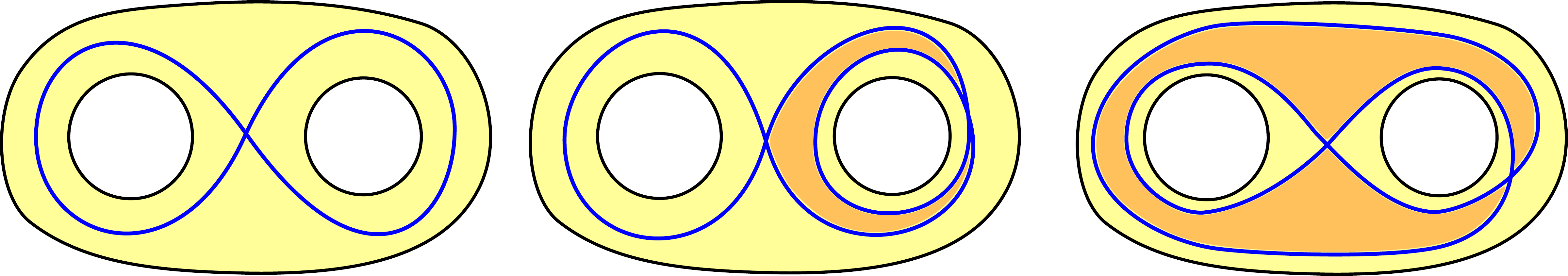}
  \caption{Three loops (in blue) filling a pair of pants. We highlighted the disks in the complement
    of the loop.  The rightmost loop is double-filling while the two left ones are not.}
    \label{fig:double_fill_exp}
\end{figure}

\begin{rem}
  This notion does not depend on the choice of the loop $\curve$ in a free-homotopy class. Indeed,
  we saw in the proof of \cref{lem:equivalent} that two loops homotopic in $\Sf$ can be obtained from
  one another by a sequence of third Reidemeister moves. Examining \cref{fig:reid_move_curves}, we
  can see that if a loop is double-filling according to our definition, then it remains so after a
  third Reidemeister move.
\end{rem}

The double-filling hypothesis is a setting in which \cref{thm:all_chi_-1} is easy to prove.

\begin{prp}
  \label{prp:con_double_filling}
  For any $k \geq 1$, there exists a constant $c_k \geq 0$ satisfying the following. For any local
  topological type $\type = \eqc{\Sf, \curve}$ with $\chi(\type) = 1$, if $\curve$ is a
  double-filling loop, then for any $L > 0$, 
  \begin{equation*}
    \int_{0}^{L} \abso{f_k^\type(\ell)} \d \ell
    \leq c_k (L+1)^{c_k-1} e^{L/2}.
  \end{equation*}
  In other words, $f_k^\type$ belongs in $\cR_w^{c_k} = \cF_w^{0,c_k}$, and its norm satisfies
  $\|f_k^\type\|_{\cF^{0,c_k}}^w \leq c_k$.
\end{prp}

The key element that allows to prove \cref{prp:con_double_filling} is the following lemma.

\begin{lem}
  \label{lem:double_fill_ineq}
  Let $Y$ be a bordered hyperbolic surface, and $\curve$ be a double-filling
  geodesic on~$Y$.  Then, $\ell_Y(\partial Y) \leq \ell_Y(\curve)$.
\end{lem}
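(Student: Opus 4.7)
\smallskip

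\noindent\textbf{Proof plan.} The idea is to compare $\ell_Y(\partial Y)$ to the length of the boundary of a thin tubular neighborhood of $\gamma$, and to exploit that, under double-filling, at most ``half'' of this boundary can face non-contractible regions of the complement.

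\smallskip

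Fix a small $\epsilon>0$ and set $\mathcal{V}_\epsilon := \{x \in Y : \dist_Y(x,\gamma) < \epsilon\}$. Since $\gamma$ is a geodesic, it is in minimal position, so for $\epsilon$ small enough $\mathcal{V}_\epsilon$ deformation retracts onto $\gamma$. As $\gamma$ fills $Y$, each connected component of $Y \setminus \mathcal{V}_\epsilon$ is either a topological disk (contractible in $Y$) or an annulus retracting onto a unique boundary component of $\partial Y$. Away from the finitely many self-intersection points of $\gamma$, the set $\partial \mathcal{V}_\epsilon$ consists of two arcs running parallel to each simple portion $\beta$, each of length $\ell_Y(\beta)+O(\epsilon)$; the two ``sides'' $b_\pm^{\mathcal{V}}(\beta)$ appearing in the definition of double-filling are precisely the boundary components of $\mathcal{V}_\epsilon$ carrying these two arcs. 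Near each self-intersection point, the remaining pieces of $\partial \mathcal{V}_\epsilon$ contribute a total of $O(\epsilon)$, by transversality of the two crossing geodesic arcs.

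\smallskip

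Let $N \subset \partial \mathcal{V}_\epsilon$ denote the union of those arcs that lie on the boundary of a non-contractible (annular) component of $Y \setminus \mathcal{V}_\epsilon$. For each simple portion $\beta$, the double-filling hypothesis guarantees that at least one of its two flanking arcs faces a contractible component, and hence at most one of them contributes to $N$. Summing over simple portions and adding the corner contribution yields
\begin{equation*}
  \ell_Y(N) \,\leq\, \sum_\beta \ell_Y(\beta) + O(\epsilon) \,=\, \ell_Y(\gamma) + O(\epsilon).
\end{equation*}

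\smallskip

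To conclude, for each boundary component $b$ of $\partial Y$, the unique annular component $A_b$ of $Y \setminus \mathcal{V}_\epsilon$ retracting onto $b$ is bounded by $b$ on one side and by a simple closed curve $c_b \subset N$ on the other, with $b$ and $c_b$ freely homotopic inside $\overline{A_b}$. Since $b$ is a closed geodesic (as $Y$ has geodesic boundary), length-minimality in a free homotopy class gives $\ell_Y(b) \leq \ell_Y(c_b)$. The $c_b$ are pairwise disjoint components of $N$, so summing yields
\begin{equation*}
  \ell_Y(\partial Y) \,\leq\, \ell_Y(N) \,\leq\, \ell_Y(\gamma) + O(\epsilon),
\end{equation*}
and the result follows by letting $\epsilon \to 0$. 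The only slightly delicate point is the local model of $\partial \mathcal{V}_\epsilon$ near each self-intersection, but this is controlled by standard transversality; everything else is a bookkeeping of lengths.
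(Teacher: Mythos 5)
Your proof is correct and takes essentially the same route as the paper's: compare $\partial Y$ to the length of the part of $\partial\mathcal{V}_\epsilon$ bordering non-contractible complementary components, and use double-filling to ensure each simple portion of $\gamma$ is counted at most once in that total. The paper's version is terser (it asserts directly that the non-contractible $b^{\mathcal{V}}$ border disjoint simple portions and then sums), while you explicitly introduce the set $N$, account for the $O(\epsilon)$ corner pieces near self-intersections, and spell out the homotopy-minimality step; all of this is accurate bookkeeping of the same argument.
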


This is an improvement of \cref{lem:length_fill}, which states that
$\ell_Y(\partial Y) \leq 2 \ell_Y(\curve)$ whenever~$\curve$ fills $Y$.  The double-filling
hypothesis allows us to remove the factor $2$ in this length inequality, which comes from the fact
that the regular neighbourhood of $\curve$ has length approximately~$2 \ell_Y(\curve)$. We first
prove the lemma.

\begin{proof}[Proof of \cref{lem:double_fill_ineq}]
  Let $\epsilon, \delta > 0$, and $\mathcal{N} = \mathcal{N}_\delta(\curve)$ be a $\delta$-regular
  neighbourhood of~$\curve$. By definition of a filling loop, each boundary component $b$ of $Y$ is
  homotopic to a non-contractible boundary components $b({\mathcal{N}})$ of $\mathcal{N}$. By
  minimality of the length of the geodesic representative in a homotopy class,
  $\ell_Y(b) \leq \ell_Y(b({\mathcal{N}}))$ for all $b$.  Thanks to the double-filling hypothesis,
  each $b({\mathcal{N}})$ borders a disjoint simple portion of the loop $\curve$, and therefore we
  can pick $\delta$ to be small enough so that
  \begin{equation*}
    \ell_Y(\partial Y) \leq  \sum_{b} \ell_Y(b({\mathcal{N}}))
    \leq \ell_Y(\curve) + \epsilon.
  \end{equation*}
  This implies the claim taking $\epsilon \rightarrow 0$.
\end{proof}

We now proceed to the proof of \cref{prp:con_double_filling}.

\begin{proof}
  Let us first consider the case of a double-filling loop $\curve$ on the pair of pants
  $\mathbf{P}$, and $\type := \eqc{\mathbf{P},\curve}$. Let $k \geq 1$ be an integer. {For
    $L>0$, we apply \cref{rem:proof_asymptotic} to the test function
    $\1{[0,L]}(\ell) \, \mathrm{sign}(f_k^\type(\ell))$ and obtain, by the triangle inequality,
  \begin{equation}
    \label{eq:apply_asymp_proof_doublefill}
    \int_{0}^{L} |f_k^\type(\ell)| \d \ell
    \leq \int_{\R_{>0}^3}
    \1{[0,L]}(h_\curve(\ell_1,\ell_2,\ell_3)) \,
    |\psi_k^{\mathbf{P}}(\ell_1, \ell_2, \ell_3)| \,
    \d \ell_1 \d \ell_2 \d \ell_3
  \end{equation}
where $\psi_k^{\mathbf{P}}$ is the $k$-th term of the asymptotic expansion of the
function $\phi_g^{\mathbf{P}}$, expressed in \cref{exa:int_pop}, and
$h_\curve : \R_{>0}^3 \rightarrow \R$ is the function which, to the
lengths of the three boundary components of $\mathbf{P}$, associates the length of the
geodesic $\curve$.
We now use the  double-filling hypothesis, and \cref{lem:double_fill_ineq}, which imply
\begin{equation*}
    \int_{0}^{L} |f_k^\type(\ell)| \d \ell
    \leq \int_{\R_{>0}^3}
    \1{[0,L]}(\ell_1+\ell_2+\ell_3)) \,
    |\psi_k^{\mathbf{P}}(\ell_1, \ell_2, \ell_3)| \,
    \d \ell_1 \d \ell_2 \d \ell_3.
\end{equation*}
Note that this quantity is now independent of the loop $\curve$. The conclusion directly follows
from the bound in \cref{lem:claim_exp_phi_T} at the order $\ord=\chi(\type)$.}

  If now $\curve$ is a double-filling loop on the once-holed torus $\mathbf{S}_{1,1}$, then
  \eqref{eq:apply_asymp_proof_doublefill} is replaced by
  \begin{equation*}
    \int_{0}^{L} |f_k^\type(\ell)| \d \ell
    \leq \int_{0}^\infty \int_{\mathcal{T}_{1,1}(x)}
    \,  \1{[0,L]}(\ell_Y(\curve)) \, |\psi_k^{(1,1)}(x)|
    \d \Volwp[1,1,x](Y) \d x.
  \end{equation*}
  We shall prove that, for any $L, x >0$,
  \begin{equation}
    \label{eq:bound_vol_dom_fund_11}
    \Volwp[1,1,x] (\{ Y \in \mathcal{T}_{1,1}(x) \, : \, \ell_Y(\curve) \leq
    L\}) \leq 4 L^2. 
  \end{equation}
  This allows to bound the integral over $\mathcal{T}_{1,1}(x)$, and then the rest of the proof is
  exactly identical to the pair of pants case above.

  Let us equip $\Sf_{1,1}$ with an arbitrary hyperbolic structure, and assume that $\curve$ is the
  geodesic representative for this metric. We consider a parametrization
  $\curve : [0,1] \rightarrow \Sf_{1,1}$. The loop $\curve$ is filling and therefore not simple, and
  we can therefore define
  \begin{equation*}
    t_+ := \inf \{ t \geq 0 \, : \, \exists s \in [0,t) \, : \, \curve(s)=\curve(t)\}
  \end{equation*}
  and $t_- < t_+$ so that $z := \curve(t_-) = \curve(t_+)$. Then, the sub-segment
  $\curve_{|[t_-,t_+]}$ is a simple loop~$\alpha$ on $\Sf_{1,1}$ based at $z$.

  The surface $\Sf_{1,1} \setminus \alpha$ is a topological pair of pants $\mathbf{P}$. We denote as $\alpha_1$
  and $\alpha_2$ the two boundary components of $\mathbf{P}$ corresponding to $\alpha$, so that the path
  $\curve$ enters $\mathbf{P}$ through $\alpha_1$ at the time $t_+$. We denote as $z_1$ and $z_2$ the copies
  of $z$ on $\mathbf{P}$, following this numbering convention.

  Let us consider a small enough $\epsilon > 0$ such that
  $\mathcal{C}_{\epsilon} := \{w \in \mathbf{P} \, : \, 0 < \mathrm{dist}(w,\alpha_2) < \epsilon\}$ is a
  cylinder. The loop $\curve$ fills the surface $\Sf_{1,1}$, and $\mathcal{C}_\epsilon$ is an annulus
  that is not homotopic to $\partial \Sf_{1,1}$, and hence the set $\mathcal{C}_\epsilon$ cannot lie
  entirely in the complement of $\curve$. We deduce that there exists a sub-segment $\beta_0$ of
  $\curve$ (or $\curve^{-1}$) going from a point of $\alpha_2$ to a point of $\partial \mathbf{P}$ which
  cannot be homotoped to a path on $\alpha_2$.  Because $\curve$ does not intersect
  $\partial \Sf_{1,1}$, there are two possibilities for the topology of $\beta_0$, represented in
  \cref{fig:proof_double_fill_R_cases}.
  \begin{figure}
    \centering
      \begin{subfigure}[t]{0.35\textwidth}
    \centering
    \includegraphics[scale=0.3]{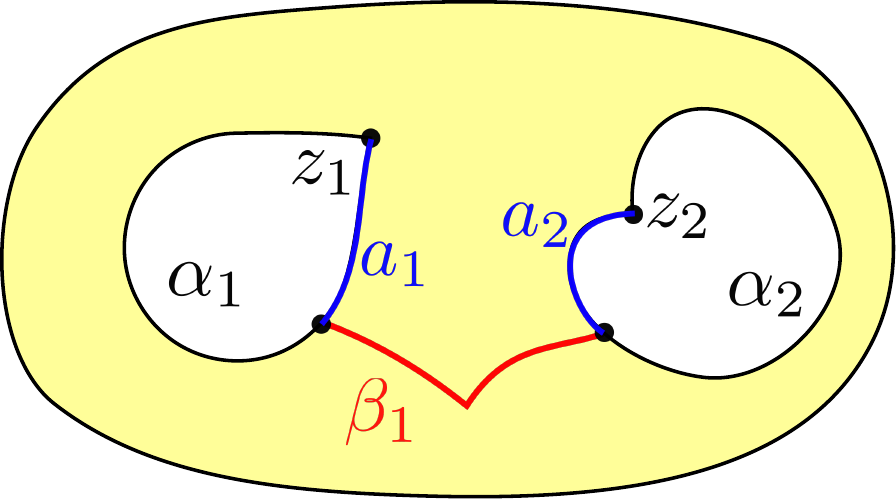}
    \caption{The case \eqref{en:proof_double_fill_R_cases_1}.}
  \end{subfigure}%
  \begin{subfigure}[t]{0.65\textwidth}
    \centering
    \includegraphics[scale=0.3]{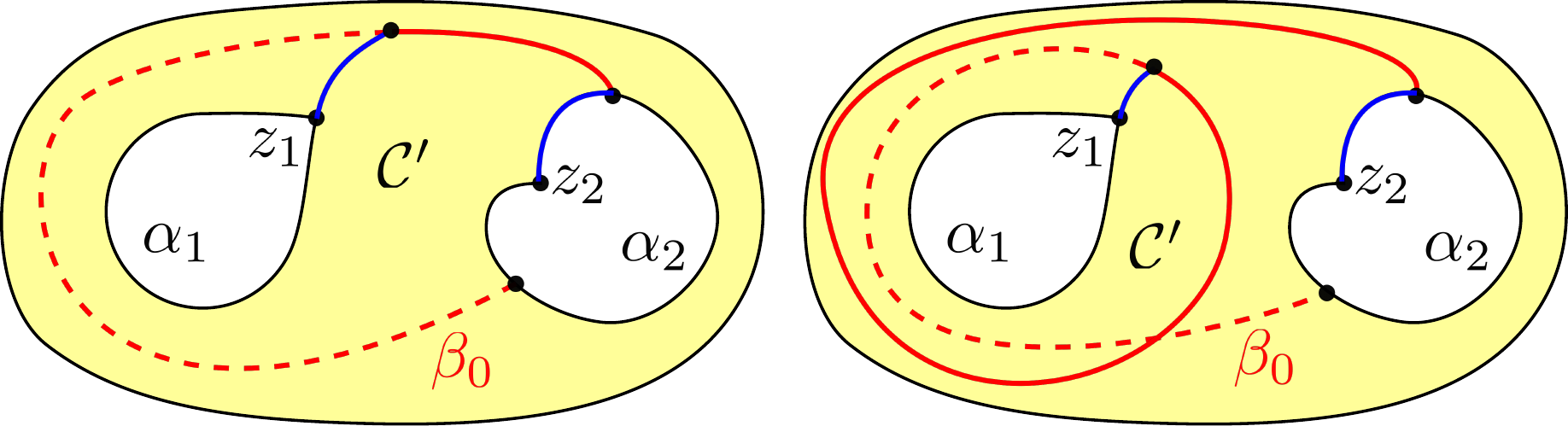}
    \caption{Two possible scenarios in case \eqref{en:proof_double_fill_R_cases_2}.}
  \end{subfigure}%
    \caption{Illustration of the proof of \cref{eq:bound_vol_dom_fund_11}.}
    \label{fig:proof_double_fill_R_cases}
  \end{figure}
  In both those cases, let us prove that can use pieces of $\curve$ to create a simple closed loop
  $\beta$ on $\Sf_{1,1}$ that intersects $\alpha$ exactly once.
  \begin{enumerate}
  \item \label{en:proof_double_fill_R_cases_1} If $\beta_0$ goes from $\alpha_2$ to $\alpha_1$, we
    extract a simple sub-path $\beta_1$ from it by iteratively removing the portion of $\beta_0$
    restricted to $[u_-,u_+]$ for any times $u_- < u_+$ such that $\beta_0(u_-) = \beta_0(u_+)$,
    until no intersections are left. We then take a sub-path $a_2$ of $\alpha_2$ from $z_2$ to the
    beginning of $\beta_1$, and a sub-path $a_1$ of $\alpha_1$ from the endpoint of $\beta_1$ to
    $z_1$. The concatenation $\beta := a_2 \smallbullet \beta_1 \smallbullet a_1$ satisfies the
    hypotheses.
  \item \label{en:proof_double_fill_R_cases_2} If $\beta_0$ goes from $\alpha_2$ to itself, then by
    construction it is not homotopic with fixed endpoints to a portion of $\alpha_2$. We use this
    information to find a topological cylinder $\mathcal{C}'$ which has $\alpha_1$ as one boundary
    component, and portions of $\beta_0$ and perhaps $\alpha_2$ as the other. The path $\curve$
    enters $\mathcal{C}'$ at time $t_+$, and cannot be homotoped to a sub-path of $\alpha_1$
    (otherwise $\Sf_{1,1}$ would contain a geodesic bigon). As a consequence, it has to escape the
    cylinder $\mathcal{C}'$ by its other boundary component. We concatenate the portion of $\curve$
    from $t_+$ to its escape of $\mathcal{C}'$, as well as portions of $\beta_0$, and finally a
    portion of $\alpha_2$ from the starting point of $\beta_0$ to $z_2$, to construct~$\beta$.
  \end{enumerate}

  Then, by construction,
  \begin{equation*}
    \forall Y \in \mathcal{T}_{1,1}(x), \quad
    \begin{cases}
      \ell_Y(\alpha) \leq \ell_Y(\curve) \\
      \ell_Y(\beta) \leq 2 \ell_Y(\curve).
    \end{cases}
  \end{equation*}
  For $x >0$, we consider the Fenchel--Nielsen coordinates $(\ell_1,\tau_1)$ on
  $\mathcal{T}_{1,1}(x)$ so that $\ell_1$ is equal to $\ell_Y(\alpha)$, and the geodesics homotopic
  to $\alpha$ and $\beta$ are orthogonal when $\tau_1=0$. Then, an elementary computation yields
  \begin{equation*}
    \cosh (\ell_Y(\curve))
    \geq \cosh \div{\ell_Y(\beta)}
    = \cosh \div{\ell_1} \cosh \div{\tau_1}
    \geq \cosh \div{\tau_1}
  \end{equation*}
  and therefore, for any $L >0$,
  \begin{equation*}
    \{(\ell_1, \tau_1) \in \R_{>0} \times \R \, : \, \ell_Y(\curve) \leq L\}
    \subset [0, L] \times [-2L, 2L]
  \end{equation*}
  which is enough to conclude to \eqref{eq:bound_vol_dom_fund_11}.
\end{proof}

\subsection{Classification of filling loops that do not double-fill}
\label{sec:class-fill-curv}

Thanks to Sections~\ref{sec:generalised_convolutions} and
\ref{sec:double-fill-curv}, we now know that \cref{thm:all_chi_-1} holds for
figure-eights and double-filling loops.  We have seen, at the centre of
\cref{fig:double_fill_exp}, an example of a loop filling a pair of pants which
is neither a double-filling loop nor a figure-eight.  We make the following
definition to refer to this loop, and other similar examples.

\begin{figure}[h]
  \includegraphics[scale=0.7]{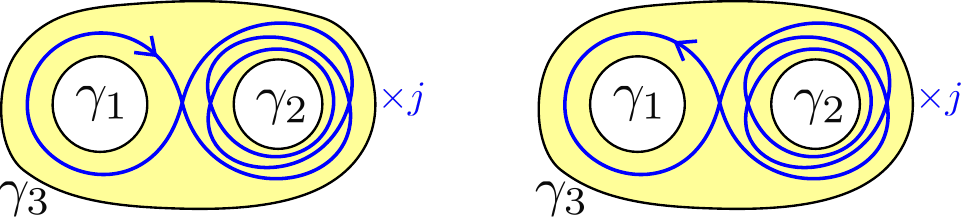}
  \caption{Two distinct one-sided figure eights with $j=3$ iterations.}
  \label{fig:one_sided_it_def}
\end{figure}

\begin{nota}
  We call the local topological types represented in \cref{fig:one_sided_it_def} \emph{one-sided
    iterated eights}, and refer to the integer $j$ as their \emph{number of iterations}. For any
  $j \geq 2$, we denote as $\curve_j$ the loop on the left-hand side of \cref{fig:one_sided_it_def}.
\end{nota}

We prove the following.

\begin{prp}
  \label{prp:non-double-fill}
  Let $\Sf$ be a filling type of absolute Euler characteristic $1$. Then, for any loop $\curve$
  filling $\Sf$, one and only of the two following occurs:
  \begin{itemize}
  \item either $\curve$ is double-filling;
  \item or $\Sf$ is a pair of pants, and $\curve$ is a figure-eight or a one-sided iterated eight.
  \end{itemize}
\end{prp}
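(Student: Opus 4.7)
The plan is to model $\gamma$ as a $4$-valent graph $G \subset S$ with vertices at the self-intersections of $\gamma$ in minimal position. Applying Euler's formula to the closed surface $\hat{S}$ obtained by capping off the $n_S$ boundary components of $S$, the $F$ faces of $\hat{S} \setminus G$ satisfy $V - 2V + F = \chi(\hat{S}) = \chi(S) + n_S = n_S - 1$. Of these, exactly $n_S$ correspond to the capped annular neighborhoods of the boundary components of $S$, so the complement $S \setminus G$ in $S$ contains $d = V - 1$ disk faces and $n_S$ annular faces. Under this dictionary, $\gamma$ is double-filling iff every edge of $G$ borders at least one disk face; call a simple portion $\beta$ \emph{bad} otherwise, i.e. if its two sides lie in annular faces $A_+, A_-$.

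First, I would dispose of the once-holed torus case $S = S_{1,1}$, where $n_S = 1$ and there is a unique annular face $A$ with inner boundary circle $c_A \subset \partial \mathcal{V}$ isotopic to $\partial S_{1,1}$. If $\beta$ were bad, both of its sides would be arcs $\alpha_\pm$ on $c_A$, so $c_A$ visits both sides of $\beta$. I would derive a contradiction by constructing a simple closed curve $\delta \subset S_{1,1} \setminus \gamma$ that is essential and not boundary-parallel, which is impossible when $\gamma$ fills $S_{1,1}$ since then every component of $S_{1,1} \setminus \gamma$ is either a disk or a collar of $\partial S_{1,1}$. The curve $\delta$ is obtained by concatenating one of the two sub-arcs of $c_A$ between $\alpha_+$ and $\alpha_-$ with a short path through a disk face of $S_{1,1}\setminus\gamma$ adjacent to an endpoint vertex of $\beta$; the existence of such a disk follows from the fact that not all four corners at such a vertex can belong to $A$ (otherwise $V = 1$ and one checks directly that the unique loop with a single self-intersection filling $S_{1,1}$ is double-filling).

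Second, for the pair of pants $S = P$ ($n_S = 3$), I would argue by induction on $V$. When $V = 1$, the only loop with one self-intersection filling $P$ is, up to homeomorphism, the figure-eight, which gives the base case. For $V \geq 2$ with a bad edge $\beta$ between annular faces $A_+, A_-$, consider the embedded subsurface $\Sigma := \overline{A_+ \cup R_\beta \cup A_-}$, where $R_\beta$ is a regular neighborhood of $\beta$. By Euler characteristic, either $\Sigma$ is a pair of pants exhausting $P$ up to a collar of the third boundary (when $A_+$ and $A_-$ surround two different boundary components of $P$), or $\Sigma$ is a twice-punctured disk with $P \setminus \Sigma$ a collar of the unused boundaries (when $A_+ = A_-$). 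In either case, the remaining edges of $\gamma$ fill a strictly smaller subsurface nested inside $\Sigma$, which, after collapsing the collars, carries a filling loop on a pair of pants with fewer self-intersections. I would then apply the inductive hypothesis to this nested loop and check that the only way to re-attach $\beta$ with its two endpoint vertices to yield a filling loop on $P$ retaining a bad simple portion is by the recursive construction that adds a new iteration to a one-sided iterated eight.

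The main obstacle is carrying out the inductive step in the pair-of-pants case: one must verify that the cyclic order of the four edges at each endpoint of $\beta$ admits only one configuration compatible both with both sides of $\beta$ lying in annular faces and with the restriction of $\gamma$ to $\Sigma \setminus R_\beta$ being a filling loop there. This amounts to a delicate combinatorial case analysis, organised by whether $A_+ = A_-$ or $A_+ \neq A_-$ and by the relative cyclic orders at the two endpoint vertices of $\beta$, ensuring that no non-double-filling topology other than the iterates $\alpha_j$ and the figure-eight can arise.
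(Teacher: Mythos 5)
Your proposal takes a genuinely different route from the paper — a ribbon-graph / Euler-characteristic framework with an inductive descent on the number $V$ of self-intersections — whereas the paper cuts $S$ along a single simple arc $\alpha$ crossing $\gamma$ exactly once and reads off the classification from the fundamental group $\Z$ of the resulting annulus. Unfortunately your version has two genuine gaps, one in each case.

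In the once-holed torus case, your curve $\delta$ cannot be contained in $S_{1,1}\setminus\gamma$ as claimed. The sub-arc of $c_A$ you use lives on the boundary of the annular face $A$, while the ``short path through a disk face'' lives in a different complementary component; these are separated by edges of the graph $G$, i.e.\ by portions of $\gamma$ itself. Any path joining them must cross $\gamma$, so $\delta$ is not disjoint from $\gamma$ and the contradiction with filling does not come for free. (The paper sidesteps this: it deliberately builds an arc $\alpha$ that crosses $\gamma$ exactly once, cuts along it, and concludes that $\gamma$ is homotopic to a simple loop on the resulting annulus, hence does not fill.) In addition, the parenthetical justification that ``not all four corners at such a vertex can belong to $A$, otherwise $V=1$'' is unsupported — there is no a priori reason the corners of the unique annular face at one vertex cannot all coincide when $V>1$ — and the claim about ``the unique loop with a single self-intersection filling $S_{1,1}$'' is vacuous, since a transverse self-intersection forces the cyclic edge-order at the vertex to be the pair-of-pants ribbon structure, so no such loop on $S_{1,1}$ exists.

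In the pair-of-pants case, you explicitly flag the inductive step as ``the main obstacle'' and leave the combinatorial case analysis undone, so the classification that only figure-eights and one-sided iterated eights can arise is not established. There is also a structural issue with the induction: removing a bad edge $\beta$ from $\gamma$ leaves an arc, not a closed loop, so it is not immediate what ``filling loop with fewer self-intersections'' the inductive hypothesis is to be applied to, or why its filled surface is again a pair of pants after collapsing collars. The paper avoids all of this by not inducting: once $\gamma$ is cut along the simple arc $\alpha$ from $b_+$ to $b_-$, the curve becomes a path $t_1\smallbullet b_0^{\,j}\smallbullet t_2^{-1}$ on an annulus, and the integer $j$ directly enumerates the figure-eight and the one-sided iterated eights with no case analysis. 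If you wish to keep your graph-theoretic framework, the cleanest fix is probably to import the paper's key construction: build a simple arc $\alpha$ dual to the bad edge $\beta$, connecting two boundary components of $S$ and meeting $\gamma$ once, and then use the annulus structure of $S\setminus\alpha$ instead of attempting an induction.
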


As a consequence, one-sided iterated eights are the only remaining case that we need to consider to
prove \ref{chal:FR_type} when $\chi(\Sf)=1$. We shall do so in \cref{sec:one-sided-iterated}.

\begin{proof}
  First, let $\Sf = \mathbf{P}$ be a pair of pants, and $\curve$ be a filling loop that is
  not double-filling. Let us prove that $\curve$ is a figure-eight or a
  one-sided iterated eight.

  \begin{figure}[h]
    \includegraphics[scale=0.7]{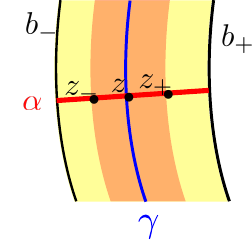} \hspace{2em}
    \includegraphics[scale=0.7]{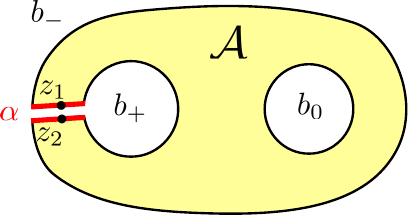} \hspace{2em}
    \includegraphics[scale=0.7]{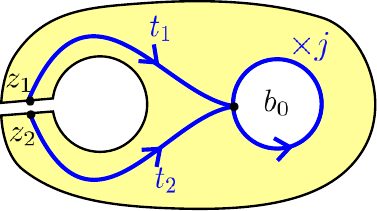}
    \caption{Illustration of the proof in the pair of pants case.}
    \label{fig:non-double-fill}
  \end{figure}

  Let $\mathcal{N}$ be a regular neighbourhood of $\curve$. By definition of double-filling loops,
  there exists a simple portion of $\curve$, denoted as $\cI$, such that none of the two boundary
  components $b_\pm(\mathcal{N})$ of $\cN$ bordering $\curve$ are contractible. They therefore are
  homotopic to two (non-necessarily distinct) boundary components~$b_\pm$ of $\mathbf{P}$. Let us
  pick a point $z$ on $\cI$, two neighbours $z_\pm$ of $z$ on $b_\pm({\mathcal{N}})$, and a simple
  path $c_0$ going from $z_+$ to $z_-$ by crossing the regular neighbourhood $\mathcal{N}$
  transversally, which only intersects $\curve$ at the point $z$.  For $\sigma \in \{+, -\}$, since
  $b_\sigma({\mathcal{N}})$ and $b_\sigma$ delimit an annulus, there exists a simple path $c_\sigma$
  entirely contained in this annulus going from $b_\sigma$ to $z_\sigma$. Then, we let
  $\alpha := c_+ \smallbullet c_0 \smallbullet c_-^{-1}$, as represented in the left part of
  \cref{fig:non-double-fill}. By construction, the path $\alpha$ is simple, and only intersects
  $\curve$ once, at $z$.  Let us cut the pair of pants $\mathbf{P}$ along the simple path $\alpha$
  going from $b_+$ to $b_-$.

  If $b_+ = b_-$, then the path $\alpha$ would separate the pair of pants $\mathbf{P}$ it into two connected
  components, while it intersects $\curve$ only once. This is impossible because the loop~$\curve$
  is closed.  As a consequence, $b_+ \neq b_-$, and the result we obtain by cutting $\mathbf{P}$ along
  $\alpha$ is therefore an annulus $\mathcal{A}$, as represented on the middle part of
  \cref{fig:non-double-fill}.

  The point $z$ corresponds to two marked points $z_{1}$ and $z_2$ on one boundary component of the
  annulus. Because $\curve$ only intersects $\alpha$ once, at $z$, the path $\curve$ corresponds to
  a path $\curve'$ on $\mathcal{A}$, going from $z_1$ to $z_2$. The fundamental group of an annulus
  is $\Z$, and therefore $\curve'$ is homotopic with fixed endpoint to
  $t_1 \smallbullet b_0^j \smallbullet t_2^{-1}$ for a $j \in \Z$, where~$b_0$ is the boundary
  component of $\mathcal{A}$ on which $z_1$ and $z_2$ do not lie, corresponding to the third
  boundary component of the pair of pants, and $t_{1}$, $t_2$ are paths from $z_{1}$, $z_2$ to a
  shared fixed point of $b_0$ (see the right-hand side of \cref{fig:non-double-fill}). It follows
  that $\curve$ is a figure-eight or a one-sided iterated eight.

  \begin{figure}[h]
    \includegraphics[scale=1]{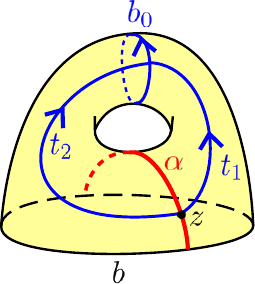}
    \caption{Illustration of the proof in the once-holed torus case.}
    \label{fig:non-double-fill-torus}
  \end{figure}  
  
  Now, let us assume that $\Sf$ is the once-holed torus $\Sf_{1,1}$, and let $\curve$ be a loop on
  $\Sf_{1,1}$ that is not double-filling. We prove that $\curve$ does not fill $\Sf_{1,1}$. Indeed, let
  us construct $b_-$, $b_+$, $\alpha$ be as above.  The once-holed torus $\Sf_{1,1}$ only has one
  boundary component, i.e.  $b_- = b_+ =: b$. The surface $\Sf_{1,1} \setminus \alpha$ has Euler
  characteristic $0$ and cannot be disconnected, since $\curve$ is a closed loop intersecting
  $\alpha$ exactly once. As a consequence, it is an annulus.  Let~$z_1$ and $z_2$ denote the two
  copies of $z$, that lie on the two distinct boundary components of the annulus. The fundamental
  group of an annulus is $\Z$, so the path $\curve$ on the annulus can be written as
  $t_1 \smallbullet b_0^j \smallbullet t_2^{-1}$ for a $j \in \Z$, where $b_0$ is the core of
  the annulus and $t_1$, $t_2$ are two paths connecting $z_1$, $z_2$ to a shared point of $b_0$,
  as represented on \cref{fig:non-double-fill-torus}. This path admits a simple representative in
  the homotopy class with fixed endpoints, which corresponds to a simple loop on $\Sf_{1,1}$ homotopic
  to~$\curve$. By minimality of the intersection number for the geodesic representation, this
  implies that~$\curve$ is simple. In particular, $\curve$ does not fill $\Sf_{1,1}$, which was our
  claim.

  Because figure-eights and one-sided iterated eights are not double-filling, it
  is clear the two cases are mutually exclusive.
\end{proof}

\subsection{Proof  for one-sided iterated eights}
\label{sec:one-sided-iterated}

Let us now proceed to the last step to the a proof of \cref{thm:all_chi_-1}, which consists in
proving it for any one-sided iterated eight.  For any $j \geq 2$, there are exactly two distinct
local types of one-sided iterated eights with $j$ iterations, represented on
\cref{fig:one_sided_it_def}.  Because these two types only differ by orientation, we shall only
study the local type $\type_j := \eqc{\mathbf{P},\curve_j}$. Let us prove the following statement.

\begin{prp}
  \label{thm:it_eight}
  For any $k \geq 1$, there exists a constant $c_k \geq 0$ satisfying the
  following. For any $j \geq 2$, the function $f_k^{\type_j}$ belongs in
  $\cF^{c_k,c_k}$ and its norm satisfies $\|f_k^{\type_j}\|_{\cF^{c_k,c_k}} \leq c_k$.
\end{prp}

The proof is extremely similar to the case of the figure eight, done in detail
in \cref{sec:generalised_convolutions}, and we shall therefore only sketch it.
The first step we need to take is to compute the length-formula relating the
length of the geodesic $\curve_j$ to the three boundary lengths of the pair of pants,
generalising \cref{eq:formula_length_eight} for $j \geq 1$. This leads to the
following.

\begin{lem}
  \label{lem:expression_before_change_var_j}
  The expression for $(f_k^{\type_1})_{k \geq 1}$ from \cref{lem:expression_before_change_var}, true
  for the figure-eight, can be extended to the local type $\type_j$ for $j \geq 2$ by replacing the
  length-function $h$ in the level-set integrals by the new length-function $h_j$ satisfying
  \begin{equation}
    \label{lem:formula_length_it}
    \cosh \div{h_j(\ell_1, \ell_2, \ell_3)}
    = 
    \frac{\cosh \div{\ell_1} \sinh \div{(j+1) \ell_2} 
      + \cosh \div{\ell_3} \sinh \div{j \ell_2}}{\sinh \div{\ell_2}} \cdot
  \end{equation}
  The family of functions $(f_i)_{i}$ and the coefficients that appear in this
  expansion does not depend on the parameter~$j$.
\end{lem}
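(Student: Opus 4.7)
The plan is to derive the length formula \eqref{lem:formula_length_it} by a trace computation in $\mathrm{SL}_2(\R)$, and then observe that the structural part of Lemma \ref{lem:expression_before_change_var} requires no modification. Represent the pair of pants $P$ as $\Hyp/\Gamma$ with $\Gamma = \pi_1(P)$ freely generated by loops $a_1, a_2$ around the first two boundary components (so that $a_3 = (a_1 a_2)^{-1}$ encodes the third). Let $A_1, A_2 \in \mathrm{SL}_2(\R)$ be lifts of hyperbolic representatives, with $|\mathrm{tr}(A_i)| = 2 \cosh \div{\ell_i}$. Inspecting the construction of $\alpha_j$ in Figure \ref{fig:one_sided_it_def}, one sees that $\alpha_j$ is freely homotopic to $a_1 a_2^j$ up to inversion, so $2\cosh \div{\mathcal{L}_j(\ell_1,\ell_2,\ell_3)} = |\mathrm{tr}(A_1 A_2^j)|$.

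The Cayley--Hamilton identity $A_2^2 = \mathrm{tr}(A_2) A_2 - I$ yields the recursion
\begin{equation*}
  \mathrm{tr}(A_1 A_2^{j+1}) = \mathrm{tr}(A_2) \,\mathrm{tr}(A_1 A_2^j) - \mathrm{tr}(A_1 A_2^{j-1}),
\end{equation*}
with initial data $\mathrm{tr}(A_1 A_2^0) = \mathrm{tr}(A_1) = 2 \cosh \div{\ell_1}$ and, from the figure-eight case, $\mathrm{tr}(A_1 A_2) = 2 \cosh \div{\mathcal{L}_1(\ell_1,\ell_2,\ell_3)}$ given by \eqref{eq:formula_length_eight}. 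Solving this linear recursion via Chebyshev polynomials of the second kind, using the identity $U_{j-1}(\cosh x) = \sinh(j x)/\sinh x$, gives
\begin{equation*}
  \mathrm{tr}(A_1 A_2^j) = \frac{\sinh \div{j \ell_2}}{\sinh \div{\ell_2}} \,\mathrm{tr}(A_1 A_2) - \frac{\sinh \div{(j-1)\ell_2}}{\sinh \div{\ell_2}} \, \mathrm{tr}(A_1).
\end{equation*}
Substituting \eqref{eq:formula_length_eight} and applying the addition formula $\sinh \div{(j+1)\ell_2} = 2 \sinh \div{j \ell_2} \cosh \div{\ell_2} - \sinh \div{(j-1) \ell_2}$ (which emerges naturally from grouping the $\cosh \div{\ell_1}$ terms) yields exactly the closed form \eqref{lem:formula_length_it}. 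As a sanity check, setting $j=1$ recovers \eqref{eq:formula_length_eight}.

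For the second assertion, I invoke Theorem \ref{thm:express_average} applied to $T = \eqc{P,\alpha_j}$. The key observation is that the density $\phi_g^P$ computed in \cref{exa:int_pop}, and hence its asymptotic expansion from Lemma \ref{lem:claim_exp_phi_T}, depends only on the filling type $P$ and not on the chosen loop. Consequently, the reduction carried out in the proof of \cref{lem:expression_before_change_var}, which expresses each $f_k^{\eqc{P,\alpha_1}}$ as a linear combination of the level-set integrals $\mathcal{I}[f_1,f_2,f_3]$ and $\mathcal{I}^{\mathrm{tor}}_i[f_i]$ with $f_i$ of the form \eqref{eq:cases_h_i}, uses only the expansion of $\phi_g^P$ together with the identity $\av[T]{F} = m(T)^{-1} \iiint F(\mathcal{L}_\alpha(\ell_1,\ell_2,\ell_3)) \phi_g^P(\ell_1,\ell_2,\ell_3)\, \d\ell_1 \d\ell_2 \d\ell_3$. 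Replacing the representative $\alpha_1$ of the local type by $\alpha_j$ simply substitutes $\mathcal{L}$ by $\mathcal{L}_j$ in the integrand, leaving every other ingredient --- the functions $h_i$, the polynomials $p_i$, and the coefficients of the linear combination --- completely untouched.

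The main obstacle is the first step: correctly identifying $\alpha_j$ with $a_1 a_2^j$ (up to conjugation and inversion) in $\pi_1(P)$, and navigating the sign ambiguities in the correspondence $\mathrm{tr}(A) = \pm 2 \cosh \div{\ell(A)}$ to ensure that the closed form comes out with the correct signs. The remainder is essentially bookkeeping: the multiplicity $m(\eqc{P,\alpha_j})$ is generically $1$ for $j \geq 2$ (boundaries $1$ and $2$ play different roles), but this enters only as an overall multiplicative constant absorbed into the coefficients, not into the family $(f_i)_i$ itself, so the uniformity statement is preserved.
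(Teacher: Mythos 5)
Your proof is correct, and it takes a genuinely different (and arguably cleaner) route to the length formula than the paper. The paper derives the formula from the recursion
\begin{equation*}
\cosh \div{\mathcal{L}_j(\ell_1,\ell_2,\ell_3)} = 2\cosh\div{\ell_1}\cosh\div{j\ell_2} + \cosh\div{\mathcal{L}_{j-1}(\ell_3,\ell_2,\ell_1)},
\end{equation*}
which it obtains by a geometric construction: it singles out a common perpendicular $p$ of $\beta_{j-1}$ with itself, represents $\alpha_j$ as a concatenation $\beta_{j-1}^1 \smallbullet p \smallbullet \beta_{j-1}^2 \smallbullet p$, and applies the $\mathrm{SL}_2(\R)$ trace identity $\tr(A_1A_2) = \tr(A_1)\tr(A_2) + \tr(-A_1 A_2^{-1})$ to that factorization; the closed form then follows by an induction (which the paper leaves to the reader). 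You bypass the orthogeodesic construction entirely by noting $\alpha_j = a_1 a_2^j$ in $\pi_1(P)$ and using Cayley--Hamilton $A_2^2 = \tr(A_2) A_2 - I$ to get the standard three-term Chebyshev recursion $T_{j+1} = \tr(A_2) T_j - T_{j-1}$ directly, with initial data supplied by the figure-eight formula; you then solve it in closed form rather than inducting. The algebra is more elementary and the closed form drops out immediately, which is a real advantage. The one place where the paper's geometric route earns its keep is sign control: lengths and angles are manifestly positive there, whereas in your version you must check that $\tr(A_1 A_2^j)$ stays positive (you flag this but do not resolve it). That is easy to supply: $T_1 = 2(2\cosh\div{\ell_1}\cosh\div{\ell_2}+\cosh\div{\ell_3}) \ge 2T_0 > 0$, and since $\tr(A_2) \ge 2$, the recursion forces $T_{j+1} \ge 2T_j - T_{j-1} \ge T_j$, so positivity propagates. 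With that observation added, your argument is complete. Your handling of the second assertion (the $j$-independence of the coefficients and of the $f_i$) is the same as the paper's: only the length function changes in the integrand, so the reduction of Lemma \ref{lem:expression_before_change_var} carries over verbatim.
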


\begin{proof}
  We simply follow the exact same steps as in Sections
  \ref{sec:expression-as-an} to \ref{eq:term_F_exp_tor_level_set}. The only
  difference here is that the expression of the length,
  \eqref{eq:formula_length_eight} in the case of the figure eight,
  and~\eqref{lem:formula_length_it} here.  We prove that, for any $j \geq 2$,
  \begin{equation}
    \label{eq:rec_formula_it}
    \cosh \div{h_j(\ell_1, \ell_2, \ell_3)}
    = 2 \cosh \div{\ell_1} \cosh \div{j \ell_2} + \cosh
    \div{h_{j-1}(\ell_3, \ell_2, \ell_1)}.
  \end{equation}
  We then obtain \eqref{lem:formula_length_it} by a straightforward induction.

  The proof of \cref{eq:rec_formula_it} is a geometric argument, that uses the interpretation of the
  unit tangent bundle of the pair of pants $\mathbf{P}$ as a quotient of $\mathrm{PSL}_2(\R)$, and
  the formula $\cosh \div{\ell(\curve)} = \frac 12 \abso{\tr (\curve)}$ true for any hyperbolic
  element $\curve$ of $\mathrm{PSL}_2(\R)$. Because these tools only appear in this proof, we will
  not provide more introductory detail, and invite the reader to refer to \cite{katok1992} for more
  details. 

  \begin{figure}[h]
    \centering
    \includegraphics[scale=0.75]{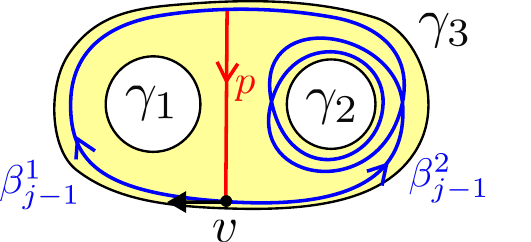}
    \caption{The geometric construction for the proof of \cref{eq:rec_formula_it}.}
    \label{fig:rec_formula_it}
  \end{figure}
  
  The following construction is illustrated on \cref{fig:rec_formula_it}. Let $\beta_{j-1}$ denote
  the one-sided iterated eight obtained by going once around~$\gamma_3$ and $j-1$ times
  around~$\curve_2$, which has length $h_{j-1}(\ell_3, \ell_2, \ell_1)$.  Let $p$ denote
  the common perpendicular of~$\beta_{j-1}$ with itself represented in \cref{fig:rec_formula_it} --
  we shall also use the notation $p$ for its length. The endpoints of $p$ delimits two
  paths~$\beta_{j-1}^1$ and~$\beta_{j-1}^2$ obtained from $\beta_{j-1}$, of respective lengths
  denoted by $t_1$ and $t_2$. Then, the one-sided iterated eight~$\curve_j$, of length
  $h_j(\ell_1,\ell_2,\ell_3)$, is freely homotopic to
  $\beta_{j-1}^1 \smallbullet p \smallbullet \beta_{j-1}^2 \smallbullet p$. Transporting the vector
  $v$ represented on \cref{fig:rec_formula_it} along this trajectory using elements of
  $\mathrm{SL}_2(\R)$, we obtain that
  \begin{equation}
    \label{eq:trace_expr}
    \cosh \div{h_j(\ell_1, \ell_2, \ell_3)}
    = \frac{1}{2} \, \abso{\tr (a_{t_1} w_p a_{t_2} w_{-p})}
  \end{equation}
  where 
  \begin{equation*}
    a_t :=
    \begin{pmatrix}
      e^{\frac t2} & 0 \\ 0 & e^{- \frac t2}
    \end{pmatrix}
    \quad \text{and} \quad
    w_p :=
    \begin{pmatrix}
      \cosh \div{p} & \sinh \div{p} \\ \sinh \div{p} & \cosh \div{p}
    \end{pmatrix}
  \end{equation*}
  respectively correspond to moving the vector $v$ in a straight direction for a
  length $t$, or along the orthogonal geodesic in the right direction for the
  (algebraic) length $p$.  We rewrite \eqref{eq:trace_expr} as
  \begin{equation}
    \label{eq:trace_expr_reexp}
    \cosh \div{h_j(\ell_1, \ell_2, \ell_3)}
    = \frac{1}{2} \, \abso{\tr (A_1 A_2)}   
  \end{equation}
  where
  \begin{equation}
    \label{eq:2}
    A_1 := a_{t_1} w_p R, \quad
    A_2 := R^{-1} a_{t_2} w_{-p}, \quad \text{with} \quad
    R :=
    \begin{pmatrix}
      0 & 1 \\ -1 & 0
    \end{pmatrix}.
  \end{equation}
  In $\mathrm{SL}_2(\R)$, we have that
  \begin{equation}
    \label{eq:trace_prod}
    \tr (A_1A_2)
    = \tr(A_1) \, \tr(A_2) + \tr(- A_1 A_2^{-1}).
  \end{equation}
  We observe by a direct computation that for $i=1, 2$,
  $ \tr(A_i) = 2 \sinh \div{p} \sinh \div{t_i} > 0$, and the path described by
  $A_i$ is freely homotopic to the closed geodesic $\gamma_1$ for $i=1$
  and~$\gamma_2^j$ for $i=2$. Hence,
  \begin{equation}
    \label{eq:trace_one}
    \tr (A_1) = 2 \cosh \div{\ell_1}
    \quad \text{and} \quad
    \tr (A_2) = 2 \cosh \div{j \ell_2}.
  \end{equation}
  We use that $R^{-1}=-R$, $a_{t_2}^{-1} = a_{-t_2}$, $w_p^{-1}=w_{-p}$ and
  $R \, w_p a_{-t_2} \, R^{-1} = w_{-p} a_{t_2}$ to obtain
  \begin{equation}
    \label{eq:trace_inv}
    \tr(- A_1 A_2^{-1}) = \tr(a_{t_1} w_p  R \, w_p a_{-t_2} R^{-1}) =
    \tr(a_{t_1} a_{t_2})
    = 2 \cosh \div{h_{j-1}(\ell_3, \ell_2, \ell_1)}.
  \end{equation}
  The conclusion then follows from
  \cref{eq:trace_expr,eq:trace_prod,eq:trace_one,eq:trace_inv}.
\end{proof}

In the case of the pair of pants, \cref{lem:easy_cases_remainder} allowed us to
deal with a few elementary cases. We easily adapt it to this new setting, and
prove the following.

\begin{lem}
  \label{lem:easy_cases_remainder_j}
  For any $j \geq 2$, the integrals of the form \eqref{eq:integral_to_compute_FR_tor} are always
  elements of $\FRrem \subset \FR$, and so is the integral \eqref{eq:integral_to_compute_FR} as soon
  as $p_1=0$ or $p_3=0$. The estimate on the remainders can be made uniformly in $j$.
\end{lem}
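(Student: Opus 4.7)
The plan is to follow the same structure as the proof of \cref{lem:easy_cases_remainder}, replacing the figure-eight length formula \eqref{eq:formula_length_eight} by \eqref{lem:formula_length_it}. First, from the dominant-term analysis of the new length relation, I will extract the inequalities, valid for $j \geq 2$ and large $\ell$:
\begin{equation*}
  \ell_1 + j\,\ell_2 \leq \mathcal{L}_j(\ell_1,\ell_2,\ell_3) + \O{1}, \qquad
  \ell_3 + (j-1)\,\ell_2 \leq \mathcal{L}_j(\ell_1,\ell_2,\ell_3) + \O{1}.
\end{equation*}
These are strictly stronger than their figure-eight counterparts and are the reason the hypothesis $p_1=0$ alone suffices in the generalized statement (in contrast to $p_1=p_2=0$ as in \cref{lem:easy_cases_remainder}).

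Next, by implicit differentiation of the level-set constraint, I will compute the Jacobians
\begin{equation*}
  \frac{\partial \ell_1}{\partial \ell} = \frac{\sinh\div{\ell}\,\sinh\div{\ell_2}}{\sinh\div{\ell_1}\,\sinh\div{(j+1)\ell_2}}, \qquad \frac{\partial \ell_3}{\partial \ell} = \frac{\sinh\div{\ell}\,\sinh\div{\ell_2}}{\sinh\div{\ell_3}\,\sinh\div{j\ell_2}},
\end{equation*}
and an analogous expression for $\partial \ell_2/\partial \ell$ with a more elaborate denominator $D$. Using the identity
\begin{equation*}
  n\sinh A \cosh(nA) - \cosh A\sinh(nA) = \tfrac{n-1}{2}\sinh((n+1)A) - \tfrac{n+1}{2}\sinh((n-1)A)
\end{equation*}
for $n \in \{j, j+1\}$ with $A = \ell_2/2$, together with the length relation, I expect to show $D \gtrsim (j-1)\,e^{\ell_2/2}\,\sinh\div{\ell}$ for $j \geq 2$, so that $\partial \ell_2/\partial \ell = \O{1/(j-1)}$ is uniformly bounded.

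With these ingredients in hand, for each integral I will pick the variable of parametrization adapted to the vanishing hypothesis and bound the integrand by $c\,(\ell+1)^c \sinh\div{\ell}$, giving an element of $\FRrem$ after integration over the $\O{\ell^2}$-sized (or $\O{\ell}$-sized, for torus integrals) region. In case $p_3 = 0$: parametrize by $(\ell_1, \ell_3)$ with $\ell_2$ implicit, and combine the boundedness of $\partial \ell_2/\partial \ell$ with $e^{-\ell_3/2}\sinh\div{\ell_3} \leq 1/2$ and the estimate $\sinh\div{\ell_1}\sinh\div{\ell_2} \leq c\,e^{\ell/2}$ coming from the first length inequality. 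In case $p_1 = 0$: parametrize by $(\ell_1, \ell_2)$ with $\ell_3$ implicit; the quotient $\sinh\div{\ell_2}/\sinh\div{j\ell_2} \lesssim e^{-(j-1)\ell_2/2}$ absorbs the extra $\sinh\div{\ell_2}$ factor appearing in the Jacobian for $j \geq 2$. The three torus integrals \eqref{eq:integral_to_compute_FR_tor} are treated analogously: one picks the implicit variable so that the Jacobian produces a quotient $\sinh\div{\ell_i}/\sinh\div{k\ell_i}$ with $k \geq 2$, providing the exponential decay needed to compensate any growing factor (including the explicit polynomial $\ell_2$ or $\ell_3$ in the integrand).

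The main technical obstacle I anticipate is establishing the positive lower bound on $D$: unlike the figure-eight case where this estimate is not needed, here one must check that the competing terms in the denominator of $\partial \ell_2/\partial \ell$ do not cancel out, which is precisely where the Chebyshev-like identity above plays its role. Uniformity of the constants in $j$ is then immediate: the only $j$-dependent factor in the final bounds is $1/(j-1)$, which is $\leq 1$ for $j \geq 2$, and the worst case $j=2$ still yields enough decay through $\sinh\div{\ell_2}\,e^{-(j-1)\ell_2/2} \leq 1/2$.
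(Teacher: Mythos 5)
Your treatment of the case $p_1=0$ and of the torus integrals is sound and matches the paper's sketch: the key observation that $\sinh^2\div{\ell_2}/\sinh\div{j\ell_2}$ is bounded for $j\geq 2$ (while it is of size $\sinh\div{\ell_2}$ for $j=1$) is exactly the reason $p_1=0$ alone suffices here, in contrast with the figure-eight case. The inequalities $\ell_1+j\ell_2\leq\mathcal{L}_j$ and $\ell_3+(j-1)\ell_2\leq\mathcal{L}_j$ are also correct, and in fact hold without the $\O{1}$.

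The case $p_3=0$ has a genuine gap. You transplant the figure-eight template, take $\ell_2$ as the implicit variable, and claim that the denominator $D$ of $\partial\ell_2/\partial\ell$ is bounded below by a multiple of $(j-1)\,e^{\ell_2/2}\sinh\div{\ell}$, hence that $\partial\ell_2/\partial\ell=\O{1/(j-1)}$. This lower bound on $D$ is false and the Jacobian is \emph{not} bounded. Writing $D=\cosh\div{\ell_1}A+\cosh\div{\ell_3}B$ with
\begin{equation*}
A=(j+1)\cosh\div{(j+1)\ell_2}-\coth\div{\ell_2}\sinh\div{(j+1)\ell_2}
\end{equation*}
and similarly for $B$ with $j$ in place of $j+1$, both $A$ and $B$ vanish like $\ell_2^2$ as $\ell_2\to 0$ with $\ell_1,\ell_3$ fixed (e.g.\ $A\sim\tfrac{j(j+1)(j+2)}{3}(\ell_2/2)^2$), whereas $\sinh\div{\ell}$ tends to a positive limit because $\cosh\div{\ell}\to(j+1)\cosh\div{\ell_1}+j\cosh\div{\ell_3}>1$. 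So $D\to 0$ while your claimed lower bound does not, and $\partial\ell_2/\partial\ell$ blows up like $1/\ell_2$. What is true — and what saves the route you chose — is that the extra $\sinh\div{\ell_2}$ supplied by the measure cancels this divergence: the combination $\sinh^2\div{\ell_2}/A$ stays bounded uniformly in $\ell_2$ and $j\geq 1$, since $A\sim\tfrac{j(j+1)(j+2)}{3}(\ell_2/2)^2$ near $0$ and $A\sim\tfrac{j}{2}\,e^{(j+1)\ell_2/2}$ at infinity. Your write-up, by bounding $\sinh\div{\ell_1}\sinh\div{\ell_2}\leq c\,e^{\ell/2}$ and then separately bounding the Jacobian, discards this cancellation and therefore rests on a false intermediate statement.

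A cleaner route for $p_3=0$, avoiding the Chebyshev-type identity and any lower bound on $D$, is to take $\ell_1$ as the implicit variable. Then
\begin{equation*}
\frac{\partial\ell_1}{\partial\ell}
=\frac{\sinh\div{\ell}\sinh\div{\ell_2}}{\sinh\div{\ell_1}\sinh\div{(j+1)\ell_2}},
\qquad
\prod_{i=1}^3 f_i(\ell_i)\sinh\div{\ell_i}\,\frac{\partial\ell_1}{\partial\ell}
= f_1 f_2 f_3\,\sinh\div{\ell_3}\,\frac{\sinh^2\div{\ell_2}}{\sinh\div{(j+1)\ell_2}}\,\sinh\div{\ell},
\end{equation*}
and since $p_3=0$ makes $f_3\sinh\div{\ell_3}$ polynomially bounded while $\sinh^2\div{\ell_2}/\sinh\div{(j+1)\ell_2}\leq\tfrac12\tanh\div{\ell_2}\leq\tfrac12$ for every $j\geq 1$, one obtains the bound $\O{(\ell+1)^c\sinh\div{\ell}}$ at once, uniformly in $j$.
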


\begin{proof}
  We only sketch the proof for $p_1=0$, because this case is actually new
  compared to before (in the case of the eight, we needed to assume
  $p_1=p_2=0$). The difference is the fact that, now,
  \begin{equation*}
    \frac{\partial \ell_3}{\partial \ell}
    = 
    \frac{\sinh \div{\ell_2}}{\sinh \div{j \ell_2}}
    \frac{\sinh \div{\ell}}{\sinh \div{\ell_3}}
  \end{equation*}
  and hence the integral we need to bound is
  \begin{equation*}
    \sinh \div{\ell} \iint \left(  \prod_{i=1}^3 f_i(\ell_i) \right)
    \sinh \div{\ell_1} \frac{\sinh^2 \div{\ell_2}}{\sinh \div{j \ell_2}}
    \1{I(\ell_1,\ell_2)}(\ell) \d \ell_1 \d \ell_2
  \end{equation*}
  which, if $p_1=0$, is clearly an element of $\FRrem$, uniformly in $j$,
  because for any $j \geq 2$,
  \begin{equation*}
    \frac{\sinh^2 \div{\ell_2}}{\sinh \div{j \ell_2}}
    \leq \frac{\sinh^2 \div{\ell_2}}{\sinh(\ell_2)} = \O{1}.
  \end{equation*}
\end{proof}

\begin{figure}[h]
  \includegraphics[scale=0.75]{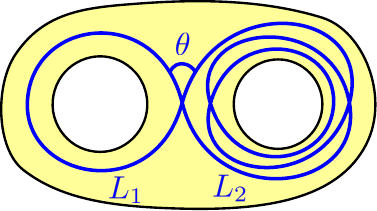}
  \caption{The coordinates for the one-sided iterated eight.}
  \label{fig:one_sided_it_coord}
\end{figure}

For the rest of the proof, we need to pick new variables, suited to the
integration on the new level-set
$\{(\ell_1,\ell_2,\ell_3) \in \R_{>0}^3 \, : \,
h_j(\ell_1,\ell_2,\ell_3)=\ell\}$.  We choose the natural adaptation of
the case of the figure-eight, illustrated in \cref{fig:one_sided_it_coord}. More
precisely, $L_1$ and $L_2$ denote the lengths of the two portions of $\curve_j$
going around $\gamma_1$ (once) and $\gamma_2$ ($j$ times) respectively, so that
$\ell_Y(\curve_j)=L_1+L_2$. We once again let $u := \cos^2 \div{\theta}$,
where~$\theta$ is the angle represented on \cref{fig:one_sided_it_coord}. We
then prove the following formula for the change of variables, generalising
\cref{lem:l123_calc} for $j \geq 1$.

\begin{lem}
  \label{lem:l123_calc_it}
  For any $j \geq 1$, any $\ell_1, \ell_2, \ell_3 > 0$,
  \begin{equation*}{}
    \begin{cases}
    &  \cosh \div{\ell_1} =  \sqrt{u} \, \cosh \div{L_1} \\
    &  \cosh \div{j \ell_2} =  \sqrt{u} \, \cosh \div{L_2} \\   
    & \cosh \div{\ell_3} = a_j(\ell_2) \brac*{(1 - b_j(\ell_2) \, u)\cosh
       \div{L_1+L_2} - b_j(\ell_2) \, u \cosh \div{L_1-L_2}} \\
    \end{cases}
  \end{equation*}
  where
  \begin{equation*}
    a_j(\ell_2) := \frac{\sinh \div{\ell_2}}{\sinh \div{j \ell_2}}
    \quad \text{and} \quad
    b_j(\ell_2) := \frac{\sinh \div{(j+1)\ell_2}}{2 \sinh \div{\ell_2} \cosh
      \div{j \ell_2}} \cdot
  \end{equation*}
\end{lem}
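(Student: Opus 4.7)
The plan is to adapt the geometric trirectangle argument from Lemma \ref{lem:l123_calc} (the $j=1$ case) to the general situation, combined with the length identity from Lemma \ref{lem:expression_before_change_var_j}. The key observation is that the portion of $\alpha_j$ of length $L_2$ is a closed loop based at the self-intersection which is freely homotopic to $\gamma_2^j$, rather than $\gamma_2$ itself.

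First I would prove the two formulas for $\ell_1$ and $j\ell_2$. Let $z$ denote the self-intersection point of $\alpha_j$, and consider the simple closed loop $\beta_1$ of length $L_1$ based at $z$ that wraps once around $\gamma_1$. Exactly as in the proof of Lemma \ref{lem:l123_calc}, I drop the common perpendicular from $\beta_1$ to the geodesic boundary component of length $\ell_1$, together with the perpendicular to this boundary passing through $z$; this cuts out a trirectangle with acute angle $\pi - \theta$, one side of length $L_1/2$, and an opposite side of length $\ell_1/2$. Buser's formula \cite[Theorem 2.3.1(iii)]{buser1992} then yields $\cosh(\ell_1/2) = \sin((\pi-\theta)/2) \cosh(L_1/2) = \sqrt{u}\cosh(L_1/2)$. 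For the second formula, the analogous loop of length $L_2$ based at $z$ is freely homotopic to $\gamma_2^j$, whose geodesic representative has length $j\ell_2$. Applying exactly the same trirectangle construction, but with the geodesic representing $\gamma_2^j$ in place of $\gamma_1$, gives $\cosh(j\ell_2/2) = \sqrt{u}\cosh(L_2/2)$.

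The formula for $\ell_3$ is then a direct algebraic consequence of Lemma \ref{lem:expression_before_change_var_j}. Since $\mathcal{L}_j(\ell_1,\ell_2,\ell_3) = L_1+L_2$, rewriting \eqref{lem:formula_length_it} as
\begin{equation*}
  \cosh\div{\ell_3} = \frac{\sinh\div{\ell_2}}{\sinh\div{j\ell_2}}\cosh\div{L_1+L_2} - \frac{\sinh\div{(j+1)\ell_2}}{\sinh\div{j\ell_2}}\cosh\div{\ell_1}
\end{equation*}
and substituting $\cosh(\ell_1/2) = \sqrt{u}\cosh(L_1/2)$, I use $\sqrt{u} = \cosh(j\ell_2/2)/\cosh(L_2/2)$ to write
\begin{equation*}
  \sqrt{u}\cosh\div{L_1} = \frac{u\cosh\div{L_1}\cosh\div{L_2}}{\cosh\div{j\ell_2}} = \frac{u}{2\cosh\div{j\ell_2}}\bigl[\cosh\div{L_1+L_2} + \cosh\div{L_1-L_2}\bigr],
\end{equation*}
applying the standard product-to-sum identity. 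Regrouping the factors produces exactly $a_j(\ell_2) b_j(\ell_2) u$ in front of the two hyperbolic cosines, and the announced formula follows.

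There is no real obstacle in this lemma: the geometric content is a direct transposition of the figure-eight case, and the algebra is a single application of the product-to-sum identity. The only point deserving care is the identification of the loop of length $L_2$ with the free-homotopy class of $\gamma_2^j$, so that the trirectangle used in Buser's formula has its relevant side of length $j\ell_2/2$ rather than $\ell_2/2$; once this is understood, the formulas follow without any further calculation.
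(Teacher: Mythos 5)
Your proof is correct and fills in exactly the argument the paper leaves implicit (no explicit proof of this lemma is given in the text; the remark ``generalising Lemma~\ref{lem:l123_calc}'' is all that is said). The decomposition is the natural one: the first two identities come from the trirectangle construction of Lemma~\ref{lem:l123_calc}, with the single observation that the $L_2$-arc is freely homotopic to $\gamma_2^j$ (geodesic length $j\ell_2$) rather than to $\gamma_2$; the third identity is pure algebra, rearranging \eqref{lem:formula_length_it} with $\mathcal{L}_j=L_1+L_2$, substituting $\cosh\div{\ell_1}=\sqrt{u}\cosh\div{L_1}$, converting one factor of $\sqrt{u}$ via $\sqrt{u}=\cosh\div{j\ell_2}/\cosh\div{L_2}$, and using the product-to-sum identity. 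I checked the coefficient bookkeeping: the factor $\frac{\sinh\div{(j+1)\ell_2}}{\sinh\div{j\ell_2}}\cdot\frac{u}{2\cosh\div{j\ell_2}}$ is indeed $a_j(\ell_2)b_j(\ell_2)u$, so the announced form drops out.

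One small point worth making explicit, though it does not affect correctness: for $j\geq 2$ the $L_2$-arc may self-intersect inside $P$, so the trirectangle is not literally embedded there. The cleanest justification is to lift to the cyclic cover $\IH/\langle\gamma_2^j\rangle$ (a hyperbolic cylinder of core length $j\ell_2$), in which the arc becomes a simple geodesic loop with the same corner angle $\pi-\theta$, and the trirectangle is genuinely embedded; alternatively, one can note that Buser's formula is equivalent to a trace computation in $\mathrm{SL}_2(\R)$ (the style of argument the paper uses for Lemma~\ref{lem:expression_before_change_var_j}), which is insensitive to embeddedness. Either remark would make the sentence ``applying exactly the same trirectangle construction'' airtight.
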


We then proceed to compute the expression of the Jacobian of the change of
variables.

\begin{lem}
  For any $j \geq 1$, the change of variable
  $(\ell_1, \ell_2, \ell_3) \rightarrow (L_1, L_2, u)$ is a diffeomorphism from
  $\R_{>0}^3$ to an open set $\mathfrak{D}_j$ of $\R_{>0}^2 \times (0,1)$, and
  its Jacobian can be written as
  \begin{align*}
    \prod_{i=1}^3 \sinh \div{\ell_i} \d \ell_i 
     = - \sinh^2 \div{L_1+L_2} \frac{\sinh^2 \div{\ell_2}}{j \sinh^2 \div{j \ell_2}} \d L_1 \d L_2 \d u.
  \end{align*}
\end{lem}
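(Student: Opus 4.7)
The plan is to follow essentially verbatim the strategy of Lemma~\ref{lem:ch_var}: identify the structural dependencies of $(\ell_1,\ell_2,\ell_3)$ on $(L_1,L_2,u)$, compute the Jacobian by implicit differentiation, and simplify the resulting $3\times 3$ determinant via one column operation. First, Lemma~\ref{lem:l123_calc_it} exhibits the change of variables as a composition of three one-variable diffeomorphisms: $\ell_1$ is recovered from $(L_1,u)$ through $\cosh\div{\ell_1} = \sqrt{u}\cosh\div{L_1}$, then $\ell_2$ from $(L_2,u)$ through $\cosh\div{j\ell_2} = \sqrt{u}\cosh\div{L_2}$ (using that $\cosh:(0,\infty)\to(1,\infty)$ is a diffeomorphism), and finally $\ell_3$ from $(L_1,L_2,u,\ell_2)$ through the third equation of Lemma~\ref{lem:l123_calc_it}. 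The domain $\mathcal{E}_j$ is exactly the open set where each of the three right-hand sides is larger than $1$, and the inverse map is built by reversing each step.

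The key structural feature is that $\partial\ell_1/\partial L_2 = \partial\ell_2/\partial L_1 = 0$, so the Jacobian matrix has the same block shape as in the figure-eight case. Implicit differentiation of the first two identities in Lemma~\ref{lem:l123_calc_it} immediately gives
\begin{equation*}
  \sinh\div{\ell_1}\,\partial_{L_1}\ell_1 = \sqrt{u}\sinh\div{L_1},\qquad
  j\sinh\div{j\ell_2}\,\partial_{L_2}\ell_2 = \sqrt{u}\sinh\div{L_2},
\end{equation*}
and analogous formulas for the $u$-derivatives. The derivatives of $\ell_3$ are then extracted by differentiating the equivalent form
\begin{equation*}
  \sinh\div{\ell_2}\,\cosh\div{L_1+L_2} = \cosh\div{\ell_1}\sinh\div{(j+1)\ell_2} + \cosh\div{\ell_3}\sinh\div{j\ell_2}
\end{equation*}
with respect to each of $L_1, L_2, u$, using the already-computed partials of $\ell_1, \ell_2$ and the derivation rule $\partial\ell_2 = (\partial\ell_2)$ only along the $L_2$ and $u$ directions.

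Next, I expand the determinant $\det\partial(\ell_1,\ell_2,\ell_3)/\partial(L_1,L_2,u)$ exactly as in the proof of Lemma~\ref{lem:ch_var}. The zeros in the first two rows make the expansion clean, and the same column operation works: adding suitable multiples of the first two columns to the third column cancels the ``negative'' $\cosh\div{L_1-L_2}$ and $\cosh\div{\ell_1}\sinh\div{(j+1)\ell_2}$ pieces and leaves $\sinh\div{L_1+L_2}$ as a common factor in the third column. After factoring it out and collapsing the resulting $3\times 3$ determinant, one obtains (up to sign)
\begin{equation*}
  J = -\,\frac{\sinh\div{L_1+L_2}\,\sinh\div{\ell_2}}{j\sinh\div{j\ell_2}\,\prod_{i=1}^{3}\sinh\div{\ell_i}} \cdot \Bigl(\text{remaining $2\times 2$ determinant}\Bigr).
\end{equation*}
The $2\times 2$ determinant simplifies, again using a column operation and the length identity, to a single factor $\sinh\div{L_1+L_2}\cdot\sinh\div{\ell_2}/\sinh\div{j\ell_2}$, which produces the announced Jacobian.

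The main obstacle is purely computational: keeping track of the trigonometric identities under $j$-dependent indexing. The useful identities are $2\sinh\div{a}\cosh\div{b} = \sinh\div{a+b} + \sinh\div{a-b}$ and the addition formula $\sinh\div{(j+1)\ell_2} = \sinh\div{j\ell_2}\cosh\div{\ell_2} + \cosh\div{j\ell_2}\sinh\div{\ell_2}$, which are precisely what is needed to reduce the cross-derivative terms to multiples of $\sinh\div{L_1+L_2}$ and $\sinh\div{\ell_2}/\sinh\div{j\ell_2}$. As a consistency check, setting $j=1$ makes the factor $\sinh^2\div{\ell_2}/(j\sinh^2\div{j\ell_2})$ equal to $1$ and the above reduces to Lemma~\ref{lem:ch_var}; the extra factor for general $j$ is structurally attributable to the implicit differentiation of $\cosh\div{j\ell_2}$ (rather than $\cosh\div{\ell_2}$) with respect to $L_2$ and $u$, namely the factor $j$ in the chain rule and the ratio of $\sinh\div{\ell_2}$ to $\sinh\div{j\ell_2}$ appearing whenever we solve for $\partial\ell_2$ in terms of $\partial L_2, \partial u$.
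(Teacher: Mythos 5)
Your proposal is correct and follows essentially the same approach as the paper: differentiate the rearranged relation $\cosh\div{\ell_3}\sinh\div{j\ell_2}=\sinh\div{\ell_2}\cosh\div{L_1+L_2}-\cosh\div{\ell_1}\sinh\div{(j+1)\ell_2}$ implicitly, then use column operations to eliminate the chain-rule cross-terms involving $\partial_\alpha\ell_1$ and $\partial_\alpha\ell_2$, reducing to the same $3\times 3$ determinant as in the figure-eight case up to the $j$-dependent prefactor. One small imprecision: the $\cosh\div{L_1-L_2}$ term you cite as needing cancellation appears only in the third formula of Lemma~\ref{lem:l123_calc_it}, not in the form you actually differentiate, where the sole direct $(L_1,L_2)$-dependence is through $\cosh\div{L_1+L_2}$ and the cross-terms are by construction multiples of the first two columns (this is the chain-rule observation the paper makes explicit to keep the column operation transparent).
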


\begin{proof}
  In order to compute the Jacobian, we shall use the expression of $\ell_3$
  derived in \cref{lem:easy_cases_remainder_j}:
  \begin{align*}
    \cosh \div{\ell_3}
    & = \frac{\sinh \div{\ell_2} \cosh \div{L_1+L_2}
      - \cosh \div{\ell_1} \sinh\div{(j+1)\ell_2}}{\sinh \div{j \ell_2}} \\
    & = \mathrm{expr}(L_1,L_2,\ell_1(L_1,L_2,u),\ell_2(L_1,L_2,u)).
  \end{align*}
  We observe that, by the chain rule, we can write the derivative of the
  previous equation according to $L_1$ as:
  \begin{equation*}
    \frac{1}{2} \sinh \div{\ell_3} \frac{\partial \ell_3}{\partial L_1}
    = \frac{\sinh \div{\ell_2}}{2 \sinh \div{j \ell_2}} \sinh \div{L_1+L_2}
    + \frac{\partial \ell_1}{\partial L_1} \partial_3 \mathrm{expr}
    + \frac{\partial \ell_2}{\partial L_1} \partial_4 \mathrm{expr}
  \end{equation*}
  and similar expressions are valid for the derivation with respect to $L_2$ and $u$. The advantage
  of these expressions is that it makes it clear that, when we compute the Jacobian, we can remove
  from the column of the variable $\ell_3$ the crossed terms in which the derivatives
  $\partial_3 \mathrm{expr}$ and $\partial_4 \mathrm{expr}$ appear, by subtracting multiples of the
  columns from the variables $\ell_1$ and $\ell_2$.  We can therefore reduce ourselves to computing
  the determinant
  \begin{equation*}
    \frac{\sinh \div{\ell_2}}{\sinh \div{\ell_1} j \sinh^2 \div{j \ell_2}\sinh \div{\ell_3}}
    \begin{vmatrix}
      \sqrt{u}\sinh \div{L_1} &
      0 &
      \sinh \div{L_1+L_2} \\
      0 &
      \sqrt{u}\sinh \div{L_2} &
      \sinh \div{L_1+L_2}  \\
      \frac{1}{\sqrt{u}} \cosh \div{L_1} &
      \frac{1}{\sqrt{u}} \cosh \div{L_2} &
      0
    \end{vmatrix}
  \end{equation*}
  which yields the claimed expression.
\end{proof}

We will then be able to conclude using the following.

\begin{prp}
  \label{prp:final_lem_it}
  Let $(f_i)_{1 \leq i \leq 3}$ be functions satisfying
  \eqref{eq:expansion_assumption_fi}. For any $j \geq 2$, the function
  \begin{equation}
    \label{eq:final_lem_it}
    \mathrm{Int}_j[f_1,f_2,f_3] : 
    \ell \mapsto \iint_{L_1+L_2=\ell}
    \left( \prod_{i=1}^3 f_i(\ell_i) \right)
    \frac{\sinh^2 \div{\ell_2}}{j \sinh^2 \div{j \ell_2}}
    \1{\mathfrak{D}_j}(L_1,L_2,u) \frac{\d L_1 \d L_2 \d u}{\d \ell}
  \end{equation}
  satisfies an estimate of the form
  \begin{equation*}
    \mathrm{Int}_j[f_1,f_2,f_3](\ell)
    = P_j[f_1,f_2,f_3](\ell) + \O{(\ell+1)^ce^{-\ell/2}}
  \end{equation*}
  uniformly in $j$, 
  where $P_j[f_1,f_2,f_3]$ is a polynomial function of degree and coefficients bounded uniformly in
  $j$. 
\end{prp}

The proof of \cref{prp:final_lem_it} is a straightforward adaptation of the
proof in the case of the figure-eight, thanks to the similarity of the formulas
for the changes of variables. Actually, it is slightly more elementary thanks to
the additional decay in $\ell_2$ in \cref{eq:final_lem_it}, which removes the
polynomial behaviour of $\ell_2$ and hence allows to consider the dependency in
$\ell_2$ as a ``remainder term'' directly.

%%%Local Variables: 
%%% mode: latex
%%% TeX-master: "main"
%%% End: 

\section{The second-order term is not a Friedman--Ramanujan function}
\label{sec:second-order-term}

In this section, we shall prove the following result, the hyperbolic surface
counterpart of Theorem 2.12 from \cite{friedman2003} for random regular graphs.

\begin{thm}
  \label{prp:non_FR}
  The function $\ell \mapsto \ell f_1^{\mathrm{all}}(\ell)$ is not a Friedman--Ramanujan function in
  the weak sense.
\end{thm}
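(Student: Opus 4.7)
The plan is to argue by contradiction, exploiting the two quantitative results already set up in the excerpt: the Friedman--Ramanujan hypothesis feeds into the conditional spectral bound of Lemma~\ref{thm:FR_contradiction_intro} (giving an \emph{upper} bound on the probability of a middle band of small eigenvalues), while Theorem~\ref{lem:prob_small_eig_intro} provides a \emph{lower} bound on the probability of that same band. The exponents $1/g^{5/4}$ and $1/g$ do not match, so for large enough $g$ something must give, and what gives is the Friedman--Ramanujan assumption.

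More concretely, suppose for contradiction that $\ell \mapsto \ell f_1^{\mathrm{all}}(\ell)$ is Friedman--Ramanujan in the weak sense. First I would apply Lemma~\ref{thm:FR_contradiction_intro}: for every small $\delta > 0$ and every large enough $g$,
\begin{equation*}
  \Pwp{\delta < \lambda_1 < \tfrac{5}{72}}
  \;=\; \O[\delta]{g^{-5/4}}.
\end{equation*}
Next I would use Theorem~\ref{lem:prob_small_eig_intro} to produce a matching lower bound of a different order in $g$. Fix any $a \in (0, 5/72)$; by the theorem there exist constants $c_1, c_2 > 0$ such that, for all sufficiently small $\delta > 0$ and all sufficiently large $g$,
\begin{equation*}
  \Pwp{\lambda_1 \leq a} \;\geq\; c_1 \frac{a^2}{g},
  \qquad
  \Pwp{\lambda_1 \leq \delta} \;\leq\; c_2 \frac{\delta}{g}.
\end{equation*}
Combining these via the set-theoretic inclusion yields
\begin{equation*}
  \Pwp{\delta < \lambda_1 < \tfrac{5}{72}}
  \;\geq\; \Pwp{\lambda_1 \leq a} - \Pwp{\lambda_1 \leq \delta}
  \;\geq\; \frac{c_1 a^2 - c_2 \delta}{g}.
\end{equation*}
Choosing $\delta := c_1 a^2 / (2 c_2)$ (small once $a$ is small) turns this into a bona fide lower bound $\geq c_1 a^2 /(2g)$.

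The contradiction is then immediate: comparing the upper and lower bounds would force
\begin{equation*}
  \frac{c_1 a^2}{2 g} \;\leq\; \frac{C(\delta)}{g^{5/4}}
  \quad \text{for all large } g,
\end{equation*}
but the right-hand side decays strictly faster than the left-hand side, which is impossible. Hence $\ell \mapsto \ell f_1^{\mathrm{all}}(\ell)$ cannot be Friedman--Ramanujan in the weak sense.

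The only delicate conceptual point is that the two auxiliary inputs are doing essentially all of the work: Lemma~\ref{thm:FR_contradiction_intro} is the genuinely hard quantitative step (it unpacks how the Friedman--Ramanujan structure forces cancellations in the trace method with a length-scale tuned to $5/72$), and Theorem~\ref{lem:prob_small_eig_intro} relies on a tangle-type lower bound built from near-degenerate once-holed tori. Since both are proven elsewhere in the paper, the contribution of the present argument is simply the observation that the polynomial rates $1/g$ and $1/g^{5/4}$ are incompatible. Thus there is no real obstacle at this stage beyond bookkeeping of the constants and the choice of $\delta$ as a function of $a$; the heavy lifting lies in the two lemmas being invoked.
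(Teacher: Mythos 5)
Your proof is correct and essentially identical to the paper's: same contradiction structure, same two inputs (Lemma~\ref{lem:FR_contradiction} and Theorem~\ref{lem:prob_small_eig}), same choice of $\delta$ proportional to $a^2$, and the same rate comparison $1/g$ versus $1/g^{5/4}$. The only cosmetic difference is that the paper explicitly sets $a := \min(a_0, 5/72)$ to make the applicability of Theorem~\ref{lem:prob_small_eig} visible, whereas you fold this into ``sufficiently small $a$''.
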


We shall prove this by contradiction, grouping the following two observations.
\begin{itemize}
\item On the one hand, in \cref{sec:proof-first_contradiction}, we prove that
  $\Pwp{\lambda_1 \leq a}$ is not very small as $g \rightarrow + \infty$ (for a fixed $a$). Indeed,
  we know that this probability goes to $0$ as soon as $a < \frac{3}{16}$ by
  \cite{wu2022,lipnowski2021}, but we show that it does so at the speed $1/g$ as
  $g \rightarrow + \infty$ only, and no faster.
\item On the other hand, in \cref{sec:proof-crefprp:sm}, we show that, if
  $\ell \mapsto \ell f_1^{\mathrm{all}}(\ell)$ was Friedman--Ramanujan in the weak sense, then we would be able
  to prove that $\Pwp{\lambda_1 \leq a}$ goes to $0$ at a rate $1/g^{1+\delta}$ for a $\delta>0$.
\end{itemize}

\Cref{prp:non_FR} could appear to be a contradiction, because we have shown that
$f_1^{\mathrm{all}}$ is the sum of $f_1^\type$ for all local types $\type$ that are simple or filling a
surface of Euler characteristic~$-1$, and $\ell \mapsto \ell f_1^\type(\ell)$ is Friedman--Ramanujan in
the weak sense for all those types by \cref{prop:simple} and \cref{thm:all_chi_-1}. However, the
class $\FRw$ is not stable by countable summation, so there is no contradiction.

The reader is invited to read this section thinking of the fact that the issue causing that
$\ell f_1^{\mathrm{all}}(\ell) \notin \FRw$ is the possible existence of \emph{tangles}, as
introduced in \cite{monk2021a,lipnowski2021} for hyperbolic surfaces, and
\cite{friedman2003,bordenave2020} for random regular graphs. We shall see that tangles are small
pairs of pants or once-holed tori, which contain ``too many'' closed geodesics, and cause the first
non-zero eigenvalue $\lambda_1$ to be small.

\subsection{Estimate of the probability of having a small eigenvalue}
\label{sec:proof-first_contradiction}

Let us prove the following result on the speed of convergence of
$\Pwp{\lambda_1 \leq a}$ as $g \rightarrow + \infty$.

\begin{thm}
  \label{lem:prob_small_eig}
  There exists $a_0, c_1, c_2 >0$ such that, for any $a \leq a_0$, any large enough~$g$,
  \begin{equation}
    \label{eq:prob_small_eig}
    c_1 \frac{a^2}{g} \leq \Pwp{\lambda_1 \leq a} \leq c_2 \, \frac{a}{g} \cdot
  \end{equation}
\end{thm}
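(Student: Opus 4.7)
The plan is to identify explicit geometric events that either force, or are forced by, $\{\lambda_1 \leq a\}$, and to estimate their probabilities via Mirzakhani's integration formula applied to simple separating closed geodesics that cut off a once-holed torus. The key intermediate quantity is
\begin{equation*}
  N_a(X) := \#\{\gamma \in \mathcal{P}(X) \, : \, \gamma \text{ simple}, \ \ell_X(\gamma) \leq a, \ \text{one side of } \gamma \text{ is a once-holed torus}\}.
\end{equation*}

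For the lower bound, I would first observe that if $X$ contains an embedded once-holed torus $T$ with $\ell_X(\partial T) \leq \ell$, then a standard min-max argument yields $\lambda_1(X) \leq C\ell$ for a universal $C>0$: use a test function equal to distinct constants on $T$ and on $X \setminus T$ (chosen so that the mean vanishes), smoothly interpolated across the (at least logarithmically wide) collar of $\partial T$, and compute the Rayleigh quotient. It thus suffices to show $\Pwp{N_{a/C} \geq 1} \geq c_1 a^2/g$. Mirzakhani's integration formula (Theorem \ref{thm:int_mirz}) gives
\begin{equation*}
  \Ewpo \brac*{N_a} = \frac{1}{V_g} \int_0^a \ell \, V_{1,1}(\ell) \, V_{g-1,1}(\ell) \, \d \ell,
\end{equation*}
and combining the fact that $V_{1,1}(\ell)$ is bounded below by a positive constant for $\ell$ small, the estimate \eqref{e:Vgn_first_order} applied to $V_{g-1,1}(\ell)/V_{g-1,1}$, and the Mirzakhani--Zograf asymptotics \eqref{eq:asymp_dev_ratio_same_euler}--\eqref{eq:asymp_dev_ratio_add_cusp} (which together imply $V_{g-1,1}/V_g \sim c/g$), one obtains $\Ewpo\brac*{N_a} \geq c' a^2/g$ for small $a$ and large $g$. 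A second-moment computation, applying Mirzakhani once more to pairs of disjoint torus-bounding simple curves, gives $\Ewpo\brac*{N_a^2} = \Ewpo\brac*{N_a} + \O{a^4/g^2} = \O{a^2/g}$, so the Paley--Zygmund inequality yields $\Pwp{N_a \geq 1} \geq \Ewpo\brac*{N_a}^2/\Ewpo\brac*{N_a^2} \geq c_1 a^2/g$ as required.

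For the upper bound, I would proceed in two steps. First, a Cheeger-type argument specific to hyperbolic surfaces: if $\lambda_1(X) \leq a$, Cheeger's inequality produces a subset $A \subset X$ with $\ell_X(\partial A) \leq 2 \sqrt{a} \min(|A|, |X\setminus A|)$, and using the thick-thin decomposition together with the collar lemma one refines this to extract an embedded subsurface $Y \subset X$ with geodesic boundary, whose Euler characteristic $|\chi_Y|$ is bounded by a universal constant and whose boundary length satisfies $\ell_X(\partial Y) \leq C\sqrt{a}$. Second, Markov's inequality combined with Mirzakhani's integration formula bounds the probability of the existence of such a $Y$: summing over the finitely many topological types with $|\chi_Y|$ bounded, the dominant once-holed-torus contribution gives
\begin{equation*}
  \Pwp{\lambda_1 \leq a} \leq c' \, \Ewpo \brac*{N_{C\sqrt{a}}} \leq c_2 \frac{(C\sqrt{a})^2}{g} = \frac{c_2' a}{g},
\end{equation*}
by the same first-moment computation as in the lower bound with $a$ replaced by $C\sqrt{a}$.

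The main obstacle is the Cheeger-refinement step in the upper bound: a naive application of Cheeger's inequality only yields $\ell_X(\partial A) \leq \O{g \sqrt{a}}$, because $\min(|A|, |X\setminus A|)$ may be as large as order $g$. One must argue that the optimal Cheeger set can be replaced by an embedded subsurface of bounded Euler characteristic independent of $g$; concretely, by a careful analysis of the thick part of $X$, the short separating multi-curve produced by Cheeger must actually bound a subsurface of bounded topology, otherwise $\lambda_1(X)$ could not be as small as $a$. This is a classical circle of ideas (related to Bers' constants, Buser's inequality, and the Margulis lemma), but its quantitative form in the Weil--Petersson random setting needs to be checked with care.
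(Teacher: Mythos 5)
Your lower bound is essentially the paper's argument: an embedded once-holed torus with short boundary forces $\lambda_1$ small via a two-function min-max test, and the first moment of the count $N_a$ of torus-bounding simple separating geodesics of length $\leq a$ is computed by Mirzakhani's integration formula to be of order $a^2/g$. The paper converts this into a lower bound on $\Pwp{N_a\geq1}$ not by Paley--Zygmund but via the telescoping identity $\Ewpo\brac*{N_a}=\sum_{k\geq1}\Pwp{N_a\geq k}$ together with the estimate $\sum_{k\geq2}\Pwp{N_a\geq k}=\O{a^4/g^2}$ (obtained by applying Mirzakhani's formula to $k$-tuples of disjoint tori). Your second-moment variant is also valid, since $\Ewpo\brac*{N_a^2}=\Ewpo\brac*{N_a}+2\,\Ewpo\brac*{\binom{N_a}{2}}$ and the latter is again $\O{a^4/g^2}$, so the two routes are interchangeable.

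The upper bound is where your proposal has a genuine gap, one you correctly flag yourself. Your plan requires proving that $\lambda_1(X)\leq a$ forces an embedded subsurface $Y\subset X$ of \emph{bounded} Euler characteristic (independent of $g$) with geodesic boundary of length $\O{\sqrt a}$, so that Markov can be applied to finitely many topological types. This ``bounded-complexity refinement'' is not a consequence of Cheeger's inequality plus thick-thin: the Cheeger set may separate $X$ into two pieces each of area of order $g$, so $\ell(\partial A)\leq h(X)\min(|A|,|X\setminus A|)$ does not by itself give a short boundary, and nothing prevents the separating curve system from having unbounded topology. The paper does not prove such a refinement and does not need it. Instead it works with Mirzakhani's \emph{modified Cheeger constant} $H(X)$, where the infimum runs over partitions along simple geodesic multi-curves, and uses Mirzakhani's comparison $\tfrac{H(X)}{1+H(X)}\leq h(X)\leq H(X)$ to reduce the problem to bounding $\Pwp{H(X)\leq b}$. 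That probability is then controlled by applying Markov's inequality and Mirzakhani's integration formula \emph{summing over all topological types of separating multi-curves} (all component counts $k$ and all genus splits $g_1+g_2+k=g+1$), with the Weil--Petersson volume estimates guaranteeing the sum converges and is dominated by small $k$; the result is $\Pwp{H(X)\leq b}\leq cb^2/g$, hence $\Pwp{\lambda_1\leq a}\leq c_2a/g$. In short, the paper pays for every topology rather than restricting to bounded ones, which is precisely the missing ingredient in your sketch.
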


The proof of the upper and lower bounds are different, and will be treated separately.

\subsubsection{The lower bound}

In order to prove the lower bound, we shall prove that, if a surface contains a
once-holed torus with a short boundary, then it has a small eigenvalue.

\begin{lem}
  \label{lem:minmax}
  Let $X$ be a compact hyperbolic surface of genus $g \geq 2$. We assume
  that~$X$ contains an embedded once-holed torus $Y$ with geodesic boundary, and
  that $\ell_X(\partial Y) \leq 1$. Then, $\lambda_1(X) \leq \ell_X(\partial Y)$.
\end{lem}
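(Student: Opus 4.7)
The strategy is a standard application of the min--max principle for $\lambda_1$, combined with the collar lemma to control the gradient energy across the thin tube around $\partial \tau$. Concretely, $\lambda_1(X)$ is the infimum of the Rayleigh quotient $\int_X |\nabla f|^2/\int_X f^2$ over non-zero functions $f$ orthogonal to constants; it suffices to exhibit one such test function whose Rayleigh quotient is at most $\ell_X(\partial \tau)$.

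Write $\ell := \ell_X(\partial \tau)$ and let $A_1 := \operatorname{Area}(\tau) = 2\pi$ and $A_2 := \operatorname{Area}(X \setminus \tau) = 4\pi(g-1)-2\pi$ (using Gauss--Bonnet). By the collar lemma, the standard collar $\mathcal{C}$ around $\partial \tau$ is isometric to $[-w,w] \times \R/\ell\Z$ with metric $d\rho^2 + \cosh^2(\rho)\, d\theta^2$, where $w = w(\ell) := \operatorname{argsinh}(1/\sinh(\ell/2))$; note $w(\ell) \to +\infty$ as $\ell \to 0$. I would build a test function $f$ of the form
\begin{equation*}
  f(x) =
  \begin{cases}
    c_1 & \text{if } x \in \tau \setminus \mathcal{C}, \\
    c_1 \phi(\rho) + c_2 \bigl(1-\phi(\rho)\bigr) & \text{if } x = (\rho,\theta) \in \mathcal{C}, \\
    c_2 & \text{if } x \in (X \setminus \tau) \setminus \mathcal{C},
  \end{cases}
\end{equation*}
where $\phi : [-w,w] \to [0,1]$ is smooth with $\phi(-w)=1$, $\phi(w)=0$. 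The constants $c_1,c_2$ are then chosen so that $\int_X f \, d\operatorname{vol} = 0$; taking $c_1 := A_2$, $c_2 := -A_1$ achieves this up to a collar correction that I will absorb, and gives $\int_X f^2 \geq A_1 A_2 (A_1 + A_2) \bigl(1+o(1)\bigr)$ as $\ell \to 0$.

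To minimise the gradient energy I would choose $\phi$ as the optimiser of the quadratic form on the collar: an Euler--Lagrange computation forces $\phi'(\rho) \propto 1/\cosh(\rho)$, giving
\begin{equation*}
  \int_{\mathcal{C}} |\nabla \phi|^2 \, d\operatorname{vol}
  = \ell \int_{-w}^{w} \phi'(\rho)^2 \cosh(\rho)\, d\rho
  = \frac{\ell}{2 \arctan(\sinh w)}.
\end{equation*}
Since $\sinh w = 1/\sinh(\ell/2) \to +\infty$ as $\ell \to 0$, we have $2\arctan(\sinh w) \to \pi$, and hence $\int_X |\nabla f|^2 = (c_1-c_2)^2 \ell/\pi + o(\ell)$.

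Putting these estimates together, the Rayleigh quotient of $f$ is
\begin{equation*}
  \frac{\int_X |\nabla f|^2}{\int_X f^2}
  \leq \frac{(A_1+A_2)^2 \ell/\pi}{A_1 A_2 (A_1+A_2)} \bigl(1+o(1)\bigr)
  = \frac{\operatorname{Area}(X)}{\pi A_1 A_2}\, \ell \,\bigl(1+o(1)\bigr)
  = \frac{4\pi(g-1)}{\pi \cdot 2\pi \cdot 2\pi(2g-3)}\, \ell \,\bigl(1+o(1)\bigr),
\end{equation*}
which tends to $\ell/(2\pi^2)$ as $g \to +\infty$; in particular it is at most $\ell$ once $\ell$ is small enough, say $\ell \leq 1$. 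The only routine points to clean up are (i) the small error from the collar in making $f \perp 1$, which one absorbs by a tiny adjustment of $c_1$ or by shifting $f$ by a constant of size $O(\ell)$, and (ii) handling the trivial case where $\ell$ is not very small by observing that the bound $\lambda_1 \leq \ell$ is vacuous or follows from Buser's inequality $\lambda_1 \leq 2h + 10 h^2$ applied to the Cheeger ratio $h \leq \ell/A_1 = \ell/(2\pi)$. The only genuine content is the collar lemma computation, which is standard; no significant obstacle is anticipated.
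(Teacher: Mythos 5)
Your approach (min--max plus collar lemma) is the same in spirit as the paper's, but the paper implements it in a way that removes the loose ends you label ``routine''. Rather than a single mean-zero test function, the paper invokes the two-function form of the min--max principle (cited from Buser, Theorem~8.2.1): if $\phi_1,\phi_2\in H^1(X)$ are $L^2$-orthogonal, then $\lambda_1$ is bounded by the larger of their two Rayleigh quotients. It then takes $\phi_1=\min(1,\dist(\cdot,X\setminus\tor))$, supported in $\tor$, and $\phi_2=\min(1,\dist(\cdot,\tor))$, supported in $X\setminus\tor$; disjoint supports give orthogonality for free, so there is no correction to $f\perp 1$ to absorb. Each $\phi_i$ is Lipschitz with $\|\grad\phi_i\|\le 1$ supported in the width-$1$ half-collar around $\partial\tor$ (which exists by the collar lemma once $\ell\le 1$), giving $\int\|\grad\phi_i\|^2\le\sinh(1)\,\ell$ and $\int\phi_i^2\ge 2\pi-\sinh(1)\ell\ge\pi$. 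Hence $\lambda_1\le\sinh(1)\ell/\pi\le\ell$, with no asymptotics and no optimisation of the transition profile.

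Two points in your version deserve more care than you give them. First, the ``tiny adjustment'' needed to enforce $f\perp 1$, which you describe as a shift of size $O(\ell)$, is not of that order in general: the collar area is $2\ell/\sinh(\ell/2)\approx 4$, an $O(1)$ quantity, so the naive correction is also $O(1)$. As it happens, with $c_1=A_2$, $c_2=-A_1$ and your antisymmetric optimal profile, the mean of $f$ is \emph{exactly} zero, because the collar straddles $\partial\tor$ symmetrically --- but that is a fact to notice and verify, not a negligible error to wave away. Second, all your estimates are stated as $(1+o(1))$ as $\ell\to 0$, whereas the lemma must hold for every $\ell\le 1$ and every $g\ge 2$; each $o(1)$ needs to be replaced by an explicit numerical bound checked at $\ell=1$, $g=2$. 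The arithmetic does close, but it is not free, and the Euler--Lagrange optimisation is unnecessary effort. Finally, your fallback via Buser's inequality $\lambda_1\le 2h+10h^2$ combined with $h\le\ell/(2\pi)$ already gives $\lambda_1\le\ell/\pi+10\ell^2/(4\pi^2)<\ell$ uniformly for $\ell\le 1$, $g\ge 2$, so it alone proves the lemma and is arguably the cleanest route of all.
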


\begin{proof}
  The min-max principle allows to bound eigenvalues in terms of well-chosen
  Rayleigh quotients. A classic application, that can be found in \cite[Theorem
  8.2.1]{buser1992} for instance, states that, if $\phi_1, \phi_2 \in H^1(X)$
  are $L^2$-orthogonal, then
  \begin{equation}
    \label{e:minmax}
    \lambda_1(X) \leq \max \left\{ \frac{\int_X \| \grad \phi_1 \|^2 \d \mu}{\int_X |\phi_1|^2 \d \mu},
      \frac{\int_X \| \grad \phi_2 \|^2 \d \mu}{\int_X |\phi_2|^2 \d \mu}\right\},
  \end{equation}
  where $\grad$ and $\d \mu$ are the gradient and volume form associated to the metric on $X$.

  By the collar lemma \cite[Theorem 4.1.1]{buser1992}, the neighbourhood of width
  \begin{equation*}
    \arcsinh \paren*{\sinh \div{\ell_X(\partial Y)}^{-1}}
  \end{equation*}
  of $\partial Y$ is isometric to a cylinder. As soon
  as $\ell_X(\partial Y) \leq 1$, this width is larger than $\arcsinh(1/\sinh(1/2)) > 1$. We can
  therefore define two elements of $H^1(X)$ by
  \begin{align*}
    \phi_1 (z) := \min (1, \mathrm{dist}(z,X \setminus Y))
    \quad \text{and} \quad
    \phi_2 (z) := \min (1, \mathrm{dist}(z,Y)).
  \end{align*}
  These functions are $L^2$-orthogonal because they have disjoint support, and we
  can therefore apply the min-max principle to them. Let us estimate the Rayleigh
  quotients appearing in \cref{e:minmax}.

  First, the norm of the gradient of $\phi_1$ is equal to $1$ on the set
  \begin{equation*}
    Y^- := \{z \in Y \, : \, \mathrm{dist}(z,X \setminus Y) < 1\},
  \end{equation*}
  and $0$ outside of it. Therefore,
  \begin{equation*}
    \int_X \| \grad \phi_1 \|^2 \d \mu
    \leq \area(Y^-) = \int_0^{\ell_X(\partial Y)} \int_0^1 \cosh \rho  \d \rho \d t
    = \sinh (1) \; \ell_X(\partial Y)
  \end{equation*}
  where the area is computed using Fermi coordinates (see \cite[Section 1.1]{buser1992}).
  We then observe that $\phi_1$ is identically equal to $1$ in $Y \setminus Y^-$, and hence
    \begin{equation*}
      \int_X |\phi_1|^2 \d \mu \geq \area(Y) - \area(Y^-) \geq 2 \pi - \sinh(1) \, \ell_X(\partial Y)
      \geq \pi
    \end{equation*}
    provided that $\ell_X(\partial Y) \leq \pi / \sinh 1$, which is the case by hypothesis.
  
  The function $\phi_2$ satisfies the same bounds.
  Therefore, \cref{e:minmax} implies that
  $\lambda_1(X) \leq \sinh(1) \, \ell_X(\partial Y)/\pi \leq \ell_X(\partial Y)$, which is our
  claim.
\end{proof}

We then estimate the probability for a surface to contain a small once-holed torus.

\begin{lem}
  \label{lem:once_holed}
  There exists $a_0, c_1, c_2 >0$ such that, for any $a \leq a_0$, any large enough~$g$,
  \begin{equation}
    \label{eq:once_holed}
     c_1 \, \frac{a^ 2}{g}
    \leq \Pwp{ \exists \text{ once-holed torus } Y \subset X \, : \, \ell_X(\partial Y) \leq
      a}
    \leq c_2 \, \frac{a^2}{g} \cdot
  \end{equation}  
\end{lem}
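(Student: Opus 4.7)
The plan is to reduce the problem to counting embedded once-holed tori $T \subset X$ via their boundary geodesics, then apply first- and second-moment estimates based on Mirzakhani's integration formula. For $g \geq 3$ and $a$ smaller than a universal constant $a_0$, an embedded once-holed torus with $\ell_X(\partial T) \leq a$ corresponds uniquely to a simple closed geodesic $\gamma$ separating $X$ into pieces of signatures $(1,1)$ and $(g-1,1)$ with $\ell_X(\gamma) \leq a$. Let $N_a(X)$ denote the number of such geodesics oriented so that the $(1,1)$-piece lies on the left; then $\{N_a \geq 1\}$ is exactly the event of interest.

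For the upper bound, Markov's inequality reduces matters to bounding $\Ewpo \brac*{N_a}$. Applying Mirzakhani's integration formula as in \cref{exa:formule_mirz_simple}, I obtain
\begin{equation*}
  \Ewpo \brac*{N_a} = \frac{1}{V_g} \int_0^a V_{1,1}(\ell)\, V_{g-1,1}(\ell)\, \ell \,d\ell.
\end{equation*}
Since $V_{1,1}(\ell) = (\ell^2 + 4\pi^2)/96$ is bounded on $[0,a_0]$, $V_{g-1,1}(\ell) \leq e^{a_0/2} V_{g-1,1}$ by~\eqref{e:increase_Vgn_bound}, and $V_{g-1,1}/V_g = \O{1/g}$ by combining~\eqref{eq:asymp_dev_ratio_same_euler} and~\eqref{eq:asymp_dev_ratio_add_cusp}, this yields $\Ewpo \brac*{N_a} = \O{a^2/g}$ and hence $\Pwp{N_a \geq 1} \leq c_2\, a^2/g$.

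For the lower bound, I will apply the Paley--Zygmund inequality $\Pwp{N_a \geq 1} \geq (\Ewpo \brac*{N_a})^2 / \Ewpo \brac*{N_a^2}$. The same integral, combined with the first-order approximation~\eqref{e:Vgn_first_order} for $V_{g-1,1}(\ell)$ and the positivity $V_{1,1}(0) = \pi^2/24 > 0$, yields a matching lower bound $\Ewpo \brac*{N_a} \geq c\, a^2/g$ for large $g$. To bound the second moment, decompose $\Ewpo \brac*{N_a^2} = \Ewpo \brac*{N_a} + \Ewpo \brac*{N_a(N_a-1)}$. By the collar lemma, for $a \leq a_0$ small, any two simple geodesics of length $\leq a$ on $X$ are disjoint; a short topological case analysis then shows that the only possible configuration for two distinct such boundary curves is a pair of \emph{disjoint} once-holed tori separated by a $(g-2,2)$-piece. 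Mirzakhani's formula applied to this multi-curve, together with $V_{g-2,2}/V_g = \O{1/g^2}$, gives $\Ewpo \brac*{N_a(N_a-1)} = \O{a^4/g^2}$. Hence $\Ewpo \brac*{N_a^2} = \O{a^2/g}$, and Paley--Zygmund yields $\Pwp{N_a \geq 1} \geq c_1\, a^2/g$.

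The main obstacle is the topological case analysis underlying the second-moment estimate: I must verify that any two disjoint simple separating geodesics each bounding an embedded once-holed torus must in fact bound \emph{disjoint} once-holed tori. This will follow from the observation that every essential simple closed curve inside a once-holed torus is either boundary-parallel (excluded by distinctness) or non-separating in $X$, and therefore cannot itself be the boundary of an embedded once-holed torus; hence nested configurations and configurations with one torus inside the other's complement-of-torus meeting it non-trivially cannot arise. The special case $g = 2$ requires separate treatment, since every separating simple closed geodesic then bounds a once-holed torus on \emph{both} sides; it can be handled by a direct argument in the same spirit.
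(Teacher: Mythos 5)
Your proof is correct and gives the lemma, but the lower-bound argument genuinely departs from the paper's. The paper exploits the exact identity
\begin{equation*}
  \Pwp{\ntor \geq 1} = \Ewpo[\ntor] - \sum_{k \geq 2} \Pwp{\ntor \geq k},
\end{equation*}
and bounds the tail $\sum_{k\geq 2}\Pwp{\ntor \geq k}$ by applying Markov's inequality to the counting function $\Ntang[,k]$ of \emph{unordered $k$-tuples} of small once-holed tori, for every $k \geq 2$, obtaining $\Pwp{\ntor \geq k} = \O{C^k a^{2k}/(k! g^k)}$. You instead use Paley--Zygmund (equivalently Cauchy--Schwarz), $\Pwp{N_a \geq 1} \geq (\Ewpo[N_a])^2/\Ewpo[N_a^2]$, which requires controlling only the second factorial moment $\Ewpo[N_a(N_a-1)] = \O{a^4/g^2}$. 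Yours is therefore slightly more economical: you need only the $k=2$ moment, not all $k$. The trade-off is that the paper's identity gives a quantitatively stronger conclusion, $\Pwp{\ntor \geq 1} = \Ewpo[\ntor](1 + \O{a^2})$, i.e.\ the probability equals the expectation up to a factor $1+\O{a^2}$, whereas Paley--Zygmund only gives $\Pwp{N_a\geq 1} \geq c\,\Ewpo[N_a]$ with an unspecified constant $c$; both suffice for the lemma. In both proofs the topological input is the same: two distinct small once-holed tori must be disjoint, and you correctly articulate the nesting argument (boundary-parallel is ruled out by distinctness of the geodesics; a non-boundary-parallel essential simple curve in a once-holed torus is non-separating in it and hence non-separating in $X$; finally nesting $T_1 \subset T_2$ forces $T_2 \setminus T_1$ to be an annulus, contradicting distinctness) --- the paper asserts disjointness with less detail. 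Two small remarks: your stated formula $V_{1,1}(\ell) = (\ell^2+4\pi^2)/96$ is off by a factor $2$ from the paper's convention $V_{1,1}(x)=\pi^2/12 + x^2/48$ (where a symmetry factor $\tfrac12$ has already been absorbed); this does not affect the estimate. And your flagged issue about $g=2$ is genuinely moot, since the lemma only claims the bound for large enough $g$.
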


The proof of the lower bound of \cref{lem:prob_small_eig} then directly follows,
because for any small enough $a$ and large enough $g$,
\begin{equation*}
  \Pwp{\lambda_1 \leq a}
  \underset{\text{Lemma } \ref{lem:minmax}}{\geq}
  \Pwp{\exists \text{ once-holed torus } Y \subset X \, : \ell_X(\partial Y) \leq a}
  \underset{\text{Lemma } \ref{lem:once_holed}}{\geq}
  c_1 \frac{a^2}{g} \cdot
\end{equation*}
We actually only need the lower bound part of \cref{lem:once_holed} to conclude to the lower bound
of \cref{lem:prob_small_eig}, but it is the hard part of the statement.

The proof of \cref{lem:once_holed} is inspired by Mirzakhani's proof of the fact that
\begin{equation*}
  c_1 \, a^2
  \leq \Pwp{\exists \text{ closed geodesic } \gamma \text{ on } X \, : \, \ell_X(\gamma) \leq a}
  \leq c_2 \, a^2,
\end{equation*}
done in \cite[Theorem 4.2]{mirzakhani2013}. We adapt it to count small once-holed tori rather than small
closed geodesics.

\begin{proof}
  For $a >0$, let $\ntor(X)$ denote the number of once-holed tori $Y$ with geodesic boundary
  embedded in $X$ such that $\ell_X(\partial Y) \leq a$. Let us estimate $\Pwp{\ntor \geq 1}$.
  
  \emph{Step 1: expectation estimate.} It is easy to compute the expectation of
  the random variable $\ntor$, because it is a geometric function, and we can
  therefore apply Mirzakhani's integration formula:
  \begin{equation*}
    \Ewpo \big[\ntor \big]
    =  \frac{1}{V_g} \int_0^a V_{1,1}(x) \, V_{g-1,1}(x) \, x \d x.
  \end{equation*}
  We know by \cite{naatanen1998} that $V_{1,1}(x) = \pi^2/12 + x^2/48$. We then replace $V_{g-1,1}(x)$
  by its first-order approximation, see \eqref{e:Vgn_first_order}, and obtain that
  \begin{equation}
    \label{e:tor_exp_before}
    \Ewpo \big[\ntor \big]
    =  \frac{V_{g-1,1}}{V_g} \int_0^a  \paren*{\frac{\pi^2}{6} + \frac{x^2}{12}} \sinh \div{x} \d x
    + \O{\frac{V_{g-1,1}}{g V_g} \int_0^a (1+x)^4 \,e^{\frac x 2}  \d x}.
  \end{equation}
  Since $1/2 < \pi^2/12 < 1$, we can take $a_0$ to be small enough so that
  \begin{equation}
    \forall x \in [0, a_0], \quad
    \frac x 2 \leq \paren*{\frac{\pi^2}{6} + \frac{x^2}{12}} \sinh \div{x} \leq x.
  \end{equation}
  By equations \eqref{eq:asymp_dev_ratio_same_euler} and \eqref{eq:asymp_dev_ratio_add_cusp}, the
  volume ratio is
  \begin{equation}
    \label{eq:proof_lower_non_FR_ratio_vol}
    \frac{V_{g-1,1}}{V_g}
    = \frac{V_{g-1,1}}{V_{g-1,2}} \, \frac{V_{g-1,2}}{V_g}
    = \frac{1}{8 \pi^2 g} + \O{\frac{1}{g^2}}.
  \end{equation}
  Equations \eqref{e:tor_exp_before} to \eqref{eq:proof_lower_non_FR_ratio_vol} together imply the
  existence of constants $c_1, c_2$ (depending on $a_0$) such that, for any $a < a_0$,
  \begin{equation}
    \label{e:tor_exp}
    c_1 \frac{a^2}{g} \leq \Ewpo \big[\ntor \big] \leq c_2 \frac{a^2}{g} \cdot
  \end{equation}

  \emph{Step 2: from the expectation to the probability.}
  By Markov's inequality, we can directly obtain the upper bound part of the claim:
  \begin{equation*}
    \Pwp{\ntor \geq 1} \leq \Ewpo \big[\ntor \big] \leq c_2 \, \frac{a^2}{g} \cdot
  \end{equation*}
  The other side of the inequality is harder to obtain because there is no lower bound on
  $\mathbb{P}(\ntor \geq 1)$ in term of $\mathbb{E}[\ntor]$ in general. We shall express the
  expectation of $\ntor$ as
  \begin{equation}
        \label{e:Pwp_vs_Ewp_tor}
    \Ewpo \big[\ntor \big]
    = \sum_{k=1}^{+ \infty} \Pwp{\ntor \geq k}
    = \Pwp{\ntor \geq 1} + \sum_{k=2}^{+ \infty} \Pwp{\ntor \geq k}.
  \end{equation}
  Thanks to this expression, we observe that in order to prove that
  $\Pwp{\ntor \geq 1}$ is bounded below by $c_1' a^2/g$, it is sufficient to prove
  that $\sum_{k=2}^{+ \infty} \Pwp{\ntor \geq k}$ is negligible compared to
  $a^2/g$. This reduces the problem to studying the probability for a typical
  surface to contain multiple small once-hole tori.

  Let $X$ be a compact hyperbolic surface of genus $g$. Let us describe the topology of families of
  embedded once-holed tori in the surface~$X$.  In order to make the discussion simpler, let us
  assume that $a_0 \leq 2 \arcsinh 1$. Then, by the collar lemma \cite[Theorem 4.1.6]{buser1992},
  all closed geodesics shorter than $a_0$ on $X$ are pairwise disjoint.  As a consequence, if $X$
  contains $k$ once-holed tori $Y_1, \ldots, Y_k$ of boundary lengths $\leq a \leq a_0$, then
  they are all disjoint (here, we assume that $g > 2$, so that no two embedded once-holed tori can
  share a boundary component). Let $C$ denote the surface obtained by removing all of these
  once-holed tori from $X$. Since the once-holed tori only have one boundary component and the
  surface $X$ is connected, $C$ is also connected. It has $k$ boundary components (corresponding to
  the $k$ once-holed tori). By additivity of the Euler characteristic, the genus of $C$ is $g-k$,
  and in particular $k \leq g$.

  \begin{figure}[h]
    \centering
    \includegraphics[scale=0.8]{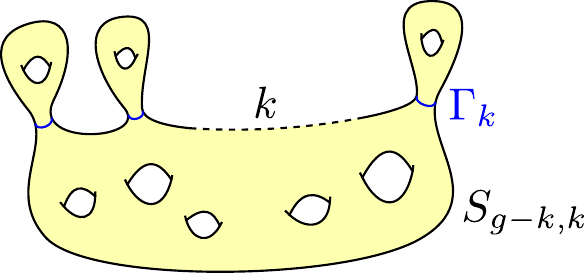}
    \caption{The multi-curve $\Gamma_k$ that separates $k$ once-holed tori off a
      surface of genus $g$.}
    \label{fig:tori}
  \end{figure}

  Therefore, for any $1 \leq k \leq g$, there is exactly one mapping-class-group orbit of family of
  $k$ once-holed tori in a surface of genus~$g$ (and none for $k > g$).  Let us take a
  representative $\Gamma_k$ of this orbit on the base surface $S_g$, as represented in
  \cref{fig:tori}.  We consider the function
  \begin{equation*}
    F_{a,k} :
    \begin{cases}
      \R_{\geq 0}^k & \rightarrow \R \\
      (x_1, \ldots,x_k) & \displaystyle \mapsto \frac{1}{k!} \prod_{i=1}^k \1{[0,a]}(x_i).
    \end{cases}
  \end{equation*}
  Then, the geometric function $X \mapsto F_{a,k}^{\Gamma_k}(X)$ defined by \eqref{eq:def_F_gamma}
  exactly counts the number of unordered families of $k$ embedded once-holed tori with boundary
  length $\leq a$. Hence,
  \begin{equation*}
    \Pwp{\ntor \geq k} = \Pwp{F_{a,k}^{\Gamma_k} \geq 1} \leq \Ewpo \big[ F_{a,k}^{\Gamma_k}\big]
  \end{equation*}
  by Markov's inequality. We compute the expectation of this geometric function
  using Mirzakhani's integration formula:
  \begin{equation}
    \label{e:int_p_tor_k}
    \Pwp{\ntor \geq k} 
    \leq \frac{1}{k!} \frac{1}{V_g}
    \int_{[0, a]^k} 
      V_{g-k,k}(x_1, \ldots, x_k) 
      \prod_{i=1}^k V_{1,1}(x_i) \, x_i \d x_i.
  \end{equation}
  We use once again \cite{naatanen1998} to express the volume $V_{1,1}(x_i)$. By
  \cref{e:increase_Vgn_bound} and \cite[Lemma 3.2]{mirzakhani2013}, there exists a constant $C>0$
  independent of $k$ such that
  \begin{equation*}
    \frac{V_{g-k,k}(x_1, \ldots, x_k)}{V_g}
    \leq \frac{C^k}{g^k} \exp \div{x_1 + \ldots + x_k}.
  \end{equation*}
  Then, \cref{e:int_p_tor_k} becomes
  \begin{equation*}
    \Pwp{\ntor \geq k}
    = \O{ \frac{C^k}{k!} \frac{1}{g^k}
      \brac*{\int_0^a \left(\frac{\pi^2}{12}+\frac{x^2}{48}\right) x \exp \div{x} \d x}^k}
    = \O{\frac{C^k}{k!}  \frac{a^{2k}}{g^k}},
  \end{equation*}
  provided we picked $a_0$ to be small enough.
  We deduce from the previous inequality that
  \begin{equation}
    \label{e:sum_k_tor}
    \sum_{k=2}^{+ \infty} \Pwp{\ntor \geq k}
    = \O{\sum_{k=2}^{+ \infty} \frac{C^k}{k!} \frac{a^{2k}}{g^k}}
    = \O{\frac{a^4}{g^2} \, e^{\frac{C a^{2}}{g}}} = \O{\frac{a^4}{g^2}}
  \end{equation}
  which is negligible compared to $\frac{a^2}{g}$ for small enough $a$ and large
  enough $g$.  Our claim then follows directly by putting together
  \cref{e:tor_exp,e:Pwp_vs_Ewp_tor,e:sum_k_tor}.
\end{proof}

\subsubsection{The upper bound}
\label{sec:upper-bound}

Let us now prove the upper bound part of \cref{lem:prob_small_eig}. It cannot be
done the same way as the lower bound. Indeed, we have used the fact that, by the
min-max principle, the presence of a small once-holed torus implies the presence
of a small eigenvalue. But the converse is not true, and the existence of a
small eigenvalue does not imply that the surface contains a small once-holed
torus. 
We shall therefore rather rely on another geometric quantity, the Cheeger
constant $h(X)$, defined by
\begin{equation*}
  h(X) := \inf_{A \sqcup B = X} \left\{ \frac{\ell(\partial A)}{\min (\area (A), \area(B))} \right\}
\end{equation*}
where the infimum is taken over all partitions $A \sqcup B$ of $X$ into two
smooth connected components. Cheeger's inequality \cite{cheeger1970} states that
\begin{equation}
  \label{eq:cheeger_ineq}
  \lambda_1(X) \geq \frac{h(X)^2}{4}
\end{equation}
and in particular, if $\lambda_1(X)$ is small, then $h(X)$ is too (the converse
is also true by Buser's inequality \cite{buser1982}).

The Cheeger constant $h(X)$ is a priori difficult to estimate for random Weil--Petersson surfaces,
because it does not depend only on geometric functions. In \cite[Section 4.5]{mirzakhani2013},
Mirzakhani defined a \emph{modified Cheeger constant} $H(X)$. It is defined the same way as~$h(X)$
is, except the infimum ranges over all partitions $A \sqcup B$ of $X$ into two connected components
such that $\partial A$ is a \emph{union of disjoint simple geodesics}. Mirzakhani proved in
\cite[Proposition 4.6]{mirzakhani2013} that the two Cheeger constants satisfy the inequality
\begin{equation}
  \label{eq:mirz_cheeger}
  \frac{H(X)}{1+H(X)} \leq h(X) \leq H(X)
\end{equation}
where the upper bound is trivial. In particular, $h(X)$ is small if and only if
$H(X)$ is.

We shall use the following probabilistic estimate on $H(X)$.

\begin{lem}
  \label{lem:mirz_prob_cheeger}
  There exists $c, a_0 > 0$ such that, for any $a \leq a_0$ and any large enough $g$, 
  \begin{equation}
    \label{eq:mirz_prob_cheeger}
    \Pwp{H(X) \leq a} \leq c \frac{a^2}{g} \cdot
  \end{equation}
\end{lem}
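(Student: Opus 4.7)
The plan is to adapt the approach of Mirzakhani~\cite[Proposition~4.7]{mirzakhani2013} to the modified Cheeger constant, making the dependence on the small parameter $a$ explicit. By definition of $H(X)$, if $H(X) \leq a$ then there exist disjoint simple closed geodesics $\Gamma = (\gamma_1, \ldots, \gamma_k)$ on $X$ separating it into two connected components $A$ and $B$, of respective signatures $(g_1, k)$ and $(g_2, k)$ with $g_1 + g_2 + k - 1 = g$ (and, after relabelling, $g_1 \leq g_2$), such that
\begin{equation*}
  \ell_X(\Gamma) \leq a \, \min(\area A, \area B) = 2 \pi a \, (2 g_1 - 2 + k) =: L_{g_1, k}.
\end{equation*}
A union bound over topological types, Markov's inequality and Mirzakhani's integration formula~\cref{thm:int_mirz} then yield
\begin{equation*}
  \Pwp{H(X) \leq a} \leq \sum_{(g_1, k)} \frac{c_{g_1, k}}{V_g} \int_{\sum_j x_j \leq L_{g_1, k}} V_{g_1, k}(\x) \, V_{g_2, k}(\x) \prod_{j=1}^{k} x_j \, d x_j,
\end{equation*}
where $c_{g_1, k} \leq 1$ absorbs the combinatorial symmetry factors.

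I would split the outer sum into a \emph{bounded regime} (types with $2 g_1 - 2 + k \leq M$) and an \emph{unbounded regime} (with $2 g_1 - 2 + k > M$), for a large constant $M$ to be chosen later. In the bounded regime, $L_{g_1, k} \leq 2 \pi a_0 M$ is uniformly bounded, so the polynomial bound~\eqref{lem:increase_Vgn} on $V_{g_1, k}(\x)$ and the exponential bound~\eqref{e:increase_Vgn_bound} on $V_{g_2, k}(\x)$ both contribute $O(1)$ factors, while a direct computation of the simplex integral produces $O(L_{g_1, k}^{2 k})$. Each bounded type thus contributes at most
\begin{equation*}
  O \!\left( \frac{V_{g_1, k} V_{g_2, k}}{V_g} \, \bigl( 2 \pi a (2 g_1 - 2 + k) \bigr)^{2 k} \right).
\end{equation*}
The leading term is the once-holed-torus case $(g_1, k) = (1, 1)$: the ratio asymptotics~\eqref{eq:asymp_dev_ratio_same_euler}--\eqref{eq:asymp_dev_ratio_add_cusp} give $V_{1, 1} V_{g-1, 1}/V_g = 1/(8 \pi^2 g) + O(1/g^2)$, and the integral evaluates to $(2 \pi a)^2/2$, producing the announced order $O(a^2/g)$. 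Every other bounded type is strictly subdominant: $k \geq 2$ yields additional powers of $a$, and $g_1 \geq 2$ yields additional powers of $1/g$ via~\cite[Lemma~3.2]{mirzakhani2013}.

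In the unbounded regime I would instead apply~\eqref{e:increase_Vgn_bound} to both volume polynomials, bounding the integrand by $V_{g_1, k} V_{g_2, k} \prod_j x_j \, e^{\sum_j x_j}$, whose integral over the simplex is controlled by a constant multiple of $L_{g_1, k}^{2 k - 1} \, e^{L_{g_1, k}}$. Choosing $a_0$ small enough, say $a_0 \leq 1/(20 \pi)$, guarantees $e^{L_{g_1, k}} \leq e^{(2 g_1 - 2 + k)/10}$, and summing these contributions via~\eqref{lemm:cut_into_pieces} and~\eqref{lem:cut_bounded_euler} produces a total of order $1/g^M$, which is $o(a^2/g)$ as soon as $M \geq 2$. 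The main obstacle is exactly this unbounded regime: the factor $e^{L_{g_1, k}}$ could \emph{a priori} grow with $g$ since $L_{g_1, k}$ can reach $\pi a (2g - 2)$, and the key point is to trade this exponential against the strong polynomial-in-$g$ decay of the Weil--Petersson volume ratios, which is only possible when $a$ is sufficiently small. Combining both regimes yields the announced bound $\Pwp{H(X) \leq a} = O(a^2/g)$.
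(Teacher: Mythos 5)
Your plan is a reasonable variant of Mirzakhani's argument, but it organizes it differently from the paper: the paper does not split into bounded and unbounded regimes at all. Instead, it extracts the factor $a^2$ uniformly from the constraint $k \geq 1$ (writing $L_{g_1,k}(a)^{2k} \leq (2\pi a)^2 (2g_1-2+k)^{2k}$ whenever $2\pi a_0 \leq 1$), then reduces the double sum over $(g_1,k)$ to a sum with $n \in \{0,1\}$ via Mirzakhani's $V_{g_1,k}V_{g_2,k} = \mathcal{O}(V_{g_1',n}V_{g_2',n})$, sums the $k$-dependence with the generating function $\sum_k x^{2k}/(k!(2k)!) = \mathcal{O}(\exp(\tfrac 32 x^{2/3}))$, and finally invokes Mirzakhani's Corollary~3.7, which already carries the exponential weight $e^{2\pi a(2g_1'-2+n)+\frac32(2g_1'-2+n)^{2/3}}$ and gives $\mathcal{O}(V_g/g)$. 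Your bounded-regime computation of the dominant once-holed-torus term is fine and agrees with the paper's conclusion.

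The genuine gap is in the unbounded regime. You appeal to \eqref{lemm:cut_into_pieces} and \eqref{lem:cut_bounded_euler} to conclude that the tail contributes $\mathcal{O}(1/g^M)$, but neither result applies as stated. The inequality \eqref{lem:cut_bounded_euler} is a sum over $(g_1,g_2)$ at \emph{fixed} $n$, with an implied constant that depends on $n$; here $n = k$ varies simultaneously with $g_1$ up to $\mathcal{O}(g)$, and there is additionally the exponential factor $e^{L_{g_1,k}}$ sitting outside the volumes. The two-parameter sum over $(g_1, k)$ with the exponential weight is precisely the hard part of Mirzakhani's Theorem~4.8, and it is handled there by the three-step reduction sketched above, not by anything you can read directly off \eqref{lem:cut_bounded_euler}. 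You would need to either reproduce that reduction (at which point your bounded/unbounded split is redundant) or supply a self-contained estimate of $\sum_{g_1,k}\frac{V_{g_1,k}V_{g_2,k}}{V_g}\frac{L_{g_1,k}^{2k}e^{L_{g_1,k}}}{k!(2k)!}$; as written, the claim is asserted rather than proved. A secondary but related issue: your unbounded-regime bound $\mathcal{O}(1/g^M)$ carries no factor of $a^2$, so it is only $o(a^2/g)$ if ``large enough $g$'' is allowed to depend on $a$; this is acceptable for the paper's quantifier order, but it is worth flagging because the paper's single-pass proof keeps the $a^2$ throughout and avoids this subtlety.
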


The upper bound of \cref{lem:prob_small_eig} then follows, because for any small
enough $a$ and large enough $g$,
\begin{equation*}
  \Pwp{\lambda_1 \leq a}
  \underset{\eqref{eq:cheeger_ineq}}{\leq} \Pwp{h(X) \leq 2 \sqrt{a}}
  \underset{\eqref{eq:mirz_cheeger}}{\leq} \Pwp{H(X) \leq \frac{2 \sqrt{a}}{1 - 2 \sqrt{a}}}
  \underset{\eqref{eq:mirz_prob_cheeger}}{\leq} c_2 \, \frac{a}{g} \cdot
\end{equation*}
\Cref{lem:mirz_prob_cheeger} comes as a consequence of the proof of \cite[Theorem
4.8]{mirzakhani2013}, although it is not stated as such. For the sake of self-containment, we repeat the
argument here.

\begin{proof}
  For a surface $X$, let $\Gamma$ be a multi-curve candidate to realizing the
  modified Cheeger constant $H(X)$. The mapping-class-group orbit of $\Gamma$ is entirely
  determined by its number of components, denoted by $k$, and the genera $g_1$
  and $g_2$ of the two connected components of $X \setminus \Gamma$. We pick the
  numbering so that $g_1 \leq g_2$, and note that by additivity of the Euler
  characteristic, we always have $g_1 + g_2 + k = g + 1$. For such a $g_1$ and
  $k$, we fix one multi-curve $\Gamma_{g_1,k}$ on the base surface $S_g$ of this
  topology. Then,
  \begin{equation}
    \label{e:decomp_proba_cheeger}
    \Pwp{H(X) \leq a}
    \leq \sum_{\substack{g_1+g_2+k = g+1 \\ 0 \leq g_1 \leq g_2 \\ g_1 + k > 1}} \Pwp{H_{g_1,k}(X) \leq a},
  \end{equation}
  where for any $g_1, k$, the function $H_{g_1, k}$ is defined by 
  \begin{equation}
    \label{e:Hkm_cheeger}
    H_{g_1,k}(X)
    :=
    \frac{\min \left\{ \ell_X(\Gamma) \, : \, \Gamma \in \orb_{g}(\Gamma_{g_1,k})\right\}}
    {2\pi(2g_1-2+k)} \cdot
  \end{equation}
  By Markov's inequality and Mirzakhani's integration formula,
  \begin{equation*}
    \Pwp{H_{g_1,k}(X) \leq a}
    \leq \frac{1}{V_g} \frac{1}{k!} \int_{\R_{>0}^k} 
    V_{g_1,k}(\x) V_{g_2,k}(\x) \,
    \1{[0,L_{g_1,k}(a)]} (x_1 + \ldots + x_k) \,
    \prod_{i=1}^k x_i \d x_i
  \end{equation*}
  for $L_{g_1,k}(a) := 2 \pi a (2g_1-2+k)$. This can be bounded using
  \cref{e:increase_Vgn_bound} and the fact that
  \begin{equation*}
    \forall L > 0, \quad \int_{\R_{>0}^k} \1{[0,L]}(x_1 + \ldots + x_k)
    \prod_{i=1}^k x_i \d x_i \leq \frac{L^{2k}}{(2k)!} 
  \end{equation*}
  and we obtain that
  \begin{equation*}
    \Pwp{H_{g_1,k}(X) \leq a}
    \leq \frac{1}{V_g}
    \frac{L_{g_1,k}(a)^{2k} e^{L_{g_1,k}(a)}}{k!(2k)!} V_{g_1,k} V_{g_2,k} \cdot
  \end{equation*}
  In order to sum over all values of $k$, we now set
  $g_i' = g_i + \floor{\frac k 2}$ and
  $n = k - 2\floor{\frac k 2} \in \{0, 1\}$. We also assume that
  $2\pi a_0 \leq 1$. By \cite[Lemma 3.2 (3)]{mirzakhani2013}, $V_{g_1,k}
  V_{g_2,k} = \O{V_{g_1',n} V_{g_2',n}}$ and hence
  \begin{equation*}
    \Pwp{H_{g_1,k}(X) \leq a}
    = \O{\frac{a^2}{V_g} \frac{(2g_1'-2+n)^{2k}}{k!(2k)!} \, e^{2\pi a (2g_1'-2+n)} \,V_{g_1',n} V_{g_2',n}}
  \end{equation*}
  with a constant independent of $k$ (note that we have kept a term $a^{2}$ in this estimate,
  granted by the fact that $k \geq 1$; this is the only difference with Mirzakhani's proof).  We
  have that, for any $x$,
  \begin{equation*}
    \sum_{k=1}^{+ \infty} \frac{x^{2k}}{k! (2k)!} = \O{\exp \paren*{\frac 32 x^{\frac 23}}}
  \end{equation*}
  and hence, \cref{e:decomp_proba_cheeger} yields
  \begin{equation*}
    \Pwp{H(X) \leq a}
    = \O{\frac{a^2}{V_g} \sum_{n =0}^1 \sum_{\substack{g_1' \leq g_2' \\ g'_1 + g'_2 = g+1-n}}
    e^{2\pi a(2 g_1'-2+n) + \frac 32 (2g_1'-2+n)^{\frac 23}}
    V_{g_1',n} V_{g_2',n}}.
  \end{equation*}
  In \cite[Corollary 3.7]{mirzakhani2013}, Mirzakhani proved that, provided that
  $4 \pi a_0 < 2 \log(2)$, the sum over all values of $g_1', g_2'$ is $\O{V_g/g}$
  for $n=0$ and $n=1$, which allows us to conclude.
\end{proof}

\subsection{Contradiction if $\ell f_1^{\mathrm{all}}(\ell)$ is Friedman--Ramanujan}
\label{sec:proof-crefprp:sm}

Let us now prove that, if the function $\ell \mapsto \ell f_1^{\mathrm{all}}(\ell)$ was
Friedman--Ramanujan in the weak sense, then we would be able to prove spectral estimates that are
too good to be true.

\begin{lem}
  \label{lem:FR_contradiction}
  If $\ell \mapsto \ell f_1^{\mathrm{all}}(\ell)$ is Friedman--Ramanujan in the weak sense, then for
  any small enough $\delta > 0$, any large enough $g$,
  \begin{equation*}
    \Pwp{\delta \leq \lambda_1 \leq \frac{5}{72}} = \O[\delta]{g^{-\frac 54}}.
  \end{equation*}
\end{lem}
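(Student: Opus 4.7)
The strategy is to combine the reformulated trace method of \cref{lem:selberg_reformulated} with the order-$1$ asymptotic expansion of $V_g^{\mathrm{all}}/V_g$ from \cref{thm:existence_asym_all}, extracting the necessary decay from the Friedman--Ramanujan cancellations of \cref{prp:int_FR}. I would take parameters $A = 6$ (so $L = 6 \log g$), $\alpha^2 = 1/6$, $\epsilon = 1/72$, so that $1/4 - \alpha^2 - \epsilon = 5/72$ matches the targeted upper bound, and an integer $m$ strictly greater than the degrees of $\ell f_0^{\mathrm{all}}$ and $\ell f_1^{\mathrm{all}}$ as Friedman--Ramanujan functions. A direct computation gives $(\alpha+\epsilon) A = \sqrt{6} + 1/12 \approx 2.533 > 9/4$, and \cref{lem:selberg_reformulated} then yields
\begin{equation*}
\Pwp{\delta \leq \lambda_1 \leq \tfrac{5}{72}} \leq \frac{C}{g^{\sqrt{6}+1/12}} \avb{\ell\, e^{-\ell/2}\, \D^m h_L(\ell)} + \frac{C\, g (\log g)^2}{g^{\sqrt{6}+1/12}}
\end{equation*}
with $C = C(\delta, m, h)$. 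Since $\sqrt{6} + 1/12 - 1 > 5/4$, the second summand is already $\O[\delta]{g^{-5/4}}$.

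To estimate the geometric average, I would apply \cref{thm:existence_asym_all} at order $K = 1$ with a small auxiliary $\eta > 0$:
\begin{equation*}
\frac{V_g^{\mathrm{all}}(\ell)}{V_g}\1{[0,L]}(\ell) = \paren*{f_0^{\mathrm{all}}(\ell) + \frac{f_1^{\mathrm{all}}(\ell)}{g}}\1{[0,L]}(\ell) + \Ow{\frac{e^{(1+\eta)\ell}}{g^2}}.
\end{equation*}
The leading coefficient coincides with $f_0^{\mathrm{simple}}$: indeed, by \cref{thm:exist_asympt_type}, the expansion of $V_g^T/V_g$ for a local type $T$ of filling surface $S$ starts at order $1/g^{|\chi_S|}$, so only the simple type contributes at order $g^0$. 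Hence $\ell f_0^{\mathrm{all}}(\ell) = 4 \sinh^2 \div{\ell}$, which is a Friedman--Ramanujan function of degree $0$ by \cref{rem:first_order_simple}. The coefficient $\ell f_1^{\mathrm{all}}$ is Friedman--Ramanujan in the weak sense by the standing hypothesis of the lemma.

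Substituting into the average, \cref{prp:int_FR} controls each of the two Friedman--Ramanujan contributions by $\O{\mathrm{poly}(\log g)}$ and $\O{\mathrm{poly}(\log g)/g}$ respectively. The weak remainder, paired with the compactly supported weight $\ell\, e^{-\ell/2}\D^m h_L(\ell)$, contributes at most
\begin{equation*}
\O{\frac{1}{g^2}\norminf*{\ell\, e^{(1/2 + \eta)\ell}\D^m h_L(\ell)}} = \O{\frac{L\, e^{L/2 + \eta L}}{g^2}} = \O{g^{1 + 6\eta}\log g}.
\end{equation*}
Collecting, $\avb{\ell\, e^{-\ell/2}\D^m h_L(\ell)} = \O{g^{1 + 6\eta}\log g}$, so plugging back into the trace bound,
\begin{equation*}
\Pwp{\delta \leq \lambda_1 \leq \tfrac{5}{72}} = \O[\delta]{g^{1 + 6\eta - \sqrt{6} - 1/12}\log g + g^{1 - \sqrt{6} - 1/12}(\log g)^2} = \O[\delta]{g^{-5/4}}
\end{equation*}
as soon as $\eta$ is chosen small enough (say $\eta < 1/30$), since $\sqrt{6} + 1/12 - 1 \approx 1.533 > 5/4$.

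The main subtlety is numerical: the choice $A = 6$ with $\alpha$ close to $\sqrt{13/72}$ gives $(\alpha+\epsilon) A$ only narrowly above the $9/4$ threshold needed to kill the trivial $g(\log g)^2$ term coming from the Selberg formula; smaller length-scales such as $A = 4$ or $A = 5$ would not suffice. The Friedman--Ramanujan hypothesis on $\ell f_1^{\mathrm{all}}$ is precisely what converts the order-$1/g$ part of the geometric average into a polylogarithmic contribution, without which this delicate balance would fail and one would only recover the weaker $\O{g^{-1}}$ bound already established by Wu--Xue and Lipnowski--Wright.
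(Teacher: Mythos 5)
Your proposal is correct and follows essentially the same approach as the paper: use \cref{lem:selberg_reformulated} with $A=6$, choose $\alpha$ and $\epsilon$ so that $1/4-\alpha^2-\epsilon = 5/72$, apply \cref{thm:existence_asym_all} at order $K=1$ (using that $\ell f_0^{\mathrm{all}}=4\sinh^2(\ell/2)$ and the hypothesis on $\ell f_1^{\mathrm{all}}$), and invoke \cref{prp:int_FR} to reduce the Friedman--Ramanujan part to polylogarithmic size. The only differences are cosmetic numerical choices — the paper takes $\alpha = 5/12$, $\epsilon = 1/144$, $\epsilon'<1/24$, you take $\alpha = 1/\sqrt{6}$, $\epsilon = 1/72$, $\eta<1/30$, both of which satisfy the required constraints and yield $(\alpha+\epsilon)A > 9/4$ with margin; your parenthetical remark that $\alpha$ is "close to $\sqrt{13/72}$" is slightly inconsistent with your stated choice $\alpha^2=1/6$, but it does not affect the argument.
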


\begin{proof}
  Let us assume that $\ell \mapsto \ell f_1^{\mathrm{all}}(\ell)$ is a Friedman--Ramanujan function
  in the weak sense, i.e. that this function belongs in $\cF_w^{m,c}$ for some integers $m$, $c$. We
  may assume w.l.o.g. that $m \geq 1$.  We shall use the trace method developed in
  \cref{sec:canc-selb-trace}, with the parameters:
  \begin{equation*}
    A = 6, \quad
    \alpha = \frac{5}{12} < \frac 12, \quad
    \epsilon = \frac{1}{144}
  \end{equation*}
    and $m$ given by the Friedman--Ramanujan hypothesis for $\ell f_1^{\mathrm{all}}(\ell)$.
  We observe that $1/4 - \alpha^2 - \epsilon = 5/72$ and
  $(\alpha+\epsilon)A > \alpha A = 5/2$, and therefore
  \cref{lem:selberg_reformulated} implies that
  \begin{equation}
    \label{eq:proba_estimate_contradiction}
    \Pwp{\delta \leq \lambda_1 \leq \frac{5}{72}} 
    = \O[\delta]{g^{-\frac 52} \avb{\ell e^{- \frac \ell 2} \, \mathcal{D}^m h_L(\ell)}
      + g^{-\frac 3 2}}. 
  \end{equation}
  We then use the density-writing of the average $\av{\cdot}$, to write
  \begin{equation}
    \label{eq:proof_contr_dens_write}
    \avb{\ell \, e^{- \frac \ell 2} \, \mathcal{D}^m h_L(\ell)}
    = \int_{0}^{+ \infty} e^{- \frac \ell 2} \, \mathcal{D}^m h_L(\ell)
    \frac{\ell V_{g}^{\mathrm{all}}(\ell)}{V_g}  \1{[0,L]}(\ell) \d \ell
  \end{equation}
  because $\D^m h_L$ is identically equal to zero outside of $[0,L]$.  By our asymptotic expansion
  result for the sum over all geodesics, \cref{thm:existence_asym_all}, at the second order,
  using a fixed value $\epsilon' < 1/24$,
  \begin{equation}
    \label{eq:proof_contr_exp_all}
    \frac{\ell \, V_{g}^{\mathrm{all}}(\ell)}{V_g} \1{[0,L]}(\ell)
    = F_{g,1}^{\mathrm{all}}(\ell) + \Ow{\frac{\ell \, e^{(1+\epsilon') \ell}}{g^2}}
    = F_{g,1}^{\mathrm{all}}(\ell) + \Ow{\frac{e^{\frac{25\ell}{24}}}{g^2}}
  \end{equation}
  for
  $F_{g,1}^{\mathrm{all}}(\ell) := \ell f_0^{\mathrm{all}}(\ell) + g^{-1} \ell
  f_1^{\mathrm{all}}(\ell)$. When we inject \eqref{eq:proof_contr_exp_all} into
  \eqref{eq:proof_contr_dens_write}, we obtain
  \begin{equation}
    \label{eq:apply_est_order_2}
    \avb{\ell \, e^{- \frac \ell 2} \, \mathcal{D}^m h_L(\ell)}
    = \int_0^{L} F_{g,1}^{\mathrm{all}}(\ell) \, e^{- \frac \ell 2} \, \D^m h_L(\ell)  \d \ell
    + \O{g^{\frac 54}}
  \end{equation}
  because the remainder is bounded by
  \begin{equation*}
    \O{\frac{\norminf{\D^m h_L}}{g^2} e^{\frac{25 L}{24} - \frac{L}{2}}} =
    \O{\frac{e^{\frac{13L}{24}}}{g^2}} = \O{g^{\frac{13}{4}-2}} = \O{g^{\frac 54}}.
  \end{equation*}
  We know that $\ell f_0^{\mathrm{all}} (\ell) = 4 \sinh^2 \div{\ell} \in \cF^{1,0}$, and therefore
  our hypothesis implies that $F_{g,1}^{\mathrm{all}}$ is an element of $\FR_w^{m,c}$. We notice
  that $\|F_{g,1}^{\mathrm{all}}\|_{\cF^{m,c}}^w = \O{1}$ uniformly in $g$. As a consequence, by
  \cref{prp:int_FR}, 
  \begin{equation*}
    \int_0^{L} F_{g,1}^{\mathrm{all}}(\ell) \, e^{- \frac \ell 2} \, \D^m h_L(\ell)  \d \ell
    = \O{L^{c+1}}.
  \end{equation*}
  Together with \cref{eq:apply_est_order_2}, this implies that
  $\avb{\ell \, e^{- \frac \ell 2} \, \mathcal{D}^m h_L(\ell)} = \O{g^{\frac
      54}}$.  The conclusion then follows directly from
  \cref{eq:proba_estimate_contradiction}.
\end{proof}

We conclude with the proof of \cref{prp:non_FR}.

\begin{proof}
  Let us assume by contradiction that $\ell \mapsto \ell f_1^{\mathrm{all}}(\ell)$ is a
  Friedman--Ramanujan function in the weak sense.  By \cref{lem:FR_contradiction}, for any small
  enough $\delta > 0$,
  \begin{equation*}
    \Pwp{\delta \leq \lambda_1 \leq \frac{5}{72}} \leq C_\delta g^{- \frac{5}{4}}.
  \end{equation*}
  This is in particular also true if we replace $5/72$ by $a := \min(a_0, 5/72)$, where 
  $a_0$ is the constant from \cref{lem:prob_small_eig}. Applying this result yields
  \begin{equation*}
    \Pwp{\delta \leq \lambda_1 \leq a}
    = \Pwp{\lambda_1 \leq a} - \Pwp{\lambda_1 < \delta}
    \geq c_1 \frac{a^2}{g} - c_2 \frac{\delta}{g} 
  \end{equation*}
  for some constants $c_1, c_2 > 0$. We take $\delta$ to be a fixed number smaller than
  $a^2 c_1/(2c_2)$. Then, we obtain that, for any large enough $g$,
  \begin{equation*}
    c_1 \frac{a^2}{2} 
    \leq  g \, \Pwp{\delta \leq \lambda_1 \leq a}
    \leq C_{\delta}g^{-\frac 14}
  \end{equation*}
  which is a contradiction as $g \rightarrow + \infty$.
\end{proof}

%%%Local Variables: 
%%% mode: latex
%%% TeX-master: "main"
%%% End: 

\bibliographystyle{plain}
\bibliography{bibliography}

\end{document}